\newcommand{\cB}{{\mathcal{B}}}
\newcommand{\bD}{{\mathbb{D}}}
\newcommand{\N}{{\mathbb{N}}}
\newcommand{\R}{{\mathbb{R}}}
\newcommand{\E}{\mathrm{E}}
\newcommand{\HH}{\mathcal{H}}
\newcommand{\LL}{\mathcal{L}}
\newcommand{\MM}{\mathcal{M}}
\newcommand{\XX}{\mathcal{X}}
\newcommand{\rT}{\text{T}}
\renewcommand{\P}{\mathrm{P}}
\renewcommand{\d}{\mathrm{d}}
\newcommand {\ep}{{\varepsilon}}
\newcommand {\deltah}{{\zeta}}
\newcommand {\half}{\frac 12}
\newcommand{\e}{\mathrm{e}}
\title{Sharp upper bounds on hitting probabilities for the solution to the stochastic heat equation on the line}
\author{Robert C.~Dalang\footnote{Institut de math\'ematiques,
Ecole Polytechnique F\'ed\'erale de Lausanne, Station 8,
CH-1015 Lausanne, Switzerland.
Email: robert.dalang@epfl.ch Partly supported by the Swiss National Foundation for Scientific Research}, David Nualart\footnote{Department of Mathematics, Kansas University, KS-66045, Lawrence, U.S.A. 
Email: nualart@ku.edu} 
and Fei Pu\footnote{Laboratory of Mathematics and Complex Systems,
School of Mathematical Sciences, Beijing Normal University, 100875, Beijing, China. Email: fei.pu@bnu.edu.cn Partly supported by National Natural Science Foundation of China (No. 12201047)}
}
\date{}                                           
\begin{document}
\newtheorem{stat}{Statement}[section]
\newtheorem{proposition}[stat]{Proposition}
\newtheorem*{prop}{Proposition}
\newtheorem{corollary}[stat]{Corollary}
\newtheorem{theorem}[stat]{Theorem}
\newtheorem{lemma}[stat]{Lemma}
\theoremstyle{definition}
\newtheorem{definition}[stat]{Definition}
\newtheorem*{cremark}{Remark}
\newtheorem{remark}[stat]{Remark}
\newtheorem*{OP}{Open Problem}
\newtheorem{example}[stat]{Example}
\newtheorem{nota}[stat]{Notation}
\numberwithin{equation}{section}
\maketitle

\begin{abstract}
    For Gaussian random fields with values in $\R^d$, sharp upper and lower bounds on the probability of hitting a fixed set have been available for many years. These apply in particular to the solutions of systems of linear SPDEs. For non-Gaussian random fields, the available bounds are less sharp. For nonlinear systems of stochastic heat equations, a sharp lower bound was obtained in a previous paper by two of the authors. Here, we obtain the corresponding sharp upper bound. The proof requires a bound on the joint probability density function of a two-dimensional random vector whose components are the solution to the {\em nonlinear} stochastic heat equation and the supremum over a small rectangle of the solution to the {\em linear} stochastic heat equation, in terms of  the size of the rectangle. This bound makes use of a formula that expresses the density of a {\em locally nondegenerate} random vector as an iterated Skorohod integral. The main effort is to estimate, using Malliavin calculus, each of the terms that arise from this formula.
 \end{abstract}

\section{Introduction}
 Let $d$ and $k$ be positive integers. Consider an $\R^{d}$-valued continuous random field $U=(U(x),\, x \in \R^{k})$. Fix a compact subset $I \subset \R^k$ with positive Lebesgue measure.  The {\em range of $U$} over $I$ (also termed the sample path $x \mapsto U(x)$, $x \in I$) is the random compact set
$$
   U(I) = \{U(x),\ x \in I\}.
$$
Hitting probabilities for $U$ refer to the probabilities of events of the form $\{U(I) \cap A \neq \emptyset\}$, $A \subset \R^d$. An interesting problem is to determine good upper and lower bounds on these probabilities, in terms of quantities related to the deterministic subset $A$. In particular, a point $z \in \R^d$ is said to be {\em polar} for $U$ if $\P\{\exists x \in I: U(x) = z\} = 0$.

Many results on this type of question are available for Brownian motion and classical Markov processes: see for instance \cite[Chapter 10]{khosh} and the references therein. For multiparameter processes, fewer results are available. For the $k$-parameter Brownian sheet $W = (W(x),\, x \in \R_+^k)$ with values in $\R^d$, Khoshnevisan and Shi \cite{khosh_shi1999} established the following result (see also \cite[Chapter 12]{khosh}).

\begin{theorem}\label{rd07_01t1}
Fix $M >0$ and let $I$ be a box.
There exists $0< c < \infty$ such that for all compact sets $A\subset [-M, M]^d$ $(\subset \R^d)$,
\begin{equation}\label{rd07_01e3}
   c^{-1}\, \mbox{\rm Cap}_{d-2k}(A) \leq \P\{W(I) \cap A \neq \emptyset \} \leq c\, \mbox{\rm Cap}_{d-2k}(A).
\end{equation}
\end{theorem}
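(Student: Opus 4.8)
The plan is to prove the upper and lower bounds in \eqref{rd07_01e3} separately; both rest on sharp Gaussian estimates for the finite-dimensional distributions of $W$, which for the sheet are explicit because $\Cov(W_i(x),W_i(y))=\prod_{l=1}^k(x_l\wedge y_l)$. Throughout one takes $I=\prod_{i=1}^k[a_i,b_i]$ with $0<a_i<b_i<\infty$ and writes $\delta(x,y)=\|W_1(x)-W_1(y)\|_{L^2(\P)}$ for the canonical metric of one coordinate; from the covariance, $\delta(x,y)\asymp|x-y|^{1/2}$ uniformly for $x,y\in I$. Three consequences are used repeatedly. (i) \emph{One-point bound}: $W(x)\sim N\big(0,(\prod_l x_l)\,\mathrm{Id}_d\big)$, so its density is bounded above and below by positive constants, uniformly for $x\in I$ and evaluated on $[-M-1,M+1]^d$. (ii) \emph{Two-point bound}: for $x,y\in I$ and $w_1,w_2\in[-M-1,M+1]^d$,
\[
   p_{W(x),W(y)}(w_1,w_2)\ \le\ C\,\delta(x,y)^{-d}\,\exp\!\Big(-\tfrac{|w_1-w_2|^2}{C\,\delta(x,y)^2}\Big),
\]
obtained by writing the joint density as the density of $W(x)$ times the Gaussian conditional density of $W(y)$ given $W(x)$, whose conditional covariance is $\asymp\delta(x,y)^2\,\mathrm{Id}_d$. (iii) \emph{Collision-kernel identity}: for $z_1,z_2\in[-M,M]^d$,
\[
   \int_I\!\!\int_I \delta(x,y)^{-d}\,\exp\!\Big(-\tfrac{|z_1-z_2|^2}{C\,\delta(x,y)^2}\Big)\,\d x\,\d y\ \asymp\ K_{d-2k}(z_1-z_2),
\]
where $K_\beta(z)=|z|^{-\beta}$ if $\beta>0$, $K_0(z)=\log(\e/|z|)$, and $K_\beta\equiv1$ if $\beta<0$; this is checked by polar coordinates in $\R^k$ and the substitution $s=|z_1-z_2|^2/|u|$, the three ranges $d>2k$, $d=2k$, $d<2k$ producing exactly the three forms of $K_{d-2k}$. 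Recall, finally, that $\mathrm{Cap}_{d-2k}(A)=\big(\inf_\mu\iint K_{d-2k}(z-w)\,\mu(\d z)\,\mu(\d w)\big)^{-1}$, the infimum being over probability measures $\mu$ supported in $A$.

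For the \textbf{lower bound} I would use a second-moment (Paley--Zygmund) argument. Assuming $\mathrm{Cap}_{d-2k}(A)>0$, fix a probability measure $\mu$ on $A$ with $\mathcal E(\mu):=\iint K_{d-2k}\,\d\mu\,\d\mu\le 2/\mathrm{Cap}_{d-2k}(A)$; for $\ep\in(0,1)$ let $q_\ep$ be the uniform probability density on $B(0,\ep)\subset\R^d$ and set $Z_\ep=\int_I\!\int_{\R^d}q_\ep(W(x)-z)\,\mu(\d z)\,\d x$. By (i), $\E[Z_\ep]\asymp1$ uniformly in $\ep$ and $\mu$; by (ii) and (iii), $\E[Z_\ep^2]\le C\,\mathcal E(\mu)\le 2C/\mathrm{Cap}_{d-2k}(A)$ uniformly in $\ep$ (replacing $\delta(x,y)^2$ by $\delta(x,y)^2+\ep$ only improves the estimate near the diagonal). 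Paley--Zygmund then gives $\P\{Z_\ep>0\}\ge(\E Z_\ep)^2/\E[Z_\ep^2]\ge c^{-1}\mathrm{Cap}_{d-2k}(A)$. Since $\{Z_\ep>0\}=\{W(I)\cap A^{\ep}\ne\emptyset\}$, with $A^{\ep}$ the closed $\ep$-neighbourhood of $A$, and since $\{W(I)\cap A^{\ep}\ne\emptyset\}\downarrow\{W(I)\cap A\ne\emptyset\}$ as $\ep\downarrow0$ (compactness of $W(I)$ and $A$ and continuity of $W$), letting $\ep\downarrow0$ finishes the lower bound.

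For the \textbf{upper bound}, following Khoshnevisan and Shi, I would build a near-capacitary measure on $A$ out of the hitting behaviour of $W$. A plain union bound over a minimal $r$-net of $A$ (using the single-ball estimate $\P\{W(I)\cap B(z_0,r)\ne\emptyset\}\asymp r^{d-2k}$, itself a $d>2k$ space-time scaling computation, with $\lesssim\log(1/r)$ resp.\ $\lesssim1$ when $d=2k$ resp.\ $d<2k$) would only replace $\mathrm{Cap}_{d-2k}(A)$ by the larger upper box-counting content of $A$, so one must exploit the multiparameter martingale structure of $W$. Fix $\ep>0$, put $p_\ep:=\P\{W(I)\cap A^{\ep}\ne\emptyset\}$, and define the random-hit-based measure $\nu_\ep$ on the closed $\ep$-neighbourhood $A^{\ep}$, informally $\nu_\ep(\d w)=\ep^{-2k}\,\E\big[\mathbf{1}\{W(I)\cap A^{\ep}\ne\emptyset\}\cdot(\text{occupation of }W\text{ at }w)\big]$, normalized so that $\nu_\ep(A^{\ep})\asymp p_\ep$; the factor $\ep^{-2k}$ reflects the decorrelation scale of $W$. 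A first-entrance decomposition in which Cairoli's strong maximal inequality plays the role of the (unavailable) strong Markov property, combined with the two-point bound (ii) and the identity (iii), shows that $\nu_\ep$ has uniformly bounded $K_{d-2k}$-potential: $\sup_z\int K_{d-2k}(z-w)\,\nu_\ep(\d w)\le C$. Consequently $\mathcal E(\nu_\ep)=\iint K_{d-2k}\,\d\nu_\ep\,\d\nu_\ep\le C\,\nu_\ep(A^{\ep})$, so $\mathrm{Cap}_{d-2k}(A^{\ep})\ge\nu_\ep(A^{\ep})^2/\mathcal E(\nu_\ep)\ge c\,\nu_\ep(A^{\ep})\asymp c\,p_\ep$, i.e.\ $p_\ep\le C\,\mathrm{Cap}_{d-2k}(A^{\ep})$. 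Letting $\ep\downarrow0$ and using $\mathrm{Cap}_{d-2k}(A^{\ep})\downarrow\mathrm{Cap}_{d-2k}(A)$ yields the upper bound, and in particular the polarity of $A$ when this capacity is $0$.

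I expect the upper bound to be the main obstacle: because $W$ is not a Markov process, a covering/union step only yields the upper box-counting content, and recovering the genuine capacity requires quantifying the near-independence of hits at well-separated locations. This is precisely where Cairoli's maximal inequality and a careful space-time scaling enter, and the delicate point is to organize them so that the resulting bound is controlled by $\mathrm{Cap}_{d-2k}$ rather than by a coarser Minkowski-type proxy. The two-point estimate (ii) and the identity (iii) are what force the upper and lower bounds to be of the same order.
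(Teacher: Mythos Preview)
The paper does not contain a proof of this theorem: it is quoted in the introduction as a known result of Khoshnevisan and Shi \cite{khosh_shi1999} (see also \cite[Chapter 12]{khosh}), and is used only as background and motivation for the paper's own results on the nonlinear stochastic heat equation. There is therefore no ``paper's own proof'' to compare your proposal against.

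That said, your sketch is broadly consistent with the approach in the cited references. The lower bound via a second-moment/Paley--Zygmund argument with a smoothed occupation functional is standard and essentially complete as you wrote it. For the upper bound you correctly identify the main difficulty---the absence of a strong Markov property---and the correct substitute, namely Cairoli's maximal inequality together with a first-entrance construction of a measure with bounded Riesz potential. However, your upper-bound paragraph is a plan rather than a proof: the definition of $\nu_\ep$ is left informal (``occupation of $W$ at $w$''), and the crucial step---showing that the first-entrance decomposition combined with Cairoli actually yields a uniform bound on $\int K_{d-2k}(z-w)\,\nu_\ep(\d w)$---is asserted but not carried out. In the Khoshnevisan--Shi argument this is where most of the work lies, and it requires a careful dyadic/multiparameter stopping-time construction that does not follow simply from (ii) and (iii). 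If you intend to write this out, that is the place where the real content must appear.
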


In this theorem, $ \mbox{Cap}_{\beta}(A)$ denotes the {\em Bessel-Riesz capacity} of $A$ of index $\beta \in \R$ (see \cite[Chapter 10, Section 3, p.376]{khosh} for a definition). In particular, if $A = \{z\}$ is a single point in $\R^d$, then $\mbox{Cap}_{\beta}(\{z\}) = 0$ for $\beta\geq 0$ and $\mbox{Cap}_{\beta}(\{z\}) = 1$ for $\beta\ < 0$, therefore points are polar in dimensions $d \geq 2k$ and are non-polar when $d < 2k$.

Typically, for $d = 1, 2, \dots$, given $\R^d$-valued $k$-parameter random fields that consist of $d$ i.i.d.~copies of a given real-valued random field, there is a {\em critical dimension} $Q(k)$ such that points {are polar} if $d> Q(k)$ and are {\em non-polar} if $d < Q(k)$. Deciding whether or not points are polar in the critical dimension $Q(k)$ is usually a difficult problem. Theorem \ref{rd07_01t1} answers this question for the Brownian sheet.

 A more general class of {\em anisotropic} Gaussian random fields was considered in \cite{BLX}. Let $(V(x),\ x \in \R^k)$ be a centered continuous Gaussian random field with values in $\R^d$ and with i.i.d.~components: $V(x) = (V_1(x), \dots,V_d(x))$, and let $I$ be a box. Suppose that there exists $0<c<\infty$ and $H_1,\dots,H_k \in (0,1)$ such that for all $x,y \in I$,
\begin{equation}\label{rd07_01c1}
   c^{-1} \sum_{\ell =1}^k \vert x_\ell - y_\ell \vert^{H_\ell} \leq \Vert V_1(x) - V_1(y)\Vert_{L^2(\Omega)}
   \leq c \sum_{\ell =1}^k \vert x_\ell - y_\ell \vert^{H_\ell}.
\end{equation}
The following result was established in \cite{BLX}.

\begin{theorem}\label{rd07_01t2}
   Assume condition \eqref{rd07_01c1} (and some additional non-degeneracy conditions). Fix $M > 0$. Set
$
   Q = \sum_{\ell=1}^k \frac{1}{H_\ell}.
$
Then there is $0<c<\infty$ such that for every compact set $A\subset [-M, M]^d$,
\begin{equation}\label{rd07_01e4}
   c^{-1}\, \mbox{\rm Cap}_{d-Q}(A) \leq \P\{V(I) \cap A \neq \emptyset\} \leq c\, \mathscr{H}_{d-Q}(A).
\end{equation}
\end{theorem}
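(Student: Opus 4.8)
\smallskip
\noindent\textbf{Proof proposal.} The two inequalities in \eqref{rd07_01e4} call for genuinely different methods, and the mismatch between the capacity on the left and the Hausdorff measure on the right is intrinsic to them. Write $\rho(x,y)=\sum_{\ell=1}^{k}|x_\ell-y_\ell|^{H_\ell}$ for the anisotropic pseudometric attached to \eqref{rd07_01c1}; a $\rho$-ball of radius $s$ has Lebesgue measure comparable to $s^{Q}$, which is why $Q=\sum_\ell H_\ell^{-1}$ is the relevant dimension for $V$. Two analytic inputs will be used throughout. First, \eqref{rd07_01c1} together with the auxiliary non-degeneracy hypotheses yields \emph{strong local nondegeneracy}, hence uniform Gaussian density bounds: writing $p_{V(x)}$ and $p_{(V(x),V(y))}$ for the densities of $V(x)$ and of $(V(x),V(y))$, one has $c^{-1}\le p_{V(x)}(u)\le c$ for $x\in I$ and $u$ in a fixed compact set, and $p_{(V(x),V(y))}(u,v)\le c\,\rho(x,y)^{-d}\exp\!\big(-c|u-v|^{2}/\rho(x,y)^{2}\big)$, since conditionally on $V(x)=u$ the vector $V(y)$ has a density bounded by $c\,\rho(x,y)^{-d}$ concentrated near $u$ at scale $\rho(x,y)$. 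Second, by \eqref{rd07_01c1}, Dudley's entropy bound and the Borell--TIS inequality, the oscillation of $V$ over a single $\rho$-rectangle of side $s$ has mean of order $s$ --- with \emph{no} logarithmic factor, precisely because only one rectangle is involved --- and Gaussian tails at scale $s$; the same holds for the field conditioned on the value of $V$ at a point of the rectangle, the conditional mean shift being $O(s)$ when that value stays in a fixed compact set and the conditional variance being $O(s^{2})$ in all cases.

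\smallskip
\noindent\textbf{Lower bound.} I would run a second moment / Paley--Zygmund argument. We may assume $\mbox{\rm Cap}_{d-Q}(A)>0$; fix a probability measure $\mu$ on $A$ with finite energy $\mathcal{E}(\mu)=\int_A\!\int_A|u-v|^{-(d-Q)}\,\mu(\d u)\,\mu(\d v)$ (read as $0$ when $d<Q$). For $\ep>0$ put
\[
   J_\ep=\int_I\int_A(2\pi\ep)^{-d/2}\exp\!\Big(-\frac{|V(x)-u|^{2}}{2\ep}\Big)\,\mu(\d u)\,\d x .
\]
From the lower density bound, $\E[J_\ep]\ge c_1>0$ uniformly in $\ep$; from the two-point bound, using the change of variables to anisotropic polar coordinates about $x$ --- which turns $\int_I\rho(x,y)^{-d}\exp(-c|u-v|^{2}/\rho(x,y)^{2})\,\d y$ into a constant multiple of $|u-v|^{-(d-Q)}$ when $d>Q$ --- one gets $\E[J_\ep^{2}]\le c_2\,(1+\mathcal{E}(\mu))$ uniformly in $\ep$. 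Paley--Zygmund then gives $\P\{J_\ep\ge c_1/2\}\ge c_1^{2}/\big(4c_2(1+\mathcal{E}(\mu))\big)$ for every $\ep$. Since $V(I)$ and $A$ are compact, $J_\ep\to 0$ a.s.\ on $\{V(I)\cap A=\emptyset\}$, so along a sequence $\ep_n\downarrow 0$ the event $\{V(I)\cap A\ne\emptyset\}$ contains $\limsup_n\{J_{\ep_n}\ge c_1/2\}$ up to a null set, and reverse Fatou yields $\P\{V(I)\cap A\ne\emptyset\}\ge c_1^{2}/\big(4c_2(1+\mathcal{E}(\mu))\big)$. Optimizing over $\mu$ and using $\inf_\mu\mathcal{E}(\mu)=\mbox{\rm Cap}_{d-Q}(A)^{-1}$ (for $d>Q$) together with $\mbox{\rm Cap}_{d-Q}(A)\le\mbox{\rm Cap}_{d-Q}([-M,M]^{d})<\infty$ gives the left-hand inequality in \eqref{rd07_01e4}; the cases $d\le Q$ are similar and easier (for $d<Q$ they reduce to the fact that single points are hit with probability bounded below).

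\smallskip
\noindent\textbf{Upper bound.} Here I would use a covering argument; since $\mathscr{H}_{d-Q}(A)=+\infty$ when $d\le Q$ (so the bound is then trivial), assume $d>Q$. It is enough to prove that
\begin{equation*}
   \P\{V(I)\cap B(z,r)\ne\emptyset\}\le c\,r^{d-Q}\qquad\text{for all }z\in[-M,M]^{d}\text{ and }0<r\le1 ,
\end{equation*}
because summing this over a near-optimal cover of $A$ by Euclidean balls and letting the mesh tend to $0$ yields $\P\{V(I)\cap A\ne\emptyset\}\le c\,\mathscr{H}_{d-Q}(A)$. To prove the one-ball bound, partition $I$ into $N_r\asymp r^{-Q}$ anisotropic rectangles $R_j=\prod_{\ell}[a_\ell^{j},a_\ell^{j}+r^{1/H_\ell}]$ of $\rho$-diameter $\asymp r$, fix $x_j\in R_j$, and set $\Delta_j=\sup_{x\in R_j}|V(x)-V(x_j)|$; then
\begin{align*}
   \P\{V(I)\cap B(z,r)\ne\emptyset\}
   &\le\sum_{j=1}^{N_r}\P\{|V(x_j)-z|\le r+\Delta_j\}\\
   &=\sum_{j=1}^{N_r}\int_{\R^{d}}p_{V(x_j)}(v)\,\P\big\{\Delta_j\ge(|v-z|-r)_+\bigm|V(x_j)=v\big\}\,\d v .
\end{align*}
For fixed $j$, split the $v$-integral at $|v-z|=Kr$ for a suitable large constant $K$. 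On $\{|v-z|\le Kr\}$, the boundedness of $p_{V(x_j)}$ gives a contribution $\le c\,r^{d}$. On $\{|v-z|>Kr\}$, the conditional mean of $\Delta_j$ given $V(x_j)=v$ is at most $c\,r(1+|v|)$ and its conditional variance is $\le c\,r^{2}$, so Borell--TIS gives $\P\{\Delta_j\ge|v-z|-r\mid V(x_j)=v\}\le c\exp\!\big(-c(|v-z|-r)^{2}/r^{2}\big)$; combining this with the bound on $p_{V(x_j)}(v)$ --- itself Gaussian-small where $|v|$ is large --- the integral over $\{|v-z|>Kr\}$ is again $\le c\,r^{d}$. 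Thus each rectangle contributes $\le c\,r^{d}$, and summing over the $N_r\asymp r^{-Q}$ of them gives $\P\{V(I)\cap B(z,r)\ne\emptyset\}\le c\,r^{d-Q}$.

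\smallskip
\noindent\textbf{Main difficulty.} The crux is the one-ball bound with the sharp exponent $d-Q$ and \emph{no} logarithmic loss. The naive splitting $\P\{V(R_j)\cap B(z,r)\ne\emptyset\}\le\P\{|V(x_j)-z|\le 2r\}+\P\{\Delta_j>r\}$ is useless, since $\P\{\Delta_j>r\}$ is merely a positive constant while there are $\sim r^{-Q}$ rectangles, and shrinking the rectangles to force $\Delta_j\lesssim r$ inflates $N_r$ by a power of $\log(1/r)$. The remedy is exactly to exploit (a) that over a \emph{single} small rectangle the oscillation lives at scale $r$ with no logarithm, and (b) that integrating the conditional tail of $\Delta_j$ against $p_{V(x_j)}$, rather than splitting, lets one use that this tail decays in $(|v-z|/r)^{2}$ fast enough that the region $\{|v-z|>Kr\}$ contributes only $O(r^{d})$. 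On the Gaussian side, the substantive work is to deduce the strong local nondegeneracy --- and hence the density and conditional-field estimates above --- from \eqref{rd07_01c1} and the auxiliary non-degeneracy assumptions, where the anisotropy makes the bookkeeping delicate.
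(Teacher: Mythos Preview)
The paper does not prove this statement: Theorem~\ref{rd07_01t2} is quoted verbatim from \cite{BLX} as background (``The following result was established in \cite{BLX}''), so there is no proof in the paper to compare your proposal against. Your task was miscalibrated on this item.

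That said, your sketch is broadly in line with how \cite{BLX} (and the related \cite{DKN07}) actually proceed: a second-moment/Paley--Zygmund argument on a smoothed occupation functional for the capacity lower bound, and an anisotropic covering combined with a one-ball estimate $\P\{V(I)\cap B(z,r)\ne\emptyset\}\lesssim r^{d-Q}$ for the Hausdorff upper bound. The specific device you use for the one-ball bound---integrating the conditional tail of the oscillation against the density rather than a naive union bound---is exactly the mechanism that avoids logarithmic losses, and matches in spirit the argument in \cite{BLX}. One small caveat: in your conditional step you write that the conditional mean of $\Delta_j$ given $V(x_j)=v$ is at most $c\,r(1+|v|)$; under the hypotheses of \cite{BLX} the conditional increment process still satisfies the two-sided bound \eqref{rd07_01c1} up to constants, so the conditional mean is in fact $O(r)$ uniformly in $v$, and the extra $(1+|v|)$ factor is unnecessary (and would otherwise need to be absorbed by the Gaussian decay of $p_{V(x_j)}$, which you do mention). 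Apart from this, your outline is a faithful summary of the standard proof.
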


In this theorem, for $\beta \in \R$, $\mathscr{H}_\beta(A)$ denotes the $\beta$-dimensional Hausdorff measure of $A \subset \R^d$ (see \cite[Appendix C]{khosh}). If $V_1$ is a $k$-parameter Brownian sheet, then for $\ell \in \{1,\dots, k\}$, each coordinate mapping $x_\ell \mapsto W(x_1,\dots, x_\ell, \dots, x_k)$ is essentially a Brownian motion, therefore $H_\ell = \frac 1 2$ and $Q = 2k$, so the dimension $\beta = d - Q$ in \eqref{rd07_01e4} matches the index $\beta = d - 2k$ in \eqref{rd07_01e3}. However, in the critical dimension $d= Q$ (assuming that $Q$ is an integer), the $0$-dimensional Hausdorff measure of a singleton $A = \{z\}$, $z \in \R^d$, is $1$, therefore, \eqref{rd07_01e4} is not sufficient to decide whether or not points are polar in the critical dimension $d = Q$. Nevertheless, Theorem \ref{rd07_01t2} seems to be the sharpest result available for a wide class of Gaussian random fields.

We now recall the results that are available for the stochastic heat equation in spatial dimension $1$. Let $V = (V(t,x),\ (t,x) \in \R_+ \times \R)$ be an $\R^d$-valued random field that solves a {\em linear system} of stochastic heat equations, namely 
\begin{equation}\label{rd07_01e5}
  \frac{\partial}{\partial t}  V(t,x) = \frac12\frac{\partial^2}{\partial x^2} V(t,x) + \dot W(t,x), \qquad x \in \R,\ t >0,
\end{equation}
where $V(0, *): \R \to \R^d$ is given and $\dot W(t,x)$ denotes $\R^d$-valued (Gaussian) space-time white noise (see \cite[Chapter 1]{dss}). Mueller and Tribe \cite{muellertribe} established the following result.

\begin{theorem}\label{rd07_01t3}
For the random field $V$, the critical dimension for hitting points is $d=6$ and points are polar in this dimension.
\end{theorem}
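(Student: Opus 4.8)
The plan is to realize $V$ as an anisotropic Gaussian random field, dispose of the non-critical dimensions by the general theory quoted above, and treat the critical dimension $d=6$ by a direct multi-scale argument. Fix a box $I\subset(t_0,t_1)\times[a,b]$ with $t_0>0$. Each component of $V$ has the form $V_i(t,x)=(G_t\ast V_i(0,\cdot))(x)+\int_0^t\int_\R G_{t-s}(x-y)\,W_i(\d s,\d y)$ with $G$ the heat kernel, so $V$ is Gaussian with i.i.d.\ components and a smooth deterministic mean on $I$ (which does not affect polarity of points), and a direct computation with $G$ gives, on $I$,
\begin{equation*}
  c^{-1}\bigl(|t-t'|^{1/4}+|x-x'|^{1/2}\bigr)\ \le\ \|V_1(t,x)-V_1(t',x')\|_{L^2(\Omega)}\ \le\ c\bigl(|t-t'|^{1/4}+|x-x'|^{1/2}\bigr),
\end{equation*}
together with $\Var(V_1(t,x))\ge c^{-1}$ for $t\ge t_0$ and the local nondeterminism of $V$. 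Thus condition \eqref{rd07_01c1} holds with $k=2$, $H_1=\tfrac14$, $H_2=\tfrac12$, whence $Q=H_1^{-1}+H_2^{-1}=6$. Applying Theorem \ref{rd07_01t2} to $A=\{z\}$: if $d<6$ then $\mathrm{Cap}_{d-6}(\{z\})=1$, so $\P\{V(I)\ni z\}>0$; if $d>6$ then $\mathscr H_{d-6}(\{z\})=0$, so $\P\{V(I)\ni z\}=0$. Hence the critical dimension is $d=6$, and it remains to show points are polar there.

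For the critical case, fix $z\in\R^6$. Since $V$ is continuous and $I$ compact, $V(I)$ is compact and $\{V(I)\ni z\}=\bigcap_{\ep>0}\{V(I)\cap B(z,\ep)\neq\emptyset\}$, a decreasing family of events; by continuity of probability it therefore suffices to prove
\begin{equation*}
  \lim_{\ep\downarrow0}\ \P\{V(I)\cap B(z,\ep)\neq\emptyset\}\ =\ 0 .
\end{equation*}
Note that here Theorem \ref{rd07_01t2} only gives $\mathrm{Cap}_0(\{z\})=0\le\P\{V(I)\ni z\}\le c\,\mathscr H_0(\{z\})=c$ and decides nothing, so a genuinely new estimate is required.

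To prove this limit, for $\ep=2^{-n}$ I would tile $I$ by anisotropic rectangles of time-width $\ep^4$ and space-width $\ep^2$ (so $N_\ep\asymp\ep^{-6}$ of them), and let $\mathcal N_\ep$ count the rectangles $R$ with $V(\bar R)\cap B(z,\ep)\neq\emptyset$. Since the density of the corner value $V(t_R,x_R)$ is bounded uniformly for $t_R\ge t_0$ and, by the modulus above together with Gaussian hypercontractivity, $\sup_R|V(t,x)-V(t_R,x_R)|$ is of order $\ep$ in every $L^p(\Omega)$, one gets $\P\{V(\bar R)\cap B(z,\ep)\neq\emptyset\}\le C\ep^6$ and hence $\E[\mathcal N_\ep]\le C$. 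A single-scale first-moment bound thus yields only $\P\{V(I)\cap B(z,\ep)\neq\emptyset\}\le\E[\mathcal N_\ep]\le C$, which is useless: all the content is in extracting decay. The additional inputs are two-point Gaussian estimates coming from local nondeterminism---an upper bound $\P\{R,R'\text{ both hit}\}\le C\ep^{12}\rho^{-6}$ for rectangles at anisotropic separation $\rho=|t_R-t_{R'}|^{1/4}+|x_R-x_{R'}|^{1/2}\ge\ep$, together with matching lower bounds on the conditional non-degeneracy of $V(t_{R'},x_{R'})$ given $V(t_R,x_R)$. The plan is to use these, via an excursion-type analysis over the geometric sequence of scales $\ep,2\ep,4\ep,\dots,\operatorname{diam}I$, to show that \emph{conditionally on $V(I)$ meeting $B(z,\ep)$ at all}, it in fact meets $B(z,\ep)$ near at least $\asymp\log(1/\ep)$ well-separated parameter values with high conditional probability. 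Combined with $\E[\mathcal N_\ep]\le C$ this forces $C\ge\E[\mathcal N_\ep]\ge c\log(1/\ep)\,\P\{V(I)\cap B(z,\ep)\neq\emptyset\}$, i.e.\ $\P\{V(I)\cap B(z,\ep)\neq\emptyset\}\le C/\log(1/\ep)\to0$, which is exactly what the reduction above needs (only the decay, not the exact rate, is used).

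I expect the heart of the difficulty to be this last point: the single-scale estimate is trivial and gives only a constant, so one must combine infinitely many scales, and the excursion/multi-scale bookkeeping requires sharp two-point---and effectively multi-point---Gaussian density bounds coming from local nondeterminism. This is the analogue, for the stochastic heat equation, of the ``hard direction'' of the Khoshnevisan--Shi result (Theorem \ref{rd07_01t1}) for the Brownian sheet in its critical dimension $d=2k$, and it is more delicate here because $V$ is not self-similar, so the heat-kernel correlations must be controlled by hand rather than read off from a scaling identity.
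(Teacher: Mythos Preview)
The paper does not prove this theorem; it attributes the result to Mueller and Tribe \cite{muellertribe} and cites it without proof, so there is no argument in the paper to compare yours against.

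On its own merits: your treatment of the non-critical dimensions via Theorem \ref{rd07_01t2} is correct. For $d=6$ your outline is the right strategy---essentially Talagrand's multi-scale method, which is indeed what Mueller and Tribe implement---but what you have written is explicitly a plan, not a proof. The crux, as you yourself flag, is the conditional accumulation step: showing that, given one hit of $B(z,\ep)$, the process with high conditional probability revisits $B(z,\ep)$ at $\gtrsim\log(1/\ep)$ well-separated parameter points. This requires a uniform-in-scale \emph{lower} bound on the conditional probability of a revisit at anisotropic distance $\asymp 2^j\ep$ from a known hit, for each $j$, together with enough decorrelation across scales to sum. The two-point \emph{upper} bound $\P\{R,R'\text{ both hit}\}\le C\ep^{12}\rho^{-6}$ that you quote is a correct and necessary ingredient, but it does not by itself deliver the conditional lower bound; one also needs a matching two-point lower bound and a careful inductive or second-moment bookkeeping over scales. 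So the idea is right and the inputs you list are the right ones, but the hardest step is asserted rather than carried out.
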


A result analogous to Theorem \ref{rd07_01t2} was established for $V$ by Dalang, Khoshnevisan and E.~Nualart \cite{DKN07}.

\begin{theorem}\label{rd07_01t4}
Fix $M > 0$ and  two non-trivial compact intervals  $I \subset (0, T] $ and $J \subset \R$. There is $0<c<\infty$ such that for every compact set $A\subset [-M, M]^d$,
\begin{equation}\label{rd07_01e8}
   c^{-1}\, \mbox{\rm Cap}_{d-6}(A) \leq \P\{V(I \times J) \cap A \neq \emptyset\} \leq c\, \mathscr{H}_{d-6}(A).
\end{equation}
\end{theorem}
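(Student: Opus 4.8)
Because $V$ solves the \emph{linear} system \eqref{rd07_01e5}, it is a Gaussian random field whose $d$ coordinates are i.i.d.\ copies of one real-valued Gaussian field; in its mild (Duhamel) formulation a coordinate reads
\[
   V_1(t,x)=\bigl(G_t*V_1(0,\cdot)\bigr)(x)+\int_0^t\!\!\int_\R G_{t-s}(x-y)\,W(\d s,\d y),
\]
with $G$ the heat kernel, the first term deterministic and smooth on $(0,\infty)\times\R$ and the second a Wiener integral. The plan is to regard $\bigl(V(t,x):(t,x)\in I\times J\bigr)$ as an $\R^d$-valued continuous Gaussian field on the box $I\times J\subset\R^2$ (so $k=2$, the first coordinate time and the second space) and to deduce \eqref{rd07_01e8} from Theorem \ref{rd07_01t2}; the deterministic mean of $V$ is bounded and smooth on $I\times J$ and including it changes none of the estimates, so we suppress it.

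\textbf{Checking the hypotheses of Theorem \ref{rd07_01t2}.} The first step is to verify \eqref{rd07_01c1} on $I\times J$ with $H_1=\tfrac14$ (time) and $H_2=\tfrac12$ (space). By the Wiener isometry,
\[
   \Norm{V_1(t,x)-V_1(s,y)}_{L^2(\Omega)}^2=\int_0^\infty\!\!\int_\R\bigl|G_{t-r}(x-z)-G_{s-r}(y-z)\bigr|^2\,\d z\,\d r ,
\]
and decomposing the increment into a pure time increment and a pure space increment, the classical estimates for the stochastic heat equation give
\[
   c^{-1}\bigl(|t-s|^{1/2}+|x-y|\bigr)\le\Norm{V_1(t,x)-V_1(s,y)}_{L^2(\Omega)}^2\le c\bigl(|t-s|^{1/2}+|x-y|\bigr)
\]
uniformly on $I\times J$, which is \eqref{rd07_01c1}. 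Since $I\subset(0,T]$ is a nontrivial compact interval, the time variable is bounded away from $0$ on $I$, whence $\Var\bigl(V_1(t,x)\bigr)$ is bounded above and below on $I\times J$; this and the increment estimate, together with the strong local nondegeneracy
\[
   \Var\bigl(V_1(t,x)\mid V_1(\mathbf t^1),\dots,V_1(\mathbf t^n)\bigr)\ \ge\ c\min_{1\le i\le n}\bigl(|t-t^i|^{1/2}+|x-x^i|\bigr)
\]
for $\mathbf t^i=(t^i,x^i)\in I\times J$, are the nondegeneracy properties needed in Theorem \ref{rd07_01t2}. With these in place, $Q=\tfrac1{H_1}+\tfrac1{H_2}=4+2=6$, and \eqref{rd07_01e8} coincides with \eqref{rd07_01e4}.

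\textbf{The underlying mechanism.} It is worth recalling how \eqref{rd07_01e4} is proved here, since the same ideas drive the harder nonlinear analysis later in the paper. For the lower bound one runs a second-moment (Paley--Zygmund) argument: given a probability measure $\mu$ on $A$ of finite $(d-6)$-energy, set $J_\ep=\int_{I\times J}\int_A p_\ep\bigl(V(t,x)-z\bigr)\,\mu(\d z)\,\d t\,\d x$ with $p_\ep$ a Gaussian mollifier, check $\liminf_{\ep\downarrow0}\E[J_\ep]>0$ from the uniform lower bound on the one-point density of $V(t,x)$, and check $\limsup_{\ep\downarrow0}\E[J_\ep^2]\le C\iint\Norm{z-w}^{-(d-6)}\,\mu(\d z)\,\mu(\d w)$ by bounding the joint density of $\bigl(V(t,x),V(s,y)\bigr)$ by $C\,\Delta^{-d}\exp\bigl(-c\Norm{z-w}^2/\Delta^2\bigr)$, where $\Delta=|t-s|^{1/4}+|x-y|^{1/2}$ and $\det\Cov\bigl(V_1(t,x),V_1(s,y)\bigr)\asymp\Delta^2$ by the two-point nondegeneracy; integrating in $(t,x),(s,y)$ via the anisotropic change of variables (the Lebesgue measure of $\{(s,y):\Delta((t,x),(s,y))\le\rho\}$ is $\asymp\rho^{6}$) yields $\asymp\Norm{z-w}^{6-d}$, with a logarithm and $\mathrm{Cap}_0$ in the borderline dimension $d=6$. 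For the upper bound, after covering $A$ by balls $B(z_n,r_n)$ with $\sum_n r_n^{d-6}$ close to $\mathscr H_{d-6}(A)$, it suffices to prove $\P\{V(I\times J)\cap B(z,r)\ne\emptyset\}\le C r^{d-6}$ uniformly in $z\in[-M,M]^d$ and small $r$; one partitions $I\times J$ into $\asymp r^{-6}$ rectangles $R$ of $\Delta$-diameter $\asymp r$ (time-side $\asymp r^4$, space-side $\asymp r^2$), observes that hitting $B(z,r)$ somewhere in $R$ forces $\Norm{V(t_R)-z}\le r+\mathrm{osc}_R V$ at a corner $t_R$, splits dyadically according to the size of $\mathrm{osc}_R V$, and bounds each dyadic piece by combining the uniform bound on the conditional one-point density of $V(t_R)$ given the increments of $V$ over $R$ (here a conditional-variance lower bound, i.e.\ again local nondegeneracy, is used) with the Gaussian concentration estimate $\P\{\mathrm{osc}_R V>\lambda r\}\le C\exp(-c\lambda^2)$; summing the geometric series in the dyadic index and then over the $\asymp r^{-6}$ rectangles gives $C r^{d-6}$.

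\textbf{Main obstacle.} The two-sided $L^2$ increment estimate is routine. The crux is the nondegeneracy input: proving the strong local nondegeneracy of $V_1$ — equivalently, sharp lower bounds on $\Var(V_1(t,x)\mid V_1(\mathbf t^1),\dots,V_1(\mathbf t^n))$ and on the conditional variance of $V_1$ at a point given its increments over a small rectangle — requires a careful analysis of the covariance kernel built from the heat kernel $G$, and it is this, together with verifying that it implies the exact nondegeneracy conditions demanded by Theorem \ref{rd07_01t2}, that is the technical heart of the argument.
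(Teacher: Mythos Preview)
This theorem is not proved in the present paper: it is quoted from \cite{DKN07} as a known background result, and the paper remarks immediately afterwards that the index $d-6$ matches what Theorem~\ref{rd07_01t2} predicts via $Q=(1/4)^{-1}+(1/2)^{-1}=6$. Your strategy---verify the anisotropic increment bounds \eqref{rd07_01c1} with $H_1=1/4$, $H_2=1/2$ and the requisite nondegeneracy, then invoke Theorem~\ref{rd07_01t2}---is therefore exactly the route the paper hints at, and the ``underlying mechanism'' paragraph you wrote is essentially the direct proof given in \cite{DKN07}.

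The only genuine gap is the one you yourself identify at the end: you have \emph{stated} the strong local nondeterminism inequality and the conditional-variance lower bound, but not proved them. These are not formalities. For the stochastic heat equation the needed two-point (and conditional) nondegeneracy is obtained in \cite{DKN07} by explicit heat-kernel computations (bounding $\det\Cov(V_1(t,x),V_1(s,y))$ from below by a constant times $\Delta((t,x),(s,y))^2$, and similarly for the conditional variance given increments over a rectangle); see also \cite[Proposition 4.1 and Lemma 4.2]{DKN09} for the nonlinear analogue. Until those estimates are supplied, neither the appeal to Theorem~\ref{rd07_01t2} nor the direct dyadic small-ball argument goes through. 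Everything else in your outline is correct.
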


It is well-known \cite[Section 3.2.2]{dss} that $(t, x) \mapsto V(t, x)$ is jointly H\"older continuous with exponents $(1/4, 1/2)$, therefore $Q = (1/4)^{-1} + (1/2)^{-1} = 6$ and the presence of the dimension $d-6$  can also be deduced from Theorem \ref{rd07_01t2}.

We now consider the related {\em nonlinear system} of stochastic heat equations. Let $U = (U(t,x),\,(t,x) \in \R_+ \times \R)$ be an $\R^d$-valued random field that solves 
\begin{equation}\label{rd07_01e6}
   \frac{\partial}{\partial t}  U(t,x) = \frac12\frac{\partial^2}{\partial x^2} U(t,x) +\sigma(U(t, x)) \cdot \dot W(t,x), \qquad x \in \R,\ t >0,
\end{equation}
where $U(0, *): \R \to \R^d$ is given, $\dot W(t,x)$ is an $\R^d$-valued space-time white noise and ${\sigma}=(\sigma_{i, j})_{1\leq i, j\leq d}$: $\R^d \to \MM_{d \times d}$ is a smooth matrix-valued function.

Under the assumption that $\sigma$ is {\em uniformly elliptic,} the following result for $U$ was established by Dalang, Khoshnevisan and E.~Nualart \cite{DKN09}.

\begin{theorem}\label{rd07_01t5} 
 Fix  $\eta>0$, $M > 0$ and  two non-trivial compact intervals  $I \subset (0, T] $ and $J \subset \R$.
There exists $c>0$ (depending in particular on $\eta$)  such that for all compact sets $A \subseteq [-M, M]^d$,
\begin{equation}\label{rd07_01e7}
c^{-1} \,  \textnormal{Cap}_{d-6+\eta}(A) \leq \P \{ U(I \times J) \cap A \neq
\emptyset \}  \leq c \, \mathscr{H}_{d-6-\eta}(A).
\end{equation}
\end{theorem}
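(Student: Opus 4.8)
The plan is to derive both inequalities in \eqref{rd07_01e7} from precise bounds on the one- and two-point probability density functions of $U$ (obtained via Malliavin calculus), fed into the standard first- and second-moment schemes for hitting probabilities, following \cite{DKN07,DKN09}. The analytic inputs are: (i) for each $(t,x)\in I\times J$ the law of $U(t,x)$ has a $C^\infty$ density $p_{t,x}$ with $c_1\le p_{t,x}(z)\le c_2$ for all $(t,x)\in I\times J$ and all $z$ in a bounded neighbourhood of $[-M,M]^d$; (ii) with $\rho:=\rho\big((t,x),(s,y)\big):=|t-s|^{1/4}+|x-y|^{1/2}$, the pair $(U(t,x),U(s,y))$ has, for $(t,x)\ne(s,y)$ in $I\times J$ and $z,w$ bounded, a density satisfying
\[
 p_{(t,x),(s,y)}(z,w)\ \le\ C\,\rho^{-d-\eta}\exp\!\Big(-\frac{\|z-w\|^2}{C\rho^2}\Big);
\]
and (iii) the classical moment bound $\E\big[\|U(s,y)-U(s',y')\|^p\big]\le C_p\,\rho\big((s,y),(s',y')\big)^p$ for every $p\ge1$. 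The bounds (i)--(ii) are the substantial inputs: they require showing that the Malliavin covariance matrix of the pair $(U(t,x),U(s,y))$ — which degenerates at rate $\rho^2$ as $(s,y)\to(t,x)$ — is a.s.\ invertible with inverse moments of all orders, controlled with the stated power of $\rho$, using the uniform ellipticity of $\sigma$; the loss $\eta$ in (ii) reflects the non-optimality of this Malliavin estimate in the nonlinear setting and is ultimately what forces the $\eta$ in \eqref{rd07_01e7}.

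\emph{Lower bound.} We may assume $\eta\in(0,1)$ and $\textnormal{Cap}_{d-6+\eta}(A)>0$, and fix a probability measure $\mu$ on $A$ of finite energy $\mathcal E_{d-6+\eta}(\mu):=\iint K_{d-6+\eta}(\|z-w\|)\,\mu(\d z)\,\mu(\d w)$, where $K_\beta(r)=r^{-\beta}$ for $\beta>0$, $K_0(r)=\log(\e/r)$, $K_\beta\equiv1$ for $\beta<0$. For $\ep>0$ put $J_\ep:=\int_{I\times J}\int_{\R^d}(2\ep)^{-d}\,\mathbf{1}_{\{\|U(t,x)-z\|<\ep\}}\,\mu(\d z)\,\d t\,\d x$. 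By (i), $\E[J_\ep]\ge c>0$ uniformly in $\ep$. For the second moment, by (ii) and the elementary scaling $\int_{I\times J}f\big(\rho((t,x),(s,y))\big)\,\d s\,\d y\asymp\int_0^{R_0}f(\rho)\,\rho^{5}\,\d\rho$ — valid because $I\times J$ carries parabolic dimension $4+2=6$, with $R_0$ the $\rho$-diameter of $I\times J$ — one gets, uniformly in $\ep$,
\[
 \E[J_\ep^2]\ \le\ C\iint\mu(\d z)\,\mu(\d w)\int_0^{R_0}\rho^{\,5-d-\eta}\exp\!\Big(-\frac{\|z-w\|^2}{C\rho^2}\Big)\d\rho\ \le\ C'\,\mathcal E_{d-6+\eta}(\mu)\ <\ \infty,
\]
the last inequality by the change of variables $\rho=\|z-w\|\,\sigma$. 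Paley--Zygmund gives $\P\{J_\ep>0\}\ge\E[J_\ep]^2/\E[J_\ep^2]$, bounded below uniformly in $\ep$; since $\{J_\ep>0\}\subseteq\{U(I\times J)\cap A^{(\ep)}\ne\emptyset\}$ (with $A^{(\ep)}$ the $\ep$-neighbourhood of $A$) and $U(I\times J)$ is a.s.\ compact, letting $\ep\downarrow0$ and then optimising over $\mu$ yields $\P\{U(I\times J)\cap A\ne\emptyset\}\ge c\,\textnormal{Cap}_{d-6+\eta}(A)$. If $d\le5$ the capacity index is negative, the bound is trivial, and the same scheme with $\mu=\delta_z$ gives a uniform positive lower bound on $\P\{U(I\times J)\ni z\}$, $z\in[-M,M]^d$.

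\emph{Upper bound.} For $d\le6$ this is vacuous ($\mathscr H_{d-6-\eta}(A)=\infty$ for $A\ne\emptyset$), so take $d\ge7$. Fix $\delta>0$ and cover $A$ by balls $B(z_i,r_i)$, $r_i\le\delta$, with $\sum_i r_i^{\,d-6-\eta}\le\mathscr H_{d-6-\eta}(A)+\delta$. Cover $I\times J$ by $N_i\asymp r_i^{-6}$ parabolic rectangles $R$ of time-side $\asymp r_i^{4}$ and space-side $\asymp r_i^{2}$, so that the $\rho$-diameter of each $R$ is $\asymp r_i$. Fix $p:=\lceil d^2/\eta\rceil$ and $\Lambda_i:=r_i^{-d/p}$. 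For a corner $(t_R,x_R)$ of $R$,
\[
 \{U(R)\cap B(z_i,r_i)\ne\emptyset\}\ \subseteq\ \{\|U(t_R,x_R)-z_i\|\le(1+\Lambda_i)r_i\}\ \cup\ \big\{\,\sup_{(s,y)\in R}\|U(s,y)-U(t_R,x_R)\|>\Lambda_i r_i\,\big\}.
\]
By (i) the first event has probability $\le C(1+\Lambda_i)^d r_i^{\,d}\le C\,r_i^{\,d-d^2/p}$; by (iii) and a Garsia--Rodemich--Rumsey estimate over $R$ (licit since $p$ exceeds the parabolic dimension $6$) the supremum has $p$-th moment $\le C_p\,r_i^{\,p}$, so by Markov the second event has probability $\le C_p\,(\Lambda_i r_i)^{-p}r_i^{\,p}=C_p\,r_i^{\,d}$. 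Summing over the $N_i$ rectangles, $\P\{U(I\times J)\cap B(z_i,r_i)\ne\emptyset\}\le C\,r_i^{-6}\big(r_i^{\,d-d^2/p}+r_i^{\,d}\big)\le C\,r_i^{\,d-6-\eta}$; summing over $i$ and letting $\delta\downarrow0$ gives $\P\{U(I\times J)\cap A\ne\emptyset\}\le c\,\mathscr H_{d-6-\eta}(A)$.

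The main obstacle is establishing (i) and (ii). This requires decomposing $U(t,x)$ into the leading stochastic convolution $\int_0^t\!\!\int_\R G_{t-s'}(x-y')\,\sigma(U(s',y'))\,W(\d s'\,\d y')$ — which near the singularity of the heat kernel $G$ behaves like a nondegenerate Gaussian field whose conditional variances are comparable to $\rho^2$ — plus a smoother remainder, and then proving sharp $L^p$ bounds on the Malliavin derivatives of $U$, negative moments of the reduced Malliavin matrix of $(U(t,x),U(s,y))$ with the correct $\rho$-dependence, and the attendant Gaussian-type tail bounds on the densities and their gradients via the integration-by-parts formula. It is precisely the imperfection of these bounds for the \emph{nonlinear} equation that produces the parameter $\eta$; removing this loss in the upper bound of \eqref{rd07_01e7} — the goal of the present paper — instead relies on a sharp estimate for the joint density of $U(t,x)$ together with the supremum over a small rectangle of the \emph{linear} solution, as described in the abstract.
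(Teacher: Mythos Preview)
This theorem is not proved in the present paper; it is quoted from \cite{DKN09} as motivating background (see the sentence immediately preceding the statement). So there is no ``paper's own proof'' to compare against here, and your task was really to reconstruct the argument of \cite{DKN09}.

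Your sketch does exactly that, and correctly. The lower bound via the second-moment/Paley--Zygmund scheme, fed by the one-point density lower bound and the two-point Gaussian-type upper bound $p_{(t,x),(s,y)}(z,w)\le C\rho^{-d-\eta}\exp(-\|z-w\|^2/(C\rho^2))$, is precisely the method of \cite{DKN07,DKN09}; the parabolic scaling $\int f(\rho)\rho^5\,\d\rho$ with exponent $5=6-1$ is the right one, and the change of variables producing $K_{d-6+\eta}$ is standard. For the upper bound, your covering by $\asymp r_i^{-6}$ parabolic rectangles and the splitting into $\{\|U(t_R,x_R)-z_i\|\le(1+\Lambda_i)r_i\}$ and the oscillation event, with $\Lambda_i=r_i^{-d/p}$ and $p=\lceil d^2/\eta\rceil$, gives $r_i^{d-6-d^2/p}\le r_i^{d-6-\eta}$ as required; this is the mechanism in \cite[Theorem~3.1]{DKN07} and \cite[Theorem~2.1]{DKN09}. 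Your closing paragraph also correctly identifies that the $\eta$-loss originates in the Malliavin-matrix negative-moment estimates for the pair $(U(t,x),U(s,y))$, and that the present paper eliminates it in the \emph{upper} bound by a different route (Theorem~\ref{th:hitting} via Theorems~\ref{th2017-11-23-1} and~\ref{thm1}), while the $\eta$-loss in the \emph{lower} bound was already removed in \cite{FP} (Theorem~\ref{rd07_01t6}).

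One small point: in the lower bound you should note that the contribution from the near-diagonal region $\rho\le\ep$ in $\E[J_\ep^2]$ is handled by the one-point density bound (i) rather than (ii), since the two-point density blows up there; this is routine but worth flagging.
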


This result is similar to that of Theorem \ref{rd07_01t4} for the Gaussian solution to the linear stochastic partial differential equation (SPDE) \eqref{rd07_01e5}, but it is less sharp because of the additional $\pm \eta$ on the left- and right-hand sides of \eqref{rd07_01e7}. For many years, it remained unclear whether or not \eqref{rd07_01e7} could be improved. In particular, is the extra $\pm \eta$ in \eqref{rd07_01e7} needed for the nonlinear SPDE \eqref{rd07_01e6}?

For the lower bound in  \eqref{rd07_01e7}, this question was finally settled in \cite{FP} (see also \cite{DP1}), where the following result was established.

\begin{theorem}\label{rd07_01t6} 
Under the same hypotheses as in Theorem \ref{rd07_01t5} (nonlinear system of stochastic heat equations), there is $c>0$ such that for all compact sets $A \subset [-M, M]^d$, 
\begin{equation}\label{rd07_01e9}
c^{-1} \,  \textnormal{Cap}_{d-6}(A) \leq \P \{ U(I \times J) \cap A \neq
\emptyset \}.
\end{equation}
\end{theorem}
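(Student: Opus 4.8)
The plan is to establish the lower bound by the classical second-moment (energy) method, the only genuinely new ingredient --- and the main difficulty --- being a sharp Gaussian-type upper bound on the joint density of the pair $(U(t,x),U(s,y))$ for the \emph{nonlinear} equation. One may assume $\mathrm{Cap}_{d-6}(A)>0$, since otherwise the right-hand side vanishes and there is nothing to prove. By the definition of Bessel--Riesz capacity, fix a probability measure $\mu$ carried by $A$ with $\mathcal{E}_{d-6}(\mu)\le 2\,\mathrm{Cap}_{d-6}(A)^{-1}$, where $\mathcal{E}_{\beta}(\mu)=\iint K_{\beta}(z-w)\,\mu(\d z)\,\mu(\d w)$ and $K_{\beta}$ is the Bessel--Riesz kernel. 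Let $p_{\ep}$ denote the centered Gaussian density on $\R^{d}$ with covariance $\ep I_{d}$ and put
$$
J_{\ep}=\int_{I\times J}\d t\,\d x\int_{\R^{d}}p_{\ep}\big(U(t,x)-z\big)\,\mu(\d z),
$$
a bounded, nonnegative random variable. The strategy is to bound $\E[J_{\ep}]$ from below and $\E[J_{\ep}^{2}]$ from above, both uniformly for $\ep$ small, then to invoke the Paley--Zygmund inequality, and finally to let $\ep\downarrow 0$ along a sequence so as to transfer the resulting lower bound to the event $\{U(I\times J)\cap A\neq\emptyset\}$.

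For the first moment, $\E[J_{\ep}]=\int_{I\times J}\d t\,\d x\int(p_{\ep}\ast p_{t,x})(z)\,\mu(\d z)$, where $p_{t,x}$ is the density of $U(t,x)$. Since $I\subset(0,T]$ is compact, $I\subset[t_{0},T]$ for some $t_{0}>0$; under the uniform ellipticity of $\sigma$, the Malliavin-calculus estimates of \cite{DKN09} furnish a constant $c_{0}>0$ such that $p_{t,x}(y)\ge c_{0}$ for every $(t,x)\in[t_{0},T]\times J$ and every $y$ in a fixed $\delta$-neighbourhood of $[-M,M]^{d}$. Hence, for $z\in A\subset[-M,M]^{d}$, $(p_{\ep}\ast p_{t,x})(z)\ge c_{0}\,\P\{|N(0,\ep I_{d})|\le\delta\}\ge c_{0}/2$ once $\ep$ is small, so $\E[J_{\ep}]\ge c_{1}:=(c_{0}/2)\,|I\times J|>0$ for all $\ep\le\ep_{0}$. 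For the second moment, expanding the square and using Fubini,
$$
\E[J_{\ep}^{2}]=\iint_{(I\times J)^{2}}\!\d t\,\d x\,\d s\,\d y\iint\E\!\big[p_{\ep}(U(t,x)-z)\,p_{\ep}(U(s,y)-w)\big]\,\mu(\d z)\,\mu(\d w),
$$
so everything reduces to the following \emph{key estimate}: there exist $0<c,C<\infty$, depending only on $T,I,J,M$ and $\sigma$, such that for all $(t,x),(s,y)\in I\times J$, all $\ep>0$ and all $z,w\in\R^{d}$,
$$
\E\!\big[p_{\ep}(U(t,x)-z)\,p_{\ep}(U(s,y)-w)\big]\;\le\;\frac{C}{(\ep+\Delta^{2})^{d/2}}\,\exp\!\Big(-\frac{c\,|z-w|^{2}}{\ep+\Delta^{2}}\Big),\qquad \Delta:=|t-s|^{1/4}+|x-y|^{1/2}.
$$
Granting this, freeze $(t,x)$, substitute $u=s-t$, $v=y-x$, and use that $\{|u|^{1/4}+|v|^{1/2}\le r\}$ has Lebesgue measure of order $r^{6}$; the $(s,y)$-integral is then dominated, uniformly in $\ep$, by a constant multiple of $\int_{0}^{R}r^{5}(\ep+r^{2})^{-d/2}\exp\big(-c|z-w|^{2}/(\ep+r^{2})\big)\,\d r$ (with $R$ a fixed length), which one checks is at most $C'\,K_{d-6}(z-w)$ for all $\ep>0$ --- the parameter $\ep$ merely caps the singularity of the kernel at $r=0$ and does not affect the bound. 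Consequently $\E[J_{\ep}^{2}]\le C''\,\mathcal{E}_{d-6}(\mu)\le 2C''\,\mathrm{Cap}_{d-6}(A)^{-1}$, uniformly in $\ep$.

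The main obstacle is the key estimate, which is exactly where the nonlinearity must be handled and is the heart of \cite{FP}. For the linear equation \eqref{rd07_01e5} it is immediate, since $(V(t,x),V(s,y))$ is Gaussian with $\E[|V(t,x)-V(s,y)|^{2}]\asymp\Delta^{2}$ and with bounded, nondegenerate covariance for $V(t,x)$, so a direct computation of the convolved Gaussian density produces precisely the stated bound; this is essentially the estimate underlying Theorem~\ref{rd07_01t4}. For the nonlinear equation I would proceed via Malliavin calculus: (i) show that the Malliavin covariance matrix $\gamma$ of the $2d$-dimensional vector $(U(t,x),U(s,y))$ satisfies $\E[(\det\gamma)^{-p}]\le C_{p}\,\Delta^{-2dp}$ for every $p\ge1$ --- equivalently, that the conditional Malliavin matrix of $U(s,y)$ given $U(t,x)$ is nondegenerate of order $\Delta^{2}$ --- using the $1/4$--$1/2$ scaling of the heat kernel, the uniform ellipticity of $\sigma$, and $\|U(t,x)-U(s,y)\|_{L^{2}(\Omega)}\asymp\Delta$; (ii) write $\E[p_{\ep}(U(t,x)-z)p_{\ep}(U(s,y)-w)]=\iint p_{\ep}(z'-z)p_{\ep}(w'-w)\,p_{U(t,x),U(s,y)}(z',w')\,\d z'\,\d w'$ and apply the integration-by-parts formula for densities together with (i), which yields the prefactor $(\ep+\Delta^{2})^{-d/2}$ and the stability in $\ep$ (the range $\ep\gtrsim\Delta^{2}$ being controlled through the density of the increment $U(t,x)-U(s,y)$, whose Gaussian-type bound has scale $\Delta$); and (iii) produce the Gaussian tail $\exp(-c|z-w|^{2}/(\ep+\Delta^{2}))$ by an exponential-martingale (Girsanov-type) argument, or equivalently an iterated Skorohod-integral representation, applied to that same increment. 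Carrying out the Malliavin estimates in (i)--(iii) with the correct, $\eta$-free exponents --- in place of the weaker bounds, with $\eta$-losses, used in \cite{DKN09} --- is the crux of the whole argument.

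Finally, the two moment bounds combine as follows. By the Paley--Zygmund inequality, for $\ep\le\ep_{0}$,
$$
\P\Big\{J_{\ep}\ge\tfrac{1}{2}\E[J_{\ep}]\Big\}\;\ge\;\frac{1}{4}\,\frac{(\E[J_{\ep}])^{2}}{\E[J_{\ep}^{2}]}\;\ge\;\frac{c_{1}^{2}}{8C''}\,\mathrm{Cap}_{d-6}(A)\;=:\;\kappa\,\mathrm{Cap}_{d-6}(A),
$$
and since $\E[J_{\ep}]\ge c_{1}$, the event $B_{\ep}:=\{J_{\ep}\ge c_{1}/2\}$ satisfies $\P(B_{\ep})\ge\kappa\,\mathrm{Cap}_{d-6}(A)$. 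Fix any sequence $\ep_{n}\downarrow 0$. Since $U$ has a.s.\ continuous paths, $U(I\times J)$ is a.s.\ compact, and on $\{U(I\times J)\cap A=\emptyset\}$ one has $\mathrm{dist}(U(I\times J),A)>0$, which forces $p_{\ep_{n}}(U(t,x)-z)\to 0$ uniformly over $(t,x)\in I\times J$ and $z\in A$, hence $J_{\ep_{n}}\to 0$ and $B_{\ep_{n}}$ fails for all large $n$. Therefore $\limsup_{n}B_{\ep_{n}}\subseteq\{U(I\times J)\cap A\neq\emptyset\}$, and reverse Fatou for the indicator functions yields
$$
\P\{U(I\times J)\cap A\neq\emptyset\}\;\ge\;\P\big(\limsup_{n}B_{\ep_{n}}\big)\;\ge\;\limsup_{n}\P(B_{\ep_{n}})\;\ge\;\kappa\,\mathrm{Cap}_{d-6}(A),
$$
which is the assertion, with $c=\kappa^{-1}$.
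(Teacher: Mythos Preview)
The paper does not supply a proof of this statement: Theorem~\ref{rd07_01t6} is quoted from \cite{FP} (see also \cite{DP1}) as prior work, and the present paper is devoted entirely to the complementary \emph{upper} bound. There is therefore no proof here to compare your attempt against.

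That said, your outline is the standard one and is indeed the approach of \cite{FP}: the second-moment energy method, with the crux being a sharp ($\eta$-free) Gaussian-type bound on the two-point density of $(U(t,x),U(s,y))$. Your reduction --- first-moment lower bound via a one-point density lower bound, second-moment upper bound via the key estimate, Paley--Zygmund, then the limit $\ep\downarrow 0$ using continuity of paths --- is correctly assembled, and the integration over $(s,y)$ against the anisotropic metric $\Delta$ does produce $K_{d-6}(z-w)$ as you claim. The genuine gap is that the ``key estimate'' is only stated and sketched: your steps (i)--(iii) are a plausible Malliavin-calculus plan, but step~(i), the negative-moment bound $\E[(\det\gamma)^{-p}]\le C_{p}\,\Delta^{-2dp}$ with no $\eta$-loss in the exponent, is precisely the improvement over \cite{DKN09} that constitutes the substance of \cite{FP}, and you do not carry it out. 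Until that estimate (and the Gaussian tail in (iii)) is actually established, what you have is a correct architecture rather than a proof.
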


The main objective in this paper is to improve the upper bound in \eqref{rd07_01e7}, so as to obtain an upper bound that matches that in \eqref{rd07_01e8}.

A step in this direction was made in \cite[Chapter 4]{FP}, where it was shown that it would be sufficient to obtain suitable bounds on the joint probability density function of $(F_1^u, F_2^u)$,
where
\begin{equation*}
   F_1^u=u(t_0,x_0)\qquad\text{and}\qquad F_2^u=  \sup _{\substack {t_0\le t \le t_0+\zeta_1 \\ x_0\le x \le x_0+\zeta_2}}
(u(t,x)-u(t_0,x_0)).
\end{equation*}
where $u$ is the solution to the stochastic heat equation with multiplicative noise (see \eqref{eq:SHE}).
This program was (partly) carried out for $v$ instead of $u$ in \cite{DP1}, where $v$ is the solution to the stochastic heat equation with additive noise (see \eqref{rd03_27e2}). 

Here, we do not seek the required estimates for the joint probability density function of $(F_1^u, F_2^u)$, but we establish analogous estimates for $(F_1^u, F_2^v)$, where
\begin{equation*}
   F_2^v=  \sup _{\substack {t_0\le t \le t_0+\zeta_1 \\ x_0\le x \le x_0+\zeta_2}}
(v(t,x)-v(t_0,x_0)).
\end{equation*}
We derive a formula (involving iterated Skorohod integrals: see Remark \ref{rd07_02r1}) for the probability density function of $(F_1^u, F_2^v)$ based on a general criterion in \cite{fn}  for locally nondegenerate random variables and establish a Gaussian-type upper bound on this density function. 
Then we show that these estimates are sufficient to obtain the sharp upper bound
\begin{equation*}\label{rd04_08e9}
       \P\{U(I \times J) \in A \} \leq C \mathscr{H}_{d-6}(A).
\end{equation*}
Together with \eqref{rd07_01e9}, this improves \eqref{rd07_01e7} and gives sharp upper and lower bounds for hitting probabilities of $U$ that match those for $V$ in \eqref{rd07_01e8}. 

The fact that it is sufficient to obtain bounds on the joint density of $(F_1^u, F_2^v)$ comes from the fact that increments of $u$ and of $v$ are not very different. Indeed, we show that for $p \geq 1$ and $(t_0, x_0)$ in a compact subset $I\times J$ of $(0, T] \times \R$, 
\begin{align*}
    &\E\left[\left( \sup_{\substack {t_0\le t \le t_0+2^{-4n} \\ x_0\le x \le x_0+2^{-2n}}} |u(t,x)-u(t_0,x_0) - \sigma(u(t_0,x_0))(v(t,x)-v(t_0,x_0))| \right)^p\, \right]
    \leq C_{T,p}\,  2^{-\frac 32 pn}
\end{align*}
(see Proposition \ref{rd_prop1} below).

\section{Main results}

Let $u=(u(t, x),\, (t, x) \in [0, \infty) \times \R)$ be the solution (in the sense of \cite{spdewalsh}; see also \cite[Chapter 4]{dss}) of the nonlinear stochastic heat equation with multiplicative noise
\begin{align}\label{eq:SHE}
\begin{cases}
\frac{\partial}{\partial t} u(t\,,x) =\frac{1}{2}\frac{\partial^2}{\partial x^2}u(t\,,x) +\sigma(u(t\,,x)) \dot{W}(t\,,x),\quad  t>0,\ x\in \R,\\
u(0, x) = 0, \hskip 2.45in x \in \R,
\end{cases}
\end{align}
 where $\dot{W} = (\dot{W}(t, x))$ denotes a space-time white noise and $\sigma: \R\to \R$. The standing assumptions on $\sigma$ are that $\sigma$ is  
 Lipschitz continuous and such that $\inf_{z\in \R} \vert \sigma(z)\vert > 0$. 
 Since $z \mapsto \sigma(z)$ is a continuous function, we can assume that 
 \begin{equation}\label{rd07_02e7}
     \inf_{z\in \R}  \sigma(z) > 0. 
 \end{equation}
 This condition implies in particular that the solution $u$ belongs to $\bD^{1, p}$ (see \cite[Theorem 2.4.3 and Proposition 2.4.4]{nualart2006} for this result and for the definition of $\bD^{1, p}$). 
 
 Let $(U(t\,,x),\, (t, x)\in [0, \infty)\times \R)$ be an $\R^d$-valued stochastic process whose coordinates are independent copies of the solution to \eqref{eq:SHE}.  Our main result is the following theorem.

\begin{theorem}\label{th:hitting}
Fix $T>0$ and $M>0$. Let $I \subset (0, T]$ and $J\subset \R$ be two fixed compact intervals with positive length. Suppose in addition that $\sigma$ and its derivatives up to order 3 are bounded (in absolute value) by $\Sigma > 0$. Then there exists a finite constant $c = c(I, J,  M, \Sigma)>0$ such that for all compact sets $A\subset [-M, M]^d$, 
\begin{align}\label{rd03_27e3}
           \P\{U(I\times J) \cap A\neq \emptyset\} \leq c\, \mathscr{H}_{d-6}(A).
\end{align}
 \end{theorem}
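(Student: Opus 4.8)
The plan is to reduce the hitting-probability bound to a local estimate on the joint density of $(F_1^u, F_2^v)$, following the classical scheme used to prove the upper bounds in Theorems \ref{rd07_01t4} and \ref{rd07_01t5}, but now with the sharp exponent. First I would fix a dyadic scale: cover $I\times J$ by roughly $2^{6n}$ parabolic rectangles $R_{n,j}$ of temporal side $2^{-4n}$ and spatial side $2^{-2n}$, with lower-left corner $(t_0,x_0)$ depending on $j$. On each such rectangle, the event that the sample path of $U$ enters a small ball $B(z,2^{-n})$ forces, for each coordinate $i\in\{1,\dots,d\}$, that $U_i(t_0,x_0)$ lies within $O(2^{-n})$ of $z_i$ \emph{and} that the oscillation $\sup_{R_{n,j}}|U_i(t,x)-U_i(t_0,x_0)|$ is at least of order $2^{-n}$ (this is where one needs an event describing the increment, not just the value). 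Using the comparison Proposition \ref{rd_prop1} — which says that the increment of $u$ differs from $\sigma(u(t_0,x_0))$ times the increment of $v$ by something of size $2^{-\frac32 n}$ in $L^p$, hence negligible at scale $2^{-n}$ after a Markov/Borel–Cantelli step — one replaces $F_2^{u_i}$ by (a constant multiple of) $F_2^{v_i}$ up to a tolerable error. Thus $\P\{U(R_{n,j})\cap B(z,2^{-n})\neq\emptyset\}$ is bounded by $\prod_{i=1}^d \P\{|F_1^{u_i}-z_i|\le c2^{-n},\ F_2^{v_i}\ge c^{-1}2^{-n}\}$ plus a super-polynomially small error, by independence of the coordinates.

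Next I would invoke the density formula for the locally nondegenerate vector $(F_1^u,F_2^v)$ coming from the criterion of \cite{fn} (the iterated Skorohod integral representation alluded to in the introduction), together with the Gaussian-type upper bound on that density that the paper establishes: schematically, the joint density $p_{(F_1^u,F_2^v)}(a,b)$ is bounded by $C\,\zeta_1^{-\alpha}\exp(-b^2/(C\zeta_1^{1/2}))$ or a comparable expression, where $\zeta_1=2^{-4n}$, $\zeta_2=2^{-2n}$ are the rectangle sides, so that integrating in $a$ over an interval of length $2^{-n}$ and in $b$ over $[c^{-1}2^{-n},\infty)$ yields $\P\{|F_1^{u_i}-z_i|\le c2^{-n},\ F_2^{v_i}\ge c^{-1}2^{-n}\}\le C\,2^{-n}\cdot 2^{-n}=C\,2^{-2n}$ — the two factors of $2^{-n}$ coming, respectively, from the length of the $a$-interval times the boundedness of the density in $a$, and from the Gaussian tail in $b$ evaluated at $b\asymp 2^{-n}$ against the natural scale $\zeta_1^{1/4}=2^{-n}$. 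Raising to the $d$-th power gives $\P\{U(R_{n,j})\cap B(z,2^{-n})\neq\emptyset\}\le C\,2^{-2dn}$.

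Then I would run the standard covering argument: given a compact $A\subset[-M,M]^d$ and any $\delta>0$, choose $n$ with $2^{-n}\approx\delta$ and a cover of $A$ by balls $B(z_m,2^{-n})$; the event $\{U(I\times J)\cap A\neq\emptyset\}$ is contained in $\bigcup_m\bigcup_{j}\{U(R_{n,j})\cap B(z_m,2^{-n})\neq\emptyset\}$, so by subadditivity
\begin{align*}
\P\{U(I\times J)\cap A\neq\emptyset\}\ \le\ \sum_m \sum_j C\,2^{-2dn}\ =\ C\,2^{6n}\cdot 2^{-2dn}\cdot \#\{m\}\ =\ C\,2^{(6-2d)n}\sum_m 1.
\end{align*}
Writing $2^{-2n}=(2^{-n})^2\asymp(2^{-n})^{d-(d-6)}\cdot\big((2^{-n})^{d}\big)^{-?}$ — more precisely, regrouping as $2^{(6-2d)n}=(2^{-n})^{-(6-2d)}=(2^{-n})^{(d-6)}\cdot(2^{-n})^{-d}$ — one sees that $C\,2^{(6-2d)n}\sum_m 1 = C\sum_m (2^{-n})^{d-6}$, which upon optimizing over the cover is bounded by $C\,\mathscr{H}^{(\delta)}_{d-6}(A)+o(1)$; letting $\delta\downarrow 0$ gives $\P\{U(I\times J)\cap A\neq\emptyset\}\le C\,\mathscr{H}_{d-6}(A)$, which is \eqref{rd03_27e3}. (When $d-6\le 0$ the capacity/Hausdorff content is bounded below and the inequality is trivial, so only $d\ge 7$ matters, and there every ball contributes $(2^{-n})^{d-6}$ as claimed.)

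The main obstacle is the middle step: proving the Gaussian-type upper bound on the joint density of $(F_1^u,F_2^v)$ with the \emph{correct} dependence on $\zeta_1,\zeta_2$. This requires (i) verifying that $(F_1^u,F_2^v)$ is locally nondegenerate in the sense of \cite{fn} — in particular controlling the Malliavin matrix of this vector from below uniformly, which is delicate because $F_2^v$ is a supremum and is not smooth in the Malliavin sense, so one must work with a suitable approximation or with the local structure near the (a.s.\ unique) maximizer; and (ii) estimating every term in the iterated Skorohod-integral formula for the density, i.e.\ bounding Sobolev norms of the relevant Malliavin derivatives and of the inverse Malliavin matrix, and tracking how each factor scales in $\zeta_1$ and $\zeta_2$. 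The scaling bookkeeping has to be done with care: the heat kernel on the line gives $\|v(t,x)-v(t_0,x_0)\|_{L^2}^2\asymp |t-t_0|^{1/2}+|x-x_0|$, so on the rectangle $R_{n,j}$ the natural scale of $F_2^v$ is $\zeta_1^{1/4}\asymp\zeta_2^{1/2}\asymp 2^{-n}$, and it is exactly the matching of these two scales that produces the sharp exponent $6=4+2$ rather than the lossy $6\pm\eta$ of Theorem \ref{rd07_01t5}. Everything else — the covering, the use of independence across coordinates, the comparison Proposition \ref{rd_prop1} — is by now routine.
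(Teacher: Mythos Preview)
Your overall strategy matches the paper's---cover by parabolic rectangles, compare increments via Proposition \ref{rd_prop1}, invoke the joint-density bound of Theorem \ref{thm1}, and run a covering argument---but the core one-coordinate probability estimate is wrong in two ways. First, the event structure: $\{U(R_{n,j})\cap B(z,2^{-n})\neq\emptyset\}$ does \emph{not} force the oscillation to be large (take $U$ nearly constant and equal to $z$ on $R_{n,j}$); the reverse triangle inequality only gives $\{|F_1-z_i|\le 2^{-n}+c|F_2^v|\}$ after the comparison step, which is not contained in your intersection $\{|F_1-z_i|\le c2^{-n}\}\cap\{F_2^v\ge c^{-1}2^{-n}\}$. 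So your product bound is not an upper bound for the hitting probability. The paper (see the proof of Theorem \ref{th2017-11-23-1}) instead splits on whether the sup of the $v$-increment exceeds $2^{-n}$: on the small side one uses only the bounded density of $F_1$ to get $\lesssim 2^{-n}$; on the large side one integrates the joint density \eqref{rd07_02e8} over the wedge $\{|F_1-z_i|\le 2^{-n+1}+cF_2^v,\ F_2^v\ge 2^{-n}\}$, which also yields $\lesssim 2^{-n}$. One also needs the analogous $p_-$ bound for $\sup(-v+v(t_0,x_0))$ to control the absolute value.

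Second, your per-coordinate count of $2^{-2n}$ is off by a full factor of $2^{-n}$: the density bound \eqref{rd07_02e8} carries a prefactor $c/\sqrt{\zeta}\asymp 2^{n}$, so the density is not ``bounded in $a$''; integrating over a $z_1$-interval of length $2^{-n}$ contributes $O(1)$, and only the Gaussian tail in $z_2$ contributes a factor $2^{-n}$. The correct per-coordinate bound is $c\,2^{-n}$, hence $c^d 2^{-dn}$ per rectangle, and summing over $\asymp 2^{6n}$ rectangles gives $(2^{-n})^{d-6}$, which is what feeds the covering. Your displayed identity $2^{(6-2d)n}\sum_m 1=\sum_m(2^{-n})^{d-6}$ is simply false (it would require $d=0$).
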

 
 In the proof of this theorem (which is given at the end of this section), we will compare the process $u=(u(t, x))$ with the Gaussian process $v=(v(t, x))$ that is the solution of the linear stochastic heat equation
 \begin{align}\label{rd03_27e2}
\begin{cases}
\frac{\partial}{\partial t} v(t\,,x) =\frac{1}{2}\frac{\partial^2}{\partial x^2} v(t\,,x) + \dot{W}(t\,,x),\quad  t>0,\ x\in \R,\\
v(0, x)= 0, \hskip 1.8in x \in \R,
\end{cases}
\end{align}
with the same space-time white-noise $\dot W$ as in \eqref{eq:SHE}. It turns out that the increments of the process $v$ are a good approximation of the increments of $u$. Indeed, let $(t_0, x_0) \in I\times J$. If we define a random variable $L(t, x)$ by the relationship
\begin{equation}\label{rd03_28e1}
     u(t,x)-u(t_0,x_0) = \sigma(u(t_0,x_0))(v(t,x)-v(t_0,x_0)) + L(t,x),
\end{equation}
then $L(t, x)$ is small in the sense of the next proposition.

 \begin{proposition} \label{rd_prop1}
Suppose only that $\sigma$ is Lipschitz continuous. For $\zeta_1, \zeta_2 > 0$ and $(t_0, x_0)\in I \times J$, let 
\[
    R^{\zeta_1, \zeta_2}_{t_0,x_0} =  [t_0, t_0+\zeta_1]\times [x_0, x_0+\zeta_2].
\] 
Define
\begin{align*}
    \XX_{\zeta_1, \zeta_2} = \sup_{(t,x)\in R^{\zeta_1, \zeta_2}_{t_0,x_0}}  | L(t,x)|.
\end{align*}
Then for all $p\geq1$, there is a finite constant $C_{T,p}$ such that, for all $\zeta_1, \zeta_2 \in (0, 1]$ and $(t_0, x_0)\in I \times J$,
\begin{align*}\label{rd03_27e1}
   \Vert \XX_{\zeta_1, \zeta_2} \Vert_p \leq C_{T, p} \left[\max\left(\zeta_1^{1/4}, \zeta_2^{1/2}\right)\right]^{3/2}.
 \end{align*}
 \end{proposition}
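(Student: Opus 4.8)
The plan is to represent $L(t,x)$ as a single Walsh stochastic integral and to control $\sup_{(t,x)\in R}\vert L(t,x)\vert$ — writing $R:=R^{\zeta_1,\zeta_2}_{t_0,x_0}$ — through a two‑point (increment) estimate combined with a chaining argument. Let $G_r(z):=(2\pi r)^{-1/2}\exp(-z^2/(2r))$ be the fundamental solution of $\partial_t=\frac12\partial_{xx}$, and set $\Delta_{t,x}(s,y):=G_{t-s}(x-y)\mathbf{1}_{\{s<t\}}-G_{t_0-s}(x_0-y)\mathbf{1}_{\{s<t_0\}}$. Subtracting the mild formulations of $u$ and $v$ (both with zero initial data) and using \eqref{rd03_28e1} yields
\[
L(t,x)=\int_0^T\!\!\int_\R\Delta_{t,x}(s,y)\bigl[\sigma(u(s,y))-\sigma(u(t_0,x_0))\bigr]\,W(\d s\,\d y),
\]
so that $L(t_0,x_0)=0$ and $\XX_{\zeta_1,\zeta_2}=\sup_{(t,x)\in R}\vert L(t,x)-L(t_0,x_0)\vert$. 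Write $\lambda:=\max(\zeta_1^{1/4},\zeta_2^{1/2})\in(0,1]$; the target is $\Vert\XX_{\zeta_1,\zeta_2}\Vert_p\le C_{T,p}\,\lambda^{3/2}$.

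The key estimate is: for all $p\ge1$ and all $z=(t,x),\,z'=(t',x')\in R$,
\begin{equation}\tag{$\star$}
\Vert L(z)-L(z')\Vert_p\le C_{T,p}\,\lambda^{1/2}\bigl(\vert t-t'\vert^{1/4}+\vert x-x'\vert^{1/2}\bigr).
\end{equation}
In the increment $L(z)-L(z')$ the two contributions involving $(t_0,x_0)$ cancel, so the operative kernel is $\Delta_{t,x}-\Delta_{t',x'}=G_{t-s}(x-y)\mathbf{1}_{\{s<t\}}-G_{t'-s}(x'-y)\mathbf{1}_{\{s<t'\}}$. Applying the Burkholder inequality for Walsh integrals, then Minkowski's inequality, then the Lipschitz property of $\sigma$,
\[
\Vert L(z)-L(z')\Vert_p^2\le C_p\!\int_0^T\!\!\int_\R\bigl[G_{t-s}(x-y)\mathbf{1}_{\{s<t\}}-G_{t'-s}(x'-y)\mathbf{1}_{\{s<t'\}}\bigr]^2\,\Vert u(s,y)-u(t_0,x_0)\Vert_p^2\,\d y\,\d s.
\]
I invoke here the standard moment and regularity bounds for the solution of \eqref{eq:SHE} under the sole hypothesis that $\sigma$ is Lipschitz (and $u(0,\cdot)=0$): $\sup_{(s,y)\in[0,T]\times\R}\Vert u(s,y)\Vert_p<\infty$ and $\Vert u(s,y)-u(s',y')\Vert_p^2\le C_{T,p}(\vert s-s'\vert^{1/2}+\vert y-y'\vert)$, whence $\Vert u(s,y)-u(t_0,x_0)\Vert_p^2\le C_{T,p}\min(1,\vert s-t_0\vert^{1/2}+\vert y-x_0\vert)$. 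Since $(t_0,x_0),(t',x')\in R$ and $\vert t'-t_0\vert^{1/2}\le\zeta_1^{1/2}\le\lambda^2$, $\vert x'-x_0\vert\le\zeta_2\le\lambda^2$, this weight is at most $2\lambda^2+\min(1,\vert s-t'\vert^{1/2})+\min(1,\vert y-x'\vert)$. Integrating the kernel against the constant $2\lambda^2$ produces $C\lambda^2\,\E[(v(t,x)-v(t',x'))^2]\le C_T\lambda^2(\vert t-t'\vert^{1/2}+\vert x-x'\vert)$; so $(\star)$ follows once the kernel integrated against each of the weights $\min(1,\vert s-t'\vert^{1/2})$ and $\min(1,\vert y-x'\vert)$ is shown to be $\le C_T(\vert t-t'\vert^{1/2}+\vert x-x'\vert)^{3/2}$, because on $R$ one has $\vert t-t'\vert^{1/2}+\vert x-x'\vert\le2\lambda^2$, which makes these terms $\le C\lambda(\vert t-t'\vert^{1/2}+\vert x-x'\vert)$.

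Those last two deterministic bounds — the extraction of the extra factor $(\vert t-t'\vert^{1/2}+\vert x-x'\vert)^{1/2}$ beyond the naive estimate — are, I expect, the main obstacle. The gain originates in the fact that the weight $\Vert u(s,y)-u(t_0,x_0)\Vert_p^2$ is small precisely where the kernel is singular, i.e.\ near $(t_0,x_0)$. One writes $G_{t-s}(x-y)\mathbf{1}_{\{s<t\}}-G_{t'-s}(x'-y)\mathbf{1}_{\{s<t'\}}$ as the sum of a purely spatial increment $[G_{t-s}(x-y)-G_{t-s}(x'-y)]\mathbf{1}_{\{s<t\}}$ and a purely temporal increment $G_{t-s}(x'-y)\mathbf{1}_{\{s<t\}}-G_{t'-s}(x'-y)\mathbf{1}_{\{s<t'\}}$ (this last one split further across $\{s<t'\}$ and $\{t'\le s<t\}$, assuming WLOG $t\ge t'$), and bounds the square of each, integrated against the weight, using the explicit Gaussian identities
\[
\int_\R\bigl[G_a(z)-G_a(z-h)\bigr]^2\d z=\frac{1-\e^{-h^2/(4a)}}{\sqrt{\pi a}}\asymp\frac{\min(h^2,a)}{a^{3/2}},\qquad\int_\R\bigl[G_a(z)-G_b(z)\bigr]^2\d z\asymp\frac{\min((a-b)^2,b^2)}{b^{5/2}}\ \ (a>b),
\]
together with their $\vert z\vert$‑weighted analogues, followed by integration in $s$ with case distinctions according to whether $t-s$ (respectively $t_0-s$) exceeds $\vert x-x'\vert^2$ (respectively $\vert t-t'\vert$). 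These computations throw up mild logarithmic factors such as $\vert x-x'\vert^2\log(1/\vert x-x'\vert)$, which are comfortably absorbed into $\vert x-x'\vert^{3/2}$. The same computations with $\Delta_{t,x}$ in place of $\Delta_{t,x}-\Delta_{t',x'}$ give the pointwise bound $\Vert L(t,x)\Vert_p\le C_{T,p}\lambda^{3/2}$, a convenient consistency check.

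Finally, $(\star)$ and $L(t_0,x_0)=0$ yield the proposition by chaining (applied to the continuous version of $L$). Partition $R$ at the parabolic scales $\lambda2^{-k}$, $k\ge0$; the level‑$k$ partition consists of $\asymp2^{6k}$ rectangles, each of parabolic diameter $\asymp\lambda2^{-k}$. Telescoping $L(z)-L(t_0,x_0)$ along the nested rectangles containing $z$, using $(\star)$, the bound $\Vert\max_jY_j\Vert_p\le(\sum_j\Vert Y_j\Vert_p^p)^{1/p}$, and the convergence of $\sum_{k\ge1}2^{k(6/p-1)}$ for $p>6$, gives
\[
\Vert\XX_{\zeta_1,\zeta_2}\Vert_p\le C_{T,p}\sum_{k\ge1}2^{6k/p}\,\lambda^{1/2}(\lambda2^{-k})\le C_{T,p}\,\lambda^{3/2}\qquad(p>6).
\]
For $p\le6$ the conclusion follows from $\Vert\,\cdot\,\Vert_p\le\Vert\,\cdot\,\Vert_7$.
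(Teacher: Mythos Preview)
Your overall architecture --- prove an increment bound of the form $(\star)$ and then chain --- matches the paper's Step~1/Step~2 structure (the paper uses Garsia--Rodemich--Rumsey in place of your dyadic chaining, which is equivalent). The problem is in your derivation of $(\star)$.

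The integrand $\Delta_{t,x}(s,y)\bigl[\sigma(u(s,y))-\sigma(u(t_0,x_0))\bigr]$ is \emph{not} adapted on $\{s<t_0\}$, because $\sigma(u(t_0,x_0))$ is only $\mathcal{F}_{t_0}$-measurable. Hence your ``single Walsh stochastic integral'' representation of $L(t,x)$ is not a Walsh integral, and Burkholder's inequality for Walsh integrals does not give the displayed bound. The same anticipation persists in $L(z)-L(z')$: the kernel $K(s,y)=G_{t-s}(x-y)\mathbf{1}_{\{s<t\}}-G_{t'-s}(x'-y)\mathbf{1}_{\{s<t'\}}$ does \emph{not} vanish for $s<t_0$ (both indicators equal $1$ there), so the contribution from $[0,t_0)\times\R$ carries the anticipating factor. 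Moreover, this gap cannot be closed by the crude triangle inequality $\Vert B_1-B_2\Vert_p\le\Vert B_1\Vert_p+\Vert B_2\Vert_p$ on the $[0,t_0)$-piece: taking $t=t'$ and $\vert x-x'\vert=h$, that route only yields $\Vert B\Vert_p^2\lesssim\int_0^{t_0}\!\!\int K^2\asymp h$, whereas $(\star)$ requires $\lesssim\lambda h$. The missing factor of $\lambda$ comes from genuine cancellation between $\int K\sigma(u)\,\d W$ and $\sigma(u(t_0,x_0))\int K\,\d W$ that you have not captured.

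The paper circumvents the anticipation by decomposing $L(t,x)-L(s,y)=A_1+A_2+A_3+A_4$: in $A_1,A_2$ the anticipating coefficient $\sigma(u(t_0,x_0))$ is replaced by the adapted $\sigma(u(t,y))$ and $\sigma(u(s,y))$, and these pieces are controlled by two ``local linearisation'' lemmas (Lemmas~\ref{lem:tem} and~\ref{lem:spatial}, giving exponents $2/5$ in time and $3/4$ in space); the remaining $A_3,A_4$ are products of a scalar random variable with a Gaussian increment of $v$, handled by Cauchy--Schwarz. If you want to rescue your direct approach, you would need either a Skorohod-integral BDG (with the attendant Malliavin correction term $\langle D\sigma(u(t_0,x_0)),\Delta_{t,x}\rangle_\HH$, and then control of that term), or an equivalent decomposition that separates the adapted and product pieces before estimating.
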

 
 The proof of this proposition is given in Section \ref{rd08_04s1}. 
\medskip

\noindent{\em A general result on hitting probabilities}
\medskip

Hitting probabilities for $U$ are closely related to properties of increments of $u$, as one can see for instance from \cite[Proposition 6.5.1]{dss}. In fact, we have the following general result that can be used to relate upper bounds on hitting probabilities for the {\em non-Gaussian} random field $U$ to estimates on the probability density function of the supremum of certain increments of the {\em Gaussian} random field $v$.

Let $u=(u(t, x))$ and $v=(v(t, x))$ be arbitrary continuous real-valued random fields. Let 
\[
   U =(U_1, \ldots, U_d) = (U(t, x),\, (t, x) \in [0, \infty) \times \R )
\] 
be a random field with values in $\R^d$ and with i.i.d.~components, each with the law of $u$. 
Let $I \subset (0, T]$ and $J \subset \mathbb{R}$ be two compact intervals with positive length.

\begin{theorem}\label{th2017-11-23-1}
Fix $0 \leq C < \infty$. For $(t_0, x_0) \in I \times J$,  let $X_{t_0, x_0}$ be a random variable such that  $\vert X_{t_0, x_0}\vert \leq C$ a.s. 
Suppose that the following properties hold:

   (1) There are $H_1, H_2 > 0$ and $\alpha > 1$ such that for all large $p, n \in \N$ and $(t_0, x_0) \in I \times J$, there is $C_{T, p} < \infty$ with the following property: setting 
\[
    R_{t_0, x_0}^{(n)} = \left[t_0, t_0 + 2^{-n H_1^{-1}}\right] \times \left[t_0, t_0 + 2^{-n H_2^{-1}}\right], 
\]
\begin{equation}\label{rd04_03e2}
   L_{t_0, x_0}(t, x) = u(t,x)-u(t_0,x_0) -X_{t_0, x_0}\, (v(t,x)-v(t_0,x_0)),
\end{equation}
and 
\begin{equation}\label{rd04_01e1}
    \XX_n = \XX_n(t_0, x_0) = \sup_{(t,x)\in R^{(n)}_{t_0,x_0}}  | L_{t_0, x_0}(t,x)|,
\end{equation}
 we have
\begin{equation}\label{rd04_01e2}
       \E\left[ (\XX_n)^p\right] \le C_{T,p}\,  2^{- \alpha p n} ;
\end{equation}
   
   (2) for all $(t, x) \in I \times J$,  $u(t, x)$ has an absolutely continuous probability law, and for all $M \in \R_+$, the probability density function $p_{t, x}(z)$ of $u(t, x)$ is bounded uniformly over $(t, x) \in I \times J$ and $z \in [-M, M]$;

   (3) there exists a constant $c = c(I, J)$ such that for all $(t_0, x_0) \in I \times J$ and $\eta> 0$ sufficiently small, the $\R^2$-valued random vectors
\begin{align*}
\left(u(t_0, x_0), \sup_{(t, x) \in [t_0, t_0 + \eta^{H_1^{-1}}] \times [x_0, x_0 + \eta^{H_2^{-1}}]}(v(t, x) - v(t_0, x_0))\right),
\end{align*}
and \begin{align*} 
\left(-u(t_0, x_0), \sup_{(t, x) \in [t_0, t_0 + \eta^{H_1^{-1}}] \times [x_0, x_0 + \eta^{H_2^{-1}}]}(-v(t, x) + v(t_0, x_0))\right)
\end{align*}
have joint probability density functions, denoted by $p_{+}(\cdot, \cdot)$ and $p_{-}(\cdot, \cdot)$ respectively, that satisfy,   for all $z_1 \in \mathbb{R}$ and $z_2 \geq \eta$, 
\begin{align} \label{eq2017-11-23-5}
p_{\pm}(z_1, z_2) &\leq \frac{c}{\eta} 
   \exp\left(-\frac{z_2^2}{c\,  \eta^2} \right).
\end{align}
Then there exists a constant $C = C(I, J, M)$ such that for all compact sets $A \subset [-M, M]^d$,
\begin{align} \label{eq2017-11-23-11}
\P\{U(I \times J) \cap A \neq \emptyset\} \leq C \, \mathscr{H}_{d - H_1^{-1} - H_2^{-1}}(A).
\end{align}
\end{theorem}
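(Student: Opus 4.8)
\medskip
\noindent\textbf{Proof proposal for Theorem~\ref{th2017-11-23-1}.}

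Write $Q=H_1^{-1}+H_2^{-1}$, and for $z\in\R^d$, $r>0$ put $B(z,r):=z+[-r,r]^d$. The plan is the classical covering argument for upper bounds on hitting probabilities in terms of Hausdorff measure: reduce \eqref{eq2017-11-23-11} to an estimate for a single cube $B(z,r)$, reduce that to a sum over a fine grid of space--time rectangles, and use the independence of the coordinates of $U$ to reduce further to a one-coordinate, one-rectangle bound, which is where hypotheses (1)--(3) enter. We may assume $d>Q$, since otherwise $\mathscr H_{d-Q}(A)\ge 1$ for every nonempty $A$ and \eqref{eq2017-11-23-11} is trivial. The first step is routine: it suffices to show that, with an implied constant depending only on $I,J,M$,
\begin{equation}\label{eq:plan-cube}
   \P\{U(I\times J)\cap B(z,r)\neq\emptyset\}\lesssim r^{\,d-Q}\qquad\text{for all small }r>0\text{ and all }z\in[-M-1,M+1]^d .
\end{equation}
Indeed, given $\ep>0$, choose a cover $A\subset\bigcup_iE_i$ by sets of small diameter with $\sum_i(\operatorname{diam}E_i)^{d-Q}\le\mathscr H_{d-Q}(A)+\ep$; discarding the $E_i$ that miss $A$ and enclosing each remaining $E_i$ in a cube $B(z_i,\operatorname{diam}E_i)$ with $z_i\in A\subset[-M,M]^d$, \eqref{eq:plan-cube} and $\sigma$-subadditivity give $\P\{U(I\times J)\cap A\neq\emptyset\}\lesssim\mathscr H_{d-Q}(A)+\ep$, and $\ep\downarrow0$ yields \eqref{eq2017-11-23-11}.

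To prove \eqref{eq:plan-cube}, given small $r$ I would let $n\in\N$ be defined by $2^{-n}\le r<2^{-(n-1)}$ and set $\eta:=2^{-n}\in(r/2,r]$. The rectangles $R^{(n)}_{t_0,x_0}=[t_0,t_0+\eta^{H_1^{-1}}]\times[x_0,x_0+\eta^{H_2^{-1}}]$ with $(t_0,x_0)$ ranging over a grid $G_n\subset I\times J$ of mesh $\eta^{H_1^{-1}}\times\eta^{H_2^{-1}}$ cover $I\times J$, and $\#G_n\lesssim\eta^{-Q}\lesssim r^{-Q}$. Since
\[
   \bigl\{\exists(t,x)\in R^{(n)}_{t_0,x_0}:U(t,x)\in B(z,r)\bigr\}\subset\bigcap_{j=1}^d\bigl\{\exists(t,x)\in R^{(n)}_{t_0,x_0}:|U_j(t,x)-z_j|\le r\bigr\},
\]
and the $d$ events on the right are independent (each depends only on the corresponding coordinate field), \eqref{eq:plan-cube} follows by raising to the $d$th power and summing over $G_n$, once we establish: for $p$ fixed large enough and all large $n$,
\begin{equation}\label{eq:plan-1coord}
   \P\bigl\{\exists(t,x)\in R^{(n)}_{t_0,x_0}:|u(t,x)-z_j|\le r\bigr\}\lesssim r
\end{equation}
uniformly over $(t_0,x_0)\in I\times J$ and over $z_j$ in a fixed bounded interval.

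For \eqref{eq:plan-1coord}, fix the coordinate and set $\XX_n=\sup_{R^{(n)}_{t_0,x_0}}|L_{t_0,x_0}|$ and $S^\pm=\sup_{(t,x)\in R^{(n)}_{t_0,x_0}}\bigl(\pm(v(t,x)-v(t_0,x_0))\bigr)\ge0$. If $|u(t^*,x^*)-z_j|\le r$ for some $(t^*,x^*)\in R^{(n)}_{t_0,x_0}$, then by \eqref{rd04_03e2}, $|X_{t_0,x_0}|\le C$, and $|v(t^*,x^*)-v(t_0,x_0)|\le\max(S^+,S^-)$ we get $|u(t_0,x_0)-z_j|\le r+C\max(S^+,S^-)+\XX_n$; splitting on $\{\XX_n>r\}$ versus $\{\XX_n\le r\}$, and in the latter case on the sign of $S^+-S^-$, the probability in \eqref{eq:plan-1coord} is at most
\[
   \P\{\XX_n>r\}+\P\{|u(t_0,x_0)-z_j|\le 2r+CS^+\}+\P\{|u(t_0,x_0)-z_j|\le 2r+CS^-\}.
\]
By \eqref{rd04_01e2} and Chebyshev's inequality, $\P\{\XX_n>r\}\le C_{T,p}\,2^{-\alpha pn}r^{-p}\le C_{T,p}\,2^{-(\alpha-1)pn}\lesssim r$, once $p$ is chosen (depending only on $\alpha$, and large enough for (1)) with $(\alpha-1)p\ge1$; this is the only place where $\alpha>1$ is used. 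For the middle term, $(u(t_0,x_0),S^+)$ has density $p_+$ by hypothesis (3) applied with $\eta=2^{-n}$, so that term equals $\iint_{\{|z_1-z_j|\le 2r+Cz_2,\;z_2\ge0\}}p_+(z_1,z_2)\,\d z_1\,\d z_2$; on $\{z_2\ge\eta\}$ we insert \eqref{eq2017-11-23-5}, integrate $z_1$ over an interval of length $4r+2Cz_2$, and substitute $z_2=\eta w$ to bound that contribution by $c\int_1^\infty(4r+2C\eta w)\exp(-w^2/c)\,\d w\lesssim r+\eta\lesssim r$, while on $\{0\le z_2<\eta\}$ the constraint forces $|z_1-z_j|\le 2r+2C\eta\lesssim r$, so that contribution is at most $\P\{|u(t_0,x_0)-z_j|\lesssim r\}\lesssim r$ by the uniform density bound of hypothesis (2). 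The last term is treated identically with $p_-$ and the vector $(-u(t_0,x_0),S^-)$. This proves \eqref{eq:plan-1coord}, and hence the theorem.

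The main obstacle is not any single estimate but the way the scales must be matched: one must choose $n$ so that the side-lengths of $R^{(n)}_{t_0,x_0}$ coincide with the $\eta$ appearing in hypothesis (3) while keeping $\eta\asymp r$, split the single-rectangle event into a ``bulk'' regime $\{S^\pm\gtrsim\eta\}$ governed by the Gaussian-type joint density bound \eqref{eq2017-11-23-5} and a ``core'' regime $\{S^\pm\lesssim\eta\}$ governed by the plain one-dimensional density bound of hypothesis (2), and exploit $\alpha>1$ to make the remainder $\XX_n$ negligible at scale $r$. Once this bookkeeping is in place, the remaining steps are the standard passage from Hausdorff measure to single cubes and from single cubes to a grid sum, using independence of the components of $U$.
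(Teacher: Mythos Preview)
Your proof is correct and follows essentially the same route as the paper's: reduce to a single-coordinate, single-rectangle bound via independence and a dyadic grid, then control that probability by splitting off $\{\XX_n>r\}$ via Chebyshev and hypothesis~(1), and handling the remaining $S^\pm$ contributions by slicing at the threshold $z_2=\eta$, using the joint-density bound \eqref{eq2017-11-23-5} above the threshold and the marginal density bound of hypothesis~(2) below it. The only cosmetic differences are that the paper works with $2^{-n}$ throughout rather than $r\asymp 2^{-n}$, and uses a direct union bound $\mathcal P_1+\mathcal P_2$ instead of your case split on the sign of $S^+-S^-$; neither affects the substance.
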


This theorem is proved in Section \ref{rd08_11s1}.
\bigskip

\noindent{\em Back to the stochastic heat equation}
\medskip

In view of Theorem \ref{th2017-11-23-1}, we will establish the following result.

\begin{theorem} \label{thm1} 
Let $u$ and $v$ be respectively the solutions of the SPDEs \eqref{eq:SHE} and \eqref{rd03_27e2}. 
Suppose in addition that $\sigma$ and its derivatives up to order 3 are bounded (in absolute value) by $\Sigma > 0$.
There is a constant $c=c(I,J, \Sigma)$ with the following property:

For $(t_0,x_0) \in I\times J$ and  $\deltah_1, \deltah_2\in (0,1)$,  consider the random vector $F=(F_1, F_2)$ $($where $F_2 = F_2(\deltah_1, \deltah_2))$ defined by
\begin{equation}\label{rd07_30e1}
     F_1=u(t_0,x_0)\qquad\text{and}\qquad F_2=  \sup _{\substack {t_0\le t \le t_0+\deltah_1 \\ x_0\le x \le x_0+\deltah_2}}
(v(t,x)-v(t_0,x_0)).
\end{equation}
Then:

    (a)  The random vector $F$ admits a probability density function on $\R\times (0, \infty)$, denoted by $p_{F}(z_1,z_2)$, $z_1\in \R$, $z_2>0$.  
     
     (b)  For any  $(t_0,x_0)\in I\times J$,  for all sufficiently small  $\delta_1> 0,\zeta_2>0$, for all $z_1\in \R$ and $z_2 \ge \deltah^{1/2}:= \max(\deltah_1^{1/4}, \deltah_2^{1/2})$,
\begin{align}\label{rd07_02e8}
   p_{F}(z_1,z_2) \le \frac{c}{\sqrt{\deltah}} \exp\left(-\frac{1}{c} \left(z_1^2 + \frac{z_2^2}{\deltah}\right) \right).
\end{align}
\end{theorem}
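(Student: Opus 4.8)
Here is how I would prove Theorem \ref{thm1}.

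\medskip

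\textbf{Setup and strategy.} The plan is to realize $F=(F_1,F_2)$ as a \emph{locally nondegenerate} random vector in the sense of \cite{fn} and apply the density formula from that reference, which expresses $p_F(z_1,z_2)$ as an iterated Skorohod integral against a localized test function. Concretely, one fixes a localizing cutoff supported near the level set $\{z_2\}$ for the second coordinate, writes $p_F(z_1,z_2)=\E[\,\delta(\delta(\varphi_{z_1,z_2}(F)\,G))\,]$ for an appropriate Malliavin-differentiable $\R^2$-valued weight $G$ built from the inverse Malliavin matrix $\gamma_F^{-1}$, and then estimates the resulting $L^p(\Omega)$ norms using the continuity of the Skorohod integral on $\bD^{k,p}$ (Meyer's inequalities). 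Part (a), the existence of a density on $\R\times(0,\infty)$, follows once we check that $\gamma_F$ is a.s.\ invertible on the relevant region; this in turn rests on $F_1=u(t_0,x_0)\in\bD^{1,p}$ with $\sigma$ bounded below (so the Malliavin variance of $F_1$ is nondegenerate, cf.\ \cite{DKN09}) and on a lower bound for the Malliavin variance of the supremum $F_2$ of a Gaussian increment.

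\medskip

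\textbf{Key steps in order.} First I would record the Malliavin derivatives: $D_{r,y}u(t_0,x_0)$ satisfies the standard linear SPDE with the Green kernel $G_{t-r}(x-y)$ and the multiplicative factor $\sigma(u)$, while $v(t,x)-v(t_0,x_0)$ is Gaussian with explicit derivative $D_{r,y}(v(t,x)-v(t_0,x_0))=(G_{t-r}(x-y)-G_{t_0-r}(x_0-y))\mathbf 1_{\{r<t\}}$. For the supremum $F_2$, I would use the a.s.\ uniqueness of the maximizer $(t^*,x^*)$ of the Gaussian field over the (small) rectangle to get $D_{r,y}F_2=D_{r,y}(v(t^*,x^*)-v(t_0,x_0))$, so that $\|DF_2\|_{\HH}^2$ reduces to the variance of the increment $v(t^*,x^*)-v(t_0,x_0)$, which by the canonical-metric estimates for $v$ is bounded below by a constant times $\delta$ whenever the maximizer is macroscopically far from the corner — an event one handles by the Gaussian-type upper bound \eqref{eq2017-11-23-5} already available in this very theorem's hypotheses (or rather, by the lower tail of $F_2$). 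Second, I would bound $\gamma_F^{-1}$ in every $L^p$: the diagonal entries are controlled from below (order $1$ for $F_1$, order $\delta$ for $F_2$), and the off-diagonal entry $\langle DF_1,DF_2\rangle_\HH$ is small because $DF_1$ lives on $[0,t_0]\times\R$ while $DF_2$ is concentrated on a thin space-time slab; a Cauchy--Schwarz plus scaling argument gives $\det\gamma_F\gtrsim \delta$ with negative moments of all orders. Third, I would choose the localization: let $\psi\in C_c^\infty$ with $\psi\equiv 1$ near $z_2$ on a window of width comparable to $\sqrt\delta$, set $\varphi(F)=\mathbf 1_{(-\infty,z_1]}(F_1)\psi(F_2)$, and apply the \cite{fn} formula. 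The Gaussian factor $\exp(-z_1^2/c)$ comes from the standard argument that the indicator in the first variable, after one integration by parts, produces the density of $F_1$ evaluated near $z_1$, which decays like a Gaussian by the sub-Gaussian tail of $u(t_0,x_0)$ (a consequence of moment bounds for the heat equation); the factor $\exp(-z_2^2/(c\delta))$ comes from the same sub-Gaussian tail for $F_2$, localized at scale $\sqrt\delta$, using $z_2\ge\sqrt\delta$. Fourth, I would collect the powers of $\delta$: each Skorohod integration costs derivatives of $\gamma_F^{-1}$ and of $\varphi$, the latter contributing $\delta^{-1/2}$ per derivative on $\psi$; careful bookkeeping of the two iterated integrals yields the overall prefactor $c\,\delta^{-1/2}$ in \eqref{rd07_02e8}.

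\medskip

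\textbf{Main obstacle.} The delicate point is the analysis of the supremum $F_2$ via Malliavin calculus: one must justify that the argmax $(t^*,x^*)$ is a.s.\ unique (so $F_2\in\bD^{1,p}$, by a limiting/approximation argument with smooth maxima and the Gaussian concentration), control higher Malliavin derivatives of $F_2$ (which involve derivatives of $(t^*,x^*)$ and are therefore genuinely nonsmooth — here one typically smooths the sup by $\varepsilon$-approximations $F_2^\varepsilon=\varepsilon\log\sum e^{v/\varepsilon}$ or uses the results of \cite{DP1}), and obtain the \emph{uniform in $(t_0,x_0)$ and in $\delta$} lower bound $\|DF_2\|_\HH^2\gtrsim\delta$ on an event of overwhelming probability, as well as matching lower bounds for the full Malliavin matrix determinant. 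Getting the scaling in $\delta$ sharp — not just polynomially but with the exact exponent $-1/2$ in the prefactor and $\delta^{-1}$ inside the exponential — requires tracking the parabolic scaling $(t,x)\mapsto(\delta_1 t,\delta_2 x)$ through every kernel estimate, which is where most of the computational effort lies; everything else is a fairly standard, if lengthy, application of the Malliavin calculus toolkit (Meyer inequalities, negative moments of the Malliavin matrix, and the \cite{fn} density formula).
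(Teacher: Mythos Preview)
Your proposal has a genuine gap at its core. You plan to work with the \emph{standard} Malliavin matrix $\gamma_F=(\langle DF_i,DF_j\rangle_\HH)_{i,j}$ and then apply the iterated Skorohod integral formula. But that formula requires taking \emph{higher} Malliavin derivatives of the entries of $\gamma_F^{-1}$, hence of $DF_2$ itself, and $F_2$ is only in $\bD^{1,p}$: as you yourself note under ``Main obstacle,'' derivatives of the argmax $(t^*,x^*)$ are genuinely nonsmooth. The log-sum-exp smoothing you mention is not a fix you have carried out, and it is far from clear that it would deliver the sharp prefactor $\delta^{-1/2}$ after tracking all the $\varepsilon$-dependence. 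So the argument as written does not close.

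The paper avoids this obstacle entirely by \emph{not} using $\gamma_F$. Instead it constructs auxiliary $\HH$-valued processes $u_{A,1}$ (deterministic, built from $(\partial_r-\tfrac12\partial_z^2)[f(r)g(z)]$) and $u_{A,2}$ (built from $(\partial_r-\tfrac12\partial_z^2)[\eta(r,z)H_A(r,z)]$, where $H_A$ involves a Garsia--Rodemich--Rumsey functional $Z_{s,y}$ of $v$), and sets $\gamma_A^{i,j}=\langle DF_i,u_{A,j}\rangle_\HH$. A heat-operator identity forces $\gamma_A^{2,1}=0$ exactly, so $\gamma_A$ is lower triangular; moreover $\gamma_A^{2,2}=\int\!\!\int\psi(Z_{s,y})\,dy\,ds$ and $\langle u_{A,1},D\gamma_A^{2,2}\rangle_\HH=0$. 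The point is that the iterated divergence now falls on $u_{A,1}$, $u_{A,2}$ and on $\gamma_A^{i,j}$, all of which are smooth in the Malliavin sense (they are built from the Gaussian $v$ and from $u(t_0,x_0)\in\bD^{3,p}$), so \emph{no second derivative of $F_2$ is ever needed}. The local nondegeneracy of $\gamma_A^{2,2}$ comes from the implication $Z_{r,z}\le R\Rightarrow \sup|v-v(t_0,x_0)|\le a$ (Proposition~\ref{rd04_03p1}), not from a lower bound on $\|DF_2\|_\HH$ on a high-probability event. Your proposed localization (a cutoff in the \emph{value} of $F_2$) is also quite different from the paper's localization (a cutoff in the \emph{parameter} $(s,y)$ via $\psi(Z_{s,y})$), and the latter is what makes the $\delta^{5/2}$-type cancellations in Section~\ref{rd07_02s1} line up to give exactly $\delta^{-1/2}$.

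In short: the missing idea is to replace $DF_j$ by carefully engineered $u_{A,j}$ so that (i) the resulting matrix is triangular, (ii) all required higher Malliavin derivatives land on smooth objects rather than on the supremum, and (iii) the localization is done through a H\"older-seminorm functional of $v$ rather than through the value of $F_2$.
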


This theorem is proved at the end of Section \ref{rd08_11s2}. The simplicity of the bound \eqref{rd07_02e8} hides quite a complex sequence of estimates of norms of various Malliavin derivatives, that begins with the extension to {\em locally nondegenerate} random vectors of the formula in \cite[Proposition 2.1.5]{nualart2006}, which applies to (globally) nondegenerate random vectors: see Proposition \ref{rd04_04p1} and Remark \ref{rd07_02r1}. The main Malliavin calculus estimate is in Proposition \ref{prop1}, which makes use of the technical estimates in Section \ref{rd07_02s1}. In particular, on the right-hand side of \eqref{equ3} in Lemma \ref{rd07_02l1}, the bound $\zeta^{5/2}$ appears. This exponent $5/2$ appears in the bounds of several different terms, such as \eqref{eq:u1}, \eqref{eq:u2}, \eqref{eq:u4}, \eqref{eq:u5}, \eqref{rd08_06e1}, \eqref{D2UA2} and \eqref{rd08_06e2}. In this last case, the value $5/2$ appears by combining numbers such as (one half of) $16$, $21$, $8$ and $13$ (see e.g. \eqref{rd07_02e9}). We do not have a clear explanation of why the estimates of all these terms ($35$ in all) match and lead to the bound \eqref{rd07_02e8}.
\bigskip

\noindent{\em Proving Theorem \ref{th:hitting}}
\medskip

 Assuming Proposition \ref{rd_prop1} and Theorems \ref{th2017-11-23-1} and \ref{thm1}, the proof of Theorem \ref{th:hitting} is quite straightforward.
 \medskip
 
 \noindent{\em Proof of Theorem \ref{th:hitting}.} By Proposition \ref{rd_prop1} and because $\sigma$ is assumed to be bounded, Hypothesis (1) of Theorem \ref{th2017-11-23-1} holds with $X_{t_0, x_0} = \sigma(u(t_0,x_0))$, $H_1 = 1/4$, $H_2 = 1/2$ and $\alpha = 3/2$. Hypothesis (2) of Theorem \ref{th2017-11-23-1} concerning the uniform boundedness of the probability density function $p_{t, x}(z)$ can be checked in the same way that \cite[Corollary 4.3]{DKN09} is deduced from \cite[Proposition 4.1]{DKN09} and \cite[Proposition 4.2]{DKN09}, but more simply, since \cite{DKN09} considers systems of SPDEs.
 
 
 By Theorem \ref{thm1}(b), setting $H_1 = 1/4$, $H_2 = 1/2$ and $\eta = \deltah^{1/2}$, we have $p_+(\cdot, \cdot) = p_F(\cdot, \cdot)$ and the bound \eqref{eq2017-11-23-5} holds for $p_+(\cdot, \cdot)$ by \eqref{rd07_02e8}. 
 In order to obtain \eqref{eq2017-11-23-5} for $p_-(\cdot, \cdot)$, we set 
 \begin{align*}
 \tilde{u}=-u,\quad  \tilde{v}=-v, \quad  \tilde{W}=-W
 \end{align*}
 and $\tilde{\sigma}(z)=\sigma(-z)$, for all $z\in \R$. Then $\tilde{u}$ and $\tilde{v}$ satisfy respectively 
 \begin{align*}
\begin{cases}
\frac{\partial}{\partial t} \tilde{u}(t,x) =\frac{1}{2} \frac{\partial^2}{\partial x^2} \tilde{u}(t,x) +\tilde{\sigma}(\tilde{u}(t\,,x)) \dot{\tilde{W}}(t\,,x),\quad  t>0,\ x\in \R,\\
\tilde{u}(0)\equiv 0,
\end{cases}
\end{align*}
and 
\begin{align*}
\begin{cases}
\frac{\partial}{\partial t}  \tilde{v}(t,x) =\frac{1}{2} \frac{\partial^2}{\partial x^2}  \tilde{v}(t,x) + \dot{\tilde{W}}(t\,,x),\quad  t>0,\ x\in \R,\\
\tilde{v}(0)\equiv 0.
\end{cases}
\end{align*}
Theorem \ref{thm1}  applies to the processes $\tilde{u}$ and $\tilde{v}$ and hence gives rise to the bound \eqref{eq2017-11-23-5} for  $p_-(\cdot, \cdot)$. This shows that Hypothesis (3) of Theorem \ref{th2017-11-23-1} holds.

Finally, we conclude from Theorem \ref{th2017-11-23-1} that \eqref{eq2017-11-23-11} holds with $H_1 = 1/4$ and $H_2 = 1/2$, and this is \eqref{rd03_27e3} since $(1/4)^{-1} + (1/2)^{-1} = 6$.
 \qed

\begin{remark}
It is also possible to consider systems of stochastic heat equations of the form
\begin{align*}
          \frac{\partial}{\partial t} U(t\,,x) =\frac{1}{2} \frac{\partial^2}{\partial x^2} U(t\,,x) + \sigma(U(t\,,x))\cdot \dot{W}(t\,,x) + b(U(t\,,x)),
\end{align*}
where $U=(U_1, \ldots, U_d)$, $\dot{{W}}$ denotes a $d$-dimensional space-time white noise, $\sigma: \R^d \to \MM_{d \times d}$ is a matrix-valued function, and $b$ is a function from $\R^d$ to $\R^d$. By extending our arguments, it should be possible to obtain upper bounds on hitting probabilities in terms of $\mathscr{H}_{d-6}(A)$ for $U$ in this case also. 
 \end{remark}
 
 We conclude this section with a brief overview of the notation used throughout the paper.
For $Y\in L^p(\Omega)$ with $p\in[1,\infty)$, we write $\|Y\|_p=(\E[|Y|^p])^{1/p}$.  Throughout the paper, we write ``$g_1(x)\lesssim g_2(x)$ for all $x\in X$'' when
there exists a real number $L$ such that $g_1(x)\le L\, g_2(x)$ for all $x\in X$.
Alternatively, we might write ``$g_2(x)\gtrsim g_1(x)$ for all $x\in X$.'' By
``$g_1(x)\asymp g_2(x)$ for all $x\in X$'' we mean that $g_1(x)\lesssim g_2(x)$
 and $g_2(x)\lesssim g_1(x)$ for all $x\in X$. For a bounded function $\phi$ on $\R$, we denote $\|\phi\|_\infty=\sup_{x\in \R}|\phi(x)|$.
 We also use the notations $C_{T,k}$, $C$ and $c$ to denote generic constants whose values may change from line to line.

 \section{Proving Proposition \ref{rd_prop1}}\label{rd08_04s1}

 We first develop the main arguments and then we state and prove two technical lemmas that are needed in the proof.
 
 \subsection{The main arguments}
 
 \noindent
 {\it Step 1:}  For $p \geq 2$, we are going to estimate the $L^p$-norm of the increments of the process $L$ defined in \eqref{rd03_28e1}. Let $(t,x), (s,y) \in R^{\zeta_1,\zeta_2}_{t_0,x_0}$ and define $\eta = \max(\zeta_1^{1/4},\zeta_2^{1/2})$, so that $\zeta_1 \zeta_2 \leq \eta^6$. We can write
 \begin{align*}
 L(t,x)-L(s,y) &= u(t,x)-u(s,y) -\sigma(u(t_0,x_0)) (v(t,x)-v(s,y))\\
     &= A_1+A_2+A_3+A_4,
 \end{align*}
 where
 \begin{align*}
 A_1 & =u(t,x)-u(t,y) -\sigma(u(t,y))\, [v(t,x)-v(t,y)], \\
  A_2&= u(t,y)-u(s,y) -\sigma(u(s,y))\, [v(t,y)-v(s,y)], \\
  A_3&= [\sigma(u(t,y)) - \sigma(u(t_0,x_0))]\, [v(t,x)-v(t,y)], \\
 A_4&= [\sigma(u(s,y)) - \sigma(u(t_0,x_0))]\, [v(t,y)-v(s,y)].
 \end{align*}
 From Lemma \ref{lem:spatial} below, replacing $x_0$ by $y$, we obtain
 \[
 \E [ |A_1|^p|] \le C_{T,p}\, |x-y|^{\frac {3p}4}  \leq C_{T,p}\, \zeta_2^{\frac p 4}\, |x-y|^{\frac {p}2} \le C_{T,p}\, \eta^{\frac p 2}\, |x-y|^{\frac p2}.
 \]
  From Lemma \ref{lem:tem} below,  replacing $t_0$ by $s$ and $x$ by $y$, we obtain 
 \[
 \E [ |A_2|^p|] \le C_{T,p}\,  |t-s|^{\frac {2p}5} \leq C_{T,p}\, \zeta_1^{\frac{3p}{20}}\, |t-s|^{\frac p4} \le C_{T,p}\, \eta^{\frac {3p}5 }\, |t-s|^{\frac p4}.
 \]
 For the terms $A_3$ and $A_4$, we use the Lipschitz continuity of $\sigma$, the Cauchy-Schwarz inequality and the H\"older continuity properties of $u$ and $v$ \cite[Proposition 4.3.2] {dss}, to see that
 \[
 \E [ |A_3|^p|] \le C_{T,p}   \left( |t-t_0|^{\frac 14}+|y-x_0|^{\frac 12}\right)^p  |x-y|^{\frac p2} \le  C_{T,p}\,  \eta^{p}\, |x-y|^{\frac p2} 
 \]
 and
  \[
 \E [ |A_4|^p|] \le C_{T,p}   \left( |s-t_0|^{\frac 14}+|y-x_0|^{\frac 12}\right)^p  |t-s|^{\frac p4} \le  C_{T,p}\,  \eta^{p}\, |t-s|^{\frac p4} .
 \]
 Putting these bounds together and using the notation 
 \begin{equation}\label{rd05_16e1}
    \Delta((t,x);(s,y))= \max\left(|t-s|^{\frac 14}, |x-y|^{\frac12}\right),
 \end{equation}
 we obtain
 \begin{equation} \label{k1}
 \E[ |L(t,x)-L(s,y)|^p] \le C_{T,p} \, \eta^{\frac{p}2} \Delta((t,x);(s,y))^{p}.
 \end{equation}
 
  \medskip

 \noindent
 {\it Step 2:}  We will use a version of the Garsia-Rodemich-Rumsey lemma (see \cite[Lemma A.3.5]{dss}). We apply this proposition to the process $L$ with $S=R_{t_0,x_0}^{\zeta_1, \zeta_2}$, $\rho((t,x);(s,y))=\Delta^2((t,x);(s,y))$, $\mu(\d t \d x)=\d t \d x$,
 $\Psi(x)=|x|^p$, $\Psi^{-1}(y)=y^{1/p}$ and $p(x)=|x|^{\alpha+\frac 6 p}$, where $\alpha\in \left(0,\frac 12-\frac 6p\right)$ (as in \cite[Corollary A.3]{DKN07}).  This requires $p>12$ (so that $\frac 12-\frac 6p > 0$), but it suffices to establish the proposition for $p\ge p_0$, for  some $p_0 \geq 1$.
 
 Define
 \[
 \mathscr{C}=\int_S \d t\d x\int_S \d s \d y\,  \frac { |L(t,x)-L(s,y)|^p}{ [ \Delta^2((t,x);(s,y)]^{6+\alpha p}}.
 \]
 By  (\ref{k1}), we can write
  \begin{align*}
 \E [\mathscr{C}]& \le C_{T,p}\, \eta^{\frac {p}2} \int_S \d t\d x\int_S \d s \d y\,   [ \Delta((t,x);(s,y)]^{ p - 12-2\alpha p} \\ 
 & \le C_{T,p}\, \eta^{\frac {p}2}\zeta_1\zeta_2 \int_0^{\zeta_1} \d u \int_0^{\zeta_2} \d v\, (u^{1/2}+v)^{\frac 1 2 ( p  - 12 - 2\alpha p)}.
 \end{align*}
 The double integral is bounded above by $C \zeta_1\zeta_2 \eta^{p - 2 \alpha p - 12} \leq C \eta^{p - 2 \alpha p - 6}$, therefore,
 \begin{equation}
      \E [\mathscr{C}] \le C_{T,p} \, \eta^{(\frac 32 - 2\alpha )p}.  
 \label{k2}
 \end{equation}
 From  \cite[Proposition A.3.5]{dss}, we deduce that for $(t, x), (s, y) \in R_{t_0,x_0}^{(n)}$,
 \begin{align*}
      |L(t,x)-L(s,y)| \le 10 \sup_{(r,z)\in R^{\zeta_1, \zeta_2}_{t_0,x_0}} \int_0^{2\Delta^2((t,x);(s,y))} \Psi^{-1}
 \left( \frac {\mathscr{C}}{[\mu(B_\rho((r,z),u/4)]^2} \right) u^{\alpha+\frac 6p -1}\, \d u,
 \end{align*}
 where $B_\rho((r,z),u/4)$ denotes the open ball in $R_{t_0,x_0}^{\zeta_1, \zeta_2}$ centered at $(r, z)$ of radius $u/4$ in the metric $\Delta^2$. One can check that there is a constant $c>0$ such that  $\mu(B_\rho((r,z),u/4)]) \ge cu^3$, for all $u \in [0, \eta^2]$ and
 $(r,z)\in  R^{\zeta_1, \zeta_2}_{t_0,x_0}$. Since $\Delta((t, x), (s, y)) \leq \eta$, 
  \begin{align*}
      |L(t,x)-L(s,y)|  &\lesssim \int_0^{2\Delta^2((t,x);(s,y))} \mathscr{C}^{1/p} u^{\alpha-1} \d u
 \asymp \mathscr{C}^{1/p}\, [\Delta^2((t,x);(s,y))]^\alpha.
  \end{align*}
  In particular, taking $(s,y)=(t_0,x_0)$, we obtain
  \[
  |L(t,x)| \lesssim  \mathscr{C}^{1/p}\, [\Delta^2((t,x);(t_0,x_0))]^\alpha \le     \mathscr{C}^{1/p} \, \eta^{2\alpha}.
\]
Using (\ref{k2}), this implies
\[
\E \left[ \sup_{(t,x)\in R^{\zeta_1, \zeta_2} _{t_0,x_0}} |L(t,x)|^p \right] \le C_{T,p}\, \eta^{\frac 32 p},
\]
 which completes the proof of the proposition.
 \qed

 \subsection{Two technical lemmas}
 Recall the mild form of the solution to \eqref{eq:SHE}:
 \begin{align}\label{rd07_23e1}
 u(t,x)= \int_0^t\int_\R G(t-s, x-y)\sigma(u(s\,,y))W(\d s\, \d y),
 \end{align}
 where $G(t,x)= (2\pi t)^{-1/2}\e^{-x^2/(2t)}$ and $W(\d s\, \d y)$ denotes the It\^o-Walsh stochastic integral (see \cite{spdewalsh,dss}).
 The solution to \eqref{rd03_27e2} is given by
 \begin{align*}
 v(t\,,x) = \int_0^t\int_\R G(t-s, x-y)W(\d s\, \d y).
 \end{align*}
Increments of $u$ be can approximated in various ways by those of $v$; see \cite{KSXZ13,FKM15,HP15}. The following lemma on the approximation of temporal increments of $u$ by those of $v$ is taken from \cite{KSXZ13}.

\begin{lemma}\label{lem:tem} 
{\rm \cite[Theorem 4.1]{KSXZ13}}
Assume only that $\sigma$ is Lipschitz. For $T>0$ and $p\geq2$, there exists $A_{T,p}>0$ such that for all $t, t_0\in[0, T]$ and $x\in \R$
           \begin{align*}
           \|u(t\,,x)-u(t_0\,,x)- \sigma(u(t_0\,,x))(v(t\,,x)-v(t_0\,,x))\|_p \le A_{T,p} |t-t_0|^{2/5}.
           \end{align*}
 \end{lemma}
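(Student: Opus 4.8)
This is \cite[Theorem~4.1]{KSXZ13}, so one option is simply to invoke it; here is how I would reprove it. \emph{Reduction and decomposition.} I would first assume without loss of generality that $t_0\le t$ and write $h=t-t_0\in[0,T]$ (the case $t<t_0$ reduces to this one, since replacing $\sigma(u(t_0,x))$ by $\sigma(u(t,x))$ alters the left-hand side by at most $\|\sigma(u(t,x))-\sigma(u(t_0,x))\|_{2p}\,\|v(t,x)-v(t_0,x)\|_{2p}\lesssim|t-t_0|^{1/2}$). I would rely throughout on the standard facts $\sup_{s\le T,\,z\in\R}\|u(s,z)\|_p<\infty$ and $\|u(s,z)-u(s',z')\|_p\le C_{T,p}(|s-s'|^{1/4}+|z-z'|^{1/2})$. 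Writing $u$ and $v$ in mild form, splitting the white noise into its restrictions to $[0,t_0]\times\R$ and $(t_0,t]\times\R$, and using that $\sigma(u(t_0,x))$ is $\mathcal F_s$-measurable for $s\ge t_0$ to recast $\sigma(u(t_0,x))\int_{t_0}^t\!\int_\R G(t-s,x-y)\,W(\d s\,\d y)$ as an It\^o--Walsh integral, one obtains
\[
   u(t,x)-u(t_0,x)-\sigma(u(t_0,x))\bigl(v(t,x)-v(t_0,x)\bigr)=D_1+D_2 ,
\]
where $D_1=\int_{t_0}^t\!\int_\R G(t-s,x-y)\bigl[\sigma(u(s,y))-\sigma(u(t_0,x))\bigr]W(\d s\,\d y)$, and, with $K(s,y):=G(t-s,x-y)-G(t_0-s,x-y)$,
\[
   D_2=\int_0^{t_0}\!\!\int_\R K(s,y)\sigma(u(s,y))\,W(\d s\,\d y)-\sigma(u(t_0,x))\int_0^{t_0}\!\!\int_\R K(s,y)\,W(\d s\,\d y).
\]

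\emph{The adapted term.} In $D_1$ the integrand is adapted, so Burkholder--Davis--Gundy, the Lipschitz bound on $\sigma$, the $L^p$-H\"older estimate, and the elementary identities $\int_\R G(\tau,z)^2\,\d z\asymp\tau^{-1/2}$, $\int_\R G(\tau,z)^2|z|\,\d z\asymp 1$ yield $\|D_1\|_p^2\lesssim\int_{t_0}^t\bigl(|s-t_0|^{1/2}(t-s)^{-1/2}+1\bigr)\d s\lesssim h$, i.e.\ $\|D_1\|_p\lesssim h^{1/2}$ --- far better than needed.

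\emph{The main obstacle: the anticipating term $D_2$.} The difficulty is that $D_2$ is \emph{not} an It\^o--Walsh integral: the factor $\sigma(u(t_0,x))$ anticipates the noise on $[0,t_0)$, so the two integrals cannot simply be combined. The kernel estimates I would use --- all consequences of $\int_\R G(a,z)G(b,z)\,\d z=(2\pi(a+b))^{-1/2}$, hence of the fact that the second difference in time (step $h$) of $\tau\mapsto\tau^{-1/2}$ decays like $h^2\tau^{-5/2}$ --- are $\int_0^{t_0}\!\int_\R K(s,y)^2\,\d y\,\d s\asymp h^{1/2}$, $\int_0^{t_0}\!\int_\R K(s,y)^2|x-y|\,\d y\,\d s\lesssim h$, $\int_{t_0-a}^{t_0}\!\int_\R K(s,y)^2\,\d y\,\d s\lesssim h^{1/2}$, and $\int_0^{t_0-a}\!\int_\R K(s,y)^2\,\d y\,\d s\lesssim h^2a^{-3/2}$ for $h\le a\le t_0$. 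Peeling off the adapted part of $D_2$ via $\sigma(u(s,y))-\sigma(u(t_0,x))=[\sigma(u(s,y))-\sigma(u(s,x))]+[\sigma(u(s,x))-\sigma(u(t_0,x))]$ disposes of the first bracket (BDG plus the second kernel bound give $\lesssim h^{1/2}$) and leaves the genuinely anticipating quantity $\int_0^{t_0}\!\int_\R K(s,y)[\sigma(u(s,x))-\sigma(u(t_0,x))]W(\d s\,\d y)$; this is where the exponent is lost. I would localize in time at scale $a\in[h,t_0]$: on $[t_0-a,t_0]$ the factor $\sigma(u(s,x))-\sigma(u(t_0,x))$ has $L^p$-norm $\lesssim a^{1/4}$, which against $\int_{t_0-a}^{t_0}\!\int_\R K^2\lesssim h^{1/2}$ contributes roughly $a^{1/4}h^{1/4}$; on $[0,t_0-a]$ one uses the uniform bound on $\|\sigma(u(s,x))-\sigma(u(t_0,x))\|_p$ together with $\int_0^{t_0-a}\!\int_\R K^2\lesssim h^2a^{-3/2}$. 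The anticipation itself I would treat either by Malliavin-calculus bounds on the Skorohod integral (available since $u\in\bD^{1,p}$, and cleanest when $\sigma$ is moreover smooth) or --- as in \cite{KSXZ13} --- by carrying the whole computation out on the Picard iterates $u_n$, where no anticipation arises, and checking that all constants are uniform in $n$ before letting $n\to\infty$. Optimizing over $a$ then gives a bound of order $h^{2/5}$ for $D_2$, and combining with $\|D_1\|_p\lesssim h^{1/2}$ yields $\|u(t,x)-u(t_0,x)-\sigma(u(t_0,x))(v(t,x)-v(t_0,x))\|_p\lesssim|t-t_0|^{2/5}$. I expect this last step --- extracting the exponent $2/5$ from the anticipating term in a way compatible with merely Lipschitz $\sigma$ --- to be the crux; a purely adapted bound there would only give $1/4$.
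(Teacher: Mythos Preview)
The paper does not prove this lemma; it simply cites \cite[Theorem~4.1]{KSXZ13}. So your first option---just invoke it---is exactly what the paper does.

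Your reproof sketch has the right architecture (split at $t_0$, then localize the $[0,t_0]$ part at scale $a$), but the step you flag as the crux has a genuine gap. You write the near-$t_0$ anticipating piece as $\int_{t_0-a}^{t_0}\!\int_\R K(s,y)[\sigma(u(s,x))-\sigma(u(t_0,x))]\,W(\d s\,\d y)$ and then bound it by ``$\|\sigma(u(s,x))-\sigma(u(t_0,x))\|_p$ against $(\int K^2)^{1/2}$''. That is a BDG-type estimate, and it is \emph{not} available here because the integrand is not adapted. Neither of your proposed remedies actually works under the Lipschitz-only hypothesis: passing to Picard iterates does nothing for the anticipation (each $u_n(t_0,x)$ is $\mathcal F_{t_0}$-measurable, so $\sigma(u_n(t_0,x))$ still anticipates the noise on $[0,t_0)$), and Skorohod/Meyer bounds require $u\in\bD^{1,p}$, hence $\sigma\in C^1$.

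The device that closes this gap---and it is precisely what the present paper uses in its proof of the spatial analog, Lemma~\ref{lem:spatial}---is to freeze the coefficient at the \emph{earlier} time $t_0-a$ rather than at $t_0$. Replace $\sigma(u(t_0,x))$ by $\sigma(u(t_0-a,x))$ throughout $D_2$; the swap costs $\|\sigma(u(t_0,x))-\sigma(u(t_0-a,x))\|_{2p}\,\|\int_0^{t_0}K\,W\|_{2p}\lesssim a^{1/4}h^{1/4}$ by Cauchy--Schwarz, and now the integral $\int_{t_0-a}^{t_0}K[\sigma(u(s,y))-\sigma(u(t_0-a,x))]\,W$ is a genuine It\^o--Walsh integral to which BDG applies directly, giving $\lesssim a^{1/4}h^{1/4}$. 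The far piece $\int_0^{t_0-a}$ is then handled crudely by bounding the two terms separately, each $\lesssim h\,a^{-3/4}$ from your kernel estimate $\int_0^{t_0-a}\!\int K^2\lesssim h^2a^{-3/2}$. Balancing $a^{1/4}h^{1/4}$ against $h\,a^{-3/4}$ gives $a=h^{3/4}$ and exponent $7/16$, not $2/5$ as you claim; since $7/16>2/5$ this is more than enough for the lemma, but your arithmetic is off.
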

 
 Using  ideas similar to those in \cite{KSXZ13,FKM15}, we obtain the following result on the approximation of spatial increments of $u$ by those of $v$.

\begin{lemma}\label{lem:spatial}
Assume only that $\sigma$ is Lipschitz. For $T> t_0 > 0$ and $p\geq2$, there exists a constant $C = C_{T,t_0,p}>0$ such that for all $t\in [t_0, T]$ and $x, x_0\in \R$,
\begin{align*}
          \left\Vert u(t\,,x)- u(t\,, x_0)-\sigma(u(t\,,x_0))(v(t,x)-v(t,x_0)) \right\Vert_p
           \leq C |x-x_0|^{\frac{3}{4}}.
 \end{align*}
 \end{lemma}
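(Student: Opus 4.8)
The plan is to mimic the proof of Lemma \ref{lem:tem} (i.e.\ \cite[Theorem 4.1]{KSXZ13}), but working with spatial increments rather than temporal ones. Write the mild formulation \eqref{rd07_23e1} for $u(t,x)$ and $u(t,x_0)$ and for $v(t,x)$, $v(t,x_0)$, and set
\[
   D_{t}(x,x_0) = u(t,x)- u(t,x_0) - \sigma(u(t,x_0))\bigl(v(t,x)-v(t,x_0)\bigr).
\]
Using the stochastic integral representations, $D_t(x,x_0)$ can be written as a single It\^o--Walsh integral over $[0,t]\times\R$ with integrand
\[
   \bigl[G(t-s,x-y)-G(t-s,x_0-y)\bigr]\,\sigma(u(s,y)) - \bigl[G(t-s,x-y)-G(t-s,x_0-y)\bigr]\,\sigma(u(t,x_0)),
\]
i.e.\ the difference of space-shifted heat kernels multiplied by $\sigma(u(s,y))-\sigma(u(t,x_0))$. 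First I would apply the Burkholder--Davis--Gundy inequality to bound $\|D_t(x,x_0)\|_p^2$ by the $L^{p/2}$-norm of the quadratic variation
\[
   \int_0^t\!\!\int_\R \bigl[G(t-s,x-y)-G(t-s,x_0-y)\bigr]^2\,\bigl|\sigma(u(s,y))-\sigma(u(t,x_0))\bigr|^2\,\d y\,\d s.
\]

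Next, I would use the Lipschitz property of $\sigma$ to replace $|\sigma(u(s,y))-\sigma(u(t,x_0))|$ by $|u(s,y)-u(t,x_0)|$, and then split this increment as $|u(s,y)-u(t,y)| + |u(t,y)-u(t,x_0)|$ — a temporal increment of $u$ at fixed spatial point $y$, plus a spatial increment of $u$ at fixed time $t$. The standard moment bounds for increments of $u$ \cite[Proposition 4.3.2]{dss} give $\|u(s,y)-u(t,y)\|_p \lesssim |t-s|^{1/4}$ and $\|u(t,y)-u(t,x_0)\|_p \lesssim |y-x_0|^{1/2}$, and by Minkowski's inequality these feed into the quadratic-variation estimate. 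It then remains to carry out the deterministic kernel computation: one needs
\[
   \int_0^t\!\!\int_\R \bigl[G(t-s,x-y)-G(t-s,x_0-y)\bigr]^2 \bigl(|t-s|^{1/4} + |y-x_0|^{1/2}\bigr)^2 \,\d y\,\d s \lesssim |x-x_0|^{3/2},
\]
(together with the companion term in which $|y-x_0|$ is replaced by $|y-x|$). The key analytic facts are the well-known $L^2$ estimate $\int_0^t\!\!\int_\R [G(t-s,x-y)-G(t-s,x_0-y)]^2\,\d y\,\d s \lesssim |x-x_0|$ (spatial increments of $v$ scale like $|x-x_0|^{1/2}$ in $L^2$), and the refinement obtained by inserting the extra weights $|t-s|^{1/4}$ and $|y-x_0|^{1/2}$, which are expected to contribute an additional half power of $|x-x_0|$ after a scaling/change-of-variables argument ($y \mapsto (y-x_0)/|x-x_0|$, $s \mapsto t - |x-x_0|^2 r$). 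Finally, since this bounds only the ``reduced'' second moment and $D_t(x,x_0) \in \bD^{1,p}$, I would either iterate the BDG estimate with a Gronwall-type argument (as in \cite{KSXZ13}) to promote the $L^2$-type bound to an $L^p$-bound, or invoke hypercontractivity/standard SPDE moment machinery; the dependence on $t_0$ enters because the implicit constants in the increment bounds for $u$ degenerate as $t\downarrow 0$.

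The main obstacle I expect is the deterministic kernel estimate with the mixed weights: one must show that inserting $|y-x_0|^{1/2}$ (or $|y-x|^{1/2}$) into the square of the heat-kernel difference upgrades the exponent from $1$ to $3/2$ in $|x-x_0|$, and that the temporal weight $|t-s|^{1/4}$ does no worse. This requires care because the heat-kernel difference $G(t-s,x-y)-G(t-s,x_0-y)$ is not small uniformly — it is only small in an integrated sense — so one cannot simply pull the weight out; the honest route is the scaling substitution above, after which the integral becomes $|x-x_0|^{3/2}$ times a finite dimensionless integral (finiteness of which must be checked near $r=0$ and $|y|\to\infty$). A secondary technical point is that, as in \cite{KSXZ13}, the increment $u(s,y)-u(t,x_0)$ appearing inside the stochastic integral itself depends on the solution, so the estimate is genuinely of fixed-point type and the promotion from $L^2$ to $L^p$ must be handled with the usual care (Gronwall in $t$, or a bootstrap on the exponent), rather than being a one-line consequence of BDG.
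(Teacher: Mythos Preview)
Your representation of $D_t(x,x_0)$ as a single It\^o--Walsh integral is incorrect: the factor $\sigma(u(t,x_0))$ is $\mathcal{F}_t$-measurable but not $\mathcal{F}_s$-measurable for $s<t$, so the proposed integrand $[G(t-s,x-y)-G(t-s,x_0-y)]\,\sigma(u(t,x_0))$ is not adapted and the It\^o--Walsh integral is not defined. Equivalently, the product $\sigma(u(t,x_0))\,(v(t,x)-v(t,x_0))$ is \emph{not} the Walsh integral of $[G(\cdots)]\,\sigma(u(t,x_0))$; writing it as a Skorohod integral would produce a Malliavin correction term that your BDG estimate does not see. This is not cosmetic: your entire argument rests on applying BDG to the displayed quadratic variation, and BDG requires an adapted integrand. (This is also why no Gronwall or ``fixed-point'' structure appears here --- the difficulty you sensed in your last paragraph is this adaptedness obstruction, misdiagnosed.)

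The paper circumvents this by a localization/freezing trick. It restricts the stochastic integral defining $u(t,x)-u(t,x_0)$ to a small window $[t-\varepsilon^2,t]\times B(x_0,2\varepsilon)$ and freezes the coefficient at $\sigma(u(t-\varepsilon^2,x_0))$, which \emph{is} adapted on that window, so BDG applies there. The error from discarding the complement of the window is controlled by direct kernel estimates (Plancherel for the time tail $[0,t-\varepsilon^2]$, a scaling argument for the spatial tail $\R\setminus B(x_0,2\varepsilon)$), and the error from unfreezing --- replacing $\sigma(u(t-\varepsilon^2,x_0))$ by $\sigma(u(t,x_0))$ --- is handled by Cauchy--Schwarz together with $\|\sigma(u(t-\varepsilon^2,x_0))-\sigma(u(t,x_0))\|_{2p}\lesssim \varepsilon^{1/2}$. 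The choice $\varepsilon=|x-x_0|^{1/2}$ balances all pieces at $|x-x_0|^{3/4}$. Your weighted-kernel heuristics are in the right spirit and would reappear (in a cleaner form) once the freezing is done, but the adaptedness obstacle forces this detour before any BDG step can be taken.
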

 \begin{proof}
Because $\Vert u(t, x) \Vert_p$ and $\Vert v(t, x) \Vert_p$ are bounded over $[0, T] \times \R$ \cite[Theorem 4.1]{dss} and because $\sigma$ has linear growth, it suffices to prove the above estimate for $x, x_0\in \R$ such that $|x-x_0|\leq 1\wedge t_0$.  Hence we assume $|x-x_0|\leq 1\wedge t_0$ in the following. 
           
According to \eqref{rd07_23e1}, we have
\begin{align*}
           u(t\,,x)- u(t\,,x_0)
           = \int_0^t\int_\R (G(t-s, x-y)- G(t-s, x_0-y))\sigma(u(s\,,y)) W(\d s\, \d y).
 \end{align*}
For $\varepsilon <\sqrt{t_0}$, let
\begin{align*}
           &\mathcal{A}(t, \varepsilon, x, x_0) 
           = \int_{t-\varepsilon^2}^t\int_{B(x_0, 2\varepsilon)} (G(t-s, x-y)- G(t-s, x_0-y))\sigma(u(s\,,y)) W(\d s\, \d y),
\end{align*}
where $B(x_0, 2\varepsilon):= [x_0-2\varepsilon, x_0+2\varepsilon]$. Then 
 \begin{align} \label{I1+I2}
          \|u(t\,,x)- u(t\,,x_0) - \mathcal{A}(t, \varepsilon, x, x_0)\|^2_p\lesssim I_1(t, \varepsilon, x, x_0)+I_2(t, \varepsilon, x, x_0),
\end{align}
where         
 \begin{align*}
       I_1(t, \varepsilon, x, x_0)     &=  \left\|\int_0^{t-\varepsilon^2}\int_\R (G(t-s, x-y)- G(t-s, x_0-y))\sigma(u(s\,,y)) W(\d s\, \d y)\right\|^2_p, \\
       I_2(t, \varepsilon, x, x_0)     &=\left\| \int_{t-\varepsilon^2}^t\int_{\R\setminus B(x_0, 2\varepsilon)} (G(t-s, x-y)- G(t-s, x_0-y))\sigma(u(s\,,y)) W(\d s\, \d y)\right\|^2_p.
\end{align*}
Using Burkholder's inequality, then Minkowski's inequality, and using the facts that $\sigma$ has linear growth and $\Vert u(t, x) \Vert_p$ is bounded \cite[Theorem 4.2.1]{dss}, we obtain
\begin{align*}
           I_1(t, \varepsilon, x, x_0) &\lesssim   \int_0^{t-\varepsilon^2}\int_\R (G(t-s, x-y)- G(t-s, x_0-y))^2\, \d y \d s\\
           &=  \int_{\varepsilon^2}^{t}\int_\R (G(s, x-y)- G(s, x_0-y))^2\,  \d y \d s.
\end{align*}
Using  Plancherel's identity, we see that
\begin{align*}
           I_1(t, \varepsilon, x, x_0) &\lesssim \int_{\varepsilon^2}^t\int_\R \left| \e^{ixz}\e^{-\frac12sz^2} - 
            \e^{ix_0z}\e^{-\frac12sz^2}\right|^2\d z \d s\\
            &= 2 \int_{\varepsilon^2}^t\int_\R \e^{-sz^2} (1- \cos((x-x_0)z))\, \d z \d s\\
            &\leq 2\int_\R \frac{\e^{-\varepsilon^2 z^2}}{z^2} (1- \cos((x-x_0)z))\, \d z.
\end{align*}
With the change of variables $y = (x-x_0) z$, we obtain
 \begin{align*}
           I_1(t, \varepsilon, x, x_0) &\lesssim 2 |x-x_0|  \int_\R \e^{- \ep^2y^2/(x-x_0)^2} \left(1 - \cos y\right) \frac{\d y}{y^2} .
 \end{align*}           
Using the inequality $1-\cos y \leq \frac12 y^2$ for all $y\in \R$, we see that
\begin{align*}
           I_1(t, \varepsilon, x, x_0) &\lesssim  |x-x_0| \int_\R \e^{-\varepsilon^2 y^2/(x-x_0)^2} \d y \lesssim \frac{(x-x_0)^2}{\ep} .         
\end{align*}
Letting $\varepsilon = |x-x_0|^\half$,  we obtain 
\begin{align}\label{eq:I1}
   I_1(t, |x-x_0|^\half, x, x_0) &\lesssim  |x-x_0|^{3/2}.
\end{align}

Using again Burkholder's inequality, then Minkowski's inequality, and the facts that $\sigma$ has linear growth and $\Vert u(t, x) \Vert_p$ is bounded, we obtain 
\begin{align*}
           I_2(t, \varepsilon, x, x_0) 
           & \lesssim \int_{t-\varepsilon^2}^t\int_{|y-x_0|>2\varepsilon} (G(t-s, x-y)- G(t-s, x_0-y))^2\, \d y \d s\\
           &= \int_0^{\ep^2} \int_{\vert z \vert > 2 \ep} (G(s, x - x_0 + z) - G(s, z))^2\, \d z \d s. 
\end{align*}
Do the change of variables $s = \ep^2 r$, $z= \ep v$, to see that
\begin{align*}
   I_2(t, \varepsilon, x, x_0) &\lesssim  \varepsilon^3 \int^{1}_{0}\int_{|v|>2} \left(G\left(\varepsilon^2 r, x-x_0 +\varepsilon v\right)- G(\varepsilon^2 r, \varepsilon v)\right)^2 \d v \d r \\
           &=  \varepsilon \int^{1}_{0}\int_{|v|>2} \left(G\left(r, \frac{x-x_0}{\varepsilon} + v\right)- G(r, v)\right)^2 \d v \d r,
\end{align*}
where in the second equality, we have used the identity 
\begin{align}\label{eq:scale}
           G(\varepsilon^2 s, \varepsilon y)= \varepsilon^{-1} G(s, y)\quad  \text{for all $\varepsilon, s>0$ and $y\in \R$}.
\end{align}
Set $\beta = (x - x_0)/\varepsilon = (x-x_0) |x-x_0|^{-\half}$. Then 
\begin{align*}
           I_2(t, |x-x_0|^\half, x, x_0) &\lesssim   
           |x-x_0|^\half \int^{1}_{0}\int_{|v|>2} (G(r, \beta + v)- G(r, v))^2\, \d v \d r .
\end{align*}
Notice that $\vert \beta \vert \leq 1$ because $|x-x_0| \leq 1$, and $\vert \beta + v \vert \geq 1$ when $|v|>2$. Note that the heat kernel $G(r, y)$ has a partial derivative in $y$ that is uniformly bounded over the set $\R_+ \times \{\vert y\vert \geq 2 \}$ and satisfies
\begin{equation*}
    \left\vert \frac{\partial}{\partial y} G(r, y) \right\vert = \frac{\vert y \vert}{r} G(r, y).
\end{equation*} 
Therefore,
\begin{align*}
           I_2(t, |x-x_0|^\half, x, x_0) &\lesssim  |x-x_0|^\half   \int^{1}_{0}\int_{|v|>2} \left(\int_v^{v + \beta} \frac{\partial}{\partial z} G(r, z)\,  \d z \right)^2 \d v \d r\\
             &\lesssim  |x-x_0|^{\half}  \beta^2 \int^{1}_{0}\int_{|v|>2} \left(\frac{\vert v \vert + 1}{r} \, G(r, \vert v \vert - 1 )\right)^2\d v\d r \\
             & \lesssim |x-x_0|^{3/2}.
\end{align*}
Thus, 
\begin{align}\label{eq:I2}
          I_2(t, |x-x_0|^\half, x, x_0) &\lesssim |x-x_0|^{3/2}.
\end{align} 
Therefore, we conclude from \eqref{I1+I2}, \eqref{eq:I1} and \eqref{eq:I2} that 
\begin{align}\label{eq:u-A}
           \left\|u(t\,,x)- u(t\,,x_0) - \mathcal{A}(t,|x-x_0|^\half, x, x_0)\right\|^2_p \lesssim |x-x_0|^{3/2}.
\end{align}
       
For $\varepsilon <\sqrt{t_0}$, define
\begin{align*}
           \mathcal{B}_\sigma(t, \varepsilon, x, x_0)&:= \sigma(u(t-\varepsilon^2, x_0)) \\
           &\qquad \times \int_{t-\varepsilon^2}^t\int_{B(x_0, 2\varepsilon)} (G(t-s, x-y)- G(t-s, x_0-y)) W(\d s\, \d y).
\end{align*}
If $\sigma \equiv 1$, we write $\cB_1$ instead of $\cB_\sigma$, and we have 
\begin{align*}
     \cB_\sigma(t, \varepsilon, x, x_0) = \sigma(u(t-\varepsilon^2,x_0)) \cB_1(t, \varepsilon, x, x_0) .
\end{align*}

Because $\sigma$ has linear growth, $\Vert u(t - \varepsilon^2, x_0) \Vert_p$ is bounded above and \eqref{rd07_02e7} holds, we have
\begin{align}\label{eq:B1}
\|\mathcal{B}_1(t, \varepsilon, x, x_0)\|^2_p& \asymp \int_{t-\varepsilon^2}^t\int_{B(x_0, 2\varepsilon)} (G(t-s, x-y)- G(t-s, x_0-y))^2\, \d y\d s\nonumber\\
 &\leq \int_{0}^t\int_{\R} (G(t-s, x-y)- G(t-s, x_0-y))^2 \, \d y\d s \lesssim |x-x_0|,
\end{align}
by \cite[(4.3.10)]{dss}. Using Burkholder's and Minkowski's inequalities, we see that
\begin{align*}
           &\|\mathcal{A}(t, \varepsilon, x, x_0)-\mathcal{B}_\sigma(t, \varepsilon, x, x_0)\|_p^2 \\
           &\qquad \lesssim \int_{t-\varepsilon^2}^t\int_{B(x_0, 2\varepsilon)} (G(t-s, x-y)- G(t-s, x_0-y))^2  \\
           &\qquad\qquad\qquad\qquad \qquad\qquad\times  \| \sigma(u(t-\varepsilon^2\,,x_0)) -\sigma(u(s\,,y))\|_p^2 \, \d y\d s\\
           &\qquad \lesssim \int_{t-\varepsilon^2}^t\int_{B(x_0, 2\varepsilon)} (G(t-s, x-y)- G(t-s, x_0-y))^2   \\
           & \qquad\qquad\qquad\qquad \qquad\qquad\times  (|t-\varepsilon^2-s|^{1/2}+|x_0-y|)\d y\d s\\
           &\qquad \lesssim \varepsilon\,   \int_{0}^t\int_{\R} (G(t-s, x-y)- G(t-s, x_0-y))^2  \d y\d s \\
           &\qquad \lesssim \varepsilon\, |x-x_0|,
           \end{align*}
           where in the second inequality, we use the Lipschitz property of $\sigma$ and the estimate
           \begin{align*}
           \sup_{(t,x), (s,y)\in [0, T]\times \R}\frac{\|u(t\,,x)-u(s\,,y)\|_p}{|t-s|^{1/4}+|x-y|^{1/2}}<\infty 
\end{align*}
(see \cite[Proposition 4.3.2]{dss}),           and in the fourth inequality, we use again \cite[(4.3.10)]{dss}. We take $\ep = |x-x_0|^\half$, to obtain 
           \begin{align}\label{eq:A-B}
           \|\mathcal{A}(t, |x-x_0|^\half, x, x_0)-\mathcal{B}_\sigma(t, |x-x_0|^\half, x, x_0)\|_p^2 \lesssim |x-x_0|^{3/2}.
           \end{align}
Hence, we deduce from \eqref{eq:u-A} and \eqref{eq:A-B} that
          \begin{align}\label{eq:u-B}
           \|u(t\,,x)- u(t\,,x_0) - \mathcal{B}_\sigma(t,|x-x_0|^\half, x, x_0)\|^2_p \lesssim |x-x_0|^{3/2}.
           \end{align}
           
           This inequality also applies to the case $\sigma\equiv 1$, which yields that
\begin{align}\label{eq:V-B}
            \left\|v(t\,,x)- v(t\,,x_0)  - \cB_1(t, |x-x_0|^\half, x, x_0) \right\Vert_p^2 
             \lesssim |x-x_0|^{3/2}.
\end{align}

Because $\sigma$ is Lipschitz, \cite[Theorem 4.3.4]{dss} implies that
\begin{align*}
           \|\sigma(u(t-|x-x_0|, x_0))- \sigma(u(t\,,x_0))\|_p \lesssim |x-x_0|^{1/4}.
 \end{align*}
Therefore,
\begin{align*}
    &  \left\Vert u(t\,,x)- u(t\,, x_0)-\sigma(u(t\,,x_0))(v(t,x)-v(t,x_0)) \right\Vert_p \\
       &\qquad\qquad \leq \Vert u(t\,,x)- u(t\,,x_0) - \mathcal{B}_\sigma(t,|x-x_0|^\half, x, x_0)\Vert_p \\
        &\qquad \qquad\qquad +  \Vert \mathcal{B}_\sigma(t,|x-x_0|^\half, x, x_0) - \sigma(u(t, x_0)) \cB_1(t,|x-x_0|^\half, x, x_0)\Vert_p \\
         &\qquad \qquad\qquad+   \Vert  \sigma(u(t, x_0)) [\cB_1(t,|x-x_0|^\half, x, x_0) - (v(t,x)-v(t,x_0))] \Vert_p.
\end{align*}
By \eqref{eq:u-B}, the first term is bounded by a constant multiple of $|x-x_0|^{3/4}$. By the Cauchy-Schwarz inequality, the second term is bounded above by a constant multiple of 
\begin{align*}
     & \Vert \sigma(u(t - |x-x_0|, x_0))   - \sigma(u(t, x_0)) \Vert_{2p}\ \Vert \cB_1(t,|x-x_0|^\half, x, x_0)\Vert_{2p} \\
     &\qquad \lesssim |x-x_0|^{\frac{1}{4}}\, |x-x_0|^{\frac12} = |x-x_0|^{\frac{3}{4}},
\end{align*}
where the inequality holds by \eqref{eq:B1}.
The last term is bounded above by a constant multiple of 
\begin{align*}
    \Vert \cB_1(t,|x-x_0|^\half, x, x_0) - (v(t,x)-v(t,x_0)) \Vert_{2p} \lesssim |x-x_0|^{\frac{3}{4}}
\end{align*}
by \eqref{eq:V-B}.
The proof is complete.                
 \end{proof}

 \section{Proof of Theorem \ref{th2017-11-23-1}}\label{rd08_11s1}

This section does not make use of the material in Section \ref{rd08_04s1}.
 
\begin{proof}[Proof of Theorem \ref{th2017-11-23-1}]
For all nonnegative integers $n$, $m$ and $\ell$, set $t_m^n:=m\, 2^{-nH_1^{-1}}$, $x_{\ell}^n:=\ell\, 2^{-nH_2^{-1}}$,
and
\begin{equation*} 
	I^n_m = [t_m^n,t_{m+1}^n],\qquad J^n_{\ell} = [x_{\ell}^n,x_{\ell+1}^n],\qquad R^n_{m,\ell} = I^n_m \times J^n_{\ell}.
\end{equation*}
Proceeding as in the proof of \cite[Theorem 3.1]{DKN07}, we will show below that it suffices to check that there is $c > 0$ such that for all  $z\in [-M, M]$, for all sufficiently large $n$ and for all $m,\ell$ such that $R^n_{m,\ell} \cap  (I \times J) \neq \emptyset$,  
\begin{align}\label{eq:smallp}
         \mathcal{P}:= \P\left\{\inf_{(t,x)\in R^n_{m,\ell}} |u(t,x)-z|\leq 2^{-n} \right\} \leq c\, 2^{-n}.
\end{align}
Since the coordinates of $U$ are independent copies of the solution to \eqref{eq:SHE}, this will imply that for $z_0 \in \R^d$ with $\vert z_0 \vert \leq M$,
\begin{align}\label{rd04_01e3}
          \P\left\{\inf_{(t,x)\in R^n_{m,\ell}} |U(t,x)-z_0 |\leq 2^{-n} \right\} \leq c^d\, 2^{-nd},
\end{align}
and this corresponds to $\beta = d$ in \cite[Theorem 3.1]{DKN07}.

Fix $n$, $m$ and $\ell$ such that $R^n_{m,\ell} \cap ( I \times J) \neq \emptyset$. By the reverse triangle inequality, 
\begin{align}\label{rd04_03e1}
           \mathcal{P} \leq \P\left\{|u(t_m^n, x_{\ell}^n)-z| \leq 2^{-n} + \sup_{(t,x)\in R^n_{m,\ell}} |u(t,x)-u(t_m^n, x_\ell^n)|\right\}. 
 \end{align}
        Denote $L_{m, \ell}^n(t, x) := L_{t_m^n, x_\ell^n}(t, x)$, where $L_{t_0,x_0}(t, x)$ is defined in \eqref{rd04_03e2}, and set
 \[
     \XX_{m, \ell}^n := \XX_n(t_m^n, x_\ell^n) =\sup_{(t, x) \in R^n_{m,\ell}} | L_{m, \ell}^n(t, x)|, 
 \]
as in \eqref{rd04_01e1}. Then
\[
    \mathcal{P} \leq  \P\left\{|u(t_m^n, x_{\ell}^n)-z| \leq 2^{-n} + \XX_{m, \ell}^n +  \vert X_{t_m^n, x_\ell^n}\vert \sup_{(t,x)\in R^n_{m,\ell}} |v(t,x)-v(t_m^n, x_\ell^n)|\right\} .
\]
 Since $\vert X_{t_m^n, x_\ell^n}\vert \leq  C$ by assumption, 
 \begin{align*}
 \mathcal{P} &\leq  \P\left\{|u(t_m^n, x_{\ell}^n)-z| \leq 2^{-n} + \XX_{m, \ell}^n + C \sup_{(t,x)\in R^n_{m,\ell}} |v(t,x)-v(t_m^n, x_\ell^n)|\right\} .
 \end{align*}
 For $p \geq 1$ large enough, $(\alpha - 1) p > 1$ because  $\alpha > 1$. Therefore, for large $p$, by Markov's inequality and Property \eqref{rd04_01e2}, 
 \begin{align*}
 \P\{\XX_{m, \ell}^n > 2^{-n} \} \leq 2^{p n}\, \E\left[(\XX_{m, \ell}^n)^p \right] \leq C_{T, p}\, 2^{-(\alpha  - 1) p n} \leq 2^{-n}.
 \end{align*}
 It follows that for large $n$,
  \begin{align}\nonumber
     \mathcal{P}  &\leq 2^{-n} + \P\left\{|u(t_m^n, x_{\ell}^n)-z| \leq 2^{-n+1} + C \sup_{(t,x)\in R^n_{m,\ell}} |v(t,x)-v(t_m^n, x_\ell^n)|\right\} \\
     &\leq 2^{-n} + \mathcal{P}_1+ \mathcal{P}_2,
 \label{rd07_26e1}
 \end{align}
 where
 \begin{align*}
\mathcal{P}_1&=   \P\left\{|u(t_m^n, x_{\ell}^n)-z| \leq 2^{-n+1} + C \sup_{(t,x)\in R^n_{m,\ell}} (v(t\,,x)-v(t_m^n, x_\ell^n))\right\},\\
\mathcal{P}_2&=   \P\left\{|u(t_m^n, x_{\ell}^n)-z| \leq 2^{-n+1}  + C \sup_{(t,x)\in R^n_{m,\ell}} (-v(t\,,x)+v(t_m^n, x_\ell^n))\right\}.
\end{align*} 

Using \eqref{eq2017-11-23-5} for $p_{+}(\cdot, \cdot)$, we will show that 
$\mathcal{P}_1 \leq c_+\, 2^{-n}$, for some constant $c_+$, and a similar estimate for $\mathcal{P}_2$ can be obtained by using \eqref{eq2017-11-23-5} for $p_{-}(\cdot, \cdot)$.

For this, notice that
\begin{align}
\mathcal{P}_1&\leq \mbox{P}\Bigg(\left\{|u(t_m^n, x_\ell^n) - z| \leq 2^{-n+1} + C \sup_{(t, x) \in R_{k, l}^n} (v(t, x) - v(t_m^n, x_\ell^n)) \right\} \nonumber \\
&\qquad \qquad\qquad \bigcap \left\{\sup_{(t, x) \in R_{k, l}^n}v(t, x) - v(t_m^n, x_\ell^n) \geq 2^{-n}\right\}\Bigg)\nonumber \\
& \qquad\qquad + \mbox{P}\left\{|u(t_m^n, x_\ell^n) - z| \leq (C+2)\, 2^{-n}\right\}. \label{eq2017-11-23-4}
\end{align}
For the first term in \eqref{eq2017-11-23-4}, we apply assumption \eqref{eq2017-11-23-5} with $\eta = 2^{-n}$
to see that it is equal to
\begin{align} \nonumber
& \int_{2^{-n}}^{\infty} \d z_2\int_{z - 2^{-n+1} - C z_2}^{z + 2^{-n+1} + C z_2} \d z_1\, p_+(z_1, z_2)   \leq c\int_{2^{-n}}^{\infty} \d z_2\int_{z - C z_2 - 2^{-n+1}}^{z + C z_2 + 2^{-n+1} }\d z_1 \, 2^{n}\, \exp\left(- \frac{z_2^2}{c\, 2^{-2n}}\right)  \\ 
 &\qquad\qquad \asymp \int_{2^{-n}}^{\infty} \d z_2\, (z_2 + 2^{-n}) \, 2^{n}\, \exp(- 2^{2n}\, z_2^2/c)= c'\, 2^{-n},
 \label{eq2017-11-23-6}
\end{align}
where, in the last equality, we have used the change of variables $z_2 = 2^{-n}\, \tilde{z}_2$.

Since the density of $u(t_m^n, x_\ell^n)$ is bounded uniformly for $(t_m^n, x_\ell^n) \in I \times J$ by hypothesis (2), the second term in \eqref{eq2017-11-23-4} is bounded above by $c'' \, 2^{-n}$, for all $n \in \N$. Together with \eqref{eq2017-11-23-4} and \eqref{eq2017-11-23-6}, we deduce that
\begin{align*}
   \mathcal{P}_1 \leq c2^{-n}.
\end{align*}
Similarly, $\mathcal{P}_2 \leq c2^{-n}$, so we conclude from \eqref{rd04_03e1} and \eqref{rd07_26e1} that
\begin{align*}
\mbox{P}\left\{\inf_{(t, x) \in R_{k, \ell}^n}|u(t, x) - z| \leq 2^{-n}\right\} \leq c2^{-n},
\end{align*}
which is \eqref{eq:smallp} and which implies \eqref{rd04_01e3}.

Now we use the estimate in \eqref{rd04_01e3} to prove Theorem \ref{th2017-11-23-1}, using the arguments in the proof of  \cite[Theorem 3.1]{DKN07}. Assume that $d - H_1^{-1} - H_2^{-1} > 0$ and $\mathscr{H}_{d - H_1^{-1} - H_2^{-1}}(A) < \infty$, otherwise there is nothing to prove.
Fix $\ep \in (0, 1)$ and $n \in \mathbb{N}$ such that $2^{-n - 1} < \ep \leq 2^{-n}$, and write
\begin{align*}
\mbox{P}\left\{U(I \times J) \cap B(z_0, \ep) \neq \emptyset \right\} \leq \sum_{(m, \ell): R_{m, \ell}^n \cap (I \times J) \neq \emptyset}\mbox{P}\left\{U(R_{m, \ell}^n) \cap B(z_0, 2^{-n}) \neq \emptyset \right\},
\end{align*}
where $B(z_0, \ep)$ denotes the Euclidean ball centered at $z_0$ with radius $\ep$.

 The number of pairs $(m, \ell)$ involved in the sum is at most $2^{n(H_1^{-1} + H_2^{-1})}$ times a constant. The bound \eqref{rd04_01e3} implies that for all $z \in A$ and large $n$,
\begin{align}\label{eq2017-08-18-50}
\mbox{P}\left\{U(I \times J) \cap B(z_0, \ep) \neq \emptyset \right\} &\leq \tilde{C}\, 2^{n(H_1^{-1} + H_2^{-1})} \, 2^{-nd}
 \leq C \ep^{d - H_1^{-1} - H_2^{-1}}.
\end{align}
Note that $C$ only depends on $I$ and $J$ (and does not depend on $n$ or $\ep$). Therefore, \eqref{eq2017-08-18-50} is valid for all $\ep \in (0, 1)$.

Now we use a covering argument: Choose $\tilde{\ep} \in (0, 1)$ and let $(B_i,\, i \geq 1)$ be a sequence of open balls in $\mathbb{R}^d$ with respective radii $r_i \in (0, \tilde{\ep})$ such that
\begin{align}\label{eq2017-08-18-60}
A \subseteq \bigcup_{i = 1}^{\infty}B_i \quad \mbox{and} \quad \sum_{i = 1}^{\infty}(2r_i)^{d - H_1^{-1} - H_2^{-1}} \leq \mathscr{H}_{d - H_1^{-1} - H_2^{-1}}(A) + \tilde{\ep}.
\end{align}
Because $\mbox{P}\left\{U(I \times J) \cap A \neq \emptyset \right\}$ is at most $\sum_{i = 1}^{\infty}\mbox{P}\left\{U(I \times J) \cap B_i \neq \emptyset \right\}$, the bounds in \eqref{eq2017-08-18-50} and \eqref{eq2017-08-18-60} together imply that
\begin{align*}
\mbox{P}\left\{U(I \times J) \cap A \neq \emptyset \right\} \leq C \sum_{i = 1}^{\infty} r_i^{d - H_1^{-1} - H_2^{-1}} \leq \tilde C \left(\mathscr{H}_{d - H_1^{-1} - H_2^{-1}}(A) + \tilde{\ep}\right).
\end{align*}
Letting $\tilde{\ep} \rightarrow 0^{+}$, we obtain \eqref{eq2017-11-23-11}. This completes the proof of Theorem \ref{th2017-11-23-1}.
\end{proof}

 \section{Proof of Theorem \ref{thm1}}\label{rd08_11s2}
 
\subsection{Controlling the supremum of $v$ by means of H\"older seminorms}

Choose an integer $p_0$ and a real number $\gamma_0$ such that
\begin{equation}\label{rd07_26e2}
p_0>\gamma_0 >4.
\end{equation}
Fix $\theta \in \left(0,\frac 12 \right)$ and set
\begin{align}\label{eq:theta12}
\theta_1=\frac 12-\theta, \qquad \theta_2=2\theta.
\end{align}
We assume that $p_0$ is large enough so that there exist $\gamma_1$ and $\gamma_2$ such that
\[
\frac 1{2p_0} <\gamma_1 <\frac {\theta_1}2 - \frac 1{2p_0}, \qquad
\frac 1{2p_0} <\gamma_2 <\frac {\theta_2}2 - \frac 1{2p_0}
\]
and
\begin{equation}\label{rd07_01e1}
2\gamma_1+\gamma_2=\frac {\gamma_0-1}{2p_0}.
\end{equation}

Let $(t_0, x_0) \in I \times J$.  For $r\ge 0$ and $z \in \R$, we define
 \begin{align*}
Y_1(r) &=\int_{[t_0,r]^2} \frac {(v(t,x_0)-v(s,x_0))^{2p_0}}{|t-s|^{\gamma_0/2}}\, \d s\d t, \\
Y_2(z)&=\int_{[x_0,z]^2} \frac {(v(t_0,x)-v(t_0,y))^{2p_0}}{|x-y|^{\gamma_0-2} }\, \d x\d y, \\
Y_3(r,z)&=\int_{[t_0,r]^2} \d s\d t \int_{[x_0,z]^2} \d x\d y\, \frac {(v(t,x)+v(s,y)-v(t,y)-v(s,x))^{2p_0}}{|t-s|^{1+2p_0\gamma_1}|x-y|^{1+2p_0\gamma_2}}
\end{align*}
 (here, the interval $[t_0, r]$ is defined as $[r, t_0]$ if $r<t_0$
 ).
 
\begin{lemma}\label{rd07_26l1}
(a) $Y_1(r)<\infty$ a.s.~and  for any $p\ge 1$,
\begin{align}\label{eq:moment1}
   \E[Y_1(r)^p] \lesssim  |r-t_0|^{p \left(2+ \frac { p_0-\gamma_0}2 \right)}.
\end{align}

(b)  $Y_2(z)<\infty$ a.s. and for any $p\ge 1$,
\begin{align}\label{eq:moment2}
   \E[Y_2(z)^p] \lesssim  |z-x_0|^{p(4+p_0-\gamma_0)}.
\end{align}

(c) $Y_3(r, z)<\infty$ a.s. and for any $p\ge 1$,
 \begin{align}\label{eq:moment3}
   \E[Y_3(r,z)^p]  &\lesssim |r-t_0|^{p(1+p_0(\theta_1-2\gamma_1))}\, |z-x_0|^{p(1+p_0(\theta_2-2\gamma_2))}.
\end{align}
 \end{lemma}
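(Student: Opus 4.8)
The plan is to prove the three moment bounds by direct computation, exploiting the Gaussianity of $v$ together with the known behaviour of the $L^2$-increments of the linear stochastic heat equation. Recall that $v(t,x)-v(s,y)$ is a centered Gaussian random variable, so for any even moment $\E[(v(t,x)-v(s,y))^{2p_0}] = c_{p_0}\,(\E[(v(t,x)-v(s,y))^2])^{p_0}$, and more generally every increment appearing in $Y_1$, $Y_2$, $Y_3$ is Gaussian with an explicitly controllable variance. The starting point is therefore the standard estimates (see \cite[Proposition 4.3.2]{dss} and the Fourier computations already used in Section \ref{rd08_04s1}, e.g. around \eqref{eq:I1}): $\|v(t,x_0)-v(s,x_0)\|_2^2 \lesssim |t-s|^{1/2}$, $\|v(t_0,x)-v(t_0,y)\|_2^2 \lesssim |x-y|$, and for the rectangular increment $\|v(t,x)+v(s,y)-v(t,y)-v(s,x)\|_2^2 \lesssim |t-s|^{\theta_1}|x-y|^{\theta_2}$ for any $\theta_1 \in (0,1/2)$, $\theta_2 \in (0,1)$ with $\theta_1+\theta_2 \le$ (the Hölder budget), which is exactly why the parametrization \eqref{eq:theta12} with $\theta_1+\theta_2 = 1/2+\theta$ is chosen; a mixed increment of the heat equation on $[0,T]\times\R$ enjoys regularity $1/4$ in time and $1/2$ in space, and the product estimate follows by interpolating the two one-variable bounds.

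With these variance bounds in hand, the argument for part (a) is: raise to the $2p_0$-th power using Gaussianity to get $\E[(v(t,x_0)-v(s,x_0))^{2p_0}] \lesssim |t-s|^{p_0/2}$, substitute into the definition of $Y_1(r)$, and obtain $\E[Y_1(r)] \lesssim \int_{[t_0,r]^2} |t-s|^{p_0/2 - \gamma_0/2}\,\d s\,\d t$. Since $p_0 > \gamma_0 > 4$, the exponent $p_0/2-\gamma_0/2 > -1$ near the diagonal is not automatic, but in fact $p_0/2 - \gamma_0/2 > 2$ would be too strong; what we actually need is just $p_0/2 - \gamma_0/2 > -1$, which holds because $p_0 - \gamma_0 > -2$ (indeed $p_0 > \gamma_0$). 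Integrating a power $|t-s|^{a}$ with $a > -1$ over a square of side $|r-t_0|$ gives $|r-t_0|^{a+2}$, yielding $\E[Y_1(r)] \lesssim |r-t_0|^{2 + (p_0-\gamma_0)/2}$, which is \eqref{eq:moment1} for $p=1$. For general $p\ge 1$ one applies Minkowski's integral inequality in $L^p(\Omega)$ to the double integral defining $Y_1(r)$: $\|Y_1(r)\|_p \le \int_{[t_0,r]^2} |t-s|^{-\gamma_0/2}\,\|(v(t,x_0)-v(s,x_0))^{2p_0}\|_p\,\d s\,\d t$ and the $L^p$-norm of the $2p_0$-th power of a Gaussian is again a constant multiple of $|t-s|^{p_0/2}$ by Gaussian hypercontractivity, giving the same integral up to constants; this produces $\E[Y_1(r)^p]^{1/p} \lesssim |r-t_0|^{2+(p_0-\gamma_0)/2}$, i.e. \eqref{eq:moment1}. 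Part (b) is entirely analogous: $\E[(v(t_0,x)-v(t_0,y))^{2p_0}] \lesssim |x-y|^{p_0}$, so $\E[Y_2(z)] \lesssim \int_{[x_0,z]^2}|x-y|^{p_0-\gamma_0+2}\,\d x\,\d y \lesssim |z-x_0|^{p_0-\gamma_0+4}$, using $p_0 - \gamma_0 + 2 > -1$; Minkowski again upgrades this to all $p \ge 1$.

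For part (c), the same scheme applies with the rectangular increment: $\E[(v(t,x)+v(s,y)-v(t,y)-v(s,x))^{2p_0}] \lesssim |t-s|^{p_0\theta_1}|x-y|^{p_0\theta_2}$, so $\E[Y_3(r,z)]$ is bounded by a product of the one-dimensional integrals $\int_{[t_0,r]^2}|t-s|^{p_0\theta_1 - 1 - 2p_0\gamma_1}\,\d s\,\d t$ and $\int_{[x_0,z]^2}|x-y|^{p_0\theta_2 - 1 - 2p_0\gamma_2}\,\d x\,\d y$ (the kernel factorizes). The exponents are $p_0\theta_1 - 1 - 2p_0\gamma_1 = -1 + p_0(\theta_1 - 2\gamma_1)$ and $-1 + p_0(\theta_2 - 2\gamma_2)$; the constraints $\gamma_1 < \theta_1/2 - 1/(2p_0)$ and $\gamma_2 < \theta_2/2 - 1/(2p_0)$ are precisely what guarantee $\theta_1 - 2\gamma_1 > 1/p_0 > 0$ and $\theta_2 - 2\gamma_2 > 1/p_0 > 0$, hence the exponents exceed $-1$ and the integrals over the respective squares converge and evaluate to $|r-t_0|^{1+p_0(\theta_1-2\gamma_1)}$ and $|z-x_0|^{1+p_0(\theta_2-2\gamma_2)}$. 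Multiplying gives \eqref{eq:moment3} for $p=1$, and Minkowski's integral inequality over the four-fold integral, combined once more with Gaussian hypercontractivity for the $L^p(\Omega)$-norm of the $2p_0$-th power, extends it to all $p \ge 1$.

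The main obstacle, and the only genuinely non-routine point, is establishing the correct rectangular-increment variance estimate $\|v(t,x)+v(s,y)-v(t,y)-v(s,x)\|_2^2 \lesssim |t-s|^{\theta_1}|x-y|^{\theta_2}$ with the \emph{product} structure and the flexibility to split the total Hölder exponent $1/2+\theta$ between the time and space variables; the ``one half of'' splitting of regularity between the heat kernel's temporal and spatial smoothing has to be done via a Fourier/Plancherel computation on $\widehat{G}(t,\xi)=e^{-t\xi^2/2}$ together with the elementary bounds $|1-e^{-a}|\le a^{\theta_1}$ and $|1-\cos b|\le |b|^{\theta_2}$ (up to constants), exactly the mechanism already deployed near \eqref{eq:I1}–\eqref{eq:I2}. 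Everything else — Gaussianity to pass to arbitrary moments, Minkowski to pull the $L^p(\Omega)$-norm inside the deterministic integrals, and the convergence of the resulting Beta-type integrals — is bookkeeping governed by the inequalities \eqref{rd07_26e2}–\eqref{rd07_01e1}.
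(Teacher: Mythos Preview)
Your proposal is correct and follows essentially the same approach as the paper: Gaussianity to control moments of increments, then integrate the resulting power functions over the square (or four-fold product), with the rectangular-increment estimate $\|v(t,x)+v(s,y)-v(t,y)-v(s,x)\|_2^2 \lesssim |t-s|^{\theta_1}|x-y|^{\theta_2}$ (which the paper records as \eqref{eq:rectangle}, citing \cite[Lemma 3.1]{DP1}) as the key input for part (c). The only cosmetic difference is that the paper uses the Jensen/H\"older bound $(\int f)^p \le |\text{domain}|^{p-1}\int f^p$ before taking expectations, whereas you use Minkowski's integral inequality on $\|Y_i\|_p$ directly; both routes yield the identical exponents.
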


 \begin{proof}
 (a) In order to check that $Y_1(r)<\infty$ a.s., by the H\"older-continuity properties of $t \mapsto v(t, x_0)$, it suffices to notice that $p_0/2 > \gamma_0/2$ by \eqref{rd07_26e2}. Further,
 \begin{align*}
\E[Y_1(r)^p] \lesssim  |r-t_0|^{2(p-1)}\int_{[t_0,r]^2} \frac {|t-s|^{pp_0/2}}{|t-s|^{p\gamma_0/2}} \, \d s\d t
\le  |r-t_0|^{p(2+ \frac { p_0-\gamma_0}2)},
\end{align*}
 which is \eqref{eq:moment1}.

(b)  Similarly, $Y_2(z)<\infty$ a.s.~because of the H\"older-continuity properties of $x \mapsto u(t_0, x)$ and the fact that $p_0 - \gamma_0 + 2 > 0$ by \eqref{rd07_26e2}. Further,
\begin{align*}
\E[Y_2(z)^p] \lesssim |z-x_0|^{2(p-1)}\int_{[x_0,z]^2} \frac {|x-y|^{pp_0}}{|x-y|^{p(\gamma_0-2)}}\, \d s\d t
\le  |z-x_0|^{p(4+p_0-\gamma_0)},
\end{align*}
which is \eqref{eq:moment2}.

(c) Because  $v$ is Gaussian, for $p\geq1$, we have 
\begin{align}\label{eq:rectangle}
&\|v(t,x)+v(s,y)-v(t,y)-v(s,x)\|_p^2 \nonumber\\
&\qquad \asymp \|v(t,x)+v(s,y)-v(t,y)-v(s,x)\|_2^2 \lesssim |t-s|^{\theta_1}|x-y|^{\theta_2},
\end{align}
where the second inequality holds by the same arguments as in \cite[Lemma 3.1]{DP1} with $\theta_1, \theta_2$ given in \eqref{eq:theta12}. 
Thus, for any $p\ge 1$, 
\begin{align*}
   \E[Y_3(r,z)^p]  &\lesssim (|r-t_0||z-x_0|)^{2(p-1)}\nonumber \\
&\qquad\qquad \times \int_{[t_0,r]^2}\d s\d t \int_{[x_0,z]^2} \d x\d y\, \frac {|t-s|^{p_0 p \theta_1} |x-y|^{p_0p\theta_2}}{|t-s|^{p(1+2p_0\gamma_1)}|x-y|^{p(1+2p_0\gamma_2)}} \nonumber\\
&\le |r-t_0|^{p(1+p_0(\theta_1-2\gamma_1))}|z-x_0|^{p(1+p_0(\theta_2-2\gamma_2))},
\end{align*}
which is \eqref{eq:moment3}. This implies in particular that $Y_3(r, z) < \infty$ a.s.
\end{proof}

For $r\ge 0$ and $z \in \R$,  we set 
 \begin{equation}\label{rd04_04e1}
 Z_{r,z} = Z_{r, z; t_0, x_0} :=  Y_1(r) + Y_2(z) + Y_3(r,z).
 \end{equation}

\begin{proposition}\label{rd04_03p1}
There is a constant $ c>0$ such that for all $\deltah > 0$, $a>0$, $r\in [t_0,t_0+\deltah^2]$ and $z\in [x_0,x_0+\deltah]$,
 \begin{equation} \label{R}
 Z_{r,z} \le R:=c\,a^{2p_0}\, \deltah^{4-\gamma_0}  \Longrightarrow
 \sup_{(t,x)\in [t_0,r] \times [x_0, z]} |v(t,x)-v(t_0,x_0)| \le a.
 \end{equation}
\end{proposition}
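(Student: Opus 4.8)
The plan is to deduce this from a Sobolev-type embedding / Garsia–Rodemich–Rumsey estimate in two dimensions, where the three quantities $Y_1$, $Y_2$, $Y_3$ respectively control the $t$-only increments along $x=x_0$, the $x$-only increments along $t=t_0$, and the genuine rectangular increments. Write $\Delta v(t,x;s,y) = v(t,x)+v(s,y)-v(t,y)-v(s,x)$ for the rectangular increment. The key observation is the decomposition
\begin{equation*}
 v(t,x)-v(t_0,x_0) = \bigl(v(t,x_0)-v(t_0,x_0)\bigr) + \bigl(v(t_0,x)-v(t_0,x_0)\bigr) + \Delta v(t,x;t_0,x_0),
\end{equation*}
valid on $[t_0,r]\times[x_0,z]$, so it suffices to bound each of the three terms separately by a constant multiple of $a$ in terms of $Y_1(r)$, $Y_2(z)$, $Y_3(r,z)$.

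First I would handle the purely temporal term: by the one-parameter Garsia–Rodemich–Rumsey inequality applied to $t\mapsto v(t,x_0)$ on $[t_0,r]$ with $\Psi(x)=|x|^{2p_0}$ and $p(u)=|u|^{\gamma_0/(2p_0)}$ (so that $p>2p_0$ requires $\gamma_0>4$, which holds by \eqref{rd07_26e2}), one gets
\begin{equation*}
 |v(t,x_0)-v(t_0,x_0)| \lesssim Y_1(r)^{1/(2p_0)}\, |r-t_0|^{\gamma_0/(4p_0)-1/(2p_0)},
\end{equation*}
and since $r-t_0\le\deltah^2$ this is $\lesssim Y_1(r)^{1/(2p_0)}\deltah^{(\gamma_0-2)/(2p_0)}$. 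Similarly, the one-parameter GRR inequality applied to $x\mapsto v(t_0,x)$ on $[x_0,z]$ with $\Psi(x)=|x|^{2p_0}$ and $p(u)=|u|^{(\gamma_0-2)/(2p_0)}$ (requiring $\gamma_0-2>2$, again from \eqref{rd07_26e2}) yields
\begin{equation*}
 |v(t_0,x)-v(t_0,x_0)| \lesssim Y_2(z)^{1/(2p_0)}\, |z-x_0|^{(\gamma_0-2)/(2p_0)-1/(2p_0)} \lesssim Y_2(z)^{1/(2p_0)}\deltah^{(\gamma_0-3)/(2p_0)}.
\end{equation*}
For the rectangular term I would invoke the two-parameter version of Garsia–Rodemich–Rumsey (the bound for the modulus of continuity of two-parameter increments, as used in \cite{DKN07, DP1}): with $\Psi(x)=|x|^{2p_0}$ and the product metric $|t-s|^{\gamma_1}|x-y|^{\gamma_2}$ together with the $L^{2p_0}$ bound from the integrand of $Y_3$, one obtains
\begin{equation*}
 |\Delta v(t,x;t_0,x_0)| \lesssim Y_3(r,z)^{1/(2p_0)}\, |r-t_0|^{\gamma_1}\, |z-x_0|^{\gamma_2} \lesssim Y_3(r,z)^{1/(2p_0)}\, \deltah^{2\gamma_1+\gamma_2} = Y_3(r,z)^{1/(2p_0)}\deltah^{(\gamma_0-1)/(2p_0)},
\end{equation*}
the last equality being exactly \eqref{rd07_01e1}.

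Combining the three bounds: on the event $Z_{r,z}\le R$ we have $Y_i \le R = c\,a^{2p_0}\deltah^{4-\gamma_0}$ for each $i$, hence $Y_i^{1/(2p_0)} \le c^{1/(2p_0)} a\, \deltah^{(4-\gamma_0)/(2p_0)}$. Multiplying by the respective powers of $\deltah$ above gives contributions of order $a\,\deltah^{(4-\gamma_0)/(2p_0)}\cdot\deltah^{(\gamma_0-2)/(2p_0)} = a\,\deltah^{1/p_0}$, and similarly $a\,\deltah^{(1-1)/(2p_0)} \cdot\deltah^{\text{(nonneg)}}$-type terms — in each case a nonnegative power of $\deltah$ — so for $\deltah$ bounded (say $\deltah\le 1$) each term is $\lesssim a$; adjusting the absolute constant $c$ in the definition of $R$ makes the sum $\le a$. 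Since the bounds are uniform over $(t,x)\in[t_0,r]\times[x_0,z]$, taking the supremum gives \eqref{R}. The main technical obstacle is setting up the two-parameter Garsia–Rodemich–Rumsey estimate for the rectangular increments with the correct anisotropic metric and extracting the clean power $|r-t_0|^{\gamma_1}|z-x_0|^{\gamma_2}$; the one-parameter pieces are routine, and the arithmetic of exponents is forced by \eqref{rd07_01e1} and the constraints on $\gamma_1,\gamma_2$.
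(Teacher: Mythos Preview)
Your approach is exactly the paper's: decompose $v(t,x)-v(t_0,x_0)$ into a temporal piece along $x=x_0$, a spatial piece along $t=t_0$, and a rectangular increment, then control each by the one-parameter (resp.\ iterated one-parameter) Garsia--Rodemich--Rumsey inequality in terms of $Y_1$, $Y_2$, $Y_3$.

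The execution, however, has arithmetic slips in the GRR exponents, and these matter. For the temporal piece the correct choice is $p(u)=u^{\gamma_0/(4p_0)}$ (so that $p(|t-s|)^{2p_0}=|t-s|^{\gamma_0/2}$ matches the denominator in $Y_1$), and the GRR output exponent on $|t-s|$ is $\gamma_0/(4p_0)-1/p_0=(\gamma_0-4)/(4p_0)$, not $\gamma_0/(4p_0)-1/(2p_0)$: the factor $\Psi^{-1}(B/u^2)$ in the GRR integral contributes $u^{-2/(2p_0)}=u^{-1/p_0}$. After using $|t-s|\le\deltah^2$ this becomes $\deltah^{(\gamma_0-4)/(2p_0)}$. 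For the spatial piece your $p$ is right, but the same $-1/p_0$ (not $-1/(2p_0)$) applies, giving $(\gamma_0-2)/(2p_0)-1/p_0=(\gamma_0-4)/(2p_0)$. For the rectangular piece the paper applies one-parameter GRR twice (first in $t$ on the map $t\mapsto \|v(t,\cdot)-v(s,\cdot)\|_{p_0,\gamma_2}$, then in $x$), losing $1/(2p_0)$ in each variable; the outcome is $\deltah^{2\gamma_1+\gamma_2-3/(2p_0)}=\deltah^{(\gamma_0-4)/(2p_0)}$ by \eqref{rd07_01e1}, not $\deltah^{2\gamma_1+\gamma_2}$.

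With the corrected exponents all three contributions are $\lesssim \deltah^{(\gamma_0-4)/(2p_0)} Z_{r,z}^{1/(2p_0)}$, and since $R^{1/(2p_0)}=c^{1/(2p_0)}a\,\deltah^{(4-\gamma_0)/(2p_0)}$ the powers of $\deltah$ cancel \emph{exactly}, yielding $\lesssim a$ for all $\deltah>0$. Your leftover positive powers of $\deltah$, and the resulting restriction to $\deltah\le 1$, are artifacts of these errors; note that the proposition is stated for all $\deltah>0$, and more to the point, the inequalities you write down are \emph{stronger} than what GRR actually delivers (larger exponent on a base $<1$), so those steps do not hold as written.
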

 
 \begin{proof}
 By the triangle inequality,  
\begin{equation} \label{a1}
   \sup_{(t,x)\in [t_0,r] \times [x_0, z]} |v(t,x)-v(t_0,x_0)|  \le M_1(r)+M_2(z) +M_3(r, z),
\end{equation}
 where
\begin{align*}
M_1(r) &=   \sup_{t_0\le t \le r } \vert v(t,x_0)-v(t_0,x_0)\vert,   \\
M_2(z) &=    \sup _{x_0\le x \le z} \vert v(t_0,x)-v(t_0,x_0)\vert, \\
M_3(r, z) &= \sup _{\substack {t_0\le t \le r \\ x_0\le x \le z}} \vert v(t,x)-v(t_0,x)-v(t,x_0)+v(t_0,x_0)\vert.
\end{align*}
We will compare $M_1(r)$ with $Y_1(r)$, $M_2(z)$ with $Y_2(z)$ and $M_3(r, z)$ with $Y_3(r, z)$.

By the Garsia-Rodemich-Rumsey lemma \cite[Lemma A.3.1]{nualart2006} (with $p(t) = t^{\gamma_0/(4 p_0)}$, $\Psi(x) = x^{2 p_0}$, $d=1$), for $s,t\in [t_0,r]$ such that $t_0\le r\le t_0+\deltah^2$,
\begin{align*}
|v(t,x_0)-v(s,x_0)| & \lesssim |t-s|^{\frac {\gamma_0-4}{4p_0} } \, Y_1(r) ^{\frac 1{2p_0}} \le c\, \deltah^{\frac {\gamma_0-4}{2p_0} }
Y_1(r)^{\frac 1{2p_0}}.
\end{align*}
Taking $s=t_0$, we see that for all $t_0 \leq r \le t_0+\deltah^2$ and  $t\in [t_0,r]$,
   \begin{equation} \label{EQ2}
|v(t,x_0)-v(t_0,x_0)| \lesssim \deltah^{\frac {\gamma_0-4}{2p_0} } \,Y_1(r) ^{\frac 1{2p_0}}\quad\text{i.e.}\quad M_1(r) \lesssim \deltah^{\frac {\gamma_0-4}{2p_0} } \,Y_1(r) ^{\frac 1{2p_0}}.
\end{equation}

By the Garsia-Rodemich-Rumsey lemma \cite[Lemma A.3.1]{nualart2006} (with $p(t) = t^{(\gamma_0 - 2)/(2 p_0)}$, $\Psi(x) = x^{2 p_0}$, $d=1$),  assuming that $x_0\le z\le x_0+\deltah$, for $x,y\in [x_0,z]$, we obtain
\[
|v(t_0,x)-v(t_0,y)| \lesssim |x-y|^{\frac {\gamma_0-4}{2p_0}}  Y_2(z) ^{\frac 1{2p_0}} \lesssim\deltah^{\frac {\gamma_0-4}{2p_0}} 
Y_2(z) ^{\frac 1{2p_0}}.
\]
Taking $y=x_0$ and assuming that $x_0 \leq z \le x_0+\deltah$,  for all $x\in [x_0,z]$, we obtain
   \begin{equation} \label{EQ1}
|v(t_0,x)-v(t_0,x_0)| \lesssim \deltah^{\frac {\gamma_0-4}{2p_0}} \, Y_2(z) ^{\frac 1{2p_0}}\quad\text{i.e.}\quad M_2(z) \lesssim \deltah^{\frac {\gamma_0-4}{2p_0}} \, Y_2(z) ^{\frac 1{2p_0}}.
\end{equation}

 For any $z \in \R$, $p \geq 1$, $\gamma\in \left(1/(2p),1\right)$ and continuous function $f$ defined on $[x_0,z]$, we define
 \[
     \|f\|_{p,\gamma}:=\left( \int_{[x_0,z]^2} \frac{|f(x)-f(y)|^{2p}}{|x-y|^{1+2p\gamma}}\, \d x\d t \right)^{1/(2p)}.
 \]
 Then, for $r\in \R$,
 \begin{align*}
 Y_3(r,z)&=\int_{[t_0,r]^2}\d s\d t \int_{[x_0,z]^2} \d x\d y\, \frac {(v(t,x)-v(s,x) - (v(t,y) - v(s,y)))^{2p_0}}{|t-s|^{1+2p_0\gamma_1}|x-y|^{1+2p_0\gamma_2}}\\
 &=\int_{[t_0,r]^2} \frac { \| v(t,*) - v(s,*) \|^{2p_0}_{p_0,\gamma_2}}
 {|t-s|^{1+2p_0\gamma_1}}\, \d s\d t.
 \end{align*}
By the Garsia-Rodemich-Rumsey lemma \cite[Lemma A.3.1]{nualart2006} (with $p(t) = t^{(1 + 2 p_0 \gamma_1)/(2 p_0)}$, $\Psi(x) = x^{2 p_0}$, d=1), assuming that  $t_0\le r\le t_0+\deltah^2$, for $t,s\in [t_0,r]$, we obtain
 \begin{align*}
 \|v(t,*)-v(s,*)\|_{p_0,\gamma_2}& \lesssim |t-s|^{\gamma_1-\frac 1{2p_0}} \,
 Y_3(r,z)^{\frac 1{2p_0}}\le  \deltah^{2\gamma_1-\frac 1{p_0}}\, Y_3(r,z)^{\frac 1{2p_0}}.
 \end{align*}
 Putting $s=t_0$,  assuming that   $t_0\le r\le t_0+\deltah^2$, for $t\in [t_0,r]$,  this yields
  \[
 \|v(t,*)-v(t_0,*)\|_{p_0,\gamma_2} \lesssim  \deltah^{2\gamma_1-\frac 1{p_0}}\, Y_3(r,z)^{\frac 1{2p_0}}.
 \]
 Using again the Garsia-Rodemich-Rumsey lemma \cite[Lemma A.3.1]{nualart2006} (with $p(t) = t^{(1 + 2 p_0 \gamma_2)/(2 p_0)}$, $\Psi(x) = x^{2 p_0}$, assuming that  $t_0\le r\le t_0+\deltah^2$ and $x_0\le z\le x_0+\deltah$, for $t\in [t_0,r]$,  
 and $x,y \in [x_0,z]$,  we obtain
  \begin{align*}
 |v(t,x)-v(t_0,x)-v(t,y)+v(t_0,y)| &\lesssim  |x-y|^{\gamma_2-\frac1{2p_0}} \, \deltah^{2\gamma_1-\frac 1{p_0}}\,  Y_3(r,z)^{\frac 1{2p_0}} \\
 &\le  \deltah ^{\frac {\gamma_0-4}{2p_0}} \, Y_3(r,z)^{\frac 1{2p_0}} .
 \end{align*}
 Putting $y=x_0$, we get
   \begin{equation*} 
 |v(t,x)-v(t_0,x)-v(t,x_0)+v(t_0,x_0)| \lesssim
  \deltah ^{\frac {\gamma_0-4}{2p_0}} \, Y_3(r,z)^{\frac 1{2p_0}},
 \end{equation*}
 that is,
    \begin{equation} \label{EQ3}
    M_3(r, z) \lesssim \deltah ^{\frac {\gamma_0-4}{2p_0}} \, Y_3(r,z)^{\frac 1{2p_0}}.
 \end{equation}
 From \eqref{a1}--(\ref{EQ3}),  assuming that   $t_0 \le r\le t_0+\deltah^2$ and $x_0\le z\le x_0+\deltah$, we obtain
  \[
 \sup _{\substack {t_0\le t \le r \\
x_0\le x \le z}} |v(t,x)-v(t_0,x_0)| \lesssim     \deltah ^{\frac {\gamma_0-4}{2p_0}}\,
 Z_{r,z}^{\frac 1{2p_0}}.
 \]
 This establishes \eqref{R}.
 \end{proof}

 \subsection{Density formula for the random vector $F$}
 
 We begin by recalling some basic elements of Malliavin calculus, following \cite{nualart2006}. 
 We associate to the space-time white noise $\dot W$ an isonormal Gaussian process $(W(h),\, h \in \HH)$, where the Hilbert space $\HH = L^2(\R_+ \times \R)$ is endowed with its usual inner product $\langle \cdot, \cdot \rangle_\HH$, defined by
 \begin{align*}
      W(h) = \int_{\R_+ \times \R} h(t, x) W(\d t\, \d x). 
 \end{align*}
 The (Malliavin) derivative $D S$ of a smooth random variable 
 \begin{align*}
    S = f(W(h_1),\dots,W(h_n)), 
\end{align*}
where $f$ is a $C^\infty$ function such that $f$ and all of its derivatives have polynomial growth, is defined as in \cite[Section 1.2 \& 1.2.1]{nualart2006}, as well as higher order derivatives $D^k S$ and the seminorms
 \begin{align*}
    \Vert S \Vert_{k, p} = \left[ \E[\vert S \vert^p] + \sum_{j=1}^k  \E\left[\Vert D^j S \Vert_{\HH^{\otimes j}} \right]\right]^{\frac{1}{p}},
 \end{align*}
 $p \geq 1$, $k\geq 1$. The completion of the family of smooth random variables with respect to $\Vert \cdot \Vert_{k, p} $ is denoted $\bD^{k, p}$.
 
 The adjoint of the operator $D$, which we denote $\delta$, is an unbounded operator on $L^2(\R_+ \times \R \times \Omega)$. The Skorohod integral of an element $\tilde u$ of $L^2(\R_+ \times \R \times \Omega)$ is defined in \cite[Section 1.3.2]{nualart2006} and is denoted $\delta(\tilde u)$. If $\tilde u$ is adapted to the filtration generated by the space-time white noise, then the Skorohod integral is equal to the It\^o-Walsh integral:
 \begin{align*}
     \delta(\tilde u) = \int_{\R_+ \times \R} \tilde u(t, x) W(\d t\, \d x).
 \end{align*}
 
 Concerning the solution $(u(t, x))$ to \eqref{eq:SHE}, when $\sigma$ is $k$ times continuously differentiable, then $u(t, x) \in \bD^{k, p}$, for all $k \geq 1$, $p \geq 1$ and $(t, x) \in \R_+^* \times \R$ (see \cite[Theorem 2.4.3 \& Proposition 2.4.4]{nualart2006} and \cite[Proposition 4.1]{DKN09}).
 \medskip
 
 \noindent{\em The density of a random vector as an iterated Skorohod integral}
 \medskip
 
 Recall that $F = (F_1, F_2)$, where
 \[
     F_1=u(t_0,x_0),\qquad\ F_2=  \sup _{\substack {t_0\le t \le t_0+\deltah_1 \\ x_0\le x \le x_0+\deltah_2}}
(v(t,x)-v(t_0,x_0)),
\]
and $(t_0, x_0) \in I \times J$.
 In this subsection, we will provide a formula for the probability density function $p_F: \R \times \R_+ \to \R$ of $F$ (see Proposition \ref{rd04_04p1}) and for the probability density function of a more general random vector $H = (H^1,\dots, H^k)$ (see Remark \ref{rd07_02r1}). We first introduce some notation.

Suppose that $I = [s_0, s_1]$ and fix $\rho>0$ strictly smaller than $s_0 \wedge \frac 9 2$ (the $\frac 9 2$ is used in the beginning of the proof of Lemma \ref{lem:gamma_11}).  Let $T_1 \in \R$ be such that $T_1 > s_1$, and
 let  $f:\R\to [0,1]$ be the function defined by 
\begin{equation}\label{rd05_21e1}
     f(r)= \frac{ r-t_0+\rho}{\rho}\, 1_{(t_0 - \rho, t_0)}(r) + 1_{[t_0, T_1)}(r) + (2 - \frac{r}{T_1})\, 1_{[T_1, 2T_1)}(r).
 \end{equation}
  Fix a large real number $K$ such that $J \subset [1-K, K-1]$. Let $g:\R\to [0,1]$  be the continuously differentiable function defined by
 \begin{align} \nonumber
 g(z) &= \frac 34 \e^{\frac 23(z+K)}\, 1_{(-\infty, -K]}(z) + \left(1 - [z-1+K]^2/4 \right) 1_{(-K, 1-K]}(z) \\ \nonumber
    &\qquad + 1_{(1-K, K-1]}(z) + \left(1- [z-K+1]^2/4 \right) 1_{(K-1, K]}(z) \\
    &\qquad + \frac 34\e^{-\frac 23 (z-K)}\, 1_{(K, \infty)}(z).
\label{rd05_21e2}
 \end{align}
 Notice in particular that $f = 1$ on $I$, $g=1$ on $J$, $f$ is continuous and piecewise $C^1$ and $g$ is $C^1$ and piecewise $C^2$.

 Recall that $\HH=L^2(\R_+\times \R)$.
  Define the $\HH$-valued random variable $u_{A,1}$ which, evaluated at $(r,z)$, is given by
 \begin{equation} \label{uA1}
 u_{A,1}(r,z)= \left ( \frac \partial{\partial r} -\frac 12\frac {\partial^2}{\partial z^2} \right)  [f(r)g(z)],
 \end{equation}
 where the derivatives are well-defined except at a few points.

 Let $\psi_0:\mathbb{R}_+ \rightarrow [0,1]$ be an infinitely differentiable nonincreasing function such that
 \[
 \psi_0(x)= 1 \quad\text{if } x \in [0, 1/2] \qquad\text{and}\qquad \psi_0(x)= 0 \quad\text{if } x \geq 1.
 \]
 
 For $z_2\in (0,\infty)$, set 
 \begin{equation}\label{rd04_04e2}
      a=z_2/2\qquad \text{and}\qquad A=\R \times (a,\infty).
 \end{equation}
 Let $\zeta > 0$ and
 \begin{equation}\label{rd08_04e2}
    R=R(z_2,\deltah) := c \left(\frac{z_2}{2}\right)^{2 p_0}\, \zeta^{4 - \gamma_0}
  \end{equation}
 be defined as in (\ref{R}) for $a=z_2/2$ (in Proposition \ref{rd04_04p1} below, we will take $\zeta=\max(\zeta_1^{1/2}, \zeta_2)$).
 
 Define $\psi(x) = \psi_{z_2,\deltah}(x) := \psi_0(x/R(z_2, \deltah))$, $x\in \mathbb{R}$ so that
  \[
 \psi(x)= 1 \quad\text{if } x \in [0, R/2], \qquad \psi(x)= 0 \quad\text{if } x \geq R,
 \]
 and 
 \begin{equation}\label{rd07_02e1}
    \|\psi'\|_\infty \le cR^{-1}
\end{equation}
for a certain constant $c$ not depending on $z_2$ or $\deltah$.

Let $Z_{r, z}$ be defined as in \eqref{rd04_04e1}, and for $r\geq0$ and $z\in \R$, define $H_A(r, z) = H_A(r, z; z_2, \deltah)$ by
 \begin{align}\label{eq:H}
 H_A(r,z)= \int_{t_0}^{t_0+\deltah^2}\int_{x_0}^{x_0+\deltah}\psi_{z_2, \deltah}(Z_{s,y})\, d y\d s- \int_{r}^{t_0+\deltah^2}\int_{z}^{x_0+\deltah}\psi_{z_2, \deltah}(Z_{s,y})\, \d y\d s. 
 \end{align}
 Let $\phi$ an infinitely differentiable function with support on $[-1,2]$ such that $\phi(x)=1$ for $x \in [0,1]$.
Define 
\begin{align}\label{eq:eta}
\eta(r, z)= \phi\left((r-t_0)/\deltah^2\right) \phi\left((z-x_0)/\deltah\right), \quad r\geq0,\ z\in \R,
\end{align}
and define $u_{A,2}(r, z) = u_{A,2}(r, z; z_2, \deltah)$ by
\begin{align}\label{rd07_01e2}
 u_{A,2}(r, z)= \left(\frac{\partial }{\partial r}-\frac 12 \frac{\partial^2}{\partial z^2}\right)[\eta(r,z)H_A(r,z)], \quad r\geq0,\ z\in \R.
 \end{align} 
 
 Finally, define a $2 \times 2$ matrix $ \gamma_A = \gamma_A(t_0,x_0; z_2, \deltah)$ by
 \[
 \gamma_A = \left(\begin{array}{cc} 
         \gamma_{A} ^{1,1} & \gamma_{A} ^{1,2} \\ [7pt]
         \gamma_{A} ^{2,1} & \gamma_{A} ^{2,2}\end{array} \right),
 \]
 where, letting $D$ denote the Malliavin derivative,
 \begin{align}\notag
 \gamma_{A} ^{1,1} &:= \int_0^{t_0}\int_\R D_{r,z} u(t_0,x_0)\,  ( f'(r) g(z)- \frac 12f(r) g''(z) )\, \d z \d r, \\
  \gamma_{A} ^{1,2} &:=  \int_0^{t_0}\int_\R   D_{r,z} u(t_0,x_0)\, u_{A,2}(r,z)\, \d z \d r,  \notag \\
   \gamma^{2,1}_A &:= 0, \notag \\
 \gamma^{2,2}_A &:= \int_{t_0}^{t_0+\deltah^2}\int_{x_0}^{x_0+\deltah}\psi(Z_{s,y})\, \d y\d s.
 \label{rd08_04e1}
 \end{align}
 Note that $ \gamma_{A} ^{1,2}$ and $ \gamma_{A} ^{2,2}$ depend on $z_2$  and $\deltah$ (via the functions $\eta$ and $\psi = \psi_{z_2, \deltah}$).
  
   \begin{proposition}\label{rd04_04p1}
  Let $F = (F_1, F_2)$ be the random vector defined in \eqref{rd07_30e1}. Let $z_2 \in (0, \infty)$, set $\zeta=\max\{\zeta_1^{1/2}, \zeta_2\}$ and, using $z_2$ and this value of $\zeta$, define
  \[
     G_1=\delta\left (  (\gamma_{A} ^{2,2})^{-1} u_{A,2}   - (\gamma_{A} ^{2,2})^{-1}   \gamma_{A} ^{1,2}\,
       ( \gamma_{A} ^{1,1})^{-1}u_{A,1} \right).
  \]
For $z_1 \in \R$, the probability density function $p_F$ of $F$ at $(z_1, z_2)$ is given by
  \begin{equation} \label{for9}
    p_F(z_1,z_2)= \E \left[ \mathbf{1}_{\{F_1>z_1\}} \mathbf{1}_{\{F_2>z_2\}}\, \delta( u_{A,1}
  (\gamma_A^{1,1})^{-1}   G_1) \right],
 \end{equation}
  and also by the alternate equivalent formula 
 \begin{equation} \label{for10}
      p_F(z_1,z_2)= -\E \left[ \mathbf{1}_{\{F_1<z_1\}} \mathbf{1}_{\{F_2>z_2\}} \, \delta( u_{A,1}
  (\gamma_A^{1,1})^{-1}   G_1)\right].
 \end{equation}
Here, $\delta$ denotes the Skorohod integral, and the Skorohod integrals that appear in these formulas are well-defined by Proposition \ref{prop1}.
\end{proposition}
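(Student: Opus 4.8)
The plan is to apply the general density formula for locally nondegenerate random vectors from \cite{fn} (the extension of \cite[Proposition 2.1.5]{nualart2006} recorded in Proposition \ref{rd04_04p1}'s companion, Proposition \ref{rd04_04p1} itself being the concrete instance). The strategy has three stages: first, identify the ``localizing'' set $A = \R \times (a, \infty)$ with $a = z_2/2$ and verify that on the event $\{F \in A\}$ the random vector $F$ is nondegenerate in the Malliavin sense, with Malliavin matrix essentially $\gamma_A$; second, produce the $\HH$-valued processes $u_{A,1}$ and $u_{A,2}$ whose Skorohod integrals ``differentiate'' in the directions dual to $F_1$ and $F_2$ respectively; third, iterate the integration-by-parts formula to express $p_F(z_1, z_2)$ as the stated expectation of an indicator times an iterated Skorohod integral, and check that all the Skorohod integrals involved are well-defined (this last point being exactly what Proposition \ref{prop1}, invoked at the end of the statement, supplies).

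More concretely: the key computation is to show that $\langle DF_1, u_{A,1}\rangle_\HH = \gamma_A^{1,1}$ and $\langle DF_2, u_{A,2}\rangle_\HH = \gamma_A^{2,2}$ on the relevant event, while $\langle DF_2, u_{A,1}\rangle_\HH = \gamma_A^{2,1} = 0$ and $\langle DF_1, u_{A,2}\rangle_\HH = \gamma_A^{1,2}$. The choice of $u_{A,1}(r,z) = (\partial_r - \tfrac12 \partial_z^2)[f(r)g(z)]$ is dictated by the fact that, since $u$ solves \eqref{eq:SHE} in mild form \eqref{rd07_23e1} with vanishing initial data, integrating $D_{r,z}u(t_0,x_0)$ against the ``heat operator applied to a test function that equals $1$ near $(t_0,x_0)$'' recovers $\sigma(u(t_0,x_0))$ up to lower-order terms via a duality/Green's-function argument — this is the standard trick for producing an explicit direction conjugate to $u(t_0,x_0)$ (cf. \cite{DKN09}). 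Similarly, $u_{A,2}$ is built from $H_A(r,z)$, which is engineered so that $\langle DF_2, u_{A,2}\rangle_\HH$ telescopes to $\gamma_A^{2,2} = \int_{t_0}^{t_0+\zeta^2}\int_{x_0}^{x_0+\zeta} \psi(Z_{s,y})\,\d y\,\d s$; the cutoff $\psi(Z_{s,y})$ localizes onto the event where $\sup |v - v(t_0,x_0)| \le a = z_2/2$ (by Proposition \ref{rd04_03p1}), which forces $\gamma_A^{2,2} > 0$ whenever $F_2 > z_2$, guaranteeing nondegeneracy there. The cutoff function $\eta$ with support in $[t_0, t_0+2\zeta^2]\times[x_0, x_0+2\zeta]$ keeps everything supported where it should be.

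The integration by parts is then applied twice. Writing $F_2 = \sup(v-v(t_0,x_0))$, one first uses that $\mathbf{1}_{\{F_2 > z_2\}}$ can be ``integrated against'' $DF_2$ paired with $(\gamma_A^{2,2})^{-1} u_{A,2}$ to pull out a derivative in the $z_2$ variable; the correction term $-(\gamma_A^{2,2})^{-1}\gamma_A^{1,2}(\gamma_A^{1,1})^{-1} u_{A,1}$ appearing in $G_1$ is precisely the Gram–Schmidt adjustment that makes the effective direction orthogonal to $DF_1$, so that the subsequent integration by parts in the $z_1$-variable (using $u_{A,1}(\gamma_A^{1,1})^{-1}$ paired with $DF_1$) does not pick up spurious $F_2$-dependence. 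Carrying this out yields \eqref{for9}; formula \eqref{for10} follows because $\mathbf{1}_{\{F_1 > z_1\}} + \mathbf{1}_{\{F_1 < z_1\}} = 1$ a.s. (the law of $F_1$ being absolutely continuous by hypothesis (2)/the $\bD^{1,p}$ property of $u$), and the Skorohod integral of a genuine divergence integrates to zero against the constant $1$, so the two expectations differ only by a sign.

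The main obstacle is \emph{not} the formal algebra of the iterated integration by parts — that is essentially the content of the cited criterion in \cite{fn} — but rather verifying that every Skorohod integral written down actually makes sense, i.e.\ that the integrands lie in the domain of $\delta$. This requires $L^p$-control of $u_{A,1}$, $u_{A,2}$, their Malliavin derivatives, and negative moments of $\gamma_A^{1,1}$ and $\gamma_A^{2,2}$; the delicate one is $\gamma_A^{2,2}$, whose positivity and integrability of $(\gamma_A^{2,2})^{-1}$ on the event $\{F_2 > z_2\}$ hinge on the estimates for $Z_{r,z}$ in Lemma \ref{rd07_26l1} and the implication in Proposition \ref{rd04_03p1}. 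All of this is deferred to Proposition \ref{prop1} and the technical estimates of Section \ref{rd07_02s1}, so in the proof of Proposition \ref{rd04_04p1} itself I would simply invoke Proposition \ref{prop1} for well-definedness and focus on the duality identities $\langle DF_i, u_{A,j}\rangle_\HH = \gamma_A^{i,j}$ and the bookkeeping of the two successive integrations by parts, exactly as in \cite[Proposition 2.1.5]{nualart2006} adapted to the locally nondegenerate setting.
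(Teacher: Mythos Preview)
Your approach is essentially the same as the paper's: establish the four identities $\langle DF_i, u_{A,j}\rangle_\HH = \gamma_A^{i,j}$ on $\{F\in A\}$ (the paper does this via Lemma \ref{lem:heat}, which you should invoke explicitly for the $F_2$ pairings), then run two integrations by parts against a smooth test function, using a cutoff $\bar\kappa(F_2)$ to localize to $\{F_2>3z_2/4\}$, and finally specialize to $\gamma_A^{2,1}=0$ to extract the explicit form of $G_1$.

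There is one genuine gap in your derivation of \eqref{for10}. You argue that since $\mathbf{1}_{\{F_1>z_1\}}+\mathbf{1}_{\{F_1<z_1\}}=1$ a.s.\ and $\E[\delta(\cdot)]=0$, the two expectations differ only by a sign. But this would require $\E\bigl[\mathbf{1}_{\{F_2>z_2\}}\,\delta(u_{A,1}(\gamma_A^{1,1})^{-1}G_1)\bigr]=0$, not merely $\E[\delta(\cdot)]=0$; the indicator $\mathbf{1}_{\{F_2>z_2\}}$ is still present and is not the constant $1$. The paper does not attempt this shortcut: it simply reruns the entire integration-by-parts argument with $\tilde\varphi(y_1,y_2)=\int_{y_1}^\infty\int_{-\infty}^{y_2}h$ in place of $\varphi$, which flips the sign of $\partial_1\tilde\varphi$ and produces \eqref{for10} directly. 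You should do the same.
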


\begin{remark}\label{rd07_02r1}
For a random (column) vector $H= (H^1,\dots,H^k)$ in $\R^k$, and a $k \times k$ matrix $M$, we obtain another $k \times k$ matrix $M \cdot H$, where ``$\cdot$" denotes the matrix product. We define the iterated integral $I_k(H)$ by
\begin{equation}\label{rd04_07e1}
I_k(H) = \delta(H^1 \delta (H^2 \cdots \delta(H^k) \cdots)).
\end{equation}
Interpret $DH = (DH^1,\dots,DH^k)$ as a column vector in $\R^k$, that is, a $k \times 1$ matrix.

Let $A \subset R^k$ be an open set.  Suppose that there exist a random (column) vector $u_A = (u_A^1,\dots, u_A^k) \in (\bD^\infty(\HH))^k$ and a  $k \times k$ random matrix $(\gamma_A^{i, j})$ such that $\vert \text{det } \gamma_A \vert^{-1} \in L^p(\Omega)$, for all $p\geq 1$, and $\langle DH^i, u_A^j \rangle_\HH = \gamma_A^{i, j}$ on $\{H \in A \}$, for $i, j = 1,\dots, k$.
Formula \eqref{for9}  is a special case of the generic formula
\begin{equation}\label{rd04_06e1}
     p_H(z_1,\dots,z_k) = \E\left[1_{\{H^1 > z_1,\dots, H^k > z_k \}}\, I_k( (\gamma_A^{-1})^\rT \cdot  u_A)\right],
\end{equation}
for $(z_1,\dots,z_k) \in A$, which extends \cite[(2.34)]{nualart2006} to locally nondegenerate random vectors. Alternatively, for $\ell = 1,\dots, k$,
\begin{equation*}
     p_H(z_1,\dots,z_k) = (-1)^\ell\, \E\left[1_{\{H^1 < z_1,\dots, H^\ell < z_\ell, H^{\ell +1} > z_{\ell + 1},\dots, H^k > z_k  \}}\, I_k( (\gamma_A^{-1})^\rT \cdot  u_A)\right].
\end{equation*}

\end{remark}
 
 \begin{proof}[Proof of Proposition \ref{rd04_04p1}]
We determine the density of $F$ on the set $A$ introduced in \eqref{rd04_04e2}.  We carry out most of the proof without using the fact that $\gamma_A^{2, 1} = 0$, so as to obtain \eqref{rd04_06e1} for $k=2$. Then we particularize to the case where $\gamma_A^{2, 1} = 0$, so as to obtain \eqref{for9} and \eqref{for10}.

 For the first component $F_1$, by (\ref{uA1}),
 \begin{align} \nonumber
&  \langle DF_1, u_{A,1} \rangle_\HH =\int_0^{t_0}\int_\R D_{r,z} u(t_0,x_0) u_{A,1}(r, z)\, \d z \d r\\
  &\qquad=\int_0^{t_0}\int_\R D_{r,z} u(t_0,x_0) \label{for2}
 ( f'(r) g(z)- \frac 12f(r) g''(z) )\,  \d z \d r = \gamma_{A} ^{1,1}.
  \end{align}
   Moreover,
 \begin{equation} \label{for4a}
   \langle DF_1, u_{A,2} \rangle_\HH=
    \int_0^{t_0}\int_\R   D_{r,z} u(t_0,x_0) u_{A,2}(r,z) \d z \d r=  \gamma_{A} ^{1,2}.
    \end{equation}

Clearly, $F_2>0$ a.s.~by \cite[(3.9)]{Kho14}. Moreover, since $\E[(v(t,x)-v(s,y))^2]\neq 0$ for $(t,x)\neq (s,y)$,
 by \cite[Lemma 2.6]{KiP}, $t \mapsto v(t, x) - v(t_0, x_0)$ attains its maximum in $[t_0, t_0 + \deltah_1]\times[x_0, x_0 + \deltah_2]$ at a unique random point
 $(S,X)\in [t_0,t_0+\deltah_1]\times [x_0,x_0+\deltah_2]$.
  By the same argument as in \cite[Lemma 4.3]{DP1}, we obtain
 \begin{align*}
   D_{r,z} F_2 &= G(S-r,X-z) \mathbf{1}_{\{r<S\}}- G(t_0-r, x-x_0) \mathbf{1}_{\{r<t_0\}}.
 \end{align*}
Therefore,
 \begin{align*}
 \langle DF_2, u_{A,2} \rangle_\HH& = \int_0^{\infty}\int_\R\left[G(S-r, X-z)\bm{1}_{\{r<S\}} -  G(t_0-r, x_0-z)\bm{1}_{\{r<t_0\}} \right] u_{A,2}(r, z)\, \d z \d r\\
 &= \eta(S, X)\, H_A(S, X) - \eta(t_0, x_0)\, H_A(t_0, x_0) \\
 &= \int_{t_0}^{t_0+\deltah^2}\int_{x_0}^{x_0+\deltah}\psi(Z_{s,y})\, \d y\d s- \int_{S}^{t_0+\deltah^2}\int_{X}^{x_0+\deltah}\psi(Z_{s,y})\, \d y\d s,
 \end{align*}
 where the second equality holds by Lemma \ref{lem:heat} below and the third equality holds because $\eta(S, X) = 1$ and $H_A(t_0, x_0) = 0$. 
 
 We claim that on the set $ \{F_2>a\}$, if $S\leq s \leq t_0+\deltah^2$ and $X\leq y\leq x_0+\deltah$, then $\psi(Z_{s,y})=0$. Indeed, if $\psi(Z_{s,y}) > 0$,
 then $Z_{s,y}\leq R = R(z_2, \deltah)$ and by \eqref{R}, this implies that 
 \begin{align*}
 a<F_2= v(S,X)-v(t_0, x_0) \leq  \sup_{t_0\leq t\leq s, x_0\leq x\leq y}(v(t,x)-v(t_0, x_0)) \leq a,
 \end{align*}
 which is a contradiction. Therefore, on the set $\{F\in A\}$,  
 \begin{align}\label{for3a}
  \langle DF_2, u_{A,2}\rangle_{\mathcal{H}} = \int_{t_0}^{t_0+\deltah^2}\int_{x_0}^{x_0+\deltah}\psi(Z_{s,y})\d y\d s = \gamma^{2,2}_A,
 \end{align}
    and
   \begin{align}\nonumber
    \langle DF_2, u_{A,1} \rangle_\HH& =\int_0^{\infty}\int_\R \left[G(S-r, X-z)\bm{1}_{\{r<S\}} -  G(t_0-r, x_0-z)\bm{1}_{\{r<t_0\}} \right]  \\   \nonumber 
   &\qquad \qquad\quad \times \left(\frac{\partial }{\partial r}-\frac 12 \frac{\partial^2}{\partial z^2}\right)[f(r)g(z)] \d z \d r\\
   &= f(S)g(X) -f(t_0)g(x_0)= 1-1=0 = \gamma_A^{2,1}, \label{for1}
   \end{align}
   where the second  equality holds by Lemma \ref{lem:heat} below. 
  
  With these ingredients, we can now proceed to derive \eqref{for9} and \eqref{for10}.   Let $h$ be an infinitely differentiable function with compact support on $\R^2$. Set
  \begin{equation}\label{rd04_07e2}
  \varphi(y_1,y_2)= \int_{-\infty}^{y_1} \d z_1 \int_{-\infty}^{y_2} \d z_2 \, h(z_1,z_2) .
  \end{equation}
  On $\{F\in A\}$, using (\ref{for1}) and (\ref{for2}), we see that
  \begin{equation*}
     \langle D\partial_1 \varphi(F), u_{A,1}\rangle_\HH= \partial_{11} \varphi(F)\, \gamma_{A}^{1,1} + \partial_{21} \varphi(F)\, \gamma_{A}^{2,1}.
  \end{equation*}
  Similarly,
   \begin{equation*}
     \langle D\partial_1 \varphi(F), u_{A,2}\rangle_\HH= \partial_{11} \varphi(F)\, \gamma_{A}^{1,2} + \partial_{21} \varphi(F)\, \gamma_{A}^{2,2}.
   \end{equation*}
  We can rewrite this in matrix form
    \[
    \left(\begin{array}{cc}
    \langle D\partial_1 \varphi(F), u_{A,1}\rangle_\HH \\ \langle D\partial_1 \varphi(F), u_{A,2}\rangle_\HH
           \end{array} \right) = \gamma_A^\rT \cdot \left(\begin{array}{cc}
           \partial_{11} \varphi(F) \\  \partial_{21} \varphi(F) \end{array} \right),
   \]
  where $\gamma_A^\rT$ denotes the transpose of $\gamma_A$.   Since $\gamma_A$ is invertible a.s. (see Lemmas \ref{lem:gamma_11} and \ref{lem:gamma22} below), it follows that
    \[
   \left(\begin{array}{cc}
           \partial_{11} \varphi(F) \\  \partial_{21} \varphi(F) \end{array} \right) = (\gamma_A^\rT)^{-1} \cdot 
            \left(\begin{array}{cc} 
            \langle D\partial_1 \varphi(F), u_{A,1}\rangle_\HH \\ \langle D\partial_1 \varphi(F), u_{A,2}\rangle_\HH
           \end{array} \right).
 \]
As a consequence,
 \begin{equation}\label{rd05_20e1}
   \partial_{21} \varphi(F) =  (\gamma_{A}^{-1})^{1,2}\langle D\partial_1 \varphi(F), u_{A,1}\rangle_\HH +  (\gamma_{A}^{-1})^{2,2} \langle D\partial_1 \varphi(F), u_{A,2}\rangle_\HH.
\end{equation}

       For $z_2 > 0$, let $\kappa_{z_2}:\R\rightarrow [0,1]$ be an infinitely differentiable function such that $\kappa_{z_2}(x)=0$ for $x\le \frac {2z_2}3$ and $\kappa_{z_2}(x)=1 $ for $x\ge \frac {3z_2}4$. Define $\bar{\kappa}(y_1,y_2)=\kappa_{z_2}(y_2)$
   and $H=\bar{\kappa}(F)$. Multiplying both sides of (\ref{rd05_20e1}) by $H$, taking expectations on both sides and using the duality relationship between the derivative and the divergence operator \cite[(1.42) p.~37]{nualart2006} (the fact that this relationship can be applied is justified at the beginning of the proof of Proposition \ref{prop1}) yields
\begin{align} \nonumber 
   \E[H h(F)] &=  \E\left[H\, \partial_{2 1} \varphi(F)\right] \\ \nonumber
   &=   \E\left[H\,\left( (\gamma_{A}^{-1})^{1,2}\, \langle D\partial_1 \varphi(F), u_{A,1}\rangle_\HH +  (\gamma_{A}^{-1})^{2,2}\, \langle D\partial_1 \varphi(F), u_{A,2}\rangle_\HH\right)\right] \\ \nonumber
   &= \E\left[\left\langle D\partial_1 \varphi(F), H\left( (\gamma_{A}^{-1})^{1,2}\, u_{A,1} + (\gamma_{A}^{-1})^{2,2}\,  u_{A,2}\right)\right\rangle_\HH\right]\\ 
    & =\E \left[\partial_1 \varphi(F)\,  \delta\left (  H\left( (\gamma_{A}^{-1}) ^{1,2}\, u_{A,1}  +  (\gamma_{A}^{-1}) ^{2,2}\, u_{A,2} \right) \right)\right].
    \label{for7a}
 \end{align}
 
  We denote
\begin{equation}\label{rd04_07e3}
   G_0=\delta\left ( \left((\gamma_{A}^{-1}) ^{1,2} \, u_{A,1}  + (\gamma_{A}^{-1}) ^{2,2}\, u_{A,2} \right) H \right).
\end{equation}
We notice that $H=1$ on $\{F_2\ge \frac 34 z_2\}$, and since $H=0$ on the set  $\{F \not \in A\}$, the local property of the divergence operator \cite[Proposition 1.3.15]{nualart2006} implies that $G_0=0$ on $\{F \not \in A\}$.
    
 By the chain rule, \eqref{for2} and \eqref{for1},
\begin{align*}
    \langle D\varphi(F), u_{A,1} \rangle_\HH & = \partial_1 \varphi(F)\, \langle  DF_1, u_{A,1} \rangle_\HH + \partial_2\varphi(F) \, \langle  DF_2, u_{A,1} \rangle_\HH \\
     &=  \gamma_A^{1,1}\,  \partial_1 \varphi(F) + \gamma_A^{2,1}\,  \partial_2\varphi(F) .
 \end{align*}
Similarly, by \eqref{for4a} and \eqref{for3a}, on the set $\{F\in A\}$,
\begin{align*}
    \langle D\varphi(F), u_{A,2} \rangle_\HH & = \partial_1 \varphi(F)\, \langle  DF_1, u_{A,2} \rangle_\HH + \partial_2\varphi(F) \, \langle  DF_2, u_{A,2} \rangle_\HH \\
     &=  \gamma_A^{1,2}\,  \partial_1 \varphi(F) + \gamma_A^{2,2}\,  \partial_2\varphi(F) .
 \end{align*}
 In matrix form, these two equalities become
   \[
    \left(\begin{array}{cc}
    \langle D \varphi(F), u_{A,1}\rangle_\HH \\ \langle D  \varphi(F), u_{A,2}\rangle_\HH
           \end{array} \right) = \gamma_A^\rT \cdot \left(\begin{array}{cc}
           \partial_{1} \varphi(F) \\  \partial_{2} \varphi(F) \end{array} \right).
   \]
Equivalently, 
   \[  
  \left(\begin{array}{cc}
           \partial_{1} \varphi(F) \\  \partial_{2} \varphi(F) \end{array} \right) = (\gamma_A^\rT)^{-1} \cdot 
           \left(\begin{array}{cc}  \langle D \varphi(F), u_{A,1}\rangle_\HH \\ \langle D  \varphi(F), u_{A,2}\rangle_\HH
           \end{array} \right).
   \]
This implies that
\begin{equation} \label{for8a}
 \partial_1\varphi(F) = (\gamma_A^{-1})^{1,1} \langle D\varphi(F), u_{A,1} \rangle_\HH + (\gamma_A^{-1})^{2,1} \langle D\varphi(F), u_{A,2} \rangle_\HH\, .
 \end{equation}
 Substituting (\ref{for8a}) into (\ref{for7a}), we obtain
 \begin{align*}
   \E[H h(F)] &=
   \E \left[ \left((\gamma_A^{-1})^{1,1} \langle D\varphi(F), u_{A,1} \rangle_\HH + (\gamma_A^{-1})^{2,1} \langle D\varphi(F), u_{A,2} \rangle_\HH\ \right)  G_0 \right] \\
   & = \E \left[ \left\langle D\varphi(F), \left((\gamma_A^{-1})^{1,1} u_{A,1} + (\gamma_A^{-1})^{2,1} u_{A,2} \right) G_0 \right\rangle_\HH \right] \\
    & = \E\left[ \varphi(F) \delta \left(\left( (\gamma_A^{-1})^{1,1}  u_{A,1} + (\gamma_A^{-1})^{2,1} u_{A,2}\right) G_0\right) \right].
   \end{align*}
Using formula \eqref{rd04_07e3}, this equality can be written
   \[  
    \E[H \partial_{12} \varphi(F)] = \E\left[ \varphi(F) \delta \left([(\gamma_A^{-1})^\rT \cdot u_A ]_1\, \delta\left([(\gamma_A^{-1})^\rT \cdot u_A ]_2\, H \right) \right) \right],
  \]
  where
\[  
         (\gamma_A^{-1})^\rT \cdot u_A = \left(\begin{array}{c}
           \left((\gamma_A^{-1})^\rT \cdot u_A \right)_1 \\  [6pt]
           \left((\gamma_A^{-1})^\rT \cdot u_A \right)_2 \end{array} \right) \qquad\text{and}\qquad u_A = \left(\begin{array}{c} u_{A,1} \\  u_{A,2} \end{array} \right).
\]

 By the definition of $H$ and of $\varphi$ in \eqref{rd04_07e2} and Fubini's theorem,
  \begin{align}\nonumber
      \E[\bar{\kappa}(F) h(F)] &=  \int_{-\infty}^\infty \d z_1 \int_{-\infty}^\infty \d z_2\, h(z_1, z_2) \\
       &\qquad \times \E\left[1_{\{F_1 > z_1\}}\, 1_{\{F_2 > z_2 \}}\,  \delta \left([(\gamma_A^{-1})^\rT \cdot u_A ]_1\, \delta\left([(\gamma_A^{-1})^\rT \cdot u_A ]_2\, H \right) \right)\right].
\label{rd04_08e4}
\end{align} 
Recalling that $H =1$ on the set  $\{F\in \mathbb{R}\times [\frac 34 z_2,\infty)\}$, using the local property of $\delta$ and the notation in \eqref{rd04_07e1}, equality \eqref{rd04_08e4} becomes
 \begin{equation}\label{rd04_08e5}
      \E[\bar{\kappa}(F) h(F)] =   \int_{-\infty}^\infty \d z_1 \int_{-\infty}^\infty \d z_2\, h(z_1, z_2)  \E\left[1_{\{F_1 > z_1\}}\, 1_{\{F_2 > z_2 \}}\,  I_2\left((\gamma_A^{-1})^\rT\cdot u_A \right) \right].
 \end{equation}
If the support of $h$ is contained in $\R \times [\frac{3}{4} z_2, \infty)$, then $h(F) > 0$ implies $\bar{\kappa}(F) = 1$, so for such $h$, \eqref{rd04_08e5} becomes
\[
     \E[h(F)] =   \int_{-\infty}^\infty \d z_1 \int_{-\infty}^\infty \d z_2\, h(z_1, z_2) \, \E\left[1_{\{F_1 > z_1\}}\, 1_{\{F_2 > z_2 \}}\,   I_2\left((\gamma_A^{-1})^\rT \cdot u_A \right) \right].
\]
In particular, 
the density of $F$ at $(z_1,z_2) \in \R \times \R_+^*$ is
\begin{equation}\label{rd07_28e1}
 p(z_1,z_2)= \E \left[ \mathbf{1}_{\{F_1>z_1\}} \mathbf{1}_{\{F_2>z_2\}}\, I_2\left((\gamma_A^{-1})^\rT \cdot u_A \right) \right],
\end{equation}
 which is \eqref{rd04_06e1} for $k=2$. 
 
 We now particularize formula \eqref{for9} to the case where $\gamma_A^{2, 1} = 0$. In this case,
 \[
     \gamma_A^{-1} = \left(\gamma_A^{1, 1}\, \gamma_A^{2, 2} - \gamma_A^{1, 2}\, \gamma_A^{2, 1}\right)^{-1} \left(\begin{array}{cc}
         \gamma_A^{2, 2} & - \gamma_A^{1, 2}  \\[7pt]
          -\gamma_A^{2, 1} &  \gamma_A^{1, 1}  \end{array}  \right) 
          = \left(\gamma_A^{1, 1}\, \gamma_A^{2, 2} \right)^{-1} \left(\begin{array}{cc}
         \gamma_A^{2, 2} & - \gamma_A^{1, 2}  \\[7pt]
          0 &  \gamma_A^{1, 1}  \end{array}  \right) ,
 \]
that is,
  \begin{align*}
     (\gamma_A^{-1})^{1,1} &= (\gamma_A^{1,1})^{-1},\quad (\gamma_A^{-1})^{1,2} = - (\gamma_A^{2,2})^{-1}\, \gamma_A^{1,2}\,(\gamma_A^{1,1})^{-1},\\
      (\gamma_A^{-1})^{2,1} &= 0,\qquad\qquad (\gamma_A^{-1})^{2,2} = (\gamma_A^{2,2})^{-1}.
  \end{align*}
 In particular, defining $I_2(\cdot)$ as in \eqref{rd04_07e1}, we have
 \[
    I_2\left((\gamma_A^{-1})^\rT\cdot u_A \right) = \delta\left( (\gamma_A^{1,1})^{-1}\, u_{A, 1}\,  \delta\left( (\gamma_A^{2,2})^{-1}\, u_{A, 2} - (\gamma_A^{2,2})^{-1}\, \gamma_A^{1,2}\,(\gamma_A^{1,1})^{-1} u_{A, 1}\right) \right).
 \] 
  Together with \eqref{rd07_28e1}, this establishes \eqref{for9}. 
  
 Replacing $\varphi$ in \eqref{rd04_07e2} by
 \[
     \tilde\varphi(y_1, y_2) =  \int_{y_1}^{\infty} \d z_1 \int_{-\infty}^{y_2} \d z_2 \, h(z_1,z_2),
 \]
 we would obtain the alternate formula
 \[
 p(z_1,z_2)= - \E \left[ \mathbf{1}_{\{F_1<z_1\}} \mathbf{1}_{\{F_2>z_2\}}\, I_2\left(\left(\gamma_A^{-1}\right)^{\rT} \cdot u_A \right) \right],
\]
which would lead to \eqref{for10}. This completes the proof of Proposition \ref{rd04_04p1}.
 \end{proof} 


 The following lemma was used in the proof of Proposition \ref{rd04_04p1}.
 
\begin{lemma}\label{lem:heat}
Suppose that $(r, z) \mapsto \tilde{H}(r, z)$ is a continuous function on $[0, \infty) \times \R$ that has one generalized derivative in $r$ and two  in $z$, and that
there exists a constant $C>0$ such that for all $(r, z)\in [0, \infty)\times\R$,
\begin{align*}
  \max\left[ |\tilde{H}(r, z)|, \left|\frac{\partial \tilde{H}(r, z)}{\partial r}\right|, \left|\frac{\partial \tilde{H}(r, z)}{\partial z}\right|, 
\left|\frac{\partial^2 \tilde{H}(r, z)}{\partial z^2}\right| \right] \leq C\e^{-C|z|}. 
\end{align*}
 Then for every $t>0$ and $x\in \R$, 
\begin{align*}
\int_0^t\int_\R G(t-r, x-z)\left(\frac{\partial }{\partial r}-\frac12\frac{\partial^2}{\partial z^2}\right)\tilde{H}(r,z)\, \d z\d r= \tilde{H}(t,x).
\end{align*}
\end{lemma}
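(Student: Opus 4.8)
The statement is a standard Duhamel/mild-formulation identity: if $\tilde H$ is smooth enough and decays exponentially in $z$, then convolving $\bigl(\partial_r - \tfrac12\partial_z^2\bigr)\tilde H$ against the heat kernel recovers $\tilde H(t,x)$. The plan is to integrate by parts in $r$ and in $z$, moving the derivatives off $\tilde H$ and onto $G$, using the fact that $G$ solves the backward heat equation $\bigl(-\partial_r - \tfrac12\partial_z^2\bigr)G(t-r,x-z)=0$ for $r<t$, and then to identify the boundary term at $r=t$ via the approximate-identity property $G(t-r,x-\cdot)\to\delta_x$ as $r\uparrow t$. The exponential decay hypothesis is exactly what makes all the spatial boundary terms at $z=\pm\infty$ vanish and all the integrals absolutely convergent (note $G$ and its derivatives have Gaussian decay in $z$, so the product with $e^{-C|z|}$ is integrable).

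First I would fix $t>0$, $x\in\R$, and $\varepsilon\in(0,t)$, and write
\[
\int_0^{t-\varepsilon}\!\!\int_\R G(t-r,x-z)\left(\frac{\partial}{\partial r}-\frac12\frac{\partial^2}{\partial z^2}\right)\tilde H(r,z)\,\d z\,\d r .
\]
In the $z$-integral, integrate by parts twice: the boundary terms at $z=\pm\infty$ vanish by the decay assumptions on $\tilde H$, $\partial_z\tilde H$ and the Gaussian decay of $G$ and $\partial_z G$, leaving $-\tfrac12\int_\R \partial_z^2 G(t-r,x-z)\,\tilde H(r,z)\,\d z$. In the $r$-integral, integrate by parts once in $r$: this produces the boundary terms $G(\varepsilon,x-z)\tilde H(t-\varepsilon,z)-G(t,x-z)\tilde H(0,z)$ plus $-\int_0^{t-\varepsilon}\partial_r\!\left[G(t-r,x-z)\right]\tilde H(r,z)\,\d r = +\int_0^{t-\varepsilon}(\partial_1 G)(t-r,x-z)\,\tilde H(r,z)\,\d r$ (chain rule gives the sign flip, where $\partial_1 G$ denotes the derivative in the first argument). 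Now combine: since $G$ satisfies $\partial_1 G(s,y)=\tfrac12\partial_y^2 G(s,y)$ for $s>0$, the two remaining double integrals cancel exactly. One is left with
\[
\int_0^{t-\varepsilon}\!\!\int_\R G(t-r,x-z)\left(\frac{\partial}{\partial r}-\frac12\frac{\partial^2}{\partial z^2}\right)\tilde H(r,z)\,\d z\,\d r
=\int_\R G(\varepsilon,x-z)\,\tilde H(t-\varepsilon,z)\,\d z-\int_\R G(t,x-z)\,\tilde H(0,z)\,\d z,
\]
valid for every $\varepsilon\in(0,t)$.

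To finish, let $\varepsilon\downarrow 0$. On the left, dominated convergence (justified by $|(\partial_r-\tfrac12\partial_z^2)\tilde H(r,z)|\le 3C e^{-C|z|}$ together with $\int_0^t\int_\R G(t-r,x-z)e^{-C|z|}\,\d z\,\d r<\infty$) gives the full integral over $[0,t]\times\R$. On the right, the first term is the convolution of the continuous bounded function $\tilde H(t-\varepsilon,\cdot)$ — which converges uniformly on compacts to $\tilde H(t,\cdot)$ and is bounded by $Ce^{-C|z|}$ — against the approximate identity $G(\varepsilon,x-\cdot)$, so it converges to $\tilde H(t,x)$; meanwhile the second term, $-\int_\R G(t,x-z)\tilde H(0,z)\,\d z$, is independent of $\varepsilon$. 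Hence
\[
\int_0^{t}\!\!\int_\R G(t-r,x-z)\left(\frac{\partial}{\partial r}-\frac12\frac{\partial^2}{\partial z^2}\right)\tilde H(r,z)\,\d z\,\d r=\tilde H(t,x)-\int_\R G(t,x-z)\,\tilde H(0,z)\,\d z .
\]

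Wait — this last formula carries an extra term $\int_\R G(t,x-z)\tilde H(0,z)\,\d z$ that is absent from the statement. So the statement as written must be invoked only in situations where $\tilde H(0,\cdot)\equiv 0$; in the application (where $\tilde H(r,z)=\eta(r,z)H_A(r,z)$ or $f(r)g(z)$ restricted appropriately, evaluated via the representation $D_{r,z}F_2$ which vanishes for $r\ge S$ and for the boundary at $r=t_0$), one has $\tilde H(0,\cdot)=0$, or more precisely the identity is applied at $(t,x)=(S,X)$ and $(t_0,x_0)$ with the convention that the integration starts at the relevant lower endpoint; in any case the honest general identity is the displayed one with the correction term, and when $\tilde H(0,\cdot)\equiv 0$ it reduces to the claimed $\tilde H(t,x)$.

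The main obstacle — really the only nontrivial point — is the careful justification of the integrations by parts and of the limit $\varepsilon\downarrow 0$: one must check that all boundary terms in $z$ genuinely vanish (here the combination of exponential decay of $\tilde H$ and Gaussian decay of $G,\partial_zG$ is used), that Fubini applies to interchange the $r$- and $z$-integrals, and that the boundary term at $r=t-\varepsilon$ converges to $\tilde H(t,x)$ rather than to some multiple of it — this is where the normalization $\int_\R G(s,y)\,\d y=1$ enters, via the standard convolution-with-approximate-identity argument. None of these steps is deep, but they require the stated regularity ($\tilde H$ having one generalized $r$-derivative and two generalized $z$-derivatives) to make the by-parts manipulations legitimate, e.g. by first mollifying $\tilde H$ and passing to the limit, or by noting that the generalized derivatives are bounded by $Ce^{-C|z|}$ so the classical computation goes through in the a.e./distributional sense.
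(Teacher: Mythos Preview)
Your approach is correct and in fact catches a genuine omission in the lemma statement: the identity as written requires $\tilde H(0,\cdot)\equiv 0$, otherwise the right-hand side should be $\tilde H(t,x)-\int_\R G(t,x-z)\tilde H(0,z)\,\d z$. You are right that in both applications in the paper (to $f(r)g(z)$ and to $\eta(r,z)H_A(r,z)$) one has $\tilde H(0,\cdot)=0$, so the lemma suffices for its intended use.

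Your route differs from the paper's. The paper argues distributionally: it pairs the expression against a test function $\varphi\in C_0^\infty((0,T)\times\R)$, applies Fubini, integrates by parts in $(r,z)$ to move $\mathcal{L}$ onto the inner convolution $\int_r^T\int_\R \varphi(t,x)G(t-r,x-z)\,\d x\,\d t$, and then uses that this convolution satisfies the inhomogeneous heat equation, yielding $\int\varphi\tilde H$; continuity then upgrades the a.e.\ equality to pointwise. Your argument is instead a direct computation with an $\varepsilon$-cutoff at $r=t-\varepsilon$, pushing derivatives onto $G$ and using the heat equation for $G$ to cancel the bulk terms, then recovering $\tilde H(t,x)$ from the approximate-identity property of $G(\varepsilon,\cdot)$. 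Both are standard; yours is more hands-on and makes the boundary contribution at $r=0$ explicit (the paper's proof asserts ``no contribution of boundary terms'' but in fact the boundary term at $r=0$ is $-\int_\R\tilde H(0,z)\Phi(0,z)\,\d z$, which vanishes only under the extra hypothesis you identified). The paper's approach has the advantage of sidestepping the delicate $\varepsilon\downarrow 0$ limit near the diagonal $r=t$, at the cost of being less transparent about why the initial-time term disappears.
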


\begin{proof}
Let $\varphi \in C_0^\infty((0, T) \times \R)$ and let $\LL$ denote the heat operator: 
\[
     \LL \varphi (r, z)= \frac{\partial }{\partial r} \varphi(r, z) -\frac12\frac{\partial^2}{\partial z^2} \varphi(r, z). 
\]
Then
\begin{align*}
    \int_0^T \d r \int_\R \d z\,  \int_r^T \d t \int_\R \d x\, \vert \LL \tilde H(r,z) \varphi(t,x) G(t-r, x-z) \vert < \infty,
\end{align*}
so we can apply Fubini's theorem to see that
\begin{align}
\label{coronilla(*1)rd}
 & \int_0^\infty \d t \int_\R \d x\, \varphi(t,x) \left(\int_0^t \d r \int_\R \d z\, G(t-r, x-z) \LL \tilde H(r,z)\right)  \notag\\
 &\qquad = \int_0^T \d r \int_\R \d z\,  \LL \tilde H(r,z) \left(\int_r^T \d t \int_\R \d x\, \varphi(t,x) G(t-r, x-z) \right).
  \end{align}   
Because $\psi$ has compact support, and vanishes near $0$ and $T$ and at $x = \pm \infty$, we can integrate by parts (twice in $x$, and once in $t$), with no contribution of boundary terms, to see that \eqref{coronilla(*1)rd} is equal to
\begin{align*}
    &\int_0^\infty \d t \int_\R \d x\,  \tilde H(t,x)\, \LL \left(\int_0^t \d s \int_\R \d y\, G(t-s, x-y) \varphi(s,y)\right)\\
    &\qquad =  \int_0^\infty \d t \int_\R \d x\, \tilde H(t,x) \varphi(t,x),
    \end{align*}
where the equality holds since the space-time convolution between heat kernel $G$ and $\varphi$ satisfies the inhomogenuous heat equation. This implies that for a.a.~$(t, x)$, $\tilde H(t, x)$ is equal to 
\begin{align*}
    \int_0^t \d r \int_\R \d z\,  G(t-r, x-z) \LL \tilde H(r,z) .
\end{align*}
Since both functions are continuous in $(t, x)$, the lemma is proved.
\end{proof}

  \subsection{Gaussian-type upper bounds on the density of $F$}
  
  First, 
 we apply twice the Cauchy-Schwarz inequality in \eqref{for9} to obtain for any $z_1\in \R$ and $z_2 > 0$,
\begin{align} \nonumber
   p_F(z_1,z_2) &\le      \P\{F_1 > z_1,  F_2>z_2\}^{1/2} \, \| \delta( u_{A,1} (\gamma_A^{1,1})^{-1}     G_1)\|_2 \\
      &\leq P\{F_1 > z_1 \}^{1/4}\, P\{F_2>z_2 \}^{1/4}\,  \| \delta( u_{A,1} (\gamma_A^{1,1})^{-1}     G_1)\|_2.  
 \label{for5}
 \end{align}
 Similarly, we can use \eqref{for10} instead to see that
 \begin{align*}
    p_F(z_1,z_2) &\le P\{F_1 < z_1 \}^{1/4}\, P\{F_2>z_2 \}^{1/4}\,  \| \delta( u_{A,1} (\gamma_A^{1,1})^{-1}     G_1)\|_2. 
  \end{align*}
 For $z_1 > 0$, $F_1 > z_1$ implies $\vert F_1\vert > \vert z_1\vert$, and for $z_1 < 0$,  $F_1 < z_1$ implies that $\vert F_1\vert > \vert z_1\vert$, so we combine this last inequality with \eqref{for5} to see that
  \begin{align}\label{rd08_08e1}
    p_F(z_1,z_2) &\le P\{\vert F_1\vert > \vert z_1\vert \}^{1/4}\, P\{F_2>z_2 \}^{1/4}\,  \| \delta( u_{A,1} (\gamma_A^{1,1})^{-1}     G_1)\|_2. 
  \end{align}

The following proposition provides an upper bound for the factor $\|\delta( u_{A,1}
  (\gamma_A^{1,1})^{-1}     G_1)\|_2$.

\begin{proposition} \label{prop1}
Assume that $z_2 \ge \deltah^{1/2}$. Then,
\begin{equation}
\label{equ1}
\|  \delta( u_{A,1} (\gamma_A^{1,1})^{-1}    G_1)\|_2 \lesssim \deltah^{-\frac 12}.
\end{equation}
\end{proposition}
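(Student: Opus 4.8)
The plan is to view $\delta(u_{A,1}(\gamma_A^{1,1})^{-1}G_1)$ as an iterated twofold Skorohod integral and to control it by the $L^p$-continuity of the divergence operator (Meyer's inequality, \cite[Proposition~1.5.7]{nualart2006}): $\|\delta(\Phi)\|_p\lesssim\|\Phi\|_{\bD^{1,p}(\HH)}$ for every Skorohod-integrable $\HH$-valued process $\Phi$, and $\delta$ maps $\bD^{2,p}(\HH)$ into $\bD^{1,p}$. Since $G_1=\delta(w)$ with
\[
   w=(\gamma_A^{2,2})^{-1}u_{A,2}-(\gamma_A^{2,2})^{-1}\gamma_A^{1,2}(\gamma_A^{1,1})^{-1}u_{A,1},
\]
two applications of Meyer's inequality, together with the product rules $\|fg\|_{\bD^{k,p}}\lesssim\|f\|_{\bD^{k,2p}}\|g\|_{\bD^{k,2p}}$ (for $k=1,2$) to split off the scalar factors, reduce \eqref{equ1} to the two bounds
\[
  \|u_{A,1}(\gamma_A^{1,1})^{-1}\|_{\bD^{1,q}(\HH)}\lesssim 1\qquad\text{and}\qquad\|w\|_{\bD^{2,p}(\HH)}\lesssim\deltah^{-1/2}
\]
for suitable $p,q>2$; along the way this also verifies that all the Skorohod integrals occurring in \eqref{for9}--\eqref{for10} are well defined.

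The second bound is in turn reduced to a small collection of estimates on the constituents, which I would extract from the technical material of Section~\ref{rd07_02s1} (in particular Lemma~\ref{rd07_02l1} and the bounds \eqref{eq:u1}, \eqref{eq:u2}, \eqref{eq:u4}, \eqref{eq:u5}, \eqref{rd08_06e1}, \eqref{D2UA2}, \eqref{rd08_06e2}): (i) $\|u_{A,1}\|_{\bD^{k,p}(\HH)}$ and $\|(\gamma_A^{1,1})^{-1}\|_{\bD^{k,p}}$ are bounded by constants not depending on $\deltah$ or $z_2$, the latter because $\gamma_A^{1,1}=\langle Du(t_0,x_0),u_{A,1}\rangle_\HH$ is nondegenerate with negative moments of all orders (Lemma~\ref{lem:gamma_11}) and $u(t_0,x_0)\in\bD^{k,p}$ for all $k,p$; (ii) $\|u_{A,2}\|_{\bD^{k,p}(\HH)}\lesssim\deltah^{5/2}$ for $k=1,2$ --- this is the source of the exponent $5/2$: the cutoff $\eta H_A$ is supported in a space--time rectangle of area $\asymp\deltah^{3}$, on which $H_A=O(\deltah^{3})$, while the heat operator in \eqref{rd07_01e2} contributes a factor $\asymp\deltah^{-2}$, so $u_{A,2}=O(\deltah)$ pointwise and $\|u_{A,2}\|_\HH=O(\deltah^{3/2}\cdot\deltah)=O(\deltah^{5/2})$, and the Malliavin derivatives obey the same bound (essentially the content of \eqref{equ3}); (iii) hence $\|\gamma_A^{1,2}\|_{\bD^{k,p}}\lesssim\deltah^{5/2}$ by Cauchy--Schwarz, since $\gamma_A^{1,2}=\langle Du(t_0,x_0),u_{A,2}\rangle_\HH$; and (iv) $\|(\gamma_A^{2,2})^{-1}\|_{\bD^{k,p}}\lesssim\deltah^{-3}$.

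For (iv), the hypothesis $z_2\ge\deltah^{1/2}$ is used in an essential way: it is precisely the condition under which the moment estimates of Lemma~\ref{rd07_26l1} for $Z_{r,z}$ and the choice \eqref{rd08_04e2} of $R=R(z_2,\deltah)$ force $Z_{s,y}\le R/2$, hence $\psi(Z_{s,y})=1$, with overwhelming probability for $(s,y)$ in the rectangle, so that $\gamma_A^{2,2}=\int_{t_0}^{t_0+\deltah^{2}}\int_{x_0}^{x_0+\deltah}\psi(Z_{s,y})\,\d y\,\d s\asymp\deltah^{3}$ and all its negative moments are of order $\deltah^{-3}$ (Lemma~\ref{lem:gamma22}); the Malliavin derivatives of $(\gamma_A^{2,2})^{-1}$ are then controlled via the quotient and chain rules for Malliavin derivatives, the bound $\|\psi'\|_\infty\lesssim R^{-1}$ from \eqref{rd07_02e1}, and estimates on $D^{j}Z_{s,y}$. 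Collecting the powers of $\deltah$, each of the two terms of $w$ is of size $\deltah^{-3}\cdot\deltah^{5/2}=\deltah^{-1/2}$ in $\bD^{2,p}(\HH)$-norm, so $\|w\|_{\bD^{2,p}(\HH)}\lesssim\deltah^{-1/2}$; hence $\|G_1\|_{\bD^{1,p}}\lesssim\deltah^{-1/2}$, then $\|u_{A,1}(\gamma_A^{1,1})^{-1}G_1\|_{\bD^{1,2}(\HH)}\lesssim\|u_{A,1}(\gamma_A^{1,1})^{-1}\|_{\bD^{1,4}(\HH)}\,\|G_1\|_{\bD^{1,4}}\lesssim\deltah^{-1/2}$, and a last application of Meyer's inequality gives \eqref{equ1}.

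I expect the genuine difficulty to be step (iv), and more broadly the bookkeeping of the powers of $\deltah$ in the Malliavin derivatives of $(\gamma_A^{2,2})^{-1}$, $\gamma_A^{1,2}$ and $u_{A,2}$: one must check that the additional negative powers of $\deltah$ produced each time $(\gamma_A^{2,2})^{-1}$ is differentiated (nominally a further factor $\deltah^{-3}$ per derivative) are exactly compensated by positive powers coming from $D^{j}\gamma_A^{2,2}$, from the smallness of $u_{A,2}$ and $H_A$ on the rectangle, and from $\|\psi'\|_\infty\lesssim R^{-1}$. As noted after Theorem~\ref{thm1}, there is no conceptual shortcut here: each of the (roughly thirty-five) resulting terms must be matched against the target exponent $-1/2$ using the lemmas of Section~\ref{rd07_02s1}, and verifying this balance is the bulk of the work.
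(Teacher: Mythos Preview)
Your approach is correct and is a genuine streamlining of the paper's argument. The paper proceeds by repeatedly expanding the Skorohod integrals via the product rule $\delta(F\Phi)=F\delta(\Phi)-\langle DF,\Phi\rangle_\HH$ (\cite[Proposition~1.3.3]{nualart2006}), computes $DG_1$ term by term, projects onto $u_{A,1}$, and then exploits the orthogonality $\langle u_{A,1},D\gamma_A^{2,2}\rangle_\HH=0$ (Lemma~\ref{vanish}) to kill several of the resulting terms before estimating the survivors one by one. Your route replaces this expansion by two applications of Meyer's inequality together with the Leibniz rule in $\bD^{k,p}$, reducing everything to the building-block bounds $\|(\gamma_A^{2,2})^{-1}\|_{\bD^{2,p}}\lesssim\deltah^{-3}$, $\|u_{A,2}\|_{\bD^{2,p}(\HH)}\lesssim\deltah^{5/2}$, $\|\gamma_A^{1,2}\|_{\bD^{2,p}}\lesssim\deltah^{5/2}$, and $\|(\gamma_A^{1,1})^{-1}\|_{\bD^{2,p}}\lesssim1$. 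These are exactly the contents of Lemmas~\ref{lem:gamma22}, \ref{lem:uA2mom}--\ref{D2u_A2}, \ref{lem:Dgamma12} and~\ref{lem:gamma_11} together with \eqref{for20}--\eqref{for202}, except that you also need $\|D^2\gamma_A^{2,2}\|_{L^p(\Omega;\HH^{\otimes2})}\lesssim\deltah^{3}$, which the paper does not state separately but which follows by the same computation as in Lemma~\ref{lem:gamma} using \eqref{D2Z}. The orthogonality \eqref{for7} is then not needed at all: even in the paper's term-by-term approach it is a convenience rather than a necessity, since the terms it kills are already of order $\deltah^{-1/2}$ by the same estimates. One small correction: you write $u(t_0,x_0)\in\bD^{k,p}$ for all $k$, but the standing hypothesis is only $\sigma\in C^3_b$, hence $u(t_0,x_0)\in\bD^{3,p}$; this is precisely what your argument requires (two derivatives on $\gamma_A^{1,1}$ and $\gamma_A^{1,2}$, each of which already contains one derivative of $u$).
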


\begin{proof}
In this proof, we assume for the moment the results of Section \ref{rd07_02s1}.

Recall that $u(t_0, x_0) \in \bD^{1,2}$ because $\sigma$ is Lipschitz \cite[Proposition 2.4.4]{nualart2006}. One easily deduces from \cite[Proposition 4.1]{DKN09} that $u(t_0, x_0) \in \bD^{3,2}$ because $\sigma \in C^3(\R)$, and since $\sigma$ and its first three derivatives are bounded, 
\begin{align}\label{rd08_06e3}
   \E\left[\Vert D u(t_0, x_0) \Vert_\HH^p\right], \quad \E\left[\Vert D^2 u(t_0, x_0) \Vert_{\HH\otimes 2}^p\right] \quad\text{and}\quad \E\left[\Vert D^3 u(t_0, x_0) \Vert_{\HH\otimes 3}^p \right]
\end{align}
are uniformly bounded over $(t_0, x_0) \in I \times J$.

In order to check that $u_{A,1} (\gamma_A^{1,1})^{-1} G_1$ belongs to the domain of $\delta$, we observe that Lemma \ref{lem:Dgamma12} below implies that $\gamma_A^{1, 2} \in \bD^{2, 2}$. In addition, because $p_0 > 4$, we have $Z_{r, z} \in \bD^{4, p}$, for all $p \geq 1$, and therefore $\gamma_A^{2, 2} \in \bD^{4,p}$. Together with Lemma \ref{lem:gamma22}, this shows that $(\gamma_A^{2, 2})^{-1} \in \bD^{4,2}$. Further, by Lemmas \ref{lem:uA2mom}--\ref{D2u_A2} below, $u_{A, 2} \in \bD^{2,2}$. Because $u_{A, 1}$ is deterministic, we have $u_{A, 1} \in \bD^{1, 2}$. These considerations, together with \cite[Propositions 1.3.3 and 1.3.8]{nualart2006}, show that $G_1$ is well-defined and $u_{A,1} (\gamma_A^{1,1})^{-1} G_1$ belongs to the domain of $\delta$.

In order to compute the divergence  $\delta( u_{A,1}
  (\gamma_A^{1,1})^{-1}    G_1)$, we use the property \cite[Proposition 1.3.3]{nualart2006} of the Skorohod integral to see that
\begin{align*}
   \delta( u_{A,1} (\gamma_A^{1,1})^{-1}  G_1) &=  \delta(u_{A,1})\, (\gamma_A^{1,1})^{-1}\,  G_1 - \left\langle u_{A,1}, D((\gamma_A^{1,1})^{-1}  G_1) \right\rangle_\HH \\
     & = \delta(u_{A,1})\, (\gamma_A^{1,1})^{-1}\,  G_1
  - G_1 \left\langle u_{A,1} ,D\left[(\gamma_A^{1,1})^{-1}\right]  \right\rangle_\HH \\
   &\qquad \qquad- (\gamma_A^{1,1})^{-1} \left\langle u_{A,1} ,D G_1 \right\rangle_\HH\\
  &= \delta(u_{A,1})\, (\gamma_A^{1,1})^{-1}\,  G_1
  + (\gamma_A^{1,1})^{-2}\, G_1\left\langle u_{A,1} ,D\gamma_A^{1,1}  \right\rangle_\HH \\
   &\qquad\qquad - (\gamma_A^{1,1})^{-1} \langle u_{A,1} ,D G_1 \rangle_\HH.
  \end{align*}
  Therefore, by H\"older's inequality (and since $u_{A,1}$ is deterministic),
  \begin{align} \nonumber
  \left\| \delta( u_{A,1}
  (\gamma_A^{1,1})^{-1}    G_1)\right\|_2 &\lesssim  \| \delta(u_{A,1})\|_6 \left \| (\gamma_A^{1,1})^{-1} \right\|_6\| G_1\|_6 \\  \nonumber
  &\qquad + \left\| (\gamma_A^{1,1})^{-2} \right\|_6\| G_1\|_6 \left\| u_{A,1} \right\|_\HH \left\| D\gamma_A^{1,1}\right\|_{L^6(\Omega; \HH)}\\ 
  & \qquad  + \left\| (\gamma_A^{1,1})^{-1} \right\|_4 \left\| \langle u_{A,1} ,D G_1 \right\rangle_\HH\|_4.
 \label{for16}
  \end{align}
 Again because $u_{A,1}$ is deterministic, the divergence $\delta(u_{A,1})$ is Gaussian, and therefore, for any $p\ge 2$,
 \[
  \|\delta(u_{A,1})\|_p \le c_p \| u_{A,1}\|_\HH.
  \]
  From the definition of $u_{A,1}$ in \eqref{uA1}, it follows that $\| u_{A,1}\|_\HH \le C$, which implies that
  \begin{equation} \label{for12}
   \|\delta(u_{A,1})\|_p \le C.
  \end{equation}
  Observe that the random variable $\gamma^{1,1}_A$ satisfies
  \begin{equation} \label{for20}
  \|  D\gamma_A^{1,1}\|_{L^p(\Omega;\HH)} \le C
  \end{equation}
  for all $p\ge 2$. Indeed,
  \begin{align*}
   \left\|  D\gamma_A^{1,1} \right\|_\HH &=  \left\| D \int_0^{t_0} \int_\R D_{r,z} u(t_0,x_0) [ f'(r)g(z)-\frac 12 f(r)g''(z)] \, \d z \d r \right\|_\HH\\
  &\lesssim \| D^2u(t_0,x_0)\|_{\HH^{\otimes2} }
  \end{align*}
by \eqref{rd08_06e3}.   Similarly, the uniform boundedness of the moments of $ \| D^3u(t_0,x_0)\|_{\HH^{\otimes3}}$ implies that
    \begin{equation} \label{for202}
  \|  D^2\gamma_A^{1,1}\|_{L^p(\Omega;\HH^{\otimes 2})} \le C.
  \end{equation}

  Using the estimates (\ref{for12}), (\ref{for13}) and (\ref{for20}) in the inequality (\ref{for16}) yields
\[
  \| \delta( u_{A,1}
  (\gamma_A^{1,1})^{-1}    G_1)\|_2 \lesssim    \| G_1\|_6 +   
\| \langle u_{A,1} ,D G_1 \rangle_\HH\|_4 .
\]

In order to conclude the proof, it suffices to show that for $z_2\geq \zeta^{\frac12}$
  \begin{equation} \label{for17}
  \| G_1\|_6 \lesssim \deltah^{-\frac 12}
  \end{equation}
  and
  \begin{equation} \label{for18}
    \left\| \langle u_{A,1} ,D G_1 \rangle_\HH\right\|_4 \lesssim \deltah^{-\frac 12}.
  \end{equation}
 
 \medskip
 \noindent
 {\it Proof of (\ref{for17}):} 
Using the property \cite[Proposition 1.3.3]{nualart2006} of the Skorohod integral, the random variable $G_1$ can be expressed as follows:
  \begin{align*}
  G_1&=  (\gamma_{A} ^{2,2})^{-1}\, \delta(u_{A,2})+(\gamma_{A} ^{2,2})^{-2}  \left\langle u_{A,2} , D\gamma_A^{2,2} \right\rangle_\HH \\
  &\quad
   - (\gamma_{A} ^{2,2})^{-1} \,  \gamma_{A} ^{1,2} \,
     ( \gamma_{A} ^{1,1})^{-1}\, \delta(u_{A,1} )+\left \langle u_{A,1} ,D\left[(\gamma_{A} ^{2,2})^{-1}   \gamma_{A} ^{1,2}
     ( \gamma_{A} ^{1,1})^{-1} \right] \right\rangle_\HH.
  \end{align*}
  From (\ref{for7}), the last term simplifies and it follows that
  \begin{align}
  G_1&=  (\gamma_{A} ^{2,2})^{-1}\,  \delta(u_{A,2})+(\gamma_{A} ^{2,2})^{-2}  \left\langle u_{A,2} ,D\gamma_A^{2,2} \right\rangle_\HH \nonumber\\ 
  &\qquad
   - (\gamma_{A} ^{2,2})^{-1}  \, \gamma_{A} ^{1,2}\, 
     ( \gamma_{A} ^{1,1})^{-1}\, \delta(u_{A,1} ) 
     + (\gamma_{A} ^{2,2})^{-1} \left\langle u_{A,1} ,D\left[  \gamma_{A} ^{1,2}\,
     ( \gamma_{A} ^{1,1})^{-1} \right] \right\rangle_\HH.
  \label{rd05_20e2}
  \end{align}
  As a consequence,
    \begin{align*}
  \|G_1\|_6&\leq  \left\|(\gamma_{A} ^{2,2})^{-1}\right\|_{12} \left\| \delta(u_{A,2})\right\|_{12} 
   +\left\| (\gamma_{A} ^{2,2})^{-2}\right\|_{18} \left\| u_{A,2}\right\| _{L^{18}(\Omega;\HH)}
 \left \|   D\gamma_A^{2,2}\right\| _{L^{18}(\Omega;\HH)} \\
  &\qquad
   + \left\|(\gamma_{A} ^{2,2})^{-1} \right\|_{24} \left\|  \gamma_{A} ^{1,2}\right\|_{24}
  \left \|  ( \gamma_{A} ^{1,1})^{-1}\right\|_{24} \left\|\delta(u_{A,1} )\right\|_{24}\\
   &\qquad + \left\|  (\gamma_{A} ^{2,2})^{-1} \left\langle u_{A,1} ,D\left[  \gamma_{A} ^{1,2}
     ( \gamma_{A} ^{1,1})^{-1} \right] \right\rangle_\HH \right\|_6.
  \end{align*}
  Using the estimates (\ref{equ2}),  (\ref{for21}),  (\ref{equ10}), (\ref{Dgamma})  (\ref{A12}), (\ref{for13})  and  (\ref{for12}), we can write
      \begin{align*}
  \|G_1\|_6  \lesssim  \deltah^{-3} \deltah^{\frac 52} +\deltah^{-6} \deltah^{\frac 52} \deltah^3   
   +\deltah^{-3} \left\|     \langle u_{A,1} ,D[  \gamma_{A} ^{1,2}
     ( \gamma_{A} ^{1,1})^{-1} ] \rangle_\HH \right\|_{12}.
  \end{align*}
  From (\ref{for13}), (\ref{DA12}),  (\ref{A12}), (\ref{for20})  and \eqref{for13}, it follows that
  \[
  \|G_1\|_6 \lesssim \deltah^{-3} \deltah^{\frac 52}  + \deltah^{-3}  \deltah^{\frac 52}   \lesssim \deltah^{-\frac 12},
  \] 
  which completes the proof of  (\ref{for17}).
    \medskip
    
 \noindent
 {\it Proof of (\ref{for18}):} 
  From \eqref{rd05_20e2} and using \cite[Proposition 1.3.8]{nualart2006}, we have
  $$
  DG_1 = Y_1 + \cdots + Y_4,
  $$
  where
  \begin{align*}
    Y_1&=  -(\gamma_{A} ^{2,2})^{-2} \delta(u_{A,2}) D\gamma_A^{2,2}+(\gamma_{A} ^{2,2})^{-1}  u_{A,2} +
    (\gamma_{A} ^{2,2})^{-1}  \delta(Du_{A,2} ), \\[6pt]
    Y_2&= -2
    (\gamma_{A} ^{2,2})^{-3}  \left\langle u_{A,2} ,D\gamma_A^{2,2} \right\rangle_\HH\, D\gamma_A^{2,2} 
    + (\gamma_{A} ^{2,2})^{-2}   \left\langle Du_{A,2} (*),D_*\gamma_A^{2,2} \right\rangle_\HH \\
    &\qquad\qquad +
    (\gamma_{A} ^{2,2})^{-2}   \langle u_{A,2} (*),DD_*\gamma_A^{2,2} \rangle_\HH , \\[6pt]
  Y_3& =  +(\gamma_{A} ^{2,2})^{-2} \,  \gamma_{A} ^{1,2}\,
     ( \gamma_{A} ^{1,1})^{-1}\delta(u_{A,1} )\,  D\gamma_A^{2,2} 
     -(\gamma_{A} ^{2,2})^{-1}  
     ( \gamma_{A} ^{1,1})^{-1}\delta(u_{A,1} )\,  D\gamma_{A} ^{1,2} \\
     &\qquad\qquad + ( \gamma_{A} ^{2,2})^{-1}\, \gamma_{A} ^{1,2}\, ( \gamma_{A} ^{1,1})^{-2}\, \delta(u_{A,1} ) \, D\gamma_A^{1,1} 
      -(\gamma_{A} ^{2,2})^{-1}   \gamma_{A} ^{1,2}
     ( \gamma_{A} ^{1,1})^{-1}u_{A,1} \\
     &\qquad\qquad -     (\gamma_{A} ^{2,2})^{-1}   \gamma_{A} ^{1,2}
     ( \gamma_{A} ^{1,1})^{-1} \, \delta(D u_{A,1}) , \\[6pt]
   Y_4  &= - (\gamma_{A} ^{2,2})^{-2} \left\langle u_{A,1} ,D\left[  \gamma_{A} ^{1,2}
     ( \gamma_{A} ^{1,1})^{-1} \right] \right\rangle_\HH \, D\gamma_A^{2,2} 
     + (\gamma_{A} ^{2,2})^{-1} 
    \left \langle Du_{A,1}(*) ,D_*[  \gamma_{A} ^{1,2}
     ( \gamma_{A} ^{1,1})^{-1} ] \right\rangle_\HH \\
    &\qquad\qquad +(\gamma_{A} ^{2,2})^{-1} 
    \left \langle u_{A,1}(*) ,DD_*[  \gamma_{A} ^{1,2}
     ( \gamma_{A} ^{1,1})^{-1} ] \right\rangle_\HH .
  \end{align*}
In the above , ``$*$" denotes the variable involved in the inner product. 

 Clearly,
\begin{equation}\label{rd07_29e1}
   \langle u_{A,1}, DG_1\rangle_\HH = \langle u_{A, 1}, Y_1 \rangle_\HH + \cdots + \langle u_{A, 1}, Y_4 \rangle_\HH.
\end{equation}
The derivative $D u_{A, 1}$ vanishes because $u_{A, 1}$ is deterministic, and in view of  (\ref{for7}),  when we project $DG_1$ on $u_{A,1}$, some of the terms vanish (in particular, those containing $D\gamma_A^{2,2}$ and $D_*\gamma_A^{2,2} $). 
We evaluate the four terms in \eqref{rd07_29e1} using in the first equality the stochastic Fubini's theorem for Skorohod integrals (see \cite[Proposition 6.5]{KRT}, for instance) to write $\langle u_{A, 1}(*), \delta(D_* u_{A, 2}) \rangle_\HH =  \delta(\langle u_{A,1}(*), D_{*}u_{A,2}\rangle_{\HH}  )$: 
\begin{align*}
    \langle u_{A,1}, Y_1\rangle_\HH&=   (\gamma_{A} ^{2,2})^{-1}  \langle u_{A,1}, u_{A,2} \rangle_\HH 
     -(\gamma_A^{2,2})^{-1} \delta(\langle u_{A,1}(*), D_{*}u_{A,2}\rangle_{\HH}  ) ,\\[6pt]
\langle u_{A, 1}, Y_2 \rangle_\HH & =(\gamma_A^{2,2})^{-2}\langle u_{A,1}, \langle Du_{A,2}(*), D_*\gamma_A^{2,2}\rangle_\HH\rangle_\HH, \\[6pt]
   \langle u_{A, 1}, Y_3 \rangle_\HH &=  -(\gamma_{A} ^{2,2})^{-1} 
     ( \gamma_{A} ^{1,1})^{-1}\delta(u_{A,1} )  \left\langle u_{A,1},  D\gamma_{A} ^{1,2} \right\rangle_\HH\\
     &\quad+ ( \gamma_{A} ^{2,2})^{-1}\gamma_{A} ^{1,2}( \gamma_{A} ^{1,1})^{-2}  \delta(u_{A,1} ) \left\langle u_{A,1}, D\gamma_A^{1,1} \right\rangle_\HH 
      -(\gamma_{A} ^{2,2})^{-1}   \gamma_{A} ^{1,2}
     ( \gamma_{A} ^{1,1})^{-1} \|u_{A,1} \|^2_\HH, \\[6pt]
    \langle u_{A, 1}, Y_4 \rangle_\HH   &= (\gamma_{A} ^{2,2})^{-1} \left \langle u_{A,1}\otimes u_{A1} ,D^2[  \gamma_{A} ^{1,2}
     ( \gamma_{A} ^{1,1})^{-1} ] \right\rangle_{\HH\otimes \HH} .
  \end{align*}
 By Lemmas \ref{lem:gamma22} and \ref{lem:uA2mom},
         \begin{align*}
         \|   (\gamma_{A} ^{2,2})^{-1}  \langle u_{A,1}, u_{A,2} \rangle_\HH\|_4\lesssim \deltah^{-3+5/2}=\deltah^{-1/2},
         \end{align*}
and by Lemmas \ref{lem:gamma22} and \ref{lem:uA1uA2},
         \begin{align*}
         \|(\gamma_A^{2,2})^{-1}\delta(\langle u_{A,1}(*), D_{*}u_{A,2}\rangle_{\HH})\|_4\lesssim \deltah^{-3+5/2}=\deltah^{-1/2},
         \end{align*}
 therefore $ \| \langle u_{A,1}, Y_1\rangle_\HH  \|_4 \lesssim \deltah^{-1/2}$. By Lemmas \ref{lem:gamma22} and \ref{lem:uA2gamma},
         \begin{align*}
         \| (\gamma_A^{2,2})^{-2}\langle u_{A,1}, \langle Du_{A,2}(*), D_*\gamma_A^{2,2}\rangle_\HH\rangle_\HH \|_4
         \lesssim \deltah^{-6+11/2}=\deltah^{-1/2},
         \end{align*}
therefore,  $ \| \langle u_{A,1}, Y_2\rangle_\HH  \|_4 \lesssim \deltah^{-1/2}$.    By Lemmas \ref{lem:gamma22}, \ref{lem:Dgamma12} and \ref{lem:gamma_11},
         \begin{align*}
         \|(\gamma_{A} ^{2,2})^{-1} 
     ( \gamma_{A} ^{1,1})^{-1}\delta(u_{A,1}  \langle u_{A,1},  D\gamma_{A} ^{1,2} \rangle_\HH)\|_4          \lesssim \deltah^{-3+5/2}=\deltah^{-1/2},
         \end{align*}
 by Lemmas \ref{lem:gamma22}, \ref{lem:Dgamma12}, \ref{lem:gamma_11} and \eqref{for12}--\eqref{for20},
        \begin{align*}
        \| ( \gamma_{A} ^{2,2})^{-1}\gamma_{A} ^{1,2}( \gamma_{A} ^{1,1})^{-2}\, \delta(u_{A,1}\, \langle u_{A,1}, D\gamma_A^{1,1}\rangle_\HH )\|_4\lesssim \deltah^{-3+5/2}=\deltah^{-1/2},
        \end{align*}
and by  Lemmas \ref{lem:gamma22}, \ref{lem:Dgamma12} and \ref{lem:gamma_11},
        \begin{align*}
        \|(\gamma_{A} ^{2,2})^{-1}   \gamma_{A} ^{1,2}
     ( \gamma_{A} ^{1,1})^{-1} \|u_{A,1} \|^{2}_\HH\|_4 \lesssim  \deltah^{-3+5/2} =\deltah^{-1/2},
        \end{align*}  
therefore,  $ \| \langle u_{A,1}, Y_3 \rangle_\HH  \|_4 \lesssim \deltah^{-1/2}$.    By  (\ref{A12})--(\ref{DDA12}), and using the facts that $\Vert u_{A, 1} \Vert_\HH$ is bounded and $(\gamma_A^{1,1})^{-1}$ has bounded moments in $\mathbb{D}^{2,p} $ for all $p\ge 2$ (see Lemma \ref{lem:gamma_11} below, \eqref{for20} and \eqref{for202}), we see that
 \begin{align*}
   \left\| (\gamma_{A} ^{2,2})^{-1} \left \langle u_{A,1}\otimes u_{A1} ,D^2[  \gamma_{A} ^{1,2}
     ( \gamma_{A} ^{1,1})^{-1} ] \right\rangle_{\HH\otimes \HH} \right\|_4 \lesssim  \deltah^{-3 +5/2}  =\deltah^{-1/2},
 \end{align*}
 therefore, $ \| \langle u_{A,1}, Y_4 \rangle_\HH  \|_4 \lesssim \deltah^{-1/2}$.  

   Together with \eqref{rd07_29e1}, these estimates  imply \eqref{for18}.
  \end{proof}

 \medskip
  \noindent
 {\it Proof of Theorem \ref{thm1}}. 
 Taking into account  the inequality (\ref{rd08_08e1}) and the bound (\ref{equ1}),  it suffices to estimate the probabilities $\P\{\vert F_1 \vert > \vert z_1 \vert\}$ and
 $\P\{F_2> z_2\}$. Because $\sigma$ is bounded, the existence of a positive constant $c_1$ such that $\P\{\vert F_1 \vert > \vert z_1 \vert \} \leq c_1 \exp(-z_1^2/c_1)$, for all $z_1 \in \R$, follows directly from the exponential martingale inequality \cite[Appendix A.2, (A.5)]{nualart2006}. We also have
 \begin{align*}
 \E[F_2] & \le \E \left[  \sup_{\substack {t_0\le t \le t_0+\deltah_1 \\
x_0\le x \le x_0+\deltah_2}}
 |v(t,x) - v(t_0,x_0)| \right]   \\
 & \le  \E \left[  \sup_{ |t-t_0|^{1/2}+|x-x_0| \le 2\deltah}
 |v(t,x) - v(t_0,x_0)| \right] 
  \le c\,  \deltah^{1/2},
 \end{align*}
 where the last inequality holds by \cite[Lemma 4.5]{DKN07}.
 Define 
 \[
 \sigma^2 = \sup_{\substack {t_0\le t \le t_0+\deltah_1 \\
x_0\le x \le x_0+\deltah_2}} \E[ (v(t,x)- v(t_0,x_0)^2] \le C\deltah.
\]
 Applying Borell's inequality \cite[(2.6)]{Adl90}, for all $z_2 \geq  \deltah^{1/2}$,
 \begin{align*}
 \P [F_2>z_2] & \le 2 \exp\left(-(z_2-\E[F_2])^2/(2\sigma^2)\right) \le 2 \exp\left(-(z_2-\E[F_2])^2/(2C\deltah)\right)\\
 & \le 2 \exp\left(-\left(2z^2_2/3-2\E[F_2]^2\right)/(2 C\deltah)\right) \\
 &= 2 \exp\left (-z^2_2/(3C\deltah^2) \right) \exp \left(\E[F_2]^2/( C\deltah)\right) \\
 & \le 2 \e^{c^2/C}\exp\left (-z^2_2/(3C\deltah) \right) 
 = c_1 \exp\left (-z^2_2/(3C\deltah) \right),
 \end{align*}
 This completes the proof.
 \hfill $\Box$

  \section{Technical results}\label{rd07_02s1}
  In this section, we state and prove several  lemmas that provide upper bounds for the $L^p$-norm of various terms, all of which were used in the proof of Proposition  \ref{prop1}. 
 
 \begin{lemma}\label{lem:gamma_11}
 The random variable $\gamma^{1,1}_A$ defined in \eqref{rd08_04e1}  satisfies
 \begin{equation} \label{for13}
\sup_{(t_0, x_0)\in I\times J} \E\left[\vert\gamma_A^{1,1}\vert^{-p}\right] <\infty,
\end{equation}
 for all $p\ge 2$.
 \end{lemma}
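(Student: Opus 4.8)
The plan is to work from the representation $\gamma_A^{1,1}=\langle Du(t_0,x_0),u_{A,1}\rangle_\HH$ (see \eqref{for2}) with $u_{A,1}=\LL[fg]$ \emph{deterministic}, where $\LL=\partial_r-\tfrac12\partial_z^2$, to produce a \emph{deterministic} lower bound for $\gamma_A^{1,1}$ in terms of an integral of the first-variation process of $u$, and then to control the negative moments of that integral by a small-ball estimate. The first step is to record the sign information hidden in the constraint $\rho<\tfrac92$ (this is where $\tfrac92$ is used): since $\rho<\tfrac92$ and $0\le f\le1$, a case check on \eqref{rd05_21e1}--\eqref{rd05_21e2} shows $u_{A,1}(r,z)=f'(r)g(z)-\tfrac12 f(r)g''(z)\ge0$ for a.e.\ $(r,z)\in[0,t_0]\times\R$: on the exponential tails $|z|>K$ one uses $g''=\tfrac49 g$ together with $f'(r)-\tfrac29 f(r)=\tfrac1\rho-\tfrac29 f(r)>0$; on the quadratic pieces $g''=-\tfrac12\le0$; on $(1-K,K-1)$ one has $g''=0$. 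Moreover $u_{A,1}\equiv\tfrac1\rho$ on $(t_0-\rho,t_0)\times(1-K,K-1)$, so, taking $K$ with $J\subset(1-K,K-1)$ and putting $\delta_0:=\tfrac12\,\mathrm{dist}(J,\{1-K,K-1\})>0$, we get $u_{A,1}\equiv\tfrac1\rho$ on $(t_0-\rho,t_0)\times[x_0-\delta_0,x_0+\delta_0]$ for every $(t_0,x_0)\in I\times J$.

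Next I would use the first-variation representation $D_{s,y}u(t_0,x_0)=\sigma(u(s,y))\,\Xi(s,y;t_0,x_0)$, where $(\Xi(s,y;t,x))_{t>s,\,x\in\R}$ solves the linear stochastic heat equation $\partial_t\Xi=\tfrac12\partial_x^2\Xi+\sigma'(u)\Xi\,\dot W$ with $\Xi(s,y;s,\cdot)=\delta_y$. Since solutions of the linear stochastic heat equation with nonnegative (measure) initial data are nonnegative, $\Xi\ge0$; combined with $c_0:=\inf_{z\in\R}\sigma(z)>0$ and the two facts above this gives the deterministic lower bound
\[
 \gamma_A^{1,1}=\int_0^{t_0}\!\!\int_\R\sigma(u(s,y))\,\Xi(s,y;t_0,x_0)\,u_{A,1}(s,y)\,\d y\,\d s\ \ge\ \frac{c_0}{\rho}\,\Lambda,\qquad \Lambda:=\int_{t_0-\rho}^{t_0}\!\!\int_{x_0-\delta_0}^{x_0+\delta_0}\Xi(s,y;t_0,x_0)\,\d y\,\d s .
\]
In particular $\gamma_A^{1,1}\ge0$, and it remains to bound $\E[\Lambda^{-p}]$ uniformly over $(t_0,x_0)\in I\times J$.

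For the small-ball estimate on $\Lambda$, I would fix a small $\ep>0$, keep only $s\in(t_0-\ep,t_0)$, and split $\Xi(s,y;t_0,x_0)=G(t_0-s,x_0-y)+R(s,y)$ with $R(s,y)=\int_s^{t_0}\!\int_\R G(t_0-\tau,x_0-w)\sigma'(u(\tau,w))\Xi(s,y;\tau,w)\,W(\d\tau\,\d w)$. Because $\int_{|y-x_0|>\delta_0}G(t_0-s,x_0-y)\,\d y$ is super-exponentially small as $t_0-s\to0$, one has $\int_{t_0-\ep}^{t_0}\int_{x_0-\delta_0}^{x_0+\delta_0}G(t_0-s,x_0-y)\,\d y\,\d s\ge\ep/2$ for $\ep$ small; and Burkholder's inequality, Minkowski's inequality, the standard Gaussian-type $L^p$-bound $\|\Xi(s,y;\tau,w)\|_p\lesssim G(\tau-s,w-y)$, the elementary identity $\int_\R G(a,\cdot)^2 G(b,\cdot-h)^2\lesssim(ab(a+b))^{-1/2}\e^{-h^2/(a+b)}$ and $\int_s^{t_0}((t_0-\tau)(\tau-s))^{-1/2}\d\tau=\pi$ give $\|R(s,y)\|_p\lesssim(t_0-s)^{-1/4}\e^{-(x_0-y)^2/(2(t_0-s))}$, hence $\bigl\|\int_{t_0-\ep}^{t_0}\int_{x_0-\delta_0}^{x_0+\delta_0}R(s,y)\,\d y\,\d s\bigr\|_p\lesssim\ep^{5/4}$, all uniformly in $(t_0,x_0)$. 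Therefore $\Lambda\ge\ep/4-\bigl|\int_{t_0-\ep}^{t_0}\int_{x_0-\delta_0}^{x_0+\delta_0}R\,\d y\,\d s\bigr|$ for $\ep$ small, so Markov's inequality yields $\P\{\Lambda<\ep/8\}\lesssim\ep^{-p}\,\ep^{5p/4}=\ep^{p/4}$; letting $p$ be arbitrarily large gives $\P\{\gamma_A^{1,1}<\ep\}\le\P\{\Lambda<\tfrac{\rho}{c_0}\ep\}\le C_N\,\ep^N$ for every $N$, with $C_N$ independent of $(t_0,x_0)\in I\times J$. Finally $\E[|\gamma_A^{1,1}|^{-p}]=\E[(\gamma_A^{1,1})^{-p}]=p\int_0^\infty\ep^{-p-1}\P\{\gamma_A^{1,1}<\ep\}\,\d\ep<\infty$ (split the integral at $\ep=1$, using the small-ball bound with $N>p$ on $(0,1)$ and $\P\le1$ on $(1,\infty)$), uniformly over $(t_0,x_0)$, which is \eqref{for13}.

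The main obstacle is the quantitative $L^p$-estimate $\|\Xi(s,y;t_0,x_0)-G(t_0-s,x_0-y)\|_p\lesssim(t_0-s)^{-1/4}\e^{-(x_0-y)^2/(2(t_0-s))}$ with constants uniform in $(t_0,x_0)\in I\times J$: it rests on the standard but somewhat delicate moment bounds for the first-variation process of the stochastic heat equation (of the same nature as those used to establish joint H\"older continuity of $u$), carried out carefully enough to retain the Gaussian spatial decay. A secondary point to pin down is the positivity of $\Xi$ --- equivalently, of the random propagator of $\partial_t-\tfrac12\partial_x^2-\sigma'(u)\dot W$ --- which is classical but should be cited precisely.
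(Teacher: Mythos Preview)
Your proposal is correct and follows essentially the same route as the paper's proof: the sign analysis of $u_{A,1}$ via the constraint $\rho<\tfrac92$, the nonnegativity of $D_{r,z}u(t_0,x_0)$, the restriction to $(t_0-\ep,t_0)\times(\text{small spatial box})$, the decomposition into a heat-kernel principal part of size $\asymp\ep$ and a stochastic remainder of $L^p$-size $\lesssim\ep^{5/4}$, and the passage to negative moments via a small-ball estimate are all exactly what the paper does. The only cosmetic difference is that you factor $D_{s,y}u=\sigma(u(s,y))\,\Xi$ and bound $\|R(s,y)\|_p$ pointwise before integrating, whereas the paper works directly with $D_{r,z}u$, integrates first (via stochastic Fubini) to form $I_{2,\ep}$, and then bounds $\|I_{2,\ep}\|_p$; both routes use the same input $\|D_{r,z}u(\theta,\eta)\|_p\lesssim G(\theta-r,\eta-z)$ (the paper cites \cite[Lemma~A.1]{HNV20}) and arrive at the same $\ep^{5/4}$. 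For the two points you flag as obstacles, the paper simply cites \cite[(A.4)]{CKNP23} for $D_{r,z}u\ge0$ and \cite[Lemma~A.1]{HNV20} for the Gaussian $L^p$-bound, and concludes via \cite[Proposition~3.5]{DKN09} instead of writing out the tail integral.
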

 
 \begin{proof}
 On the one hand, we know that $D_{r,z}u(t_0,x_0)\ge 0$ a.s.~(see \cite[(A.4)]{CKNP23}\footnote{\cite{CKNP23} considers the initial data $u(0)\equiv1$ and the result of (A.4) also applies to $u(0)\equiv0$.}). On the other hand, let $\rho$ and $f$ be as in \eqref{rd05_21e1} and let $K$ and $g$ be as in \eqref{rd05_21e2}.  For $0\leq r < t_0 - \rho$, we have $f(r) = 0$, and for $r \in [t_0-\rho, t_0]$ and $z \in  (1-K, K-1)$,  we have $f'(r)=\frac{1}{\rho}$, $g(z) = 1$ and $g''(z) = 0$. Therefore, for these $(r, z)$,
 \begin{align*}
     f'(r)g(z)-\frac 12 f(r) g''(z) 
 &= \frac 1 \rho 
     \ge 0.
  \end{align*}
For $r \in [t_0-\rho, t_0]$ and $\vert z \vert \in (K-1, K)$, $g''(z) = -1/2$, so $f'(r)g(z)-\frac 12 f(r) g''(z) \geq \frac 1 \rho g(z) \geq 0$. Finally, when  $r \in [t_0-\rho, t_0]$ and $\vert z \vert > K$, $g''(z) = \frac 4 9 g(z)$, so $f'(r)g(z)-\frac 12 f(r) g''(z) = \left( \frac 1 \rho - \frac 2 9 \right) g(z) \geq 0$ because we have chosen $\rho < \frac 9 2$. W conclude that the integrand in the definition of $\gamma_A^{1,1}$ is always nonnegative.

It follows that
\begin{align*}
         \gamma_A^{1,1}&= \int_0^{t_0}\int_\R D_{r,z} u(t_0,x_0) \left( f'(r) g(z)- \frac 12f(r) g''(z)\right) \d z \d r\\
         &\geq  \frac1\rho \int_{t_0-\rho}^{t_0}\int_{1-K}^{K-1} D_{r,z} u(t_0,x_0)\, \d z\d r. 
\end{align*}
By \cite[Proposition 2.4.4]{nualart2006},  the Malliavin derivative of $u$ satisfies  the following equation:
\begin{align*}
         D_{r,z}u(t_0\,,x_0)&= G(t_0-r, x_0-z)\sigma(u(r\,,z))\\
          &\qquad + \int_r^{t_0}\int_\R G(t_0-\theta, x_0-\eta)\sigma'(u(\theta\,, \eta))
         D_{r,z}u(\theta\,, \eta)W(\d\theta\, \d \eta),
 \end{align*}
so for all $0<\varepsilon<\rho$,
\begin{align}\label{gamma11-1}
         \gamma_A^{1,1}& \geq \frac1 \rho \int_{t_0-\varepsilon}^{t_0}\int_{1-K}^{K-1} D_{r,z} u(t_0,x_0)\d z\d r = I_{1, \varepsilon} + I_{2, \varepsilon},
\end{align}
where
\begin{align*}
        I_{1, \varepsilon} &= \frac1\rho \int_{t_0-\varepsilon}^{t_0}\int_{1-K}^{K-1} G(t_0-r, x_0-z)\sigma(u(r\,,z))\d z\d r , \\
     I_{2, \varepsilon}   &= \frac1\rho \int_{t_0-\varepsilon}^{t_0}\int_{1-K}^{K-1}\Bigg[\int_r^{t_0}\int_\R G(t_0-\theta, x_0-\eta)\sigma'(u(\theta\,, \eta)) \\
      &\qquad\qquad\qquad\qquad\qquad\qquad
       \times  D_{r,z}u(\theta\,, \eta)W(\d\theta\, \d \eta)\Bigg]\d z\d r.
\end{align*}
We can assume that $K$ is large enough so that the interval $[x_0-(K-1), x_0+ K -1)]$ contains $[0, 1]$, for all $x_0\in J$.  By \eqref{rd07_02e7}, there exists a constant $c>0$ such that for all $0<\varepsilon<\rho/2$,
\begin{align}\label{I1}
         I_{1, \varepsilon} &\geq c \int_{t_0-\varepsilon}^{t_0}\int_{1-K}^{K-1} G(t_0-r, x_0-z)\d z \d r \nonumber\\
         &\geq c \int_0^{\varepsilon} \int_0^1G(r, z) \d z \d r = c\, \varepsilon \int_0^{1} \int_0^{1/\sqrt{\varepsilon}}
           \sqrt{\ep}\, G(\varepsilon\, s, y\, \sqrt{\varepsilon})\d y\d s\nonumber\\
         &= c\, \varepsilon  \int_0^{1} \int_0^{1/\sqrt{\varepsilon}}
         G(s, y)\d y\d s \asymp \varepsilon,
\end{align}
where the second equality holds because of the identity \eqref{eq:scale}.
         
We proceed to estimate the moments of $I_{2, \varepsilon}$, $0 < \varepsilon < 1$. First, by the stochastic Fubini's theorem \cite[Theorem 2.4.1]{dss}, we write
\begin{align*}
      I_{2, \varepsilon} &= \int_{t_0-\varepsilon}^{t_0}\int_\R G(t_0-\theta, x_0-\eta) \\
        &\qquad \times \left[\int_{t_0-\varepsilon}^{\theta}\int_{1-K}^{K-1}  \sigma'(u(\theta\,, \eta))D_{r,z}u(\theta\,,\eta)\d z \d r 
         \right] W(\d \theta\, \d \eta).
\end{align*}
Since $\sigma'$ is bounded, by Burkholder's inequality, then Minkowsky's inequality,
\begin{align*}
         \|I_{2, \varepsilon}\|_p^2& \lesssim 
         \int_{t_0-\varepsilon}^{t_0}\int_\R G^2(t_0-\theta, x_0-\eta) \\
       &\qquad \times \left\|\int_{t_0-\varepsilon}^{\theta}\int_{-K+1}^{K-1}
         \sigma'(u(\theta\,, \eta))D_{r,z}u(\theta\,,\eta) \, \d z \d r  \right\|_p^2\d \eta\d \theta\\
         &\lesssim  \int_{t_0-\varepsilon}^{t_0}\int_\R G^2(t_0-\theta, x_0-\eta) 
          \left[\int_{t_0-\varepsilon}^{\theta}\int_{-K+1}^{K-1}
         \|D_{r,z}u(\theta\,,\eta)\|_p \, \d z \d r  \right]^2\d \eta\d \theta.
 \end{align*}
Lemma A.1 of \cite{HNV20}\footnote{\cite{HNV20} considers the initial data $u(0)\equiv1$ and the result of Lemma A.1 also applies to $u(0)\equiv0$.} ensures that
\begin{align*}
         \|D_{r,z}u(\theta\,,\eta)\|_p \lesssim G(\theta-r, \eta-z).
\end{align*}
Thus, it follows that
\begin{align}
                  \|I_{2, \varepsilon}\|_p^2& \lesssim
                    \int_{t_0-\varepsilon}^{t_0}\int_\R G^2(t_0-\theta, x_0-\eta) 
         \left[\int_{t_0-\varepsilon}^{\theta}\int_{-K+1}^{K-1}
         G(\theta-r, \eta-z)\, \d z \d r \right]^2\d \eta\d \theta\nonumber\\
         &\lesssim             \int_{t_0-\varepsilon}^{t_0}\int_\R G^2(t_0-\theta, x_0-\eta)\,
        (\theta - t_0 + \ep)^2 \, \d \eta\d \theta\nonumber\\
         &\leq \varepsilon^2               \int_{t_0-\varepsilon}^{t_0}\int_\R G^2(t_0-\theta, x_0-\eta) \,
         \d \eta\d \theta 
          \asymp \varepsilon^{5/2},
  \label{I2}
\end{align}
where we have used \cite[(A.5)]{bp1998}. 
         Hence, we conclude from \eqref{gamma11-1}, \eqref{I1} and \eqref{I2} that for all $(t_0\,,x_0)\in I\times J$ and 
         for all $0<\varepsilon<\rho$,
         \begin{align*}
         \gamma_A^{1,1}\geq c'\varepsilon - |I_{2, \varepsilon}|,
         \end{align*}
         where 
         \begin{align*}
         \|I_{2, \varepsilon}\|_p \lesssim \varepsilon^{5/4}, \qquad 0 < \varepsilon < 1.
         \end{align*}
Therefore, we deduce from \cite[Proposition 3.5]{DKN09} that for all $p\geq 2$, \eqref{for13} holds.
 \end{proof}

\begin{lemma} \label{lem:gamma22}
The random variable $\gamma_A^{2,2}$ has finite negative moments of all orders. Moreover, for $z_2 \ge \deltah^{1/2}$ and $p\ge 1$, we have
 \begin{equation}   \label{equ2}
 \| (\gamma_A^{2,2})^{-1}\|_{p} \lesssim \deltah^{-3}.
 \end{equation}
 \end{lemma}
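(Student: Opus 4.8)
Recall that $\gamma_A^{2,2} = \int_{t_0}^{t_0+\deltah^2}\int_{x_0}^{x_0+\deltah}\psi_{z_2,\deltah}(Z_{s,y})\,\d y\,\d s$, where $\psi = \psi_0(\cdot/R)$ with $R = R(z_2,\deltah) = c(z_2/2)^{2p_0}\deltah^{4-\gamma_0}$, and $\psi_0 = 1$ on $[0,1/2]$. The plan is to obtain a deterministic lower bound for $\gamma_A^{2,2}$ on a set of overwhelming probability and then apply the standard device (as in \cite[Proposition 3.5]{DKN09}, used already in Lemma \ref{lem:gamma_11}) that converts such a bound into finiteness of all negative moments. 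Concretely, since $\psi \geq \mathbf{1}_{\{Z_{s,y}\leq R/2\}}$, we have
\[
   \gamma_A^{2,2} \geq \int_{t_0}^{t_0+\deltah^2}\int_{x_0}^{x_0+\deltah} \mathbf{1}_{\{Z_{s,y}\leq R/2\}}\,\d y\,\d s \geq \deltah^3\,\mathbf{1}_{\{\sup_{(s,y)\in R^{\deltah^2,\deltah}_{t_0,x_0}} Z_{s,y}\leq R/2\}},
\]
using that the rectangle $[t_0,t_0+\deltah^2]\times[x_0,x_0+\deltah]$ has area $\deltah^3$ and that $(s,y)\mapsto Z_{s,y}$ is nondecreasing in each coordinate (by the definition \eqref{rd04_04e1} of $Z$ as a sum of integrals over growing rectangles), so that its supremum over the rectangle is attained at the corner $(t_0+\deltah^2, x_0+\deltah)$.

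The main step is therefore to estimate $\P\{Z_{t_0+\deltah^2,\,x_0+\deltah} > R/2\}$. By \eqref{rd04_04e1}, $Z_{r,z} = Y_1(r) + Y_2(z) + Y_3(r,z)$, and from Lemma \ref{rd07_26l1} we have, for every $p\geq 1$,
\[
   \E[Y_1(t_0+\deltah^2)^p] \lesssim \deltah^{2p(2 + (p_0-\gamma_0)/2)}, \quad \E[Y_2(x_0+\deltah)^p] \lesssim \deltah^{p(4+p_0-\gamma_0)},
\]
\[
   \E[Y_3(t_0+\deltah^2,\,x_0+\deltah)^p] \lesssim \deltah^{2p(1+p_0(\theta_1-2\gamma_1))}\,\deltah^{p(1+p_0(\theta_2-2\gamma_2))}.
\]
Using $2\gamma_1+\gamma_2 = (\gamma_0-1)/(2p_0)$ and $\theta_1+\theta_2 = 1/2 + \theta$ from \eqref{eq:theta12}, one checks that all three exponents of $\deltah$ equal a common value which is at least $4 + p_0 - \gamma_0$ (indeed for $Y_1$ it is $4 + p_0 - \gamma_0$, for $Y_2$ it is $4 + p_0 - \gamma_0$, and a short computation gives the same for $Y_3$); in particular each is $\geq 4+p_0-\gamma_0 > 0$ since $p_0 > \gamma_0$. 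Combined with Markov's inequality and $R/2 \asymp z_2^{2p_0}\deltah^{4-\gamma_0} \geq \deltah^{p_0}\deltah^{4-\gamma_0} = \deltah^{4+p_0-\gamma_0}$ when $z_2 \geq \deltah^{1/2}$, this yields, for every $q\geq 1$,
\[
   \P\{Z_{t_0+\deltah^2,\,x_0+\deltah} > R/2\} \lesssim R^{-q}\,\E[Z_{t_0+\deltah^2,\,x_0+\deltah}^q] \lesssim (z_2^{2p_0}\deltah^{4-\gamma_0})^{-q}\,\deltah^{q(4+p_0-\gamma_0)} \lesssim (\deltah^{p_0}/z_2^{2p_0})^{q} \leq 1,
\]
and more usefully $\P\{\gamma_A^{2,2} < \deltah^3\} = \P\{Z_{t_0+\deltah^2,\,x_0+\deltah} > R/2\} \lesssim_q \deltah^{q}$ for every $q$, uniformly over $(t_0,x_0)\in I\times J$ (one keeps $z_2\geq\deltah^{1/2}$ throughout; the constants come from Lemma \ref{rd07_26l1} and are uniform).

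To finish, write $\E[(\gamma_A^{2,2})^{-p}] = \E[(\gamma_A^{2,2})^{-p}\mathbf{1}_{\{\gamma_A^{2,2}\geq\deltah^3\}}] + \E[(\gamma_A^{2,2})^{-p}\mathbf{1}_{\{\gamma_A^{2,2}<\deltah^3\}}]$. The first term is at most $\deltah^{-3p}$. For the second, since $\gamma_A^{2,2}$ is bounded below by $0$ but we need integrability, we use instead the Cauchy–Schwarz inequality together with the crude deterministic bound $\gamma_A^{2,2} \geq \deltah^3\mathbf{1}_{\{\cdots\}}$ only when the event holds; the cleaner route is to invoke \cite[Proposition 3.5]{DKN09} directly with the pair of estimates ``$\gamma_A^{2,2}\geq \deltah^3$ on an event of probability $1 - O(\deltah^q)$ for all $q$,'' exactly as was done for $I_{2,\varepsilon}$ in the proof of Lemma \ref{lem:gamma_11}: this gives $\E[(\gamma_A^{2,2})^{-p}]<\infty$ and, tracking the $\deltah$-dependence, $\|(\gamma_A^{2,2})^{-1}\|_p \lesssim \deltah^{-3}$ uniformly over $(t_0,x_0)\in I\times J$ and $z_2\geq\deltah^{1/2}$. \textbf{The main obstacle} is the bookkeeping in the second paragraph: verifying that the three exponents of $\deltah$ produced by Lemma \ref{rd07_26l1} are all $\geq 4+p_0-\gamma_0$ so that they dominate the power $R^{-q}\asymp \deltah^{-q(4+p_0-\gamma_0)}$ coming from the threshold, using the constraints \eqref{rd07_26e2}, \eqref{eq:theta12}, \eqref{rd07_01e1} on $p_0,\gamma_0,\gamma_1,\gamma_2,\theta_1,\theta_2$; everything else is the routine ``lower bound on an event of high probability $\Rightarrow$ negative moments'' argument already deployed in Lemma \ref{lem:gamma_11}.
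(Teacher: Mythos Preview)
Your overall strategy is right --- bound $\gamma_A^{2,2}$ below by the integral of $\mathbf{1}_{\{Z_{s,y}\le R/2\}}$ and exploit the monotonicity of $(s,y)\mapsto Z_{s,y}$ --- but the execution has a genuine gap at two linked points.

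First, the probability estimate you claim, $\P\{\gamma_A^{2,2}<\deltah^3\}\lesssim_q\deltah^q$, does not follow from your Markov bound. You correctly compute $R^{-q}\E[Z_{t_0+\deltah^2,x_0+\deltah}^q]\lesssim(\deltah^{p_0}/z_2^{2p_0})^q$, but at the boundary $z_2=\deltah^{1/2}$ this ratio equals a constant, not $\deltah^q$. Since the lemma must hold uniformly in $z_2\ge\deltah^{1/2}$, your tail bound at the single threshold $\deltah^3$ is only $O(1)$.

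Second, and more fundamentally, even a single-threshold bound of the form $\P\{\gamma_A^{2,2}<\deltah^3\}\le c$ (or even $\le\deltah^q$) gives no control of $(\gamma_A^{2,2})^{-p}$ on the bad event, so it cannot by itself yield finiteness of negative moments. Your appeal to \cite[Proposition~3.5]{DKN09} is a misapplication: that device requires a \emph{family} of lower bounds indexed by a small parameter $\varepsilon$ (roughly $\gamma_A^{2,2}\ge c\varepsilon-|I_\varepsilon|$ with $\|I_\varepsilon\|_p\lesssim\varepsilon^{1+\delta}$), not a single bound at one fixed scale. The paper supplies exactly this missing ingredient: for every $0<\epsilon\le\deltah^3$ it uses the sub-rectangle $[t_0,t_0+\epsilon^{2/3}]\times[x_0,x_0+\epsilon^{1/3}]$ together with the monotonicity of $Z$ to get
\[
\P\{\tilde X<\epsilon\}\le\P\{Z_{t_0+\epsilon^{2/3},\,x_0+\epsilon^{1/3}}\ge R/2\}\lesssim R^{-q}\epsilon^{\frac{q}{3}(4+p_0-\gamma_0)}.
\]
Here $R$ is fixed while the moment bound scales in $\epsilon$, so the tail decays at every scale. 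Integrating $p\int_0^\infty y^{p-1}\P\{\tilde X<1/y\}\,\d y$ with a split at $y=\deltah^{-3}$ then gives $\E[\tilde X^{-p}]\lesssim\deltah^{-3p}+R^{-q}\deltah^{-3p+q(4+p_0-\gamma_0)}$, and the $R^{-q}$ factor cancels the extra power of $\deltah$ precisely when $z_2\ge\deltah^{1/2}$. Your proof is easily repaired along these lines; the key missing idea is the sliding-scale sub-rectangle.
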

 
 \begin{proof}
 Let $R$ be as in \eqref{rd08_04e2}.  We can write 
 \begin{align*}
 \gamma_A^{2,2} \geq  \int_{t_0}^{t_0+\deltah^2}\int_{x_0}^{x_0+\deltah}\bm{1}_{\{Z_{s,y}\leq R/2\}}\d y\d s=:\tilde{X}.
 \end{align*}
 Then, for $0<\epsilon \leq \deltah^3$, since $(s, y) \mapsto Z_{s, y}$ is a nondecreasing function of $s$ and $y$, we have
 \begin{align*}
 \P\{\tilde{X}<\epsilon\} \leq \P\left\{Z_{t_0+\epsilon^{2/3},\, x_0+\epsilon^{1/3}}\geq R/2\right\}\lesssim R^{-q}\, \E\left[Z_{t_0+\epsilon^{2/3},\, x_0+\epsilon^{1/3}}^q\right],
 \end{align*}
 for all $q > 0$. We see from \eqref{eq:moment1}, \eqref{eq:moment2}, \eqref{eq:moment3}, \eqref{eq:theta12} and \eqref{rd07_01e1}  that
 \begin{align*}
 \E\left[Z_{t_0+\epsilon^{2/3},\, x_0+\epsilon^{1/3}}^q\right] \lesssim \epsilon^{\frac{q}{3}(4+p_0-\gamma_0)},
 \end{align*}
 and therefore, for $0<\epsilon \leq \deltah^3$, 
 \begin{align*}
  \P\{\tilde{X}<\epsilon\} \lesssim R^{-q}\epsilon^{\frac{q}{3}(4+p_0-\gamma_0)}.
 \end{align*}
 This implies that $\tilde{X}$ has finite negative moments of all orders. Indeed,
 \begin{align*}
 \E[\tilde{X}^{-p}]&= p\int_0^\infty y^{p-1}\P\{\tilde{X}^{-1}>y\}\d y\\
 &=p\int_0^{\deltah^{-3}} y^{p-1}\P\{\tilde{X}^{-1}>y\}\d y + p\int_{\deltah^{-3}}^{\infty} y^{p-1}\P\{\tilde{X}<1/y\}\d y\\
 &\lesssim \deltah^{-3p} + R^{-q}\int_{\deltah^{-3}}^\infty y^{p-1}y^{-\frac{q}{3}(4+p_0-\gamma_0)}\d y\\
 &\asymp \deltah^{-3p} + R^{-q}\deltah^{-3p+q(4+p_0-\gamma_0)},
 \end{align*}
 provided we choose $q$ so that $p<\frac{q}{3}(4+p_0-\gamma_0)$.
Recall that $z_2\geq \deltah^{1/2}$. Therefore, by \eqref{rd08_04e2}, 
 \begin{align}\label{eq:R}
 R^{-1} 
     = c_1 z_2^{-2p_0}\deltah^{\gamma_0-4} \leq c_1 \deltah^{-p_0+\gamma_0-4}.
 \end{align}
 Hence, for $z_2\geq \deltah^{1/2}$,
  \begin{align*}
 \E[\tilde{X}^{-p}]&\lesssim  \deltah^{-3p} + \deltah^{(-p_0+\gamma_0-4)q -3p+q(p_0-\gamma_0+4)} \asymp \deltah^{-3p}.
 \end{align*}
Therefore, we conclude that (\ref{equ2}) holds for $z_2\geq \deltah^{1/2}$.
 \end{proof}

 \begin{lemma}\label{rd07_02l1}
 For $z_2\geq \deltah^{1/2}$,  we have
 \begin{equation} \label{equ3}
 \|\E[u_{A,2}]\|_{\mathcal{H}} \lesssim \deltah^{5/2}.
 \end{equation}
 \end{lemma}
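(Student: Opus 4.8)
The plan is to exploit the fact that $u_{A,2}(r,z)$ is defined in \eqref{rd07_01e2} as $\left(\frac{\partial}{\partial r} - \frac 12 \frac{\partial^2}{\partial z^2}\right)[\eta(r,z) H_A(r,z)]$, where $\eta$ is the deterministic cutoff from \eqref{eq:eta} supported in $[t_0-\deltah^2, t_0 + 2\deltah^2]\times[x_0 - \deltah, x_0 + 2\deltah]$ and $H_A(r,z)$ is the integral functional of the $\psi$-truncated field $Z_{s,y}$ from \eqref{eq:H}. The key observation is that $\E[u_{A,2}(r,z)] = \left(\frac{\partial}{\partial r} - \frac 12 \frac{\partial^2}{\partial z^2}\right)[\eta(r,z)\, \E[H_A(r,z)]]$, so the quantity to be estimated is the $\HH = L^2(\R_+\times\R)$-norm of the heat operator applied to a deterministic function supported on a rectangle of area $\asymp \deltah^3$. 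First I would record, from \eqref{rd08_04e1} and \eqref{eq:H}, that $\E[H_A(r,z)] = \gamma_A^{2,2}$-type expectation minus a partial version, and in particular that $0 \le \E[H_A(r,z)] \le \E[\gamma_A^{2,2}] = \int_{t_0}^{t_0+\deltah^2}\int_{x_0}^{x_0+\deltah}\P$-weighted$\{Z_{s,y}\le R\}$-ish mass $\lesssim \deltah^3$ uniformly in $(r,z)$, using that $\psi \le 1$.

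Next I would reduce the $\HH$-norm of $\LL[\eta\,\E H_A]$ to bounds on $\eta\,\E H_A$ and its first $t$-derivative and first two $z$-derivatives, via the hypotheses of Lemma \ref{lem:heat} or a direct expansion using the product rule $\LL[\eta\, h] = (\LL\eta)h + \eta(\LL h) - \partial_z\eta\,\partial_z h$. The terms involving derivatives of $\eta$ produce factors of $\deltah^{-2}$ (from $\partial_r$ or $\partial_z^2$) or $\deltah^{-1}$ (from $\partial_z$) but are supported on a rectangle of area $\asymp\deltah^3$, so after taking the $L^2$-norm (square root of area times sup$^2$) these contribute $\deltah^{3/2}\cdot\deltah^{-2}\cdot\sup|\E H_A| \lesssim \deltah^{3/2}\cdot\deltah^{-2}\cdot\deltah^3 = \deltah^{5/2}$, which is the target. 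The remaining effort is to control the terms with $\eta\cdot\LL[\E H_A]$ and $\eta\cdot\partial_z\E H_A$: here I would differentiate under the integral sign in \eqref{eq:H}, noting that $\frac{\partial}{\partial r}\E H_A(r,z) = \int_{z}^{x_0+\deltah}\E[\psi(Z_{r,y})]\,\d y$ and $\frac{\partial}{\partial z}\E H_A(r,z) = \int_{r}^{t_0+\deltah^2}\E[\psi(Z_{s,z})]\,\d s$, so these first derivatives are bounded by $\deltah$ and $\deltah^2$ respectively on the support of $\eta$. The second $z$-derivative $\partial_z^2 \E H_A$ requires differentiating $\E[\psi(Z_{s,z})]$ in $z$; since $\psi = \psi_{z_2,\deltah}$ has $\|\psi'\|_\infty \lesssim R^{-1}$ and $z\mapsto Z_{s,z}$ is a.s.\ differentiable with a moment bound coming from Lemma \ref{rd07_26l1}, one gets $|\partial_z\E[\psi(Z_{s,z})]| \lesssim R^{-1}\E[|\partial_z Z_{s,z}|]$, and integrating in $s$ over an interval of length $\deltah^2$ and using $R^{-1}\lesssim\deltah^{-p_0+\gamma_0-4}$ from \eqref{eq:R}, this is absorbed into a small power of $\deltah$; combined with the rectangle area $\deltah^3$ under the $L^2$-norm, all such terms are $\lesssim\deltah^{5/2}$ (indeed much smaller). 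Summing the finitely many contributions gives \eqref{equ3}.

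The main obstacle I anticipate is the second-derivative term $\eta\,\partial_z^2\E H_A$: controlling $\partial_z^2\E H_A$ forces one to differentiate $\E[\psi_{z_2,\deltah}(Z_{s,z})]$ once (or twice) in $z$, which brings in $\psi'$ (hence the factor $R^{-1}$, potentially a large negative power of $\deltah$) and the spatial regularity of the random field $Z_{s,z}$. One must check that the moment estimates for $Z$ and its spatial derivative from Lemma \ref{rd07_26l1}, together with the explicit form of $R$ in \eqref{rd08_04e2} and the constraint $z_2 \ge \deltah^{1/2}$, are strong enough that $R^{-1}$ times these moments, integrated over the small time interval, stays bounded (or contributes only a positive power of $\deltah$). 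A clean way around the worst of this is to avoid differentiating $\psi$ altogether by integrating by parts in the $z$-variable inside $\E H_A$ before differentiating, or simply to observe that $\partial_z^2(\eta\,\E H_A) = \partial_z[(\partial_z\eta)\E H_A + \eta\,\partial_z\E H_A]$ and push the remaining $z$-derivative onto $\eta$ (paying $\deltah^{-1}$ but keeping $\E H_A$ or $\partial_z\E H_A$, which are $\le\deltah^3$ and $\le\deltah^2$ respectively); combined with the area factor $\deltah^{3/2}$ this again lands at $\deltah^{5/2}$ or better, so the estimate is robust.
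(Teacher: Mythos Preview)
Your main argument is correct and is exactly the paper's approach: expand $u_{A,2}$ via the product rule into the five pieces \eqref{for8}, bound the four easy ones pointwise by $\deltah$ using only $\psi\le 1$ and the scaling of $\eta$, and for the remaining piece $\eta\,\partial_z^2 H_A$ differentiate $\psi(Z_{s,z})$ to pick up $R^{-1}\E[|\partial_z Z_{s,z}|]$, then use the moment bound $\E[|\partial_z Z_{s,z}|]\lesssim\deltah^{p_0-\gamma_0+3}$ (this is not literally in Lemma~\ref{rd07_26l1} but is obtained by the same computation; the paper writes it out at \eqref{eq:partialZ}) together with \eqref{eq:R} to conclude.

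One caveat: the ``clean way around'' you propose at the end does not work. Writing $\partial_z^2(\eta\,h)=\partial_z[(\partial_z\eta)h+\eta\,\partial_z h]$ and then ``pushing the remaining $z$-derivative onto $\eta$'' is not a legitimate manipulation when the goal is an $L^2$-norm bound: there is no test function to integrate by parts against, so the term $\eta\,\partial_z^2 h$ reappears as soon as you carry out the outer $\partial_z$. You must confront $\partial_z^2\E H_A$ directly via $\psi'$ and the moment of $\partial_z Z$, exactly as in your primary argument; fortunately that already suffices.
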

 
 \begin{proof}
 Observe from \eqref{rd07_01e2} that for $r>0$ and $z\in \R$,
 \begin{align*} 
 u_{A,2}(r,z)& :=\sum_{k=1}^5 u_{A,2,k}(r,z),
 \end{align*}
 where
 \begin{align} \nonumber
& u_{A,2,1}(r,z) =  \frac{\partial \eta(r,z)}{\partial r}H_A(r,z) , \qquad\quad u_{A,2,2}(r,z) = \eta(r, z)\frac{\partial H_A(r,z)}{\partial r} , \\  \nonumber
&u_{A,2,3}(r,z) = -\frac12 \frac{\partial^2\eta(r,z)}{\partial z^2}H_A(r,z), \quad
 u_{A,2,4}(r,z) = -\frac{\partial \eta(r,z)}{\partial z}\,  \frac{\partial H_A(r,z)}{\partial z}, \\
 &u_{A,2,5}(r,z) = -\frac12\, \eta(r,z)\frac{\partial^2H_A(r,z)}{\partial z^2}.
 \label{for8}
 \end{align}

In order to simplify the notation, we let
\begin{align}\label{eq:E}
E=[t_0-\deltah^2, t_0+2\deltah^2]\times [x_0-\deltah, x_0+2\deltah].
\end{align}
 Since $\eta$ is supported on $E$, it suffices to estimate 
\begin{align*}
            \|\E[u_{A,2}]\|_{L^2(E)}=\left( \int_{t_0-\deltah^2}^{t_0+2\deltah^2}\int_{x_0-\deltah}^{x_0+2\deltah}\left(\E[u_{A,2}(r,z)]\right)^{2}\d z\d r\right)^{1/2}. 
 \end{align*}
For $(r,z)\in E$, we see from \eqref{eq:H} that
           \begin{align}\label{eq:HA}
           |H_A(r,z)|\leq \|\psi\|_{\infty}(\deltah^3+ |t_0+\deltah^2-r|\cdot|x_0+\deltah-z|) \lesssim \deltah^3,
           \end{align}
and by \eqref{eq:eta}, that
 \begin{align}\label{eq:etabound}
           \left|\frac{\partial\eta(r,z)}{\partial r}\right| \lesssim \deltah^{-2}\|\phi'\|_{\infty}\|\phi\|_{\infty}\quad \text{and}\quad 
                      \left|\frac{\partial^2\eta(r,z)}{\partial z^2}\right| \lesssim \deltah^{-2} \|\phi\|_{\infty}\|\phi''\|_{\infty}.
           \end{align}
Thus, 
\begin{align}\label{eq:u1}
    \vert u_{A,2,1}(r, z) \vert \lesssim \zeta^{-2} \zeta^3 = \zeta \quad\text{and}\quad       \|\E[u_{A,2,1}]\|_{L^2(E)}\lesssim  \left[(\deltah^{-2}\deltah^3)^2\deltah^3\right]^{1/2}= \deltah^{5/2},
\end{align}
and similarly, 
\begin{align}\label{eq:u3}
      \vert u_{A,2,3}(r, z) \vert \lesssim  \zeta \quad\text{and}\quad       \|\E[u_{A,2,3}]\|_{L^2(E)}\lesssim \deltah^{5/2}.
\end{align}
Moreover, by \eqref{eq:H}, we have for $(r,z)\in E$,
           \begin{align}\label{eq:HA2}
           \left|\frac{\partial H_A(r,z)}{\partial r} \right|= \left|\int_z^{x_0+\deltah}\psi(Z_{r, y})\, \d y\right|
            \leq \|\psi\|_{\infty}|x_0+\deltah-z| \lesssim \|\psi\|_{\infty}\deltah.
           \end{align}
Using the fact that for $(r,z)\in E$,
           \begin{align}\label{eq:eta2}
           |\eta(r,z)|\leq \|\phi\|_{\infty}^2,
           \end{align}
 we obtain that
 \begin{align}\label{eq:u2}
      \vert u_{A,2,2}(r, z) \vert \lesssim  \zeta \quad\text{and}\quad       \|\E[u_{A,2,2}]\|_{L^2(E)}\lesssim  [\deltah^2\deltah^3]^{1/2}=\deltah ^{5/2}.
\end{align}
           
We proceed to estimate $\|\E[u_{A,2,4}]\|_{L^2(E)}$. First, notice that for $(r,z)\in E$, by \eqref{eq:eta}, 
           \begin{align}\label{eq:eta3}
           \left|\frac{\partial \eta(r,z)}{\partial z}\right| \leq \deltah^{-1}\|\phi\|_{\infty}\|\phi'\|_{\infty}.
           \end{align}
By \eqref{eq:H}, for $(r,z)\in E$,
           \begin{align}\label{eq:HA3}
           \left|\frac{\partial H_A(r,z)}{\partial z} \right|= \left|\int_r^{t_0+\deltah^2}\psi(Z_{s, z})\, \d s\right|
            \leq \|\psi\|_{\infty}|t_0+\deltah^2-r| \lesssim \|\psi\|_{\infty}\deltah^2.
           \end{align}
           Hence,
\begin{align}\label{eq:u4}
      \vert u_{A,2,4}(r, z) \vert \lesssim  \zeta \quad\text{and}\quad       \|\E[u_{A,2,4}]\|_{L^2(E)}\lesssim  [(\deltah^{-1}\deltah^{2})^2\deltah^3]^{1/2}=\deltah^{5/2}.
\end{align}
           
It remains to estimate $\|\E[u_{A,2,5}]\|_{L^2(E)}$. By \eqref{eq:H},  for $(r,z)\in E$,
           \begin{align}\label{eq:HAderi}
           \frac{\partial^2 H_A(r,z)}{\partial z^2}= \int_r^{t_0+\deltah^2}\psi'(Z_{s, z}) \frac{\partial Z_{s,z}}{\partial z}\, \d s,
           \end{align}
and by \eqref{rd04_04e1},
\begin{align}
             \frac{\partial Z_{s,z}}{\partial z}&=2\int_{x_0}^z \frac {(v(t_0,x)-v(t_0,z))^{2p_0}}{|x-z|^{\gamma_0-2} }\d x \nonumber\\
             &\quad +2 \int_{[t_0, s]^2}\d \tilde{s}\d t \int_{x_0}^z \d x\, \frac {(v(t,x)+v(\tilde{s},z)-v(t,z)-v(\tilde{s},x))^{2p_0}}{|t-\tilde{s}|^{1+2p_0\gamma_1}|x-z|^{1+2p_0\gamma_2}}. \label{eq:partialZ}
            \end{align}
By the H\"older continuity of $v$ and \eqref{eq:rectangle},
for $s\in[r, t_0+\deltah^2]$ with $r\in [t_0-\deltah^2, t_0+2\deltah^2]$ and $z\in [x_0-\deltah, x_0+2\deltah]$, 
          \begin{align*}
          \E\left[\left| \frac{\partial Z_{s,z}}{\partial z}\right|\right] &\lesssim  \int_{x_0}^z |x-z|^{p_0-\gamma_0+2}\d x \\
          &\quad + \int_{[t_0, s]^2}\d \tilde{s}\d t \int_{x_0}^z \d x \,
          |t-\tilde{s}|^{p_0\theta_1-1-2p_0\gamma_1}|x-z|^{p_0\theta_2-1-2p_0\gamma_2}.
          \end{align*}
Using \eqref{eq:theta12} and \eqref{rd07_01e1}, we deduce that
            for these $(s, z)$, 
          \begin{align*}
          \E\left[\left| \frac{\partial Z_{s,z}}{\partial z}\right|\right] &\lesssim \deltah^{p_0-\gamma_0+3}.
          \end{align*}
Furthermore, since $\psi_0$ is a bounded function, we conclude using \eqref{rd07_02e1} that for $(r,z)\in E$,
          \begin{align*}
          |\E[u_{A,2,5}(r,z)]| \lesssim \|\psi'\|_{\infty}\deltah^{p_0-\gamma_0+5} \leq  \|\psi_0'\|_{\infty}R^{-1}\deltah^{p_0-\gamma_0+5}.
          \end{align*}
           Under the condition $z_2\geq \deltah^{1/2}$, we apply the inequality \eqref{eq:R} to see that for $(r,z)\in E$,
                     \begin{align*}
          |\E[u_{A,2,5}(r,z)]| \lesssim \deltah.
          \end{align*}
          Therefore, we obtain that for $z_2\geq \deltah^{1/2}$,
          \begin{align}\label{eq:u5}
          \|\E[u_{A,2,5}]\|_{L^2(E)} \lesssim  [\deltah^2\deltah^3]^{1/2}=\deltah^{5/2}.
          \end{align}
          
We combine \eqref{eq:u1}, \eqref{eq:u2}, \eqref{eq:u3}, \eqref{eq:u4} and \eqref{eq:u5} to complete the proof. 
 \end{proof}

  \begin{lemma}\label{lem:uA2mom}
 For $z_2\geq \deltah^{1/2}$,
 \begin{equation} \label{equ10}
 \|u_{A,2}\|_{L^p(\Omega;\mathcal{H})} \lesssim \deltah^{5/2}.
 \end{equation}
 \end{lemma}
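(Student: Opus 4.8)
The plan is to reuse the decomposition $u_{A,2}(r,z)=\sum_{k=1}^5 u_{A,2,k}(r,z)$ of \eqref{for8} from the proof of Lemma~\ref{rd07_02l1}, and to upgrade each of its five estimates from an $L^2(E)$-bound on the expectation to an $L^p(\Omega;\HH)$-bound (we may assume $p\ge 2$, since smaller $p$ follow by monotonicity of the $L^p(\Omega)$-norms). Since $\eta$ is supported on $E=[t_0-\deltah^2,t_0+2\deltah^2]\times[x_0-\deltah,x_0+2\deltah]$ (see \eqref{eq:E}), which has Lebesgue measure $|E|\asymp\deltah^3$, it suffices to prove that $\|u_{A,2,k}(r,z)\|_p\lesssim\deltah$ uniformly over $(r,z)\in E$, for each $k=1,\dots,5$. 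Indeed, by Minkowski's integral inequality applied to the measure $\d z\,\d r$ on $E$ and the $L^{p/2}(\Omega)$-norm,
\[
   \|u_{A,2,k}\|_{L^p(\Omega;\HH)}^2=\left\|\int_E |u_{A,2,k}(r,z)|^2\,\d z\,\d r\right\|_{L^{p/2}(\Omega)}\le\int_E\|u_{A,2,k}(r,z)\|_p^2\,\d z\,\d r\lesssim\deltah^2\,|E|\asymp\deltah^5,
\]
and summing over $k$ and taking square roots gives \eqref{equ10}.

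For $k=1,2,3,4$ there is nothing probabilistic to do: the bounds $|u_{A,2,k}(r,z)|\lesssim\deltah$ already hold pointwise (deterministically) on $E$, being exactly the first assertions in \eqref{eq:u1}, \eqref{eq:u2}, \eqref{eq:u3} and \eqref{eq:u4}, which were derived from \eqref{eq:HA}, \eqref{eq:etabound}, \eqref{eq:HA2}, \eqref{eq:eta2}, \eqref{eq:eta3} and \eqref{eq:HA3} without using any randomness. Hence $\|u_{A,2,k}(r,z)\|_p\lesssim\deltah$ for $k\le 4$.

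The only term requiring an honest moment estimate is $k=5$, and this is where the main work lies. Here $u_{A,2,5}(r,z)=-\tfrac12\,\eta(r,z)\,\tfrac{\partial^2 H_A(r,z)}{\partial z^2}$ with $\tfrac{\partial^2 H_A(r,z)}{\partial z^2}=\int_r^{t_0+\deltah^2}\psi'(Z_{s,z})\,\tfrac{\partial Z_{s,z}}{\partial z}\,\d s$ by \eqref{eq:HAderi}, so, using $|\eta|\lesssim 1$, $\|\psi'\|_\infty\le\|\psi_0'\|_\infty R^{-1}$ (see \eqref{rd07_02e1}) and Minkowski's inequality in $s$,
\[
   \|u_{A,2,5}(r,z)\|_p\lesssim R^{-1}\int_r^{t_0+\deltah^2}\left\|\frac{\partial Z_{s,z}}{\partial z}\right\|_p\,\d s.
\]
To bound $\|\partial Z_{s,z}/\partial z\|_p$ I would use the explicit formula \eqref{eq:partialZ}: its right-hand side is a nonnegative combination of terms of the form $(v(\cdots))^{2p_0}$, so Minkowski's integral inequality reduces matters to bounding $\|(v(\cdots))^{2p_0}\|_p=\|v(\cdots)\|_{2p_0p}^{2p_0}$, and Gaussian hypercontractivity gives $\|v(\cdots)\|_{2p_0p}\asymp\|v(\cdots)\|_2$. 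Thus $\|(v(t_0,x)-v(t_0,z))^{2p_0}\|_p\lesssim|x-z|^{p_0}$ and, by \eqref{eq:rectangle}, $\|(v(t,x)+v(\tilde s,z)-v(t,z)-v(\tilde s,x))^{2p_0}\|_p\lesssim|t-\tilde s|^{p_0\theta_1}|x-z|^{p_0\theta_2}$ — precisely the bounds used for the first moment in the proof of Lemma~\ref{rd07_02l1}. The same computation (invoking \eqref{eq:theta12} and \eqref{rd07_01e1}) then yields $\|\partial Z_{s,z}/\partial z\|_p\lesssim\deltah^{p_0-\gamma_0+3}$ for $(r,z)\in E$ and $s\in[r,t_0+\deltah^2]$, whence $\|u_{A,2,5}(r,z)\|_p\lesssim R^{-1}\deltah^{p_0-\gamma_0+5}$. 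Finally, using the hypothesis $z_2\ge\deltah^{1/2}$ through \eqref{eq:R}, namely $R^{-1}\lesssim\deltah^{-p_0+\gamma_0-4}$, we conclude $\|u_{A,2,5}(r,z)\|_p\lesssim\deltah$, which completes the verification and hence the proof. The main obstacle is thus the $L^p(\Omega)$-control of $\partial Z_{s,z}/\partial z$; everything else is a transcription of the proof of Lemma~\ref{rd07_02l1}, with expectations replaced by $L^p(\Omega)$-norms and Gaussian hypercontractivity invoked to keep all exponents unchanged.
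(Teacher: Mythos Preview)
Your proposal is correct and follows essentially the same approach as the paper: both use the decomposition \eqref{for8}, the deterministic pointwise bounds $|u_{A,2,k}|\lesssim\deltah$ for $k\le 4$, and for $k=5$ bound $\|\partial Z_{s,z}/\partial z\|_p\lesssim\deltah^{p_0-\gamma_0+3}$ via Minkowski's inequality and Gaussian moment estimates (exactly \eqref{eq:z1}--\eqref{eq:z2}), then close with \eqref{eq:R}. The only cosmetic difference is the order in which Minkowski is applied for $k=5$: you first obtain the pointwise bound $\|u_{A,2,5}(r,z)\|_p\lesssim\deltah$ and then integrate over $E$, while the paper first bounds $|\partial Z_{s,z}/\partial z|$ by the $s$-independent quantities $\tilde Z_1+\tilde Z_2$ and then takes moments, arriving at $I_5\lesssim R^{-1}\deltah^{p_0-\gamma_0+13/2}$---which is the same estimate.
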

 
 \begin{proof}
           Recall the definition of $E$ in \eqref{eq:E}.
           Since $\eta$ is supported on $E$, we have
           \begin{align*}
           \|u_{A,2}\|_{L^p(\Omega;\mathcal{H})} &= \left(\E\left[\left(\int_{t_0-\deltah^2}^{t_0+2\deltah^2}\int_{x_0-\deltah}^{x_0+2\deltah}
           u_{A,2}^{2}(r,z)\d z\d r\right)^{p/2}\right]\right)^{1/p}\\
           &\leq  \sum_{k=1}^5 \left(\E\left[\left(\int_{t_0-\deltah^2}^{t_0+2\deltah^2}\int_{x_0-\deltah}^{x_0+2\deltah}
           u_{A,2,k}^{2}(r,z)\d z\d r\right)^{p/2}\right]\right)^{1/p}
            =: \sum_{k=1}^5I_k ,
           \end{align*}
 where the $u_{A, 2, k}$ are defined in \eqref{for8}. 
           By \eqref{eq:u1} and \eqref{eq:u3},  we have for $(r,z)\in E$,
           \begin{align*}
           |u_{A,2,1}(r,z)| \lesssim \deltah \quad \text{and} \quad |u_{A,2,3}(r,z)| \lesssim \deltah,
           \end{align*}
           which implies that
           \begin{align*}
           I_1\lesssim \deltah^{5/2} \quad \text{and} \quad I_3\lesssim \deltah^{5/2}.
           \end{align*}
           According to \eqref{eq:u2} and \eqref{eq:u4},  for $(r,z)\in E$,
           \begin{align*}
           |u_{A,2,2}(r,z)| \lesssim \deltah \qquad \text{and} \qquad   |u_{A,2,4}(r,z)| \lesssim \deltah,
           \end{align*}
           which implies that
           \begin{align*}
           I_2\lesssim \deltah^{5/2} \qquad \text{and} \qquad  I_4\lesssim \deltah^{5/2}.
           \end{align*}
           
           It remains to estimate $I_5$. Using the boundedness of $\eta$ and \eqref{eq:HAderi}, we get
 \begin{align}\label{rd07_02e2}
           I_5^p\lesssim \|\psi'\|^p_{\infty}\E\left[\left(\int_{t_0-\deltah^2}^{t_0+2\deltah^2}\int_{x_0-\deltah}^{x_0+2\deltah} \left(\int_r^{t_0+\deltah^2}\left|\frac{\partial Z_{s,z}}{\partial z}\right|\d s\right)^2\d z \d r\right)^{p/2}\right].
\end{align}
 Using \eqref{eq:partialZ}, we see that
\begin{align}
           \left| \frac{\partial Z_{s,z}}{\partial z} \right| &\lesssim \int_{x_0-\deltah}^{x_0+2\deltah} \frac {(v(t_0,x)-v(t_0,z))^{2p_0}}{|x-z|^{\gamma_0-2} }\, \d x\nonumber\\
           &\quad +
            \int_{[t_0-\deltah^2, t_0+2\deltah^2]^2}\d \tilde{s}\d t \int_{x_0-\deltah}^{x_0+2\deltah} \d x\, \frac {(v(t,x)+v(\tilde{s},z)-v(t,z)-v(\tilde{s},x))^{2p_0}}{|t-\tilde{s}|^{1+2p_0\gamma_1}|x-z|^{1+2p_0\gamma_2}}\nonumber\\
            &:=\tilde{Z}_1 +\tilde{Z}_2. 
 \label{z1z2}
\end{align}
By \eqref{rd07_02e2} and \eqref{rd07_02e1}, 
           \begin{align*}
           I_5^p \lesssim R^{-p}\deltah^{7p/2}\E\left[(\tilde{Z}_1+\tilde{Z}_2)^p\right].
           \end{align*}
           By Minkowski's inequality and H\"older continuity of $v$,
           \begin{align}
           \|\tilde{Z}_1\|_p &\lesssim   \int_{x_0-\deltah}^{x_0+2\deltah} \frac {\left\|(v(t_0,x)-v(t_0,z))^{2p_0}\right\|_{p}}{|x-z|^{\gamma_0-2} }\, \d x
           \lesssim \deltah^{p_0-\gamma_0+3}. \label{eq:z1}
           \end{align} 
           By Minkowski's inequality and \eqref{eq:rectangle},
           \begin{align}
           \|\tilde{Z}_2\|_{p} &\lesssim    \int_{[t_0-\deltah^2, t_0+2\deltah^2]^2}\d \tilde{s}\d t \int_{x_0-\deltah}^{x_0+2\deltah} \d x\, 
               \frac {\left\|(v(t,x)+v(\tilde{s},z)-v(t,z)-v(\tilde{s},x))^{2p_0}\right\|_{p}}{|t-\tilde{s}|^{1+2p_0\gamma_1}|x-z|^{1+2p_0\gamma_2}}\nonumber\\
           &\lesssim \deltah^{5}\deltah^{2(p_0\theta_1-1-2p_0\gamma_1)}\deltah^{p_0\theta_2-1-2p_0\gamma_2}
           =\deltah^{p_0-\gamma_0+3}, \label{eq:z2}
           \end{align}
           where the last equality is due to \eqref{eq:theta12} and \eqref{rd07_01e1}.
           Therefore, 
           \begin{align*}
           I_5\lesssim R^{-1}\deltah^{p_0-\gamma_0+13/2}.
           \end{align*}
           Now, under the condition $z_2\geq \deltah^{1/2}$, \eqref{eq:R} holds. Therefore, 
           if $z_2\geq \deltah^{1/2}$, then 
           \begin{align}\label{rd08_06e1}
           I_5\lesssim \deltah^{5/2}.
           \end{align}
           The proof is complete.
\end{proof}

 \begin{lemma}\label{lem:DuA2}
 For $z_2\geq \deltah^{1/2}$,  
 \begin{equation} \label{UA}
 \|Du_{A,2}\|_{L^p(\Omega;\mathcal{H}^{\otimes2})} \lesssim \deltah^{5/2}.
 \end{equation}
 \end{lemma}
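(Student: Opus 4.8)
The plan is to mimic the proof of Lemma \ref{lem:uA2mom}, differentiating each of the five pieces $u_{A,2,k}$ in \eqref{for8} and controlling the resulting $\HH\otimes 2$-norms. Since $\eta$ is deterministic and smooth, $D u_{A,2,k}$ only acts on $H_A$ and its derivatives, so after applying $D$ we must estimate the Malliavin derivatives $D\psi(Z_{s,y}) = \psi'(Z_{s,y})\, DZ_{s,y}$ and $D\bigl(\partial^2 H_A/\partial z^2\bigr)$, which via \eqref{eq:HAderi} involves $D\bigl(\psi'(Z_{s,z})\,\partial_z Z_{s,z}\bigr) = \psi''(Z_{s,z})\,DZ_{s,z}\,\partial_z Z_{s,z} + \psi'(Z_{s,z})\,D(\partial_z Z_{s,z})$. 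The factors $\|\psi'\|_\infty \le cR^{-1}$ and $\|\psi''\|_\infty \le cR^{-2}$ (from \eqref{rd07_02e1} and the definition $\psi = \psi_0(\cdot/R)$) enter, and the key point is that on the support of $\psi'$ and $\psi''$ we have $Z_{s,y} \le R = c\,(z_2/2)^{2p_0}\zeta^{4-\gamma_0}$, so these large negative powers of $R$ are absorbed using the bound \eqref{eq:R}, namely $R^{-1}\lesssim \zeta^{-p_0+\gamma_0-4}$, exactly as in the previous lemma.

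\medskip

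First I would write $D u_{A,2} = \sum_{k=1}^5 D u_{A,2,k}$ and handle $k = 1,3$ together and $k = 2,4$ together, since those terms carry the explicit derivatives of $\eta$ with the powers $\zeta^{-2}$ or $\zeta^{-1}$. For $k=1$, $D u_{A,2,1}(r,z) = \partial_r\eta(r,z)\, D H_A(r,z)$, and from \eqref{eq:H} we get $D_{r',z'} H_A(r,z) = \int_{t_0}^{t_0+\zeta^2}\!\int_{x_0}^{x_0+\zeta}\psi'(Z_{s,y})\,D_{r',z'}Z_{s,y}\,dy\,ds - \int_r^{t_0+\zeta^2}\!\int_z^{x_0+\zeta}\psi'(Z_{s,y})\,D_{r',z'}Z_{s,y}\,dy\,ds$. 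Using $\|\psi'\|_\infty\lesssim R^{-1}$, Minkowski's inequality, and an $L^p(\Omega;\HH)$ bound on $\|DZ_{s,y}\|$ — which follows from differentiating \eqref{rd04_04e1} and using the Gaussianity of $v$ and the moment estimates in Lemma \ref{rd07_26l1} (together with $D_{r,z}v(t,x) = G(t-r,x-z)\mathbf 1_{\{r<t\}}$) — one obtains $\|D H_A(r,z)\|_{L^p(\Omega;\HH)}\lesssim R^{-1}\zeta^{\kappa}$ for a suitable exponent $\kappa$; combined with $|\partial_r\eta|\lesssim\zeta^{-2}$ and integration of $\zeta$-area $\zeta^3$ over $E$, and then invoking \eqref{eq:R}, this yields the contribution $\lesssim\zeta^{5/2}$. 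The cases $k=2,3,4$ are analogous, replacing $H_A$ by $\partial_r H_A$, $\partial_z H_A$ (which by \eqref{eq:HA2}, \eqref{eq:HA3} are integrals of $\psi(Z)$ over intervals of length $\lesssim\zeta$ or $\zeta^2$) and adjusting powers of $\zeta$ exactly as in Lemma \ref{lem:uA2mom}.

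\medskip

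The main obstacle, as in the previous lemma, is the term $k=5$, i.e.\ bounding $\|D u_{A,2,5}\|_{L^p(\Omega;\HH\otimes 2)}$ where $u_{A,2,5} = -\tfrac12\,\eta\,\partial^2_z H_A$ and $\partial^2_z H_A = \int_r^{t_0+\zeta^2}\psi'(Z_{s,z})\,\partial_z Z_{s,z}\,ds$ by \eqref{eq:HAderi}. Applying $D$ produces two pieces: one with $\psi''(Z_{s,z})\,DZ_{s,z}\,\partial_z Z_{s,z}$, carrying $R^{-2}$, and one with $\psi'(Z_{s,z})\,D(\partial_z Z_{s,z})$, carrying $R^{-1}$. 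For the first piece I would use $\|\psi''\|_\infty\lesssim R^{-2}$, Hölder's inequality in $\omega$, the $L^p$-bounds on $\partial_z Z_{s,z}$ from \eqref{eq:z1}--\eqref{eq:z2} (i.e.\ $\|\tilde Z_1+\tilde Z_2\|_p\lesssim\zeta^{p_0-\gamma_0+3}$), and an $L^p(\Omega;\HH)$-bound on $\|DZ_{s,z}\|$ obtained by differentiating \eqref{eq:partialZ}'s antecedent \eqref{rd04_04e1} — each of $Y_1,Y_2,Y_3$ is a chaos-type expression in $v$ whose Malliavin derivative is estimated using $\|D_{r,z}(\cdot)\|\lesssim G$ and the same Garsia/Hölder-continuity machinery as in Lemma \ref{rd07_26l1}. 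For the second piece I would differentiate \eqref{eq:partialZ} directly to express $D(\partial_z Z_{s,z})$ as a sum of Skorohod/pathwise integrals of products of heat kernels against powers of increments of $v$, and bound its $L^p(\Omega;\HH)$-norm by $\lesssim\zeta^{p_0-\gamma_0+3}$ again (the extra spatial derivative costs nothing in the exponent because the relevant integrals over $E$ already supply it). Collecting, $\|D u_{A,2,5}\|_{L^p(\Omega;\HH\otimes2)}\lesssim (R^{-2}+R^{-1})\,\zeta^{p_0-\gamma_0+3}\cdot\zeta^{7/2}$ (the $\zeta^{7/2}$ coming from the $L^2(E)$ norm of $\int_r^{t_0+\zeta^2}(\cdot)\,ds$ over $E$, as in \eqref{rd07_02e2}), and since $z_2\ge\zeta^{1/2}$ forces $R^{-1}\lesssim\zeta^{-p_0+\gamma_0-4}$ (and $R^{-2}$ gives an even better, doubly-small power under the same constraint, still dominated after the $\zeta^{p_0-\gamma_0}$ gain), this reduces to $\lesssim\zeta^{5/2}$. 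Summing the five bounds gives \eqref{UA}.

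\medskip

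The one technical subtlety worth flagging is the need for an a priori $L^p(\Omega;\HH)$ bound on $\|DZ_{s,y}\|_\HH$ and on $\|D(\partial_z Z_{s,y})\|_\HH$ uniformly for $(s,y)\in E$; I expect to state this as a short auxiliary estimate (or fold it into the proof) using that $Z_{s,y}\in\bD^{\infty,p}$ since $p_0>4$ and that all $v$-increments appearing in $Y_1,Y_2,Y_3$ have Malliavin derivatives bounded by the corresponding heat-kernel differences, so the same integrals already estimated in Lemma \ref{rd07_26l1} and \eqref{eq:z1}--\eqref{eq:z2} control the derivative norms with the same $\zeta$-exponents. Once that is in hand, the rest is bookkeeping of powers of $\zeta$ that parallels Lemma \ref{lem:uA2mom} line by line.
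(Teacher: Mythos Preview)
Your plan is essentially the paper's own proof: the paper decomposes $Du_{A,2}=\sum_{k=1}^5 Du_{A,2,k}$, first proves the auxiliary estimate $\|DZ_{s,y}\|_{L^p(\Omega;\HH)}\lesssim \zeta^{4+p_0-\gamma_0}$ (your ``a priori bound''), handles $k=1,\dots,4$ exactly as you outline, then for $k=5$ proves the second auxiliary estimate $\|D(\partial_z Z_{s,z})\|_{L^p(\Omega;\HH)}\lesssim \zeta^{p_0-\gamma_0+3}$ and splits $D(\partial_z^2 H_A)$ into the $\psi''$-piece and the $\psi'$-piece just as you describe.

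One concrete correction: your collected formula for $k=5$, namely $(R^{-2}+R^{-1})\,\zeta^{p_0-\gamma_0+3}\cdot\zeta^{7/2}$, drops the factor $\|DZ_{s,z}\|_{L^p(\Omega;\HH)}\lesssim\zeta^{4+p_0-\gamma_0}$ from the $R^{-2}$ term. Without it, that term would be $\lesssim R^{-2}\zeta^{p_0-\gamma_0+13/2}\lesssim \zeta^{-(p_0-\gamma_0)-3/2}$, which blows up. The correct pointwise bound (as in the paper) is
\[
\|Du_{A,2,5}(r,z)\|_{L^p(\Omega;\HH)}\lesssim R^{-2}\,\zeta^{2(p_0-\gamma_0)+9}+R^{-1}\,\zeta^{p_0-\gamma_0+5},
\]
and after squaring and integrating over $E$ (area $\asymp\zeta^3$) both terms become $\lesssim\zeta^5$ once you apply $R^{-1}\lesssim\zeta^{-(p_0-\gamma_0)-4}$. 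Your parenthetical ``$R^{-2}$ gives an even better, doubly-small power'' is also backwards: $R$ is small, so $R^{-2}$ is \emph{larger} than $R^{-1}$; it is the extra $\zeta^{4+p_0-\gamma_0}$ from $\|DZ\|$ that compensates. Fix that bookkeeping and your argument matches the paper exactly.
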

 
 \begin{proof} The proof of this lemma is based on the decomposition (\ref{for8}) and it will be done in several steps. Recall the definition of the set $E$ in \eqref{eq:E}.
 
 \medskip
 \noindent
 {\it Step 1:}
            For $(s,y)\in E$ and $p\geq1$,  we claim that
            \begin{equation} \label{Z}
             \E\left[\|DZ_{s,y}\|_{\mathcal{H}}^p \right] \lesssim \deltah^{p(4+p_0-\gamma_0)},
           \end{equation}
 where $Z_{s,y}$ is defined in \eqref{rd04_04e1}.  In order to establish (\ref{Z}), we compute 
           \begin{align}\label{eq:sum123}
           DZ_{s,y} = DY_1(s) + DY_2(y) + DY_3(s,y),
           \end{align}
           where
\begin{align*}
           DY_1(s) &= 2p_0\int_{[t_0, s]^2} \frac{(v(t,x_0)-v(\tilde{s},x_0))^{2p_0-1} D(v(t,x_0)-v(\tilde{s},x_0))}{|t-\tilde{s}|^{\gamma_0/2}}  \, \d \tilde{s} \d t,\\[6pt]
           DY_2(y)&=2p_0\int_{[x_0, y]^2} \frac {(v(t_0,x)-v(t_0,\tilde{y}))^{2p_0-1}  D\left(
           v(t_0,x)-v(t_0,\tilde{y})
           \right)}{|x-\tilde{y}|^{\gamma_0-2} }\, \d x\d \tilde{y},\\[6pt]
            DY_3(s,y)&=2p_0\int_{[t_0, s]^2}\d \tilde{s}\d t \int_{[x_0, y]^2} \d x\d \tilde{y} \, 
             \frac {(v(t,x)+v(\tilde{s},\tilde{y})-v(t,\tilde{y})-v(\tilde{s},x))^{2p_0-1}
            }{|t-\tilde{s}|^{1+2p_0\gamma_1}|x-\tilde{y}|^{1+2p_0\gamma_2}}\\
                       &  \qquad\qquad\qquad \times D\left(v(t,x)+v(\tilde{s},\tilde{y})-v(t,\tilde{y})-v(\tilde{s},x)\right).
\end{align*}
Since
\begin{align}
           \|D\left(v(t,x)-v(s,y)\right)\|_{\mathcal{H}} =
                      \Vert v(t, x) - v(s, y) \Vert_{2}  \lesssim |t-s|^{1/4}+|x-y|^{1/2},
\label{eq:increment2}           
\end{align}
and
           \begin{align}\label{eq:rect2}
           &\left\|D\left(v(t,x)+v(\tilde{s},\tilde{y})-v(t,\tilde{y})-v(\tilde{s},x)\right)  \right\|_{\mathcal{H}}\nonumber\\
           &\qquad=\left\|v(t,x)+v(\tilde{s},\tilde{y})-v(t,\tilde{y})-v(\tilde{s},x) \right\|_{2}
            \lesssim |t-\tilde{s}|^{\theta_1/2}|x-\tilde{y}|^{\theta_2/2}
           \end{align}
           (see \eqref{eq:rectangle}),
we deduce that 
           \begin{align*}
           \|DY_1(s)\|_{\mathcal{H}} &\lesssim \int_{[t_0-\deltah^2, t_0+2\deltah^2]^2} \frac {|v(t,x_0)-v(\tilde{s},x_0|^{2p_0-1}
           }{|t-\tilde{s}|^{\frac{\gamma_0}{2}-\frac14}}\,  \d \tilde{s} \d t,\\
           \|DY_2(y)\|_{\mathcal{H}}&\lesssim \int_{[x_0-\deltah, x_0+2\deltah]^2} \frac {|v(t_0,x)-v(t_0,\tilde{y}|^{2p_0-1}}{|x-\tilde{y}|^{\gamma_0-\frac52} } \, \d x\d \tilde{y},\\
            \|DY_3(s,y)\|_{\mathcal{H}}&\lesssim \int_{[t_0-\deltah^2, t_0+2\deltah^2]^2}\d \tilde{s}\d t \int_{[x_0-\deltah, x_0+2\deltah]^2} \d x\d \tilde{y} 
            \\
            &\qquad\qquad \times \frac {|v(t,x)+v(\tilde{s},\tilde{y})-v(t,\tilde{y})-v(\tilde{s},x)|^{2p_0-1}
            }{|t-\tilde{s}|^{1+2p_0\gamma_1 -\frac{\theta_1}{2}}|x-\tilde{y}|^{1+2p_0\gamma_2-\frac{\theta_2}{2}}}.
           \end{align*}
           By Minkowski' inequality, for $p\geq 1$,
           \begin{align}\label{eq:Y1}
           \|DY_1(s)\|_{L^p(\Omega, \mathcal{H})} &\lesssim  \int_{[t_0-\deltah^2, t_0+2\deltah^2]^2} \frac {\left\||v(t,x_0)-v(\tilde{s},x_0)|^{2p_0-1}\right\|_{p}
           }{|t-\tilde{s}|^{\frac{\gamma_0}{2}-\frac14}}\, \d \tilde{s} \d t \nonumber\\
           &\lesssim \zeta^4 \zeta^{2(p_0/2 - 1/4 - \gamma_0/2 + 1/4)} \lesssim \deltah^{4+p_0-\gamma_0}.
           \end{align}
            Similarly, 
           \begin{align}\label{eq:Y2}
           \|DY_2(y)\|_{L^p(\Omega, \mathcal{H})} &\lesssim 
           \int_{[x_0-\deltah, x_0+2\deltah]^2} \frac {\left\||v(t_0,x)-v(t_0,\tilde{y})|^{2p_0-1}\right\|_{p}}{|x-\tilde{y}|^{\gamma_0-\frac52} }\d x\d \tilde{y}\nonumber\\
           &\lesssim \zeta^2 \zeta^{p_0 - 1/2 - \gamma_0 + 5/2} \lesssim \deltah^{4+p_0-\gamma_0}.
           \end{align}
Moreover, by \eqref{eq:rectangle}, 
           \begin{align}\label{eq:Y3}
                       \|DY_3(s,y)\|_{L^p(\Omega, \mathcal{H})}&\lesssim \int_{[t_0-\deltah^2, t_0+2\deltah^2]^2}\d \tilde{s}\d t \int_{[x_0-\deltah, x_0+2\deltah]^2} \d x\d \tilde{y} \\
                       &\qquad \times  
             \frac {\left\|\, |v(t,x)+v(\tilde{s},\tilde{y})-v(t,\tilde{y})-v(\tilde{s},x)|^{2p_0-1}\right\|_{p}
            }{|t-\tilde{s}|^{1+2p_0\gamma_1 -\frac{\theta_1}{2}}|x-\tilde{y}|^{1+2p_0\gamma_2-\frac{\theta_2}{2}}} \nonumber\\
            &\lesssim \deltah^{6}\deltah^{2\left[(2p_0-1)\frac{\theta_1}{2}-1-2p_0\gamma_1+\frac{\theta_1}{2}\right]}
            \deltah^{(2p_0-1)\frac{\theta_2}{2}-1-2p_0\gamma_2+\frac{\theta_2}{2}} \nonumber \\
            &= \deltah^{4+p_0-\gamma_0},  \nonumber
           \end{align}
where, in the equality, we have used \eqref{rd07_01e1}. Therefore, the bound  (\ref{Z})  follows from the estimates \eqref{eq:Y1}--\eqref{eq:Y3}.
  \medskip
  
 \noindent
 {\it Step 2:} For $k=1,\dots, 5$, $z_2 \geq \zeta^{1/2}$ and $p \geq 1$, we claim that 
 \begin{equation} \label{rd07_02e5}
           \E\left[\left(\int_0^\infty\int_\R \|Du_{A,2,k}(r,z)\|_\mathcal{H}^2\, \d z\d r\right)^{p/2}\right] \lesssim \deltah^{5p/2},
\end{equation}
 where the $u_{A,2,k}(r, z)$ are defined in \eqref{for8}. This will prove \eqref{UA}.
 
  \medskip
 \noindent
 {\it Step 2a:}  We claim that \eqref{rd07_02e5} holds for $k=1$.

 Indeed, for $(r,z)\in E$, we use \eqref{eq:H} to write
           \begin{align*}
           Du_{A,2,1}(r,z) &= \frac{\partial \eta(r,z)}{\partial r}DH_A(r,z)\\
           &=\frac{\partial \eta(r,z)}{\partial r} \Big(\int_{t_0}^{t_0+\deltah^2}\int_{x_0}^{x_0+\deltah}\psi'(Z_{s,y})DZ_{s,y}\, \d y\d s \\
           & \qquad \qquad \qquad - \int_{r}^{t_0+\deltah^2}\int_{z}^{x_0+\deltah}\psi'(Z_{s,y})DZ_{s,y}\, \d y\d s\Big).
           \end{align*}
 By \eqref{eq:etabound} and \eqref{rd07_02e1},  for $(r,z)\in E$, we have
           \begin{align*}
           \|Du_{A,2,1}(r,z)\|_{\mathcal{H}}&\lesssim \deltah^{-2}R^{-1}\Big(\int_{t_0}^{t_0+\deltah^2}\int_{x_0}^{x_0+\deltah}\|DZ_{s,y}\|_{\mathcal{H}}\, \d y\d s \\
      &\qquad\qquad\qquad +\int_{t_0-\deltah^2}^{t_0+2\deltah^2}\int_{x_0-\deltah}^{x_0+2\deltah}\|DZ_{s,y}\|_{\mathcal{H}}\, \d y\d s\Big).
           \end{align*}
Hence, for $(r,z)\in E$, by \eqref{Z}, 
           \begin{align}\label{eq:UA1}
           \|Du_{A,2,1}(r,z)\|_{L^p(\Omega, \mathcal{H})} &\lesssim \deltah^{-2}R^{-1}
\int_{t_0-\deltah^2}^{t_0+2\deltah^2}\int_{x_0-\deltah}^{x_0+2\deltah}\|DZ_{s,y}\|_{L^p(\Omega,\mathcal{H})}\d y\d s\nonumber\\
           &\lesssim R^{-1}\deltah^{5+p_0-\gamma_0}.
           \end{align}
By Minkowski's inequality, 
           \begin{align*}
           &\left\|\int_0^\infty\int_\R \|Du_{A,2,1}(r,z)\|_\mathcal{H}^2\, \d z\d r\right\|_{\frac p2} \\
           &\qquad\qquad =\left\|\int_{t_0-\deltah^2}^{t_0+2\deltah^2}\int_{x_0-\deltah}^{x_0+2\deltah} \|Du_{A,2,1}(r,z)\|_\mathcal{H}^2\, \d z\d r\right\|_{\frac p2}\\
           &\qquad\qquad \leq \int_{t_0-\deltah^2}^{t_0+2\deltah^2}\int_{x_0-\deltah}^{x_0+2\deltah} \|Du_{A,2,1}(r,z)\|_{L^p(\Omega, \mathcal{H})}^2\, \d z\d r
           \lesssim R^{-2}\deltah^{3+2(5+p_0-\gamma_0)},
           \end{align*}
where the second inequality is due to \eqref{eq:UA1}. Using \eqref{eq:R}, 
we deduce that for $z_2\geq \deltah^{1/2}$,
           \begin{align*}
           \left\|\int_0^\infty\int_\R \|Du_{A,2,1}(r,z)\|_\mathcal{H}^2\, \d z\d r\right\|_{\frac p2} \lesssim \deltah^{5},
           \end{align*}
           which completes the proof of (\ref{rd07_02e5}) for $k=1$.
  \medskip
  
 \noindent{\it Step 2b:}
           We claim that \eqref{rd07_02e5} holds for $k=2$.
  
Indeed, for $(r,z)\in E$, we have
           \begin{align*}
           Du_{A,2,2}(r,z) = \eta(r,z) \int_z^{x_0+\deltah} \psi'(Z_{r,y})DZ_{r,y} \, \d y. 
           \end{align*}
           It follows from \eqref{rd07_02e1} that for $(r,z)\in E$,
           \begin{align*}
           \|Du_{A,2,2}(r,z)\|_{\mathcal{H}} \lesssim R^{-1}\int_{x_0-\deltah}^{x_0+2\deltah}\|DZ_{r,y}\|_{\mathcal{H}}\, \d y.
           \end{align*}
           Hence, for $(r,z)\in E$,
           \begin{align*}
           \|Du_{A,2,2}(r,z)\|_{L^p(\Omega,\mathcal{H})}& \lesssim 
           R^{-1}\int_{x_0-\deltah}^{x_0+2\deltah}\|DZ_{r,y}\|_{L^p(\Omega, \mathcal{H})}\, \d y
           \lesssim R^{-1}\deltah^{5+p_0-\gamma_0},
           \end{align*}
           where the second inequality holds by (\ref{Z}). This is similar to \eqref{eq:UA1}, so the remainder of the proof follows along the same lines 
           as in Step 2b.
   \medskip
   
 \noindent
 {\it Step 2c:}
We claim that \eqref{rd07_02e5} holds for $k=3$.
  
 Indeed, the proof of (\ref{rd07_02e5}) for $k=3$ is the same as for $k=1$ (Step 2a), using the fact $\|\frac{\partial^2 \eta}{\partial z^2}\|_\infty\lesssim \deltah^{-2}$.
 
  \medskip
  
 \noindent {\it Step 2d:}
           We claim that \eqref{rd07_02e5} holds for $k=4$.

 Indeed, for $(r,z)\in E$,
           \begin{align*}
           Du_{A,2,4}(r,z) =- \frac{\partial \eta(r,z)}{\partial z} \int_r^{t_0+\deltah^2} \psi'(Z_{s,z})DZ_{s,z}\, \d s. 
           \end{align*}
Using (\ref{rd07_02e1}) and \eqref{eq:eta},  it follows that for $(r,z)\in E$, 
           \begin{align*}
           \|Du_{A,2,4}(r,z)\|_{\mathcal{H}} \lesssim \deltah^{-1} R^{-1} \int_{t_0-\deltah^2}^{t_0+2\deltah^2}\|DZ_{s,z}\|_{\mathcal{H}}\, \d s.
           \end{align*}
           By (\ref{Z}), for $(r,z)\in E$,
           \begin{align*}
           \|Du_{A,2,4}(r,z)\|_{L^p(\Omega, \mathcal{H})}& \lesssim \deltah^{-1} R^{-1} \int_{t_0-\deltah^2}^{t_0+2\deltah^2}\|DZ_{s,z}\|_{L^p(\Omega, \mathcal{H})}\, \d s
           \lesssim  R^{-1}\deltah^{5+p_0-\gamma_0}.
           \end{align*}
This is similar to \eqref{eq:UA1}, so the remainder of the proof follows along the same lines 
           as in Step 2a.
 
   \medskip
 \noindent
 {\it Step 2e:} We claim that for $(s,z)\in E$ and $p\geq1$,  
            \begin{equation} \label{partialZ}
             \E\left[\left\|D\frac{\partial Z_{s,z}}{\partial z}\right\|_{\mathcal{H}}^p \right] \lesssim \deltah^{p(p_0-\gamma_0+3)},
           \end{equation}
           where  the random variable $\frac{\partial Z_{s, z}}{\partial z}$ is explicited in \eqref{eq:partialZ}.
           
 Indeed, for $(s,z)\in E$, we write
           \begin{align*}
           D\frac{\partial Z_{s,z}}{\partial z}&=
           4p_0(J_1+J_2),
           \end{align*}
           where 
\begin{align*}
     J_1&=\int_{x_0}^z \frac {(v(t_0,x)-v(t_0,z))^{2p_0-1}  D(v(t_0, x)-v(t_0,z))}{|x-z|^{\gamma_0-2} }\d x, \\[7pt]
           J_2&=\int_{[t_0, s]^2}\d \tilde{s}\d t \int_{x_0}^z \d x\frac {(v(t,x)+v(\tilde{s},z)-v(t,z)-v(\tilde{s},x))^{2p_0-1}
             }{|t-\tilde{s}|^{1+2p_0\gamma_1}|x-z|^{1+2p_0\gamma_2}} \\
              &\qquad\qquad \times D(v(t,x)+v(\tilde{s},z)-v(t,z)-v(\tilde{s},x)).
\end{align*}
From \eqref{eq:increment2} and \eqref{eq:rect2}, we see that for $(s,z)\in E$,
           \begin{align*}
           &\left\|D\frac{\partial Z_{s,z}}{\partial z}\right\|_{\mathcal{H}}\lesssim 
           \int_{x_0-\deltah}^{x_0+2\deltah} \frac {|v(t_0,x)-v(t_0,z)|^{2p_0-1}}{|x-z|^{\gamma_0-\frac52} }\, \d x\\
           &\quad + \int_{[t_0-\deltah^2, t_0+2\deltah^2]^2}\d \tilde{s}\d t \int_{x_0-\deltah}^{x_0+2\deltah} \d x \, 
            \frac {|v(t,x)+v(\tilde{s},z)-v(t,z)-v(\tilde{s},x)|^{2p_0-1}
             }{|t-\tilde{s}|^{1+2p_0\gamma_1-\frac{\theta_1}{2}}|x-z|^{1+2p_0\gamma_2-\frac{\theta_2}{2}}}.
           \end{align*}
           Moreover, by Minkowski's inequality, 
          \begin{align*}
           &\left\|D\frac{\partial Z_{s,z}}{\partial z}\right\|_{L^p(\Omega, \mathcal{H})}\lesssim 
           \int_{x_0-\deltah}^{x_0+2\deltah} \frac {\left\||v(t_0,x)-v(t_0,z)|^{2p_0-1}\right\|_{p}}{|x-z|^{\gamma_0-\frac52} }\, \d x\\
           &\qquad+ \int_{[t_0-\deltah^2, t_0+2\deltah^2]^2}\d \tilde{s}\d t \int_{x_0-\deltah}^{x_0+2\deltah} \d x \, 
            \frac {\left\|\, |v(t,x)+v(\tilde{s},z)-v(t,z)-v(\tilde{s},x)|^{2p_0-1}\right\|_{p}
             }{|t-\tilde{s}|^{1+2p_0\gamma_1-\frac{\theta_1}{2}}|x-z|^{1+2p_0\gamma_2-\frac{\theta_2}{2}}}\\
             &\qquad   \lesssim \deltah \deltah^{\half (2p_0 -1) -\gamma_0+\frac 5 2} + \deltah^{5}\deltah^{2((2p_0-1)\frac{\theta_1}{2} -(1+2p_0\gamma_1)+\frac{\theta_1}{2})}\deltah^{(2p_0-1)\frac{\theta_2}{2}-(1+2p_0\gamma_2)+\frac{\theta_2}{2}}\\
             &\qquad=2\deltah^{p_0-\gamma_0+3},
           \end{align*}
           where the equality holds by \eqref{eq:theta12} and \eqref{rd07_01e1}.
\medskip
   
 \noindent {\it Step 2f:}
           We claim that \eqref{rd07_02e5} holds for $k=5$. 

Indeed, for $(r,z)\in E$, 
          by the expression of $u_{A,2,5}(r,z)$ in \eqref{for8} and \eqref{eq:H}, we have
          \begin{align*}
          Du_{A,2,5}(r,z) =-\frac12 \eta(r,z)\int_r^{t_0+\deltah^2}\left[\psi''(Z_{s, z})DZ_{s,z} \frac{\partial Z_{s,z}}{\partial z}
          + \psi'(Z_{s, z}) D\frac{\partial Z_{s,z}}{\partial z}\right]
          \d s.
          \end{align*}
Since $\|\psi''\|_{\infty} \lesssim R^{-2}$ and $\|\psi'\|_{\infty} \lesssim R^{-1}$ (see \eqref{rd07_02e1}), for $(r,z)\in E$, we have
 \begin{align*}
          \|Du_{A,2,5}(r,z)\|_{\mathcal{H}} &\lesssim R^{-2}\int_{t_0-\deltah^2}^{t_0+2\deltah^2} \|DZ_{s,z}\|_{\mathcal{H}}
          \left| \frac{\partial Z_{s,z}}{\partial z}\right| \d s 
          + R^{-1}\int_{t_0-\deltah^2}^{t_0+2\deltah^2}
                     \left\|D\frac{\partial Z_{s,z}}{\partial z}\right\|_{\mathcal{H}}\d s. 
\end{align*}
          By Minkowski's inequality and H\"older's inequality, 
          \begin{align*}
          \|Du_{A,2,5}(r,z)\|_{L^{p}(\Omega,\mathcal{H})} &\lesssim R^{-2}\int_{t_0-\deltah^2}^{t_0+2\deltah^2} \|DZ_{s,z}\|_{L^{2p}(\Omega,\mathcal{H})}
          \left\| \frac{\partial Z_{s,z}}{\partial z}\right\|_{2p} \d s\\
          &\qquad\qquad\qquad\qquad+ R^{-1}\int_{t_0-\deltah^2}^{t_0+2\deltah^2}
                     \left\|D\frac{\partial Z_{s,z}}{\partial z}\right\|_{L^p(\Omega, \mathcal{H})}\d s. 
          \end{align*}
          Recall from \eqref{z1z2}--\eqref{eq:z2} that
          \begin{align}\label{mom:partialZ}
                    \left\| \frac{\partial Z_{s,z}}{\partial z}\right\|_{L^{2p}(\Omega)}  \lesssim \deltah^{p_0-\gamma_0+3}.
          \end{align}
          We combine this with the inequalities  (\ref{Z}) and  (\ref{partialZ}) to deduce that 
          for $(r,z)\in E$ and $p\geq1$,
\begin{align*}
          \|Du_{A,2,5}(r,z)\|_{L^{p}(\Omega,\mathcal{H})} &\lesssim R^{-2} \deltah^2 \deltah^{4 + p_0 - \gamma_0} \deltah^{p_0 - \gamma_0 +3} + R^{-1} \deltah^2 \deltah^{p_0 - \gamma_0 +3}\\
           &= R^{-2}\deltah^{2(p_0-\gamma_0)+9} + R^{-1}\deltah^{p_0-\gamma_0+5}.
\end{align*}
Therefore, 
\begin{align}\nonumber
  \left\|\int_0^\infty\int_\R \|Du_{A,2,5}(r,z)\|_\mathcal{H}^2 \, \d z\d r\right\|_{\frac p2} &  \leq \int_{t_0-\deltah^2}^{t_0+2\deltah^2}\int_{x_0-\deltah}^{x_0+2\deltah} \|Du_{A,5}(r,z)\|_{L^p(\Omega, \mathcal{H})}^2 \, \d z\d r\\ \nonumber
     & \lesssim  \deltah^3 (R^{-4} \deltah^{4 (p_0 - \gamma_0) + 18} + R^{-2} \deltah^{2 (p_0 - \gamma_0 + 10)} ) \\
           &= R^{-4}\deltah^{4(p_0-\gamma_0)+21} + R^{-2}\deltah^{2(p_0-\gamma_0)+13}.
 \label{rd07_02e9}
 \end{align}
By \eqref{eq:R},  $R^{-4}\lesssim \deltah^{-4(p_0-\gamma_0)-16}$ and $R^{-2}\lesssim \deltah^{-2(p_0-\gamma_0)-8}$
           if $z_2\geq \deltah^{1/2}$. We deduce that for $z_2\geq \deltah^{1/2}$,
\begin{align} \label{rd08_06e2}
           \left\|\int_0^\infty\int_\R \|Du_{A,2,5}(r,z)\|_\mathcal{H}^2 \, \d z\d r\right\|_{\frac p2} \lesssim \deltah^{5},
\end{align}
           which completes the proof of \eqref{rd07_02e5} for $k=5$.
           
           Finally, 
  Steps 2a--2d and 2f establish \eqref{rd07_02e5} and this completes the proof of Lemma \ref{lem:DuA2}.
 \end{proof}
 
  \begin{lemma}\label{D2u_A2}
             For $z_2\geq \deltah^{1/2}$,  we have
             \begin{equation} \label{D2UA2}
              \|D^2u_{A,2}\|_{L^p(\Omega;\mathcal{H}^{\otimes3})} \lesssim \deltah^{5/2}.
 \end{equation}
 \end{lemma}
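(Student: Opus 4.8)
The plan is to mirror exactly the structure of the proof of Lemma \ref{lem:DuA2}, but now carrying one extra Malliavin derivative throughout. Recall from \eqref{for8} that $u_{A,2} = \sum_{k=1}^{5} u_{A,2,k}$, with each $u_{A,2,k}$ built from products of $\eta$ (or one of its first two partial derivatives), and of $H_A$ (or $\partial H_A/\partial r$, $\partial H_A/\partial z$, $\partial^2 H_A/\partial z^2$). Since $\eta$ is deterministic and supported on $E$ (see \eqref{eq:E}), it suffices to estimate $\|D^2 u_{A,2,k}\|_{L^p(\Omega;\HH^{\otimes 3})}$ for each $k$, and for this one only needs bounds on $\|D^2 Z_{s,y}\|_{\HH^{\otimes 2}}$ and $\|D^2 (\partial Z_{s,z}/\partial z)\|_{\HH^{\otimes 2}}$, together with the already-established bounds \eqref{Z}, \eqref{partialZ}, \eqref{mom:partialZ} on the first derivatives and on $\partial Z_{s,z}/\partial z$ itself.

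First I would establish the analogue of Step 1 of the proof of Lemma \ref{lem:DuA2}: for $(s,y)\in E$ and $p\geq 1$,
\begin{equation*}
   \E\!\left[\|D^2 Z_{s,y}\|_{\HH^{\otimes 2}}^p\right] \lesssim \deltah^{p(4+p_0-\gamma_0)}.
\end{equation*}
This follows by differentiating the expressions for $DY_1(s)$, $DY_2(y)$, $DY_3(s,y)$ from Step 1 once more. Applying the Leibniz rule to a term like $(v(t,x_0)-v(\tilde s,x_0))^{2p_0-1} D(v(t,x_0)-v(\tilde s,x_0))$ produces two kinds of contributions: one with $(v(t,x_0)-v(\tilde s,x_0))^{2p_0-2}$ times $D(\cdot)\otimes D(\cdot)$, and one with $(v(t,x_0)-v(\tilde s,x_0))^{2p_0-1}$ times $D^2(\cdot)$; but $D^2 v = 0$ since $v$ is a first-chaos (Gaussian) field, so only the first term survives. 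Using $\|D^2(\cdot)\|_{\HH^{\otimes 2}} = 0$ and the norm estimates \eqref{eq:increment2}, \eqref{eq:rect2} on $\|D(v(t,x)-v(s,y))\|_\HH$ together with Minkowski's inequality, exactly as in \eqref{eq:Y1}--\eqref{eq:Y3}, gives the same exponent $\deltah^{4+p_0-\gamma_0}$ — the power of $\deltah$ is unchanged because replacing one factor of $v$-increment by another $D(\cdot)$ in $\HH$ costs the same Hölder power. The same reasoning, applied to the expressions $J_1$, $J_2$ in Step 2e, yields $\E[\|D^2(\partial Z_{s,z}/\partial z)\|_{\HH^{\otimes 2}}^p] \lesssim \deltah^{p(p_0-\gamma_0+3)}$.

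With these two estimates in hand, I would run through $k=1,\dots,5$ as in Steps 2a--2f. For $k=1,2,3,4$: $D^2 u_{A,2,k}$ is $(\text{deriv.\ of }\eta)$ times $D^2 H_A(r,z)$ (resp.\ $D^2 \partial_r H_A$, $D^2 \partial_z H_A$), and $D^2 H_A$ is an integral of $\psi''(Z)\, DZ\otimes DZ + \psi'(Z)\, D^2 Z$ over the relevant rectangle. Using $\|\psi''\|_\infty \lesssim R^{-2}$, $\|\psi'\|_\infty \lesssim R^{-1}$, the bounds \eqref{Z} and the new $D^2 Z$ bound, Minkowski and Hölder, one gets $\|D^2 u_{A,2,k}(r,z)\|_{L^p(\Omega;\HH^{\otimes 2})} \lesssim R^{-2}\deltah^{2(p_0-\gamma_0)+9} + R^{-1}\deltah^{5+p_0-\gamma_0}$ (the worst case is the $\psi''$ term, which is exactly the type of bound that appeared in Step 2f); integrating the square over $E$ (volume $\deltah^3$) and inserting $R^{-1}\lesssim \deltah^{-p_0+\gamma_0-4}$ from \eqref{eq:R} under the hypothesis $z_2 \geq \deltah^{1/2}$ collapses everything to $\lesssim \deltah^5$, i.e.\ $\|D^2 u_{A,2,k}\|_{L^p(\Omega;\HH^{\otimes 3})} \lesssim \deltah^{5/2}$. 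For $k=5$, $u_{A,2,5} = -\tfrac12 \eta\, \partial_z^2 H_A$ with $\partial_z^2 H_A = \int_r^{t_0+\deltah^2} \psi'(Z_{s,z})\,\partial_z Z_{s,z}\,\d s$ (see \eqref{eq:HAderi}), so $D^2 u_{A,2,5}$ involves, after two applications of Leibniz, terms such as $\psi'''(Z)\,DZ\otimes DZ\,\partial_z Z$, $\psi''(Z)\,(D^2 Z)\,\partial_z Z$, $\psi''(Z)\, DZ\otimes D(\partial_z Z)$, and $\psi'(Z)\, D^2(\partial_z Z)$; using $\|\psi^{(j)}\|_\infty \lesssim R^{-j}$, the estimates \eqref{Z}, \eqref{partialZ}, \eqref{mom:partialZ} and the new $D^2$ bounds, Hölder's inequality, and then \eqref{eq:R}, the worst term is the $R^{-3}$ one, giving after integration over $E$ a bound of the form $R^{-6}\deltah^{\cdots} + \dots \lesssim \deltah^5$ under $z_2\geq\deltah^{1/2}$, so again $\lesssim \deltah^{5/2}$. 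Combining $k=1,\dots,5$ gives \eqref{D2UA2}.

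The main obstacle is bookkeeping rather than conceptual: keeping track of the many Leibniz-rule terms for $k=5$, verifying that in each term the combination of the number of $\psi$-derivatives (each costing a factor $R^{-1}$) and the powers of $\deltah$ coming from the Hölder estimates of the $v$-increments and from the rectangle volumes conspires, after substituting $R^{-1}\lesssim \deltah^{-p_0+\gamma_0-4}$, to leave a net power $\deltah^{5/2}$. The key simplifications that make this work are that $v$ lives in the first Wiener chaos so all its second and higher Malliavin derivatives vanish, and that every extra factor of a $v$-increment replaced by a factor of $D(v\text{-increment})\in\HH$ preserves the relevant Hölder exponent; these are precisely the mechanisms already exploited in Lemma \ref{lem:DuA2}, so the present lemma is a faithful, if laborious, repetition of that argument with one more derivative.
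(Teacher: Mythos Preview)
Your proposal is correct and follows essentially the same approach as the paper's proof: both mirror the structure of Lemma \ref{lem:DuA2}, first establishing the second-derivative analogues $\E[\|D^2 Z_{s,y}\|_{\HH^{\otimes 2}}^p]\lesssim \deltah^{p(4+p_0-\gamma_0)}$ and $\E[\|D^2(\partial Z_{s,z}/\partial z)\|_{\HH^{\otimes 2}}^p]\lesssim \deltah^{p(p_0-\gamma_0+3)}$ (exploiting $D^2 v=0$), then treating $k=1,\dots,4$ via the expansion $D^2 H_A \sim \psi''(Z)\,DZ\otimes DZ+\psi'(Z)\,D^2 Z$ and $k=5$ via the four Leibniz terms involving $\psi',\psi'',\psi'''$, and finally collapsing all powers of $R^{-1}$ using \eqref{eq:R}. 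The paper's proof is slightly more explicit in writing out the $k=1$ and $k=5$ computations, but the strategy and estimates are identical to yours.
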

 
 \begin{proof}
           Recall the definition of $E$ in \eqref{eq:E}.
           First, we can perform the same calculations  as in the estimate of  \eqref{Z} to see that 
                       for $(s,y)\in E$ and $p\geq1$,
           \begin{equation} \label{D2Z}
             \E\left[\|D^2Z_{s,y}\|_{\mathcal{H}^{\otimes 2}}^p \right] \lesssim \deltah^{p(4+p_0-\gamma_0)}.
           \end{equation}
           In order to estimate the moment of $D^2u_{A,2,1}$, using \eqref{for8} and \eqref{eq:H}, 
           we write for $(r,z)\in E$,
           \begin{align*}
           &D^2u_{A,2,1}(r,z) = \frac{\partial \eta(r,z)}{\partial r}D^2H_A(r,z)\\
           &=\frac{\partial \eta(r,z)}{\partial r} \bigg(\int_{t_0}^{t_0+\deltah^2}\int_{x_0}^{x_0+\deltah}(\psi''(Z_{s,y}) DZ_{s,y}\otimes
           DZ_{s,y}
           + \psi'(Z_{s,y})D^2Z_{s,y}) \, 
           \d y\d s\\
           &\qquad\qquad\quad\quad
           - \int_{r}^{t_0+\deltah^2}\int_{z}^{x_0+\deltah}(\psi''(Z_{s,y})DZ_{s,y}\otimes
           DZ_{s,y}
           + \psi'(Z_{s,y})D^2Z_{s,y}) \, \d y\d s\bigg).
           \end{align*}
          Since $\|\frac{\partial \eta}{\partial r}\|_{\infty} \lesssim \deltah^{-2}$ by \eqref{eq:eta}, and
          $\|\psi'\|_{\infty} \lesssim R^{-1}$ and $\|\psi''\|_{\infty} \lesssim R^{-2}$ by \eqref{rd07_02e1}, 
          for $(r,z)\in E$, we have 
          \begin{align*}
          \|D^2u_{A,2,1}(r,z)\|_{\HH^{\otimes 2}} \lesssim \deltah^{-2} \int_{t_0-\deltah^2}^{t_0+2\deltah^2}\int_{x_0-\deltah}^{x_0+2\deltah}
          (R^{-2}\|DZ_{s,y}\|_{\HH}^2 + R^{-1}\|D^2Z_{s,y}\|_{\HH^{\otimes2}})\, \d y\d s.
          \end{align*}
          Using \eqref{Z}, \eqref{D2Z} and \eqref{eq:R},  
          we deduce that for $z_2\geq \deltah^{1/2}$ and $p\geq1$, 
\begin{equation*} 
    \|D^2u_{A,2,1}(r,z)\|_{\HH^{\otimes 2}} \lesssim \deltah^{-2} \deltah^3 \, (R^{-2} \deltah^{2(4 + p_0 - \gamma_0)} + R^{-1} \deltah^{4 + p_0 - \gamma_0}) \asymp \zeta .
\end{equation*}
As in the last part of the proof of Step 2a in the proof of Lemma \ref{lem:DuA2}, we deduce that
\begin{equation*} 
              \|D^2u_{A,2,1}\|_{L^p(\Omega;\mathcal{H}^{\otimes3})} \lesssim \deltah^{5/2}.
 \end{equation*}
          Similarly, for $k=2,3,4$, we can use the same argument as in the estimate of $\|Du_{A,2,k}\|_{L^p(\Omega, \HH^{\otimes2})}$
          to obtain that
          \begin{align*}
          \|D^2u_{A,2,k}\|_{L^p(\Omega, \HH^{\otimes3})} \lesssim \deltah^{5/2},
          \end{align*}
          for $z_2\geq \deltah^{1/2}$ and $p\geq1$.
          
          It remains to estimate  $\|D^2u_{A,2,5}\|_{L^p(\Omega, \HH^{\otimes3})}$. Similar to \eqref{partialZ}, we can derive that
          for $(s,z)\in E$ and $p\geq1$
                      \begin{equation} \label{partialZ2}
             \E\left[\left\|D^2\frac{\partial Z_{s,z}}{\partial z}\right\|_{\mathcal{H}^{\otimes 2}}^p \right] \lesssim \deltah^{p(p_0-\gamma_0+3)}.
           \end{equation}
           Then for $(r,z)\in E$,  using (\ref{eq:HAderi}), we can write
          \begin{align*}
          &D^2u_{A,2,5}(r,z) =\frac12 \eta(r,z)\bigg(\int_r^{t_0+\deltah^2}\Big[\psi'''(Z_{s, z})DZ_{s,y}\otimes
           DZ_{s,y} \frac{\partial Z_{s,z}}{\partial z} \\
         &\quad  +\psi''(Z_{s, z})D^2Z_{s,z} \frac{\partial Z_{s,z}}{\partial z} 
          +2 \psi''(Z_{s, z})DZ_{s,z} D\frac{\partial Z_{s,z}}{\partial z}
          + \psi'(Z_{s,z})D^2\frac{\partial Z_{s,z}}{\partial z} \Big]
          \d s\bigg).
          \end{align*}
          Thus, we can appeal to the estimates \eqref{rd07_02e1}, \eqref{Z}, \eqref{mom:partialZ}, \eqref{D2Z}, \eqref{partialZ} and \eqref{partialZ2}
          to obtain that for $z_2\geq \deltah^{1/2}$ and $p\geq1$,
\begin{align*}
    \|D^2u_{A,2,5}(r, z)\|_{\mathcal{H}^{\otimes 2}} &\lesssim \deltah^2 \left[R^{-3} \zeta^{2(4 + p_0 - \gamma_0)} \deltah^{p_0 - \gamma_0 +3} + R^{-2} \deltah^{4 + p_0 - \gamma_0} \deltah^{p_0 - \gamma_0 + 3} \right. \\
    &\qquad\qquad \left. + R^{-2}  \deltah^{4 + p_0 - \gamma_0} \deltah^{p_0 - \gamma_0 + 3}  + R^{-1} \deltah^{p_0 - \gamma_0 + 3}  \right].
\end{align*}
With \eqref{eq:R}, we obtain
\begin{align*}
    \|D^2u_{A,2,5}(r, z)\|_{\mathcal{H}^{\otimes 2}} &\lesssim \deltah,
\end{align*}
and therefore,
 \begin{equation*} 
              \|D^2u_{A,2,5}\|_{L^p(\Omega;\mathcal{H}^{\otimes3})} \lesssim \deltah^{5/2}.
 \end{equation*}
 
 The proof is complete. 
 \end{proof}

\begin{lemma}\label{lem:uA1uA2}
          For $z_2\geq \deltah^{1/2}$,
          \begin{align*}
          \|\delta(\langle u_{A,1}(*), D_{*}u_{A,2}\rangle_{\HH})\|_p  \lesssim \deltah^{5/2}.
          \end{align*}
\end{lemma}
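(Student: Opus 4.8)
The plan is to bound $\|\delta(\langle u_{A,1}(*), D_* u_{A,2}\rangle_\HH)\|_p$ by applying the standard Skorohod integral estimate, often called the Meyer-type inequality (see \cite[Proposition 1.5.8]{nualart2006}), which gives, for $v \in \bD^{1,p}(\HH)$,
\[
   \|\delta(v)\|_p \lesssim \|\E[v]\|_\HH + \|Dv\|_{L^p(\Omega;\HH^{\otimes 2})}.
\]
Applying this with $v(*) = \langle u_{A,1}(\cdot), D_\cdot u_{A,2}(*)\rangle_\HH$ (an $\HH$-valued random variable, where $*$ is the free variable), it suffices to estimate the two quantities $\|\E[\langle u_{A,1}(*), D_* u_{A,2}\rangle_\HH]\|_\HH$ and $\|D\langle u_{A,1}(*), D_* u_{A,2}\rangle_\HH\|_{L^p(\Omega;\HH^{\otimes 2})}$, each by a constant multiple of $\deltah^{5/2}$. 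The factor $u_{A,1}$ is deterministic, with $\|u_{A,1}\|_\HH \le C$, which will be used throughout via Cauchy--Schwarz in the $\HH$ inner product (in the $*$-variable) to reduce these to estimates on $D u_{A,2}$ and $D^2 u_{A,2}$.

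First I would handle the expectation term. Since $u_{A,1}$ is deterministic, $\E[\langle u_{A,1}(*), D_* u_{A,2}\rangle_\HH] = \langle u_{A,1}(*), D_*\E[u_{A,2}]\rangle_\HH$ (interchanging $\E$ and the deterministic pairing, and using that $D$ commutes with $\E$ on $\bD^{1,1}$). Wait --- more carefully, $\E$ and $D$ do not commute in general, so instead I would write $\langle u_{A,1}(*), D_* u_{A,2}\rangle_\HH$ as an $\HH$-valued (in $*$) random variable and take its expectation directly. By Cauchy--Schwarz in the variable carried by $u_{A,1}$, $\|\langle u_{A,1}(\cdot), D_\cdot u_{A,2}(*)\rangle_\HH\|_\HH \le \|u_{A,1}\|_\HH \, \|D u_{A,2}(*)\|_{\HH^{\otimes 2}}$ pointwise, hence $\|\E[\langle u_{A,1}(*), D_* u_{A,2}\rangle]\|_\HH \lesssim \E[\|D u_{A,2}\|_{\HH^{\otimes 2}}] \lesssim \|D u_{A,2}\|_{L^p(\Omega;\HH^{\otimes 2})} \lesssim \deltah^{5/2}$ by Lemma \ref{lem:DuA2} (inequality \eqref{UA}). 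For the derivative term, $D_\bullet \langle u_{A,1}(*), D_* u_{A,2}\rangle_\HH = \langle u_{A,1}(*), D_\bullet D_* u_{A,2}\rangle_\HH = \langle u_{A,1}(*), D^2 u_{A,2}(\bullet,*)\rangle_\HH$, again using that $u_{A,1}$ is deterministic. Thus $\|D\langle u_{A,1}(*), D_* u_{A,2}\rangle_\HH\|_{\HH^{\otimes 2}} \le \|u_{A,1}\|_\HH\, \|D^2 u_{A,2}\|_{\HH^{\otimes 3}} \lesssim \|D^2 u_{A,2}\|_{\HH^{\otimes 3}}$ pointwise in $\omega$, so taking $L^p(\Omega)$-norms and invoking Lemma \ref{D2u_A2} (inequality \eqref{D2UA2}) gives $\|D\langle u_{A,1}(*), D_* u_{A,2}\rangle_\HH\|_{L^p(\Omega;\HH^{\otimes 2})} \lesssim \deltah^{5/2}$.

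Combining the two bounds through the Meyer-type inequality yields $\|\delta(\langle u_{A,1}(*), D_* u_{A,2}\rangle_\HH)\|_p \lesssim \deltah^{5/2}$, which is the claim. The only genuinely delicate point is verifying that $\langle u_{A,1}(*), D_* u_{A,2}\rangle_\HH$ indeed belongs to $\bD^{1,p}(\HH)$ so that the Meyer inequality applies; this follows from $u_{A,2}\in\bD^{2,p}(\HH)$ (established via Lemmas \ref{lem:uA2mom}, \ref{lem:DuA2}, \ref{D2u_A2}) together with the fact that $u_{A,1}$ is a fixed deterministic element of $\HH$, plus the stochastic Fubini theorem for Skorohod integrals \cite[Proposition 6.5]{KRT} to justify the identity $\langle u_{A,1}(*), \delta(D_* u_{A,2})\rangle_\HH = \delta(\langle u_{A,1}(*), D_* u_{A,2}\rangle_\HH)$ used in the proof of Proposition \ref{prop1}. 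I expect no real obstacle here beyond bookkeeping, since all the heavy lifting --- the $\deltah^{5/2}$ bounds on $Du_{A,2}$ and $D^2 u_{A,2}$ --- has already been done in the preceding lemmas; the exponent $5/2$ propagates verbatim because contracting with the bounded deterministic $u_{A,1}$ costs nothing.
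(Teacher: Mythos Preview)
Your proof is correct and follows essentially the same approach as the paper: apply Meyer's inequality \cite[Proposition 1.5.8]{nualart2006}, then bound the expectation term and the derivative term separately via Cauchy--Schwarz against the deterministic $u_{A,1}$, reducing to the estimates \eqref{UA} on $Du_{A,2}$ and \eqref{D2UA2} on $D^2u_{A,2}$. The paper writes out the integrals explicitly before applying Cauchy--Schwarz, whereas you phrase the same steps more abstractly, but the argument is identical.
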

\begin{proof}
          By Meyer's inequality (see \cite[Proposition 1.5.8]{nualart2006}),  for $p\geq1$,
          \begin{align*}
          &\|\delta(\langle u_{A,1}(*), D_{*}u_{A,2}\rangle_{\HH})\|_p\\
          &\quad \leq c_p\left(
          \|\E[\langle u_{A,1}(*), D_{*}u_{A,2}\rangle_{\HH}]\|_\HH + \| D \langle u_{A,1}(*), D_{*}u_{A,2}\rangle_{\HH} \|_{L^p(\Omega; \HH^{\otimes2})}
          \right).
          \end{align*}
          We write
          \begin{align*}
          &\|\E[\langle u_{A,1}(*), D_{*}u_{A,2}\rangle_{\HH}]\|^2_\HH\\
          &\quad= \int_{\R_+\times \R}
          \left(\E\left[\int_{\R_+\times\R}u_{A,1}(r_2,v_2)D_{r_2,v_2}u_{A,2}(r_1,v_1)\, \d r_2\d v_2\right]\right)^2 \d r_1\d v_1.
          \end{align*}
          Since $u_{A,1}$ is deterministic, by the Cauchy-Schwarz inequality, 
          \begin{align*}
                    &\|\E[\langle u_{A,1}(*), D_{*}u_{A,2}\rangle_{\HH}]\|^2_\HH \\
                    &\quad \leq 
           \|u_{A,1}\|_{\HH}^2   \int_{\R_+\times \R}
          \left(\E\left[\left(\int_{\R_+\times\R}[D_{r_2,v_2}u_{A,2}(r_1,v_1)]^2\, \d r_2\d v_2\right)^{1/2}\right]\right)^2 \d r_1\d v_1\\
          &\quad \lesssim   \E\left[ \int_{\R_+\times \R}
          \int_{\R_+\times\R}[D_{r_2,v_2}u_{A,2}(r_1,v_1)]^2\, \d r_2\d v_2 \d r_1\d v_1\right]\\
          &\quad = \|Du_{A,2}\|^2_{L^2(\Omega; \HH^{\otimes2})} \lesssim \deltah^{5}
          \end{align*}
          for $z_2\geq \deltah^{1/2}$, where the last inequality holds by \eqref{UA}.
          
          Moreover, we write
          \begin{align*}
          &\| D \langle u_{A,1}(*), D_{*}u_{A,2}\rangle_{\HH} \|^2_{L^p(\Omega; \HH^{\otimes2})}\\
          &\quad=\left\|\int_{(\R_+\times\R)^2} \left(
          \int_{\R_+\times\R}u_{A,1}(r,z)D_{r_2, v_2}D_{r,z}u_{A,2}(r_1,v_1)\, \d r\d z
          \right)^2 \d r_1\d v_1\d r_2\d v_2
          \right\|_{\frac p2}.
          \end{align*}
          Since $u_{A,1}$ is deterministic, by the Cauchy-Schwarz inequality, 
          \begin{align*}
          &\| D \langle u_{A,1}(*), D_{*}u_{A,2}\rangle_{\HH} \|^2_{L^p(\Omega; \HH^{\otimes2})}\\
          &\qquad \leq  \|u_{A,1}\|_{\HH}^2 \left\|\int_{(\R_+\times\R)^3} 
          [D_{r_2, v_2}D_{r,z}u_{A,2}(r_1,v_1)]^2\, \d r\d z
           \d r_1\d v_1\d r_2\d v_2
          \right\|_{\frac p2}\\
          &\qquad=  \|u_{A,1}\|_{\HH}^2\,  \|D^2u_{A,2}\|^2_{L^p(\Omega; \HH^{\otimes3})} \lesssim \deltah^{5}
          \end{align*}
          for $z_2\geq \deltah^{1/2}$, where the last inequality holds by \eqref{D2UA2}.        
          The proof is complete.
\end{proof}

 \begin{lemma}
  For $z_2\geq \deltah^{1/2}$, 
  \begin{equation} \label{for21}
    \|\delta(u_{A,2})\|_p   \lesssim \deltah^{5/2}.
    \end{equation}
     \end{lemma}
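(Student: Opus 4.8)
The plan is to bound $\|\delta(u_{A,2})\|_p$ via Meyer's inequality (see \cite[Proposition 1.5.8]{nualart2006}), which gives, for $p > 1$,
\[
   \|\delta(u_{A,2})\|_p \lesssim \|\E[u_{A,2}]\|_\HH + \|Du_{A,2}\|_{L^p(\Omega;\HH^{\otimes 2})}.
\]
Both summands on the right-hand side have already been estimated: by Lemma \ref{rd07_02l1}, $\|\E[u_{A,2}]\|_\HH \lesssim \deltah^{5/2}$, and by Lemma \ref{lem:DuA2}, $\|Du_{A,2}\|_{L^p(\Omega;\HH^{\otimes 2})} \lesssim \deltah^{5/2}$, both valid under the standing hypothesis $z_2 \ge \deltah^{1/2}$. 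Adding these two bounds yields $\|\delta(u_{A,2})\|_p \lesssim \deltah^{5/2}$, which is exactly \eqref{for21}.

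To make this rigorous I would first note that $u_{A,2} \in \bD^{1,p}(\HH)$ for all $p \ge 1$ (equivalently $u_{A,2}$ is in the domain of $\delta$ with $\delta(u_{A,2}) \in L^p$), which is guaranteed by Lemmas \ref{lem:uA2mom} and \ref{lem:DuA2} together with the fact that $u_{A,2}$ is supported on the bounded set $E$ defined in \eqref{eq:E}; this justifies invoking Meyer's inequality in the first place. Then the only genuine content is the arithmetic of combining the two previously established bounds.

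I do not anticipate any real obstacle here: the statement is essentially a corollary of Lemmas \ref{rd07_02l1} and \ref{lem:DuA2}, and the one point requiring a word of care is simply confirming that the hypotheses of Meyer's inequality (membership in the appropriate $\bD^{1,p}(\HH)$) are met, which follows immediately from the moment estimates already in hand. The proof is therefore short.

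\begin{proof}
By Meyer's inequality \cite[Proposition 1.5.8]{nualart2006}, for any $p > 1$,
\[
   \|\delta(u_{A,2})\|_p \le c_p\left( \|\E[u_{A,2}]\|_\HH + \|Du_{A,2}\|_{L^p(\Omega;\HH^{\otimes 2})} \right),
\]
which is applicable since $u_{A,2} \in \bD^{1,p}(\HH)$ for all $p \ge 1$ by Lemmas \ref{lem:uA2mom} and \ref{lem:DuA2}. For $z_2 \ge \deltah^{1/2}$, Lemma \ref{rd07_02l1} gives $\|\E[u_{A,2}]\|_\HH \lesssim \deltah^{5/2}$ and Lemma \ref{lem:DuA2} gives $\|Du_{A,2}\|_{L^p(\Omega;\HH^{\otimes 2})} \lesssim \deltah^{5/2}$. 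Combining these estimates yields \eqref{for21}.
\end{proof}
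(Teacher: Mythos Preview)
Your proof is correct and follows essentially the same approach as the paper: both apply Meyer's inequality \cite[Proposition 1.5.8]{nualart2006} and then invoke Lemma \ref{rd07_02l1} (estimate \eqref{equ3}) and Lemma \ref{lem:DuA2} (estimate \eqref{UA}). Your version adds a brief justification that $u_{A,2}\in\bD^{1,p}(\HH)$, which the paper leaves implicit, but otherwise the arguments are identical.
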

     
\begin{proof}
From Meyer's inequality (see \cite[Proposition 1.5.8]{nualart2006}), we deduce that for any $p>1$,
\[
  \|\delta(u_{A,2})\|_p \le c_p \left( \|\E[u_{A,2}]\|_\HH+ \| Du_{A,2} \|_{L^p(\Omega; \HH^{\otimes2}} \right)
  \lesssim \deltah^{5/2},
\]
  where, in the last inequality, we have used (\ref{equ3}) and (\ref{UA}).
  \end{proof}

  \begin{lemma}\label{lem:gamma}
 For $z_2\geq \deltah^{1/2}$, 
 \begin{equation} \label{Dgamma}
 \|D\gamma_A^{2,2}\|_{L^p(\Omega;\mathcal{H})} \lesssim \deltah^{3}.
 \end{equation}
 \end{lemma}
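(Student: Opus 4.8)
The plan is to estimate $\|D\gamma_A^{2,2}\|_{L^p(\Omega;\HH)}$ directly from the definition $\gamma_A^{2,2} = \int_{t_0}^{t_0+\deltah^2}\int_{x_0}^{x_0+\deltah}\psi(Z_{s,y})\,\d y\d s$ given in \eqref{rd08_04e1}. Differentiating under the integral sign gives
\[
   D\gamma_A^{2,2} = \int_{t_0}^{t_0+\deltah^2}\int_{x_0}^{x_0+\deltah} \psi'(Z_{s,y})\, DZ_{s,y}\, \d y\d s,
\]
so that by Minkowski's inequality in $\HH$ and in $L^p(\Omega)$,
\[
   \|D\gamma_A^{2,2}\|_{L^p(\Omega;\HH)} \le \int_{t_0}^{t_0+\deltah^2}\int_{x_0}^{x_0+\deltah} \|\psi'\|_\infty\, \|DZ_{s,y}\|_{L^p(\Omega;\HH)}\, \d y\d s.
\]

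Next I would invoke the two ingredients already available: the bound $\|\psi'\|_\infty \le cR^{-1}$ from \eqref{rd07_02e1}, and the bound $\E[\|DZ_{s,y}\|_\HH^p] \lesssim \deltah^{p(4+p_0-\gamma_0)}$ established as \eqref{Z} in Step 1 of the proof of Lemma \ref{lem:DuA2} (valid for $(s,y)\in E$, hence in particular for $(s,y)$ in the smaller rectangle $[t_0,t_0+\deltah^2]\times[x_0,x_0+\deltah]$). Combining these with the fact that the domain of integration has area $\deltah^3$ yields
\[
   \|D\gamma_A^{2,2}\|_{L^p(\Omega;\HH)} \lesssim \deltah^3\, R^{-1}\, \deltah^{4+p_0-\gamma_0}.
\]
Finally, I would use the estimate \eqref{eq:R}, namely $R^{-1}\le c_1\deltah^{-p_0+\gamma_0-4}$, which holds precisely under the hypothesis $z_2\ge\deltah^{1/2}$, to conclude that $\|D\gamma_A^{2,2}\|_{L^p(\Omega;\HH)} \lesssim \deltah^3\cdot\deltah^{-p_0+\gamma_0-4}\cdot\deltah^{4+p_0-\gamma_0} = \deltah^3$, which is \eqref{Dgamma}.

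There is essentially no serious obstacle here: the lemma is a short consequence of the chain rule for the Malliavin derivative, the localization bound on $\psi'$, and the already-proven moment bound \eqref{Z} for $DZ_{s,y}$. The only point requiring a little care is the justification of differentiating $\psi(Z_{s,y})$ under the $\d y\d s$ integral and commuting $D$ with that integral, which is routine given that $Z_{s,y}\in\bD^{1,p}$ for all $p\ge1$ (since $p_0>4$, as noted in the proof of Proposition \ref{prop1}) and that the integrand is dominated uniformly; one can make this rigorous by approximating the Lebesgue integral by Riemann sums and using the closedness of $D$. I would also note that the same computation, iterated once more, underlies the proof of the companion bound on $D^2\gamma_A^{2,2}$ (cf.\ the bounds \eqref{D2Z} on $D^2Z_{s,y}$), although that is not needed for the present statement.
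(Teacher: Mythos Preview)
Your proof is correct and follows essentially the same approach as the paper: differentiate under the integral, bound $\|\psi'\|_\infty$ by $R^{-1}$ via \eqref{rd07_02e1}, apply Minkowski together with the moment bound \eqref{Z} on $DZ_{s,y}$ to get $R^{-1}\deltah^{p_0-\gamma_0+7}$, and then invoke \eqref{eq:R} under the hypothesis $z_2\ge\deltah^{1/2}$ to arrive at $\deltah^3$. Your additional remarks on justifying the interchange of $D$ with the Lebesgue integral are reasonable but not spelled out in the paper's version.
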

 
\begin{proof}
           From the definition of $\gamma_A^{2,2}$ in \eqref{rd08_04e1}, we have 
           \begin{align*}
           D\gamma_A^{2,2}=\int_{t_0}^{t_0+\deltah^2}\int_{x_0}^{x_0+\deltah}\psi'(Z_{s,y}) DZ_{s,y}\, \d y\d s
           \end{align*}
           and hence, using \eqref{rd07_02e1},
                      \begin{align*}
           \|D\gamma_A^{2,2}\|_{\mathcal{H}}\leq R^{-1}\int_{t_0}^{t_0+\deltah^2}\int_{x_0}^{x_0+\deltah}\|DZ_{s,y}\|_{\mathcal{H}}\, \d y\d s.
           \end{align*}
           Thus, by Minkowski's inequality, 
\begin{align*}
           \|D\gamma_A^{2,2}\|_{L^p(\Omega, \mathcal{H})}&\leq R^{-1}\int_{t_0}^{t_0+\deltah^2}\int_{x_0}^{x_0+\deltah}\|DZ_{s,y}\|_{L^p(\Omega, \mathcal{H})}\, \d y\d s
           \lesssim  R^{-1}\deltah^{p_0-\gamma_0+7},
 \end{align*}
           where the second inequality follows from (\ref{Z}).  
 By \eqref{eq:R}, we conclude  that for $z_2\geq \deltah^{1/2}$,
\begin{align*}
           \|D\gamma_A^{2,2}\|_{L^p(\Omega, \mathcal{H})}           &\lesssim  \deltah^{3}.
\end{align*}
           This proves Lemma \ref{lem:gamma}.
\end{proof}

\begin{lemma}\label{lem:uA2gamma}
           For $z_2\geq \deltah^{1/2}$, 
           \begin{align*}\label{eq:uA2gamma}
           \|\langle u_{A,1}, \langle Du_{A,2}(*), D_*\gamma_A^{2,2}\rangle_\HH\rangle_\HH \|_p\lesssim \deltah^{11/2}.
           \end{align*}
\end{lemma}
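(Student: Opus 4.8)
The plan is to reduce the estimate to the already-established moment bounds on $Du_{A,2}$ and $D\gamma_A^{2,2}$ by two applications of the Cauchy--Schwarz inequality in $\HH$, followed by H\"older's inequality in $\omega$.

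First, I would exploit the fact that $u_{A,1}$ is deterministic. The quantity $\langle Du_{A,2}(*), D_*\gamma_A^{2,2}\rangle_\HH$ is the $\HH$-valued random variable obtained by contracting, against $D\gamma_A^{2,2}\in\HH$, the copy of $\HH$ in $Du_{A,2}\in\HH^{\otimes 2}$ that comes from the Malliavin derivative (the remaining $\HH$-slot, coming from the fact that $u_{A,2}$ is itself $\HH$-valued, stays free). Cauchy--Schwarz on that contracted slot gives the pointwise bound
\[
  \left\| \langle Du_{A,2}(*), D_*\gamma_A^{2,2}\rangle_\HH \right\|_\HH \le \|Du_{A,2}\|_{\HH^{\otimes 2}}\,\|D\gamma_A^{2,2}\|_\HH ,
\]
and a second Cauchy--Schwarz in $\HH$ against the deterministic $u_{A,1}$, together with $\|u_{A,1}\|_\HH\le C$ (immediate from the definition \eqref{uA1}), yields
\[
  \left| \langle u_{A,1}, \langle Du_{A,2}(*), D_*\gamma_A^{2,2}\rangle_\HH\rangle_\HH \right|
  \lesssim \|Du_{A,2}\|_{\HH^{\otimes 2}}\,\|D\gamma_A^{2,2}\|_\HH .
\]

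Next I would take $L^p(\Omega)$-norms on both sides and apply H\"older's inequality with the conjugate exponents $2p$ and $2p$:
\[
  \left\| \langle u_{A,1}, \langle Du_{A,2}(*), D_*\gamma_A^{2,2}\rangle_\HH\rangle_\HH \right\|_p
  \lesssim \|Du_{A,2}\|_{L^{2p}(\Omega;\HH^{\otimes 2})}\,\|D\gamma_A^{2,2}\|_{L^{2p}(\Omega;\HH)} .
\]
Under the standing hypothesis $z_2\ge\deltah^{1/2}$, the bound \eqref{UA} gives $\|Du_{A,2}\|_{L^{2p}(\Omega;\HH^{\otimes 2})}\lesssim\deltah^{5/2}$ and the bound \eqref{Dgamma} gives $\|D\gamma_A^{2,2}\|_{L^{2p}(\Omega;\HH)}\lesssim\deltah^{3}$; multiplying these two estimates produces the desired $\deltah^{11/2}$, which completes the proof.

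There is no serious obstacle here: the statement is a direct corollary of Lemmas \ref{lem:DuA2} and \ref{lem:gamma}. The only point requiring a little care is the interpretation of the ``$*$'' notation, i.e.\ correctly identifying which $\HH$-slot of $Du_{A,2}$ is paired with $D\gamma_A^{2,2}$ so that the first Cauchy--Schwarz step is legitimate; once that is settled, everything is routine.
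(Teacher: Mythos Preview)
Your proof is correct and follows essentially the same route as the paper: two applications of Cauchy--Schwarz in $\HH$ to bound the absolute value by $\|u_{A,1}\|_\HH\,\|Du_{A,2}\|_{\HH^{\otimes 2}}\,\|D\gamma_A^{2,2}\|_\HH$, then invoke Lemmas~\ref{lem:DuA2} and~\ref{lem:gamma}. You spell out the H\"older step in $\omega$ that the paper compresses into a single line, but the argument is the same.
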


\begin{proof}
          By the Cauchy-Schwarz inequality, 
 \begin{align*}
          | \langle u_{A,1}, \langle Du_{A,2}(*), D_*\gamma_A^{2,2}\rangle_\HH\rangle_\HH | &\leq \|u_{A,1}\|_\HH \, \|Du_{A,2}\|_{\HH^{\otimes2}}\, 
          \|D\gamma_A^{2,2}\|_\HH \\
          &\lesssim \deltah^{5/2} \deltah ^3 = \deltah^{11/2},
\end{align*}
where the inequality follows from Lemmas \ref{lem:DuA2} and \ref{lem:gamma}. This proves Lemma \ref{lem:uA2gamma}.
\end{proof}

  \begin{lemma}\label{lem:Dgamma12}
   For $z_2\geq \deltah^{1/2}$, 
\begin{equation} \label{A12}
   \| \gamma_A^{1,2}\|_p \lesssim \deltah^{\frac 52},
\end{equation}
 \begin{equation} \label{DA12}
   \| D\gamma_A^{1,2}\|_{L^p(\Omega;\HH)} \lesssim \deltah^{\frac 52}
\end{equation}
   and
 \begin{equation} \label{DDA12}
       \| D^2\gamma_A^{1,2}\|_{L^p(\Omega;\HH\otimes \HH)} \lesssim \deltah^{\frac 52}.
\end{equation}
  \end{lemma}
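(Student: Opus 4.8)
The plan is to exploit the identity $\gamma_A^{1,2}=\langle Du(t_0,x_0),u_{A,2}\rangle_\HH$ recorded in \eqref{for4a} (the limits $0$ and $t_0$ in the definition \eqref{rd08_04e1} are immaterial since $D_{r,z}u(t_0,x_0)=0$ for $r>t_0$) together with the bounds already obtained for the Malliavin derivatives of $u(t_0,x_0)$ and of $u_{A,2}$. First I would recall that, because $\sigma\in C^3$ with derivatives up to order $3$ bounded, $u(t_0,x_0)\in\bD^{3,p}$ for all $p$ with the moments in \eqref{rd08_06e3} uniformly bounded over $(t_0,x_0)\in I\times J$, while Lemmas \ref{lem:uA2mom}--\ref{D2u_A2} give $u_{A,2}\in\bD^{2,p}(\HH)$ with $\|u_{A,2}\|_{L^p(\Omega;\HH)}$, $\|Du_{A,2}\|_{L^p(\Omega;\HH^{\otimes2})}$ and $\|D^2u_{A,2}\|_{L^p(\Omega;\HH^{\otimes3})}$ all $\lesssim\deltah^{5/2}$ (equations \eqref{equ10}, \eqref{UA}, \eqref{D2UA2}), under the standing hypothesis $z_2\ge\deltah^{1/2}$. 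In particular $\gamma_A^{1,2}=\langle Du(t_0,x_0),u_{A,2}\rangle_\HH\in\bD^{2,p}$, so its first two Malliavin derivatives may be computed by the Leibniz rule for $\HH$-inner products of Hilbert-space-valued functionals.

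For \eqref{A12} I would bound $|\gamma_A^{1,2}|\le\|Du(t_0,x_0)\|_\HH\,\|u_{A,2}\|_\HH$ by the Cauchy--Schwarz inequality in $\HH$, then apply H\"older's inequality in $\Omega$ and invoke \eqref{rd08_06e3} and \eqref{equ10}. For \eqref{DA12}, differentiating the inner product yields, as an $\HH$-valued random variable,
\[
  D\gamma_A^{1,2}=\langle D^2u(t_0,x_0),u_{A,2}\rangle_\HH+\langle Du(t_0,x_0),Du_{A,2}\rangle_\HH,
\]
so that $\|D\gamma_A^{1,2}\|_\HH\le\|D^2u(t_0,x_0)\|_{\HH^{\otimes2}}\|u_{A,2}\|_\HH+\|Du(t_0,x_0)\|_\HH\|Du_{A,2}\|_{\HH^{\otimes2}}$, and H\"older's inequality together with \eqref{rd08_06e3}, \eqref{equ10} and \eqref{UA} gives the claimed $\deltah^{5/2}$. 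Iterating the Leibniz rule once more produces
\[
  D^2\gamma_A^{1,2}=\langle D^3u(t_0,x_0),u_{A,2}\rangle_\HH+2\langle D^2u(t_0,x_0),Du_{A,2}\rangle_\HH+\langle Du(t_0,x_0),D^2u_{A,2}\rangle_\HH,
\]
whose $\HH\otimes\HH$-norm is dominated by the corresponding products of derivative norms; H\"older's inequality and \eqref{rd08_06e3}, \eqref{equ10}, \eqref{UA}, \eqref{D2UA2} then yield \eqref{DDA12}.

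The only point requiring care is the bookkeeping for the Leibniz/contraction rule applied to inner products of $\HH$-valued random variables -- that is, tracking at each differentiation which $\HH$-variable the inner product pairs up and which residual tensor power of $\HH$ survives, so that the Cauchy--Schwarz splitting in $\HH$ is legitimate. There is no new heat-kernel analysis involved: all the work has already been done in Lemmas \ref{lem:uA2mom}--\ref{D2u_A2} and in establishing \eqref{rd08_06e3}, so once the algebra of the derivatives is in place the three estimates follow by Cauchy--Schwarz and H\"older with no further effort.
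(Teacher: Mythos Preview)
Your proposal is correct and follows essentially the same approach as the paper's proof: Cauchy--Schwarz on the inner product $\gamma_A^{1,2}=\langle Du(t_0,x_0),u_{A,2}\rangle_\HH$, H\"older in $\Omega$, and then the uniform moment bounds on $D^k u(t_0,x_0)$ from \eqref{rd08_06e3} combined with the $\deltah^{5/2}$ bounds on $u_{A,2}$ and its Malliavin derivatives from \eqref{equ10}, \eqref{UA}, \eqref{D2UA2}. The paper's proof is terser---it writes out only the case \eqref{A12} explicitly and then says the proofs of \eqref{DA12} and \eqref{DDA12} are ``similar''---whereas you spell out the Leibniz expansions for $D\gamma_A^{1,2}$ and $D^2\gamma_A^{1,2}$; but the content is the same.
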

  
  \begin{proof}
  By \eqref{rd08_04e1}, we can write
  \begin{align*}
  \| \gamma_A^{1,2}\|_p&= \left \| \int_0^{t_0} \int_\R D_{r,z} u(t_0,x_0) u_{A,2}(r,z)\, \d z \d r \right\|_p\\
  &\le  \| Du(t_0,x_0) \|_{L^{2p}(\Omega;\HH)}\, \|u_{A,2}\|_{L^{2p}(\Omega;\HH)}  
  \le C\, \|u_{A,2}\|_{L^{2p}(\Omega;\HH)}  \lesssim \deltah^{\frac 52},
  \end{align*}
  where, in the last inequality, we have used (\ref{equ10}).  The proofs of \eqref{DA12} and \eqref{DDA12} are similar using the fact that  $\|D^2u(t_0, x_0)\|_{\HH^{\otimes2}}$ and $\|D^3u(t_0, x_0)\|_{\HH^{\otimes3}}$ have uniformly bounded moments  over $(t_0, x_0)\in I \times J$, 
as mentioned in \eqref{rd08_06e3}.
  \end{proof}

 \begin{lemma} \label{vanish}
 We have
   \begin{equation} \label{for7}
  \langle u_{A,1},  D\gamma_A^{2,2}\rangle_\HH=0.
  \end{equation}
 \end{lemma}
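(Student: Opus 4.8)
The plan is to show that $\langle u_{A,1}, D\gamma_A^{2,2}\rangle_\HH = 0$ by exploiting the fact that $u_{A,1}$ is supported in time on $[0, t_0]$ (plus a contribution near $T_1 > s_1$), whereas $D\gamma_A^{2,2}$ is supported in time on $[t_0, t_0+\deltah^2]$, so the two $\HH$-functions have essentially disjoint supports. More precisely, from \eqref{rd08_04e1} we have
\[
   D\gamma_A^{2,2} = \int_{t_0}^{t_0+\deltah^2}\int_{x_0}^{x_0+\deltah}\psi'(Z_{s,y})\, DZ_{s,y}\, \d y\, \d s,
\]
and since $Z_{s,y}$ is built from increments $v(t,x)-v(\tilde s,\tilde y)$ with time arguments in $[t_0, s]\subset[t_0, t_0+\deltah^2]$, the Malliavin derivative $D_{r,z}Z_{s,y}$ (which by the Walsh representation of $v$ involves heat kernels $G(t-r, \cdot)$ with $t \le t_0+\deltah^2$) vanishes for $r > t_0+\deltah^2$. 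So $D_{r,z}\gamma_A^{2,2}$ is supported on $\{r \le t_0 + \deltah^2\}$.

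On the other side, recall from \eqref{uA1} that $u_{A,1}(r,z) = \left(\partial_r - \tfrac12 \partial_z^2\right)[f(r)g(z)]$. The function $f$ defined in \eqref{rd05_21e1} is constant (equal to $1$) on $[t_0, T_1)$, so on that whole time interval $\partial_r[f(r)g(z)] = 0$, and $-\tfrac12 f(r) g''(z) = -\tfrac12 g''(z)$ is supported in $z$ on $|z - (K-1)|$ type boundary regions — but crucially this term is supported, in $z$, outside the interval $[x_0, x_0+\deltah]$ once $\deltah$ is small and $x_0 \in J \subset [1-K, K-1]$ (indeed $g'' = 0$ on $(1-K, K-1) \supset J$ for $\deltah$ small enough). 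Meanwhile $DZ_{s,y}$ with $y \in [x_0, x_0+\deltah]$ involves heat kernels $G(\cdot, x - z)$ with $x$ near $x_0$, which are nonzero for all $z$ — so one must be a bit careful: the spatial supports are NOT literally disjoint. Therefore the clean argument is the temporal one: for $r \in [t_0, t_0+\deltah^2]$ we have $f = 1$ so $u_{A,1}(r, \cdot) = -\tfrac12 g''(\cdot)$; but then I would pair this with the fact that for these $r$, the inner integral against $D_{r,z}Z_{s,y}$ can be handled by integrating by parts twice in $z$ using $-\tfrac12 g'' = (\partial_r - \tfrac12\partial_z^2)(g)$ with $\partial_r g = 0$... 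Actually, the cleanest path: write $u_{A,1} = \LL[f g]$ where $\LL = \partial_r - \tfrac12\partial_z^2$, and use Lemma \ref{lem:heat} in reverse — $\langle \LL[fg], D_{\cdot}Z_{s,y}\rangle$ recovers the value of a heat-convolution. Since each summand of $Z_{s,y}$ is (an integral of) a polynomial in increments of $v(t,x)$ with $t \le t_0 + \deltah^2 \le t_0 + \deltah^2$, and $D_{r,z}v(t,x) = G(t-r, x-z)\mathbf 1_{\{r<t\}}$ solves the heat equation in $(t,x)$, one checks that $\langle D_{\cdot}v(t,x) - D_{\cdot}v(\tilde s, \tilde y), u_{A,1}\rangle_\HH = [f(t)g(x) - f(\tilde s)g(\tilde y)]$; since $t, \tilde s \in [t_0, t_0+\deltah^2]$ (so $f \equiv 1$ there) and $x, \tilde y \in J$ (so $g \equiv 1$ there for $\deltah$ small), every such pairing vanishes, hence by the chain rule $\langle D_{\cdot} Z_{s,y}, u_{A,1}\rangle_\HH = 0$ for each $(s,y)$ in the domain of integration, and then Fubini gives \eqref{for7}.

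So the key steps, in order, are: (i) differentiate $\gamma_A^{2,2}$ to get the representation of $D\gamma_A^{2,2}$ as an integral of $\psi'(Z_{s,y})DZ_{s,y}$; (ii) apply the chain rule to $D Z_{s,y}$ to express it in terms of $D$ of increments $v(t,x) - v(\tilde s, \tilde y)$ with all time-arguments in $[t_0, t_0+\deltah^2]$ and all space-arguments in a $\deltah$-neighbourhood of $x_0$; (iii) invoke Lemma \ref{lem:heat} (as already done for \eqref{for1} in the proof of Proposition \ref{rd04_04p1}) to compute $\langle D(v(t,x) - v(\tilde s, \tilde y)), u_{A,1}\rangle_\HH = f(t)g(x) - f(\tilde s)g(\tilde y)$; (iv) observe that $f \equiv 1$ on $[t_0, T_1)$ and $g \equiv 1$ on $J$ (and that $\deltah$ is small enough that the $\deltah$-neighbourhoods of $x_0$ stay inside the region where $g \equiv 1$, using $J \subset (1-K, K-1)$), so each of these pairings is $1 - 1 = 0$; (v) conclude by Fubini/linearity that $\langle u_{A,1}, DZ_{s,y}\rangle_\HH = 0$ for all relevant $(s,y)$ and hence $\langle u_{A,1}, D\gamma_A^{2,2}\rangle_\HH = 0$. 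The main obstacle I anticipate is justifying the interchange of the $\HH$-inner product with the integrals defining $\gamma_A^{2,2}$ and $Z_{s,y}$ — i.e., the use of a stochastic Fubini theorem and the integrability needed to differentiate under the integral sign — together with being careful that the spatial smallness condition on $\deltah$ (so that $g'' \equiv 0$ on the relevant $z$-range) is exactly what makes the boundary terms in the heat-kernel computation disappear; this is essentially the same bookkeeping as in \eqref{for1}, so it should go through, but it is where the real content lies.
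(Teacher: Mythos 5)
Your final argument---differentiating $\gamma_A^{2,2}$, expanding $DZ_{s,y}$ into $D$ of increments of $v$, invoking Lemma \ref{lem:heat} to evaluate $\langle u_{A,1}, D[v(t,x)-v(\tilde s,\tilde y)]\rangle_\HH = f(t)g(x) - f(\tilde s)g(\tilde y) = 1-1=0$, and integrating---is exactly the paper's proof. The initial detour about disjoint temporal/spatial supports (which you correctly recognize does not work) is harmless, and your closing worry about the $\deltah$-neighbourhood staying inside $\{g\equiv 1\}$ is resolved simply by the paper's choice of $K$ large relative to $J$ and $\deltah\in(0,1)$, not by a smallness condition on $\deltah$.
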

 
 \begin{proof}
  In order to show (\ref{for7}),   we use \eqref{rd08_04e1} to write
 \[
 D\gamma_A^{2,2}=  \int_{t_0}^{t_0+\deltah^2}\int_{x_0}^{x_0+\deltah}\psi'(Z_{s,y})DZ_{s,y}\, \d y\d s .
 \]
 As a consequence,   (\ref{for7}) will be proved if we show that for any $(s,y)\in  [t_0,t_0+\deltah^2]\times [x_0,x_0+\deltah]$, 
 \begin{align*}
 \langle u_{A,1}, DZ_{s,y} \rangle_\HH=0.
 \end{align*}
 Taking into account  the expression of $DZ_{s,y}$ in \eqref{eq:sum123}, the fact that this inner product vanishes  follows from the following computations: for any  $(t,x), (s,y)\in [t_0,t_0+\deltah^2]\times [x_0,x_0+\deltah]$,
 \begin{align*}
 & \langle u_{A,1}, D[v(t,x)-v(s,y)] \rangle_\HH \\
   &\quad= \int_0^\infty \int_{\R}\d r \d z \left(G(t-r,x-z)\mathbf{1}_{\{r<t\}} -G(s-r,y-z) \mathbf{1}_{\{ r< s\}} \right)\\
  &\qquad\qquad \qquad \times \left( \frac {\partial}{\partial r}-\frac 12 \frac {\partial^2}{\partial z^2} \right)
    \left[f(r)g(z) \right] \\
    &\quad=f(t)g(x)-f(s)g(y)=1\times 1-1\times 1=0,
    \end{align*}
    where the second equality holds by Lemma \ref{lem:heat}.
    This completes the proof of Lemma \ref{vanish}.
 \end{proof}


\end{document}